\def\a{\alpha}
\def\A{\mathcal{A}}
\def\b{\beta}
\def\g{\gamma}
\def\e{\epsilon}
\def\cE{\mathcal{E}}
\def\N{\mathbb{N}}
\def\R{\mathbb{R}}
\def\Z{\mathbb{Z}}
\def\cross{\times}
\def\P{\mathbb{P}}
\def\le{\leqslant}
\def\ge{\geqslant}
\def\stab{\text{Stab}}
\newcommand{\norm}[1]{|#1|}
\newcommand{\setnorm}[1]{\#|#1|}
\newcommand{\gp}[3]{(#2\cdot#3)_{#1}}
\newcommand\rnu{\check \nu}
\newcommand\rmu{\check \mu}
\newcommand\CP{\mathbb{P}^2(\mathbb{C})}
\newcommand\Mod{\textup{Mod}}
\renewcommand{\geq}{\geqslant}
\renewcommand{\leq}{\leqslant}
\newtheorem{theorem}{Theorem}[section]
\newtheorem{lemma}[theorem]{Lemma}
\newtheorem{corollary}[theorem]{Corollary}
\newtheorem{proposition}[theorem]{Proposition}
\theoremstyle{definition}
\newtheorem{definition}[theorem]{Definition}
\newtheorem{remark}[theorem]{Remark}
\newtheorem{claim}[theorem]{Claim}
\begin{document}

\title{Random walks, WPD actions, and the Cremona group}
\author{Joseph Maher, Giulio Tiozzo}

\begin{abstract}
We study random walks on the Cremona group.  We show that almost
surely the dynamical degree of a sequence of random Cremona
transformations grows exponentially fast, and a random walk produces
infinitely many different normal subgroups with probability $1$.
Moreover, we study the structure of such random subgroups.

We prove these results in general for groups of isometries of (non-proper) hyperbolic spaces which possess at least one WPD element. 
As another application, we answer a question of D. Margalit showing that a random normal subgroup of the mapping class group is free. 
\end{abstract}

\maketitle

\section{Introduction}

The \emph{Cremona group} is the group $G = \textup{Bir} \ \CP$ of
birational transformations of the projective plane.  Its study has
been initiated by E. De Jonqui\'eres, L. Cremona, and M. Noether in
the 1800's (see \cite{Fa} for a survey), and a great deal of progress
has been obtained in the last decade.  In particular, Cantat and Lamy
\cite{CL} proved a conjecture of Mumford, showing that the Cremona
group is not simple. In fact, they produced infinitely many different
normal subgroups.

A technique to produce many examples of a mathematical structure is to
use probability; indeed, even if it is hard to construct an
explicit example, it may be simpler to show that \emph{almost all}
objects satisfy the desired property (a famous example is
\emph{expander graphs}, see e.g. \cite{lubo-book}, Section 1.2).

In this paper, we prove the following strengthening of \cite{CL} by
looking at random walks.  To define a random walk, let us fix a
probability measure $\mu$ on the Cremona group, with countable
support.  Let us denote as $\Gamma_\mu$ the semigroup generated by the
support of $\mu$.  Then let us draw a sequence $(g_n)$ of elements
independently with distribution $\mu$, and consider the random product
$$w_n := g_1 g_2 \dots g_n.$$
We prove the following. 

\begin{theorem} \label{T:Cre2}
Let $\mu$ be a probability measure on the Cremona group $G = \textup{Bir} \ \CP$ so that $\Gamma_\mu$ is a primitive subgroup which contains a WPD element. 
For any sample path $\omega = (w_n)$, consider the normal closure $N_n(\omega) := \langle \langle w_n \rangle \rangle$.
Then we have: 
\begin{enumerate}
\item for almost every sample path $\omega$, the sequence 
$$( N_1(\omega), N_2(\omega), \dots, N_n(\omega), \dots )$$
contains infinitely many different normal subgroups of $\textup{Bir} \ \CP$.

\item Let the \emph{injectivity radius} of a subgroup $H < G$ be defined as
$$\textup{inj}(H) := \inf_{f \in H \setminus \{1 \}} \textup{deg }f.$$
Then, for any $R > 0$ the probability that $\textup{inj}(N_n) \geq R$ tends to $1$ as $n \to \infty$; 
\item 
The probability that the normal closure $\langle \langle w_n \rangle \rangle$ of $w_n$ in $G$ is free satisfies
$$\mathbb{P}(\langle \langle w_n \rangle \rangle \textup{ is free}) \to 1$$
as $n \to \infty$.
\end{enumerate}
\end{theorem}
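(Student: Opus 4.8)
The plan is to reduce everything to a statement about random walks on groups acting on (non-proper) hyperbolic spaces with a WPD element, and then invoke the small-cancellation machinery for such actions. I would set up the following pipeline.

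\medskip

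\textbf{Step 1: The geometric model.} The Cremona group acts by isometries on the infinite-dimensional hyperbolic space $\mathbb{H}^\infty$ (the Picard--Manin space), and under the primitivity hypothesis the semigroup $\Gamma_\mu$ contains a loxodromic WPD element. The key tool is that a WPD loxodromic element $g$ has a ``small cancellation'' behaviour: there is a constant $B$ so that translates of a long axis segment overlap by at most $B$. I would first prove a general result: if $G$ acts on a (possibly non-proper) hyperbolic space with a WPD element, and $\mu$ is a non-elementary measure (i.e. $\Gamma_\mu$ not virtually cyclic, not fixing a point on the boundary), then for a random $w_n$ the translation length $\tau(w_n)$ grows linearly in $n$ and, more importantly, $w_n$ itself is WPD with small cancellation constants growing linearly. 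This uses the standard fact (Maher, Maher--Tiozzo) that random walk elements are loxodromic with a Gromov product/``fellow-travelling'' estimate, combined with a Borel--Cantelli argument to upgrade to the WPD/small-cancellation quantitative version.

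\medskip

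\textbf{Step 2: From small cancellation to freeness of the normal closure.} Once $w_n$ is a loxodromic WPD element whose axis satisfies a sufficiently strong small cancellation condition relative to $G$ (quantified by the ratio of translation length to the WPD constant being large), one applies the Dahmani--Guirardel--Osin / Hull rotating-families or group-theoretic Dehn filling theorem: the normal closure $\langle\langle w_n \rangle\rangle$ is then a free group, freely generated by the conjugates of $w_n$ (equivalently, $G/\langle\langle w_n\rangle\rangle$ is a proper quotient and the kernel is free). Concretely, there is a threshold $\epsilon(g)$ such that if $w_n = g^{k}$-like in the sense of satisfying the $C'(\epsilon)$ small cancellation condition over $G$ for small enough $\epsilon$, the normal closure is free. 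So part (3) reduces to: $\mathbb{P}(w_n \text{ satisfies the small cancellation condition with parameter} \le \epsilon_0) \to 1$.

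\medskip

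\textbf{Step 3: The probabilistic estimate.} This is the heart and the main obstacle. I need to show that the ``small cancellation defect'' of $w_n$ — the largest overlap between two distinct $G$-translates of its quasi-axis, divided by $\tau(w_n)$ — tends to $0$ in probability. The translation length $\tau(w_n)$ grows linearly (positive drift), so it suffices to show the overlap length stays sublinear with probability tending to one. For the overlap bound I would use the WPD property: the stabiliser of a long axis segment is ``almost'' the cyclic group $\langle g\rangle$, so any long overlap forces an element of $G$ conjugating a long power of $w_n$ close to another — and matching the geodesic combinatorics of a random word forces cancellation in the random walk, which is exponentially unlikely (this is where one re-uses the linear-progress and exponential-decay-of-Gromov-products estimates for random walks on hyperbolic spaces, plus the WPD finiteness). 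The delicate point is that $\mathbb{H}^\infty$ is non-proper, so one cannot use compactness/properness of the boundary action; everything must be done with explicit $\delta$-hyperbolic estimates and the WPD quantifier, which is exactly why the paper isolates the WPD hypothesis.

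\medskip

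\textbf{Step 4: Assembly.} Part (2) then follows since $\textup{inj}(N_n)$ is controlled below by (a constant times) $\tau(w_n)$ once $N_n$ is a free group generated by conjugates of $w_n$ with translation length $\tau(w_n)\to\infty$; and part (1) follows from (2) because distinct injectivity radii force distinct normal subgroups, so infinitely many distinct values of $\textup{inj}(N_n)$ along the sequence yield infinitely many distinct $N_n$ — one gets the almost-sure statement by passing to a subsequence along which $\tau(w_n)$ is strictly increasing, which holds a.s. by positive drift. I expect Steps 2 and 3 to carry the whole argument, with the non-properness of the space being the feature that dictates the WPD-centric formulation; the clean separation of ``general hyperbolic-group input'' from ``Cremona-specific input (primitivity $\Rightarrow$ WPD loxodromic exists)'' should make the write-up modular.
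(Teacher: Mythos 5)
Your overall pipeline is the same as the paper's: reduce to a general WPD statement, show $\tau(w_n)$ grows linearly while overlaps of translates of the axis stay sublinear, feed this into the Dahmani--Guirardel--Osin rotating-family machinery to get freeness and a large injectivity radius, and then deduce abundance of normal subgroups. But there is a genuine gap at the heart of Steps 2--3: freeness of $\langle \langle w_n \rangle \rangle$ is \emph{not} a consequence of the WPD hypothesis alone, and your outline never uses primitivity where it is actually needed. For a generic $w_n$ the stabilizer of its endpoint pair is $E_G(w_n) = \langle w_n \rangle \ltimes E_\mu$, where $E_\mu$ is the pointwise stabilizer of the limit set of $\Gamma_\mu$, and when $\Gamma_\mu$ acts nontrivially on $E_\mu$ by conjugation the normal closure is non-free with probability tending to $1 - 1/k(\mu)$ (Theorem \ref{T:normal-intro}); so a proof that only bounds the ``largest overlap between two \emph{distinct} $G$-translates of the quasi-axis'' cannot suffice, since a conjugate $f w_n f^{-1}$ whose axis coincides \emph{setwise} with that of $w_n$ (an orientation-reversing symmetry, or a finite ``rotation'' coming from $E_\mu$) is invisible to that quantity yet violates the small cancellation condition $\Delta(g,h) \le \epsilon \cdot \textup{inj}$ for $g \ne h^{\pm 1}$ and kills freeness. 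The missing step is the identification of $E_G(w_n)$ for generic $w_n$ (asymmetric elements, self-matching estimates, and matching with a special element $f \in \Gamma_\mu$ whose coarse stabilizers lie in $E^+_G(\Gamma_\mu)$, as in Proposition \ref{P:h1-h2} and Corollary \ref{C:Eplus}), after which primitivity, i.e.\ $k(\mu)=1$, is exactly what guarantees that distinct conjugates of $w_n$ lie in distinct elementary subgroups. Relatedly, the WPD finiteness of the seed element only controls coarse stabilizers along translates of \emph{its} axis; transporting this to the axis of the random element is the content of the matching/non-matching estimates and the asymptotic acylindricality statement with its $c^{\sqrt{n}}$ rate, which your Step 3 names as a goal but does not supply.

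There is also a flaw in the assembly of part (1): passing to a subsequence along which $\tau(w_n)$ is strictly increasing does not yield distinct normal subgroups, because $\textup{inj}(N_n)$ is an infimum over the \emph{entire} normal closure and is only bounded below in probability; a priori all the $N_n$ along your subsequence could coincide (as indeed happens in non-WPD situations, where the normal closure can be the whole group for every $n$). The correct deduction is either the paper's: for each $M$, the event $\{\sup_n \textup{inj}(N_n) \le M\}$ has probability zero because $\mathbb{P}(\textup{inj}(N_n) \ge M+1) \to 1$; or, using the square-root exponential rates you develop, Borel--Cantelli gives $\textup{inj}(N_n) \to \infty$ almost surely, and then finitely many distinct subgroups in the sequence would force $\sup_n \textup{inj}(N_n) < \infty$, a contradiction.
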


We will in fact provide estimates on the rate of convergence in (3) (see Theorem \ref{T:normal-intro}), and discuss the non-primitive case in detail. 
Let us now introduce some definitions. 


\subsection{The dynamical degree}

Let $f : \CP \to \CP$ be a birational map. Then $f$ is given in homogeneous coordinates by
$$f([x : y : z]) := [P : Q : R]$$
where $P, Q, R$ are polynomials of degree $d$ without common factors. We call $d$ the \emph{degree} of $f$, and we denote it as $\textup{deg }f$.

Now, one notes that $\textup{deg}(f^{n+m}) \leq \textup{deg}(f^n) \cdot \textup{deg}(f^m)$, but the equality need not hold: the most famous example is 
the \emph{Cremona involution} 
$$g([x: y: z]) := [yz : xz : xy]$$ 
which has degree $2$, but $g^2$ is the identity; the Cremona group is in fact generated by degree $1$ transformations and the Cremona involution. 
Hence, following \cite{Fri}, \cite{RS} we define the \emph{dynamical degree} of $f$ as
$$\lambda(f) := \lim_{n \to \infty} \left( \textup{deg } f^n \right)^{1/n}.$$
The dynamical degree is always an algebraic integer \cite{DF}, and it is related to the topological entropy by $h_{top}(f) \leq \log \lambda(f)$. 
In fact, equality is conjectured \cite{Fri}. 

The Cremona group acts by isometries on an infinite dimensional hyperbolic space $\mathbb{H}_{\mathbb{P}^2}$
which is contained in the \emph{Picard-Manin space} (see Section \ref{S:Cremona}). 
Thus, Cremona transformations can be classified as \emph{elliptic}, \emph{parabolic}, or \emph{loxodromic} (\cite{DF} \cite{Can}). In particular, 
a Cremona transformation is \emph{loxodromic} if $\lambda(f) > 1$, and we say it is \emph{WPD} if it is loxodromic and not conjugate to a monomial transformation. A subgroup $\Gamma < G$ is \emph{primitive} if no non-trivial element of $\Gamma$ fixes the limit set $\Lambda(\Gamma) \subseteq \partial \mathbb{H}_{\mathbb{P}^2}$ pointwise. There are many such subgroups (see Remark \ref{R:prim}).

A measure $\mu$ on the Cremona group has \emph{finite first moment} if 
$\int \log \textup{deg }f \ d\mu(f)  < + \infty$, and is \emph{bounded} if there exists $D < +\infty$ such that $\textup{deg }f \leq D$ for any $f \in \textup{supp}(\mu)$. 
Moreover, it is \emph{non-elementary} if $\Gamma_\mu$ contains two loxodromic elements with disjoint fixed sets. 

We prove that the degree and dynamical degree of a random Cremona transformation grow exponentially fast.

\begin{theorem} \label{T:Cre1}
Let $\mu$ be a countable non-elementary probability measure on the
Cremona group with finite first moment.  Then there exists $L > 0$
such that for almost every random product $w_n = g_1 \dots g_n$ of
elements of the Cremona group we have the limit
$$\lim_{n \to \infty} \frac{1}{n} \log \textup{deg}(w_n) = L.$$
Moreover, if $\mu$ is bounded then for almost every sample path we have
$$\lim_{n \to \infty} \frac{1}{n} \log \lambda(w_n) = L.$$
\end{theorem}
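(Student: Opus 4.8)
\emph{Sketch of the argument.} The plan is to reduce everything to the isometric action of $G$ on the hyperbolic space $\mathbb{H}_{\mathbb{P}^2}$ and then apply the theory of random walks on (possibly non-proper) Gromov hyperbolic spaces. Fix as basepoint $o \in \mathbb{H}_{\mathbb{P}^2}$ the class of a line in the Picard--Manin space; the key feature of this action (Section \ref{S:Cremona}) is that $\cosh d(o, g\, o) = \textup{deg}\, g$ for every $g \in G$. Since $\left| \log \cosh t - t \right| \le \log 2$ for all $t \ge 0$, this yields the uniform comparison $\left| \log \textup{deg}\, g - d(o, g\, o) \right| \le \log 2$ for all $g \in G$, so $\tfrac1n \log \textup{deg}(w_n)$ and $\tfrac1n d(o, w_n o)$ are asymptotic. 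Moreover, since $G \to \textup{Isom}(\mathbb{H}_{\mathbb{P}^2})$ is a genuine action and the same formula applies to $g^k$, the quantity $\log \lambda(g) = \lim_k \tfrac1k \log \textup{deg}(g^k) = \lim_k \tfrac1k d(o, g^k o) = \tau(g)$ is exactly the stable translation length of $g$ on $\mathbb{H}_{\mathbb{P}^2}$.

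First I would establish the existence of $L$. Along the i.i.d.\ shift the function $n \mapsto d(o, w_n o)$ is subadditive, $d(o, w_{n+m} o) \le d(o, w_n o) + d\bigl(o, (g_{n+1}\cdots g_{n+m})\, o\bigr)$ with the last term distributed as $d(o, w_m o)$, and $\mathbb{E}\,[d(o, w_1 o)] \le \log 2 + \int \log \textup{deg}\, g \ d\mu(g) < \infty$ by finite first moment. Kingman's subadditive ergodic theorem then gives $\tfrac1n d(o, w_n o) \to L := \inf_n \tfrac1n \mathbb{E}\,[d(o, w_n o)]$ almost surely, hence $\tfrac1n \log \textup{deg}(w_n) \to L \ge 0$ a.s. That $L > 0$ is positivity of the drift for non-elementary random walks on a separable $\delta$-hyperbolic space with finite first moment, due to Maher and Tiozzo; it applies because $\mathbb{H}_{\mathbb{P}^2}$ is $\delta$-hyperbolic (being $\mathrm{CAT}(-1)$), $G$ acts by isometries — one may pass to the separable $\Gamma_\mu$-invariant convex subspace spanned by the orbit $\Gamma_\mu o$ — and ``$\mu$ non-elementary'' is precisely the hypothesis that $\Gamma_\mu$ contains two loxodromics with disjoint fixed sets. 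This proves the first limit.

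For the bounded case, $\textup{deg}(g^k) \le (\textup{deg}\, g)^k$ gives $\lambda(w_n) \le \textup{deg}(w_n)$, so $\limsup_n \tfrac1n \log \lambda(w_n) \le L$ a.s.\ (equivalently $\tau(w_n) \le d(o, w_n o)$). The content is the matching lower bound $\liminf_n \tfrac1n \tau(w_n) \ge L$. By the standard loxodromic criterion in $\delta$-hyperbolic spaces, $\tau(g) \ge d(o, g\, o) - 2\, (g^{-1} o \cdot g\, o)_o - C(\delta)$, and since $2(g^{-1} o \cdot g\, o)_o = 2 d(o, g\, o) - d(o, g^2 o)$, it suffices to show that the backtracking $B_n := (w_n^{-1} o \cdot w_n o)_o$ is sublinear, $B_n = o(n)$, almost surely --- equivalently $d(o, w_n^2 o) = 2 L n + o(n)$ a.s. Here I would invoke the shadow and pivoting estimates of Maher and Tiozzo: the forward trajectory almost surely converges to a boundary point $\xi^+$ and tracks a geodesic ray towards it, so the Gromov products $(w_n^{-1} o \cdot w_n o)_o$ are controlled with small error probabilities; the boundedness of $\mu$, which gives a uniform bound $d(o, g\, o) \le \log(2D)$ on step sizes, makes these tail probabilities summable, so Borel--Cantelli yields $B_n = o(n)$ a.s. Combined with $\tfrac1n d(o, w_n o) \to L$ this gives $\tau(w_n) = L n + o(n)$, i.e.\ $\tfrac1n \log \lambda(w_n) \to L$ a.s.

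The main obstacle is precisely this last step: upgrading linear growth of the \emph{displacement} $d(o, w_n o)$ to linear growth of the \emph{translation length} $\tau(w_n) = \log \lambda(w_n)$. A priori $w_n$ could have large displacement while being close to elliptic or parabolic, which would force $\lambda(w_n)$ to be far smaller than $\textup{deg}(w_n)$; ruling this out amounts to showing that the trajectory heading to $\xi^+$ and the trajectory of the reversed walk (heading to its own limit) separate at $o$ with only sublinear Gromov product. Since the boundary of $\mathbb{H}_{\mathbb{P}^2}$ is not locally compact, the classical compactness arguments fail and one must use the quantitative estimates of Maher and Tiozzo; the role of the boundedness hypothesis is exactly to make the relevant tail probabilities summable, turning an in-probability statement into an almost sure one.
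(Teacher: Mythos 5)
Your proposal is correct and follows essentially the same route as the paper: translate degree into displacement via $\deg g=\cosh d(o,go)$ and $\log\lambda(g)=\tau(g)$, get the first limit from positive drift (Theorem \ref{T:MT}), and reduce the second to sublinearity of the Gromov product $\gp{o}{w_n o}{w_n^{-1}o}$ together with $\tau(g)=d(o,go)-2\gp{o}{go}{g^{-1}o}+O(\delta)$. The only caveat is that this sublinearity is not an off-the-shelf Maher--Tiozzo statement but is exactly what the paper proves (Proposition \ref{P:gp2}, via the midpoint decomposition $w_{2n}=w_nu_n$, independence, exponential decay of shadows and of linear progress, then Borel--Cantelli), and your sketch of "shadow estimates plus summable tails under boundedness" is precisely that argument.
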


Moreover, we obtain the following characterization of the Poisson boundary (see Section \ref{S:Poiss}). 

\begin{theorem} \label{T:Poisson}
Let $\mu$ be a non-elementary probability measure on the Cremona group with finite entropy and finite logarithmic moment, and 
suppose that $\Gamma_\mu$ contains a WPD element.
Then the Gromov boundary of the hyperboloid $\mathbb{H}_{\mathbb{P}^2}$ with the hitting measure
is a model for the Poisson boundary of $(G, \mu)$.
\end{theorem}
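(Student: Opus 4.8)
The plan is to apply the general framework for identifying Poisson boundaries of groups acting on Gromov hyperbolic spaces, in the form developed by Maher–Tiozzo and, for the non-proper case, building on Kaimanovich's ray/strip criterion. Since $\mathbb{H}_{\mathbb{P}^2}$ is separable but \emph{not} proper, one cannot directly invoke the classical results for proper $\mathrm{CAT}(-1)$ or hyperbolic spaces; the WPD hypothesis is precisely what substitutes for properness. First I would recall (from the earlier sections, and from the non-elementarity built into the hypotheses, since a group with a WPD element that is not virtually cyclic — which $\Gamma_\mu$ is not, being the group generated by the support — contains loxodromics with disjoint endpoints) that the random walk $w_n$ converges almost surely to a point $\xi^+ \in \partial \mathbb{H}_{\mathbb{P}^2}$, defining the hitting (harmonic) measure $\nu$ on the boundary. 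The pair $(\partial \mathbb{H}_{\mathbb{P}^2}, \nu)$ is then a $\mu$-boundary; the content of the theorem is that it is the \emph{maximal} one, i.e. the full Poisson boundary.

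The key steps, in order, would be: (1) Establish convergence to the boundary and the $\mu$-stationarity of $\nu$; this follows from the hyperbolicity of $\mathbb{H}_{\mathbb{P}^2}$ together with non-elementarity, exactly as in the general theory, using the finite logarithmic moment to control excursions (via a Borel–Cantelli / subadditivity argument on the Gromov products). (2) Verify Kaimanovich's strip criterion: for $\nu\times\check\nu$-almost every pair of boundary points $(\xi^+,\xi^-)$ — where $\check\nu$ is the hitting measure of the reflected walk — one must produce a "strip" $S(\xi^-,\xi^+)\subseteq G$ of sub-exponentially growing size (measured in the word metric or, here, along the sample path) that is invariant in the appropriate sense, such that the sample path $w_n$ stays within bounded $\mathbb{H}_{\mathbb{P}^2}$-distance of $g S(\xi^-,\xi^+)$. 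The natural candidate is the set of group elements moving a fixed basepoint $o$ within bounded distance of a geodesic (or quasigeodesic) from $\xi^-$ to $\xi^+$. (3) The crucial point where WPD enters: one must bound the \emph{number} of group elements $g$ with $d(o, g o) \le r$ that lie near a fixed geodesic — in a non-proper space this can fail catastrophically (the orbit near a point is infinite), but the WPD property of a loxodromic element, combined with an argument that the random geodesic $[\xi^-,\xi^+]$ fellow-travels the axis of a WPD element for a definite proportion of its length (this uses that WPD loxodromics are "generic" for the stationary measure, or a closing-lemma type statement), forces the strip to be finite, and in fact of size controlled by the acylindricity-type constants. Then finite entropy and the logarithmic moment give the sub-exponential growth bound $\tfrac1n\log\#\,(S\cap \text{ball of radius growing linearly in }n)\to 0$, completing Kaimanovich's criterion. (4) Conclude that $(\partial\mathbb{H}_{\mathbb{P}^2},\nu)$ is the Poisson boundary, and note that the finite entropy hypothesis also yields, via the entropy criterion, that it has the right asymptotic entropy.

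The main obstacle — and the place where this theorem genuinely goes beyond the proper case — is step (3): controlling the cardinality of the strips in a non-proper hyperbolic space. In a proper space, local finiteness of the orbit makes the strip automatically finite; here one has no such luxury, and the entire weight of the argument rests on leveraging the single WPD element. Concretely, the difficulty is showing that a $\nu\times\check\nu$-random bi-infinite geodesic spends a linear amount of its length shadowing the axis of \emph{some} conjugate of a fixed WPD element $h\in\Gamma_\mu$ — so that the WPD acylindricity bound along that axis caps the number of candidate elements. This should follow from the fact that loxodromic WPD elements are dense in the boundary action and that the stationary measure gives positive mass to the corresponding shadow sets, together with an ergodic-theoretic (Kac-type return-time) argument along the sample path; but making this quantitative, with the bounds depending only on $\mu$'s moments and the WPD function of $h$, is the technical heart of the proof. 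Everything else — stationarity, convergence, the entropy estimate — is by now standard once the strip criterion's hypotheses are in place.
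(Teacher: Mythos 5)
Your overall skeleton---almost sure convergence to the boundary, Kaimanovich's strip criterion, WPD as the substitute for properness, and the finite logarithmic moment to make the strip count subexponential along the sample path---is the same as the paper's. But there is a genuine gap at exactly the step you identify as the heart of the matter (your step (3)): with your strip, namely all $g$ whose orbit point $go$ lies within bounded distance of a geodesic from $\xi^-$ to $\xi^+$, the WPD hypothesis gives no bound on the intersection of the strip with balls. WPD bounds the number of elements that coarsely stabilize a \emph{pair} of far-apart points along the axis of the WPD element; an element of your strip satisfies only a one-point condition, so the difference $g_2 g_1^{-1}$ of two strip elements with nearby orbit points coarsely stabilizes just the single point $g_1 o$, and no finiteness follows. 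Concretely, in the Cremona group the stabilizer of the basepoint $[H] \in \mathbb{H}_{\mathbb{P}^2}$ contains all of $\mathrm{PGL}_3(\mathbb{C})$, and nothing in the hypotheses prevents $\Gamma_\mu$ from containing infinitely many elliptic elements (almost) fixing the basepoint; composing one strip element with these produces infinitely many strip elements of the same displacement, so your strip violates the criterion whenever it is non-empty. Your proposed remedy---showing that the random bi-infinite geodesic fellow-travels a conjugate axis of the WPD element for a linear proportion of its length---constrains the geodesic, not the strip elements, so it does not repair this.

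The paper fixes the definition of the strip rather than the geodesic: the strip is the set of \emph{bounded geometry elements} $\mathcal{O}(\alpha,\beta)$, i.e.\ those $g$ with $\alpha \in \overline{S_{gvx}(gx,K)}$ and $\beta \in \overline{S_{gx}(gvx,K)}$, where $v = h^M$ is a sufficiently high power of the WPD element $h$. This builds a two-point condition into membership: if $g_1, g_2$ are two strip elements in a common ball of radius $4K$, then $f = g_2 g_1^{-1}$ coarsely stabilizes both $g_1 x$ and $g_1 v x$ (the quasigeodesic $[\alpha,\beta]$ and its $f$-translate have endpoints in the same shadows, hence fellow-travel along the segment $[g_1 x, g_1 v x]$), and Lemma \ref{L:WPD} bounds the number of possible $f$, giving the linear growth of the strip in balls (Proposition \ref{P:bound-geom}). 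Non-emptiness of the strip is then a soft ergodic-theorem argument using positivity of shadow measures (Proposition \ref{P:positive}); no quantitative ``linear shadowing'' estimate is needed for the Poisson boundary (such matching estimates appear in the paper only later, for asymptotic acylindricality and the normal closure results). With the strip corrected in this way, your steps (1), (2) and (4), together with the logarithmic moment bound $\frac{1}{n}\log d(x, w_n x) \to 0$ and the finite entropy hypothesis of Kaimanovich's theorem, go through as you describe.
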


Note that  for simplicity we deal with the Cremona group over $\mathbb{C}$, but Theorems \ref{T:Cre1}, \ref{T:Cre2}, and \ref{T:Poisson} are still true (and with the same proofs) for the Cremona group over any algebraically closed field $k$. 

\subsection{General setup. WPD actions}

We will actually prove our results on the Cremona group under the more general framework of groups of isometries of non-proper 
hyperbolic spaces. 

Recall that a metric space $(X, d)$ is \emph{$\delta$-hyperbolic} if geodesic triangles are $\delta$-thin, 
and is \emph{proper} if closed balls are compact.
Let us consider a group $G$ acting by isometries on $X$. 
 
If the group $G$ is not hyperbolic, then it cannot admit a proper, cocompact action on a hyperbolic metric space, but there are many interesting actions on \emph{non-proper} hyperbolic metric spaces. 
Notable examples include \emph{relatively hyperbolic groups} which act on the coned-off Cayley graph (\cite{farb}, \cite{osin}); 
\emph{right-angled Artin groups}, acting on the extension graph \cite{kim-koberda2}; the \emph{mapping class group} of a surface, which acts on the \emph{curve complex} (\cite{mm1}, \cite{bow}); and  the group $\textup{Out} (F_n)$ of outer automorphisms of the free group (\cite{bestvina-feighn}, \cite{HM13}).

Recall that a $\delta$-hyperbolic space $X$ is equipped with the \emph{Gromov boundary} $\partial X$ given by asymptote classes of quasigeodesic rays. 
Under mild conditions on $\mu$, we proved in \cite{MT} that almost every sample path $(w_n x)$ converges to a point on the boundary
$\partial X$, and that the random walk has positive drift.

Since the spaces on which $G$ acts are not proper, some weak notion of properness is still needed in order to be able to extract information 
on the group from the action, and several candidate notions have been proposed in the last two decades. 

First of all, following \cite{sela}, \cite{bow}, \cite{osin}, the
action of a group $G$ on $X$ is \emph{acylindrical} if for any two
points $x, y$ in $X$ which are sufficiently far apart, the set of
group elements which coarsely fixes both $x$ and $y$ has bounded
cardinality.  More precisely, given a constant $K \ge 0$, we define
the \emph{joint coarse stabilizer} of $x$ and $y$ as
$$\textup{Stab}_K(x, y) := \{ g \in G \ : \ d(x, g x) \leq K \textup{
  and }d(y, g y) \leq K \}.$$
Then the action of $G$ on $X$ is acylindrical if for any $K \ge 0$,
there are constants $R(K)$ and $N(K)$ such that for all points $x$ and
$y$ in $X$ with $d(x, y) \ge R(K)$, we have the following bound (where
$\setnorm A$ is the cardinality of $A$):
\begin{equation}
\setnorm{ \stab_K(x, y) } \leq N(K).
\end{equation}
This condition is quite useful, and it is verified in certain important cases (e.g. the action of the mapping class group on the curve complex 
\cite{bow}, 
or the action of a RAAG on its extension graph \cite{kim-koberda2}). 

However, there are several interesting actions of groups on hyperbolic spaces which are not acylindrical; in particular, certain actions of 
$\textup{Out}(F_n)$ and of the Cremona group. 
For this reason, in this paper we will consider group actions which satisfy the \emph{weak proper discontinuity (WPD)} property, 
a weaker notion introduced by Bestvina and Fujiwara \cite{bestvina-fujiwara} in the context of mapping class groups.
Intuitively, an element is WPD if it acts properly on its axis.
In formulas, an element $g \in G$ is \emph{WPD} if for any $x \in X$ and any $K \geq 0$ there exists $N > 0$ such that 
\begin{equation}
\setnorm{ \stab_K(x, g^N x) } < + \infty.
\end{equation}
In other words, the finiteness condition is not required of all pairs of points in the space, but only of points along the axis of a given loxodromic element.

Let $\mu$ be a probability measure on the group $G$. We say that $\mu$ is \emph{countable} if the support of $\mu$ is countable, 
and we denote as $\Gamma_\mu$ the semigroup generated by the support of $\mu$. 
In this paper we show that as long as the semigroup $\Gamma_\mu$ contains \emph{at least one} WPD element, then 
generic elements have all the properness properties one could wish for. In particular, one can identify the Poisson boundary and study the normal 
closure of random elements.
As an application, we will use this condition to derive results on the Cremona group. 

Note that the action of the Cremona group on the infinite-dimensional hyperbolic space is not acylindrical, but WPD elements 
actually exist: in particular, by Shepherd-Barron \cite{SB}, a loxodromic map is WPD if and only if it is not conjugate to a monomial map (see also \cite{Ur}). 
Moreover, by (\cite{lonjou}, Proposition 4), for each $n \geq 2$,
the transformation given in affine coordinates by
$(x, y) \mapsto (y, y^n - x)$ is WPD.  

A related notion to WPD is the notion of \emph{tight} element from \cite{CL}. In fact, in order to produce new normal subgroups, 
Cantat and Lamy take the normal closure of tight elements. 
Let us note that in the Cremona group, centralizers of loxodromic elements are virtually cyclic; as a consequence, 
if an element is tight then it is also WPD. 

\subsection{Normal closure} \label{S:norma}

Let us now formulate our theorem on the normal closure for general WPD actions on a hyperbolic space.
In order to state the theorem, we need some assumptions. We call a measure $\mu$  \emph{reversible} if the semigroup $\Gamma_\mu$ 
generated by the support of $\mu$ is indeed a group. 
This condition is satisfied e.g. when the support of $\mu$ is closed under taking inverses. 
A measure $\mu$ on $G$ is \emph{admissible} with respect to an action on $X$ if it is countable, non-elementary, reversible, bounded, and WPD.

Given a subgroup $H < G$, we define its \emph{injectivity radius} as 
$$\textup{inj}(H) := \inf_{\stackrel{g \in H \setminus \{1 \}}{x \in X}} d(x, gx).$$

We prove that the injectivity radius of the normal closure of a random element is almost surely unbounded, 
and taking the normal closure of random elements yields many different normal subgroups. 

To be precise, let us denote as $\Lambda_\mu \subseteq \partial X$ the limit set of the group $\Gamma_\mu$, and 
$E_\mu := \{ g \in G \ :  \ gx = x \textup{ for all }x \in \Lambda_\mu \}$ the pointwise stabilizer of $\Lambda_\mu$. 
Note that if $G = \Gamma_\mu$, then $E_\mu = E(G)$ is the maximal finite normal subgroup of $G$ (i.e., the largest finite subgroup of $G$ which is normal: that such a subgroup exists is a consequence of the WPD property). 

Since $E_\mu$ is normal in $\Gamma_\mu$, conjugacy yields a 
homomorphism 
$$\Gamma_\mu \to \textup{Aut }E_\mu.$$
Let us denote as $H_\mu$ the image of $\Gamma_\mu$ in $\textup{Aut }E_\mu$. 
Then the \emph{characteristic index} $k(\mu)$ of $\mu$ is the cardinality of $H_\mu$. 

\begin{theorem}(Abundance of normal subgroups.)  \label{T:inj-intro}
Let $G$ be a group acting on a  Gromov hyperbolic space $X$,
and let $\mu$ be an admissible probability measure on $G$.  Let $k = k(\mu)$ be the characteristic index of $\mu$. 
Then, if we consider the normal closure
$N_n(\omega) := \langle \langle w_n^k \rangle \rangle$, we have:
\begin{enumerate}
\item for any $R > 0$, the probability that $\textup{inj}(N_n) \geq R$ tends to $1$ as $n \to \infty$; 
\item for almost every sample path $\omega$, the sequence 
$$( N_1(\omega), N_2(\omega), \dots, N_n(\omega), \dots )$$
contains infinitely many different normal subgroups of $G$.
\end{enumerate}
\end{theorem}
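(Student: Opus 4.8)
The two conclusions are really consequences of one quantitative input: that $w_n^k$ is, with high probability, a \emph{loxodromic WPD element whose translation length and ``fellow-travelling constant'' are both large}, together with a small-cancellation / Dahmani–Guirardel–Osin–type theorem for such elements. So the proof splits into (a) a probabilistic step producing good axes, and (b) a geometric step converting a good axis into control of $\langle\langle w_n^k\rangle\rangle$.

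For step (a) I would argue as follows. By \cite{MT} and Theorem~\ref{T:Cre1} (applied in the abstract WPD setting), for admissible $\mu$ almost every sample path converges to $\Lambda_\mu$ with positive drift $L>0$, so $d(x,w_n x)\ge \tfrac{L}{2}n$ eventually, and moreover $w_n$ is eventually loxodromic with translation length $\tau(w_n)\to\infty$ at linear rate. One also needs that $w_n$ ``tracks a geodesic well'': the Gromov products $(w_n^{-1}x\cdot w_n x)_x$ stay bounded (this is where non-elementarity and the fact that the forward and backward limit points of $w_n$ converge to independent points of $\Lambda_\mu$ enter), so the axis of $w_n$ passes within bounded distance of $x$. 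The role of the exponent $k=k(\mu)$ is to kill the obstruction coming from $E_\mu$: passing from $w_n$ to $w_n^k$ guarantees that $w_n^k$ commutes with nothing outside the ``expected'' elementary subgroup $E(w_n^k)=\langle w_n^k\rangle\times E_\mu$ (more precisely, that $w_n^k$ acts on its axis with the WPD constants behaving well), since $H_\mu$ has order $k$. I would then invoke the WPD property of $w_n$ (which holds for all large $n$ because, by hypothesis, $\Gamma_\mu$ contains \emph{some} WPD element $h$, and a Baire/recurrence argument — or the explicit genericity statements for WPD elements along random walks — shows $w_n$ is WPD with probability $\to 1$; alternatively one restricts to the event that $w_n$ is conjugate into a neighbourhood of $h$'s axis). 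The upshot: for any constants $A,B$, with probability $\to 1$ the element $g=w_n^k$ is WPD, has $\tau(g)\ge A$, and its ``injectivity-type'' constants exceed $B$.

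For step (b) I would apply the structure theorem for normal closures of WPD elements with large parameters — this is the analogue, in the WPD setting, of the Dahmani–Guirardel–Osin result on hyperbolically embedded subgroups, or equivalently Cantat–Lamy's ``tight element'' small-cancellation theorem reproved abstractly. It says: there is a function $B_0(\cdot)$ such that if $g$ is WPD with suitable parameters $\ge B_0$, then $\langle\langle g\rangle\rangle$ is a free product of conjugates of $\langle g\rangle$ (in particular $\langle\langle g\rangle\rangle\cap E(g)=\langle g\rangle$), and moreover $\mathrm{inj}(\langle\langle g\rangle\rangle)$ is bounded below by (a constant times) $\tau(g)$. Granting this, conclusion (1) is immediate: choose $g=w_n^k$ with $\tau(g)\ge R/c$; conclusion (2) follows because $\mathrm{inj}(N_n(\omega))\to\infty$ along almost every path forces the $N_n$ to be pairwise distinct for a subsequence (if $N_n=N_m$ with $n<m$, then $\mathrm{inj}(N_n)=\mathrm{inj}(N_m)$), hence infinitely many distinct subgroups appear. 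I would spell out this last implication carefully: fix $\omega$ in the full-measure set where $\tau(w_n)\to\infty$ and $w_n$ is eventually WPD; then $\mathrm{inj}(N_n(\omega))\ge c\,\tau(w_n^k)=ck\,\tau(w_n)\to\infty$, and a sequence of subgroups whose injectivity radii are unbounded cannot be eventually constant, so it contains infinitely many distinct terms.

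The main obstacle, I expect, is step (b) in the \emph{non-proper} WPD setting: the classical small-cancellation/HNN arguments are usually stated for acylindrical actions or for hyperbolically embedded subgroups, and one must check that a single WPD element generates a subgroup $\langle g\rangle$ that is hyperbolically embedded in $\langle\langle g\rangle\rangle$ (or directly run the geometric ``windmill'' argument) using only the WPD hypothesis and large translation length, without properness of $X$. A secondary technical point is the uniformity in step (a): one needs the good-axis constants for $w_n^k$ to grow (not merely to be finite) with probability tending to $1$, which requires combining the linear drift with a decay estimate on the probability that $w_n$ has a short ``backtracking'' or small Gromov product — quantitative deviation estimates of the type one gets from the exponential decay of the probability that the random walk stays in a bounded set. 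I would isolate these two technical inputs as lemmas (one probabilistic, one geometric) and then the theorem follows by the assembly described above.
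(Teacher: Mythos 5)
There is a genuine gap at the heart of your step (b). You treat as a black box the claim that a WPD element with large translation length (after passing to the $k$-th power) automatically satisfies a DGO/Cantat--Lamy--type small-cancellation theorem, so that $\langle\langle w_n^k\rangle\rangle$ is a free product of conjugates of $\langle w_n^k\rangle$ with controlled injectivity radius. This is false as stated: the small-cancellation hypothesis one must verify is not ``WPD and $\tau$ large'' but the \emph{tightness ratio} condition $\Delta(w_n)\le \epsilon\,\tau(w_n)$, where $\Delta(w_n)$ is the maximal overlap between $\mathrm{Ax}(w_n)$ and its translates $h\,\mathrm{Ax}(w_n)$ for $h\notin E_G(w_n)$. (Your plan even asks that the translation length and the fellow-travelling constant be ``both large,'' which is the wrong direction for the latter.) For a \emph{fixed} WPD element $g$ one can make the ratio small by raising $g$ to a high power, since $\Delta(g^m)$ stays bounded while $\tau(g^m)\to\infty$ — that is the setting of DGO and of Cantat--Lamy's tight elements — but here the exponent $k$ is fixed (its only role is to make $\varphi(w_n^k)=1$ in $\mathrm{Aut}\,E_\mu$, so that conjugates by $E_G(w_n)$ do not produce distinct members of the family sharing an axis), so nothing forces $\Delta(w_n)$ to be small relative to $\tau(w_n)$ a priori. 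Establishing $\P(\Delta(w_n)\ge \epsilon\ell n)\le Bc^{\sqrt n}$ is precisely the technical core of the paper (Proposition \ref{P:Delta-small}), and it rests on the machinery you do not invoke: asymptotic acylindricality, the non-matching and self-matching estimates (Propositions \ref{P:non-match}, \ref{P:self-match}), genericity of asymmetric elements (Proposition \ref{P:asymmetric}), and the identification $E_G(w_n)=E_G^+(w_n)=\langle w_n\rangle\ltimes E_\mu$ (Corollary \ref{C:Eplus}). Without this input your appeal to a rotating-family/small-cancellation theorem has no verified hypothesis, and your "main obstacle" (hyperbolic embeddedness in the non-proper setting) is not actually the obstruction — DGO's results already apply to WPD elements; the obstruction is the probabilistic control of $\Delta(w_n)/\tau(w_n)$.

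Two smaller points. First, your injectivity-radius bound $\mathrm{inj}(\langle\langle w_n^k\rangle\rangle)\ge c\,\tau(w_n^k)$ with a uniform constant $c$ is stronger than what the rotating-family theorem yields as used in the paper: there one fixes $R$, chooses the rotation parameter $\alpha$ (hence the small-cancellation constants $(A,\epsilon)$) in terms of $R$, and concludes $\P(\mathrm{inj}(N_n)\ge R)\ge 1-Bc^{\sqrt n}$ with constants depending on $R$; elements of $N_n$ not conjugate into $\langle w_n^k\rangle$ are controlled via their translation length in the coned-off space and a Lipschitz comparison. Second, for part (2) you claim $\mathrm{inj}(N_n)\to\infty$ almost surely; the paper deliberately proves less (it is posed as an open question in the introduction at this level of rate) and instead deduces (2) from the convergence-in-probability statement (1) by a soft argument: if $\sup_n\mathrm{inj}(N_n)\le M$ on a set of positive measure, this contradicts $\P(\mathrm{inj}(N_n)\ge M+1)\to 1$, so $\limsup_n\mathrm{inj}(N_n)=\infty$ almost surely, which already forces infinitely many distinct subgroups. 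Your deterministic-looking derivation of the almost-sure statement leans on the unjustified bound above; it could be repaired by Borel--Cantelli over a countable family of radii using the $1-Bc^{\sqrt n}$ estimates, but as written it inherits the main gap.
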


The characteristic index also determines the structure of the normal closure of a random element, in particular whether it is free.

\begin{theorem}(Structure of the normal closure.)  \label{T:normal-intro} 
Let $G$ be a group acting on a Gromov hyperbolic space $X$, and let $\mu$ be an admissible probability measure on $G$
with characteristic index $k(\mu)$. Then:
\begin{enumerate}
\item the probability that the normal closure $\langle \langle w_n \rangle \rangle$ of $w_n$ in $G$ is free satisfies
$$\mathbb{P}(\langle \langle w_n \rangle \rangle \textup{ is free}) \to \frac{1}{k(\mu)}$$
as $n \to \infty$.
\item
Moreover, if $k = k(\mu)$, then 
$$\mathbb{P}(\langle \langle w_n^k \rangle \rangle \textup{ is free}) \to 1$$
as $n \to \infty$, and indeed there exist constant $B > 0, c< 1$ such that
$$\mathbb{P}(\langle \langle w_n^k \rangle \rangle \textup{ is free}) \geq 1 - B c^{n}$$
for any $n$.
\end{enumerate}
\end{theorem}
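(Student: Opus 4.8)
The plan is to reduce both parts to a single algebraic criterion for freeness of the normal closure of a ``generic'' WPD element, and then to feed in the probabilistic estimates; the freeness statements are really refinements of Theorem~\ref{T:inj-intro}, in that the windmill/rotating-family construction producing normal closures of large injectivity radius also exhibits them as free products, from which freeness can be read off. Let $\pi\colon \Gamma_\mu \to H_\mu \le \textup{Aut }E_\mu$ be the conjugation homomorphism, so $k = k(\mu) = |H_\mu|$ and $\ker\pi = C_{\Gamma_\mu}(E_\mu)$. For a loxodromic $g\in\Gamma_\mu$ let $E^+(g)\le G$ be the stabilizer of the two endpoints of the axis of $g$; since these lie in $\Lambda_\mu$ we have $\langle g\rangle\cdot E_\mu \subseteq E^+(g)$, and $E^+(g)$ is virtually cyclic because $g$ is WPD. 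Call $g$ \emph{generic} if it satisfies the hypotheses of the windmill construction of the earlier sections (in particular: WPD; translation length above the relevant threshold; $G$-translates of its axis overlapping it only on short segments unless moved by $E^+(g)$; and the finite part of $E^+(g)$ equal to $E_\mu$, so that $E^+(g) = \langle g_0\rangle\ltimes E_\mu$ with $g = g_0^{\,j}$ and $g_0$ not a proper power in $E^+(g)$). For generic $g$ that construction writes $\langle\langle g\rangle\rangle$ as a free product of conjugates of $N_g := \langle\langle g\rangle\rangle\cap E^+(g)$, and $N_g$ is a split extension of $\langle g\rangle$ by the finite group $F_g := \langle\langle g\rangle\rangle\cap E_\mu$, so $N_g\cong\langle g\rangle\ltimes F_g$. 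A free product is free iff all its factors are, and $\langle g\rangle\ltimes F_g$ is free iff $F_g=1$; an elementary computation inside the virtually cyclic group $E^+(g)$ shows $F_g=1$ exactly when $g$ centralizes $E_\mu$, i.e.\ when $\pi(g)=1$: if $\pi(g)=1$ then $g$ is central in $E^+(g)$, so $N_g=\langle g\rangle\cong\Z$ and the free product is free; if $\pi(g)\ne1$ then the commutator $[e,g]=ege^{-1}g^{-1}$ lies in $\langle\langle g\rangle\rangle$ and is a nontrivial element of $E_\mu$ for a suitable $e\in E_\mu$, so $F_g\ne1$. Thus \emph{for generic $g$, $\langle\langle g\rangle\rangle$ is free if and only if $\pi(g)=1$.}

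Given this criterion, the two parts follow from the quantitative estimates of the earlier sections, which yield constants $B>0$, $c<1$ with $\P(w_n\textup{ is not generic})\le Bc^{\sqrt n}$ for all $n$, and the same bound for $w_n^k$, since a power of a generic element is again generic (same axis, same $E^+$, larger translation length). For part (2): since $H_\mu$ has order $k$, Lagrange's theorem gives $\pi(w_n^k)=\pi(w_n)^k=1$, so whenever $w_n^k$ is generic its normal closure is free; hence $\P(\langle\langle w_n^k\rangle\rangle\textup{ is free})\ge 1-Bc^{\sqrt n}$, which is the claimed estimate and in particular tends to $1$. For part (1): on the generic event the events ``$\langle\langle w_n\rangle\rangle$ is free'' and ``$\pi(w_n)=1$'' coincide, so $\P(\langle\langle w_n\rangle\rangle\textup{ is free})$ and $\P(\pi(w_n)=1)$ differ by at most $\P(w_n\textup{ not generic})\le Bc^{\sqrt n}\to 0$. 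Finally $\pi(w_n)$ is the position after $n$ steps of the random walk on the finite group $H_\mu$ with step distribution $\pi_*\mu$; since $\textup{supp}(\pi_*\mu)$ generates $H_\mu$ this walk is irreducible, hence (in the aperiodic case) equidistributes, so $\P(\pi(w_n)=1)\to 1/|H_\mu|=1/k(\mu)$, whence $\P(\langle\langle w_n\rangle\rangle\textup{ is free})\to 1/k(\mu)$.

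The deduction above is short once the two main ingredients are in place; I expect the \emph{structural} one — the windmill presentation of $\langle\langle g\rangle\rangle$ as a free product of copies of $N_g$ in the merely WPD (non-acylindrical) setting, together with the uniform estimates needed to apply it to random elements — to be the principal obstacle, with the genericity bound $\P(w_n\textup{ not generic})\le Bc^{\sqrt n}$ the second. The remaining points needing care are the group theory inside the virtually cyclic group $E^+(g)$ that identifies $F_g$ (and hence pins down the exact constant $1/k$), the fact that the finite part of $E^+(w_n)$ is $E_\mu$ for generic $w_n$, and the possible periodicity of the induced walk on $H_\mu$ when making the limit in part (1) precise.
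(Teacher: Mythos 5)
Your top-level architecture is the same as the paper's: reduce freeness of $\langle\langle w_n\rangle\rangle$ to the condition $\varphi(w_n)=1$ for the conjugation homomorphism $\varphi\colon\Gamma_\mu\to\textup{Aut}\,E_\mu$, get part (2) from Lagrange's theorem, get the constant $1/k$ from equidistribution of the pushforward walk on the finite group $H_\mu$, and absorb the non-generic event into a $Bc^{\sqrt n}$ error; your non-freeness direction (the commutator $[e,w_n]$ is a nontrivial torsion element of $E_\mu$ inside the normal closure) is fine and needs no structure theorem. The genuine gap is the structural claim you rely on for the freeness direction: that for \emph{every} generic $g$ the windmill/rotating-family construction exhibits $\langle\langle g\rangle\rangle$ as a free product of conjugates of $N_g=\langle\langle g\rangle\rangle\cap E^+_G(g)\cong\langle g\rangle\ltimes F_g$. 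This is not what the machinery provides: when $F_g\neq 1$ the family of conjugates of $\langle g\rangle$ (or of $N_g$) contains distinct members with the \emph{same} axis (conjugates by elements of $E_\mu$), and $N_g$ contains nontrivial elliptic elements of bounded displacement along that axis, so both the $(A,\epsilon)$-small cancellation condition and the very-rotating condition of \cite{DGO} fail. Worse, the claim is false in general: take $G=S_3\times F_2$ acting on the Bass--Serre tree of $F_2$ (so $E_\mu=S_3$, $k(\mu)=6$) and $g=(\sigma,w)$ with $\sigma$ a transposition and $w$ a suitable primitive element; then $E^+_G(g)=\langle g\rangle\ltimes S_3$ and $N_g$ contains the subgroup $A_3\times 1$, which is normal in $G$, hence is contained in every conjugate of $N_g$; since distinct factors of a free product intersect trivially, a free product decomposition would force $\langle\langle g\rangle\rangle$ to be a single conjugate of $N_g$, i.e.\ virtually cyclic, which it is not (it surjects onto $\langle\langle w\rangle\rangle_{F_2}$).

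Even in the only case where you invoke the decomposition ($\pi(g)=1$, and $w_n^k$ in part (2)), your justification that $N_g=\langle g\rangle$ --- ``$g$ is central in $E^+(g)$'' --- does not by itself control $\langle\langle g\rangle\rangle\cap E^+_G(g)$; that equality is an output of the small-cancellation argument, not an input. What is actually needed, and what the paper proves, is that when $\varphi(w_n)=1$ the family $\{hw_nh^{-1}\}_{h\in G}$ of conjugates of the \emph{cyclic} group satisfies the $(A,\epsilon)$-small cancellation condition: by Lemma \ref{L:conj-E} any $h$ with $hw_nh^{-1}\in E_G(w_n)$ lies in $E_G(w_n)$, and by Lemma \ref{L:semidirect} centrality then forces $hw_nh^{-1}=w_n$, so distinct conjugates have small axis overlap by Proposition \ref{P:Delta-small}; together with $\tau(w_n)\geq \ell n/2$ and Corollary \ref{C:Eplus} this makes the family $\alpha$-rotating, and \cite[Corollary 5.4]{DGO} gives a free product of conjugates of $\langle w_n\rangle$, hence freeness. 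With that verification inserted (and the non-free case handled by your torsion argument, as in the paper), the probabilistic endgame you describe goes through exactly as in Theorem \ref{T:inj-later}.
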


Moreover, as a corollary of Theorem \ref{T:normal-intro},  
the probability that the normal closure of a random element is free detects the following algebraic property of the group:

\begin{corollary}
Let $G$ be a group acting on a Gromov hyperbolic space $X$, and let $\mu$ be an admissible probability measure on $G$.
If $\Gamma_\mu = G$, then 
$$\mathbb{P}\left( \langle \langle w_n \rangle \rangle \textup{ is free}\right) \to 1\qquad \textup{as }n \to \infty$$
if and only if the maximal finite normal subgroup $E(G)$ equals the center $Z(G)$.
\end{corollary}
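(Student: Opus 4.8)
\emph{Proof proposal.} The plan is to read everything off Theorem~\ref{T:normal-intro}(1). That theorem gives $\mathbb{P}(\langle\langle w_n\rangle\rangle \textup{ is free}) \to 1/k(\mu)$, and since $k(\mu) = |H_\mu|$ is the order of a group, hence a positive integer, the limit equals $1$ if and only if $k(\mu) = 1$. So the corollary is equivalent to the purely algebraic statement: assuming $\Gamma_\mu = G$, one has $k(\mu) = 1 \iff E(G) = Z(G)$. To prove this I would first unwind the characteristic index. Since $\Gamma_\mu = G$, the group $E_\mu$ is exactly the maximal finite normal subgroup $E(G)$, and $H_\mu$ is by definition the image of the conjugation homomorphism $c : G \to \textup{Aut}\, E(G)$, with $k(\mu) = |H_\mu|$. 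Hence $k(\mu) = 1$ if and only if $c$ is trivial, i.e.\ if and only if every element of $G$ centralizes $E(G)$, i.e.\ if and only if $E(G) \subseteq Z(G)$.

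The second step is to show that the reverse inclusion $Z(G) \subseteq E(G)$ holds automatically in this setting. Let $z \in Z(G)$. For any loxodromic $g \in G$, centrality gives $z g z^{-1} = g$, so $z$ carries the attracting (resp.\ repelling) fixed point of $g$ on $\partial X$ to that of $zgz^{-1} = g$; equivalently, for a basepoint $x \in X$ one has $z\,g^{\pm n} x = g^{\pm n}(zx) \to g^{\pm}$ as $n \to \infty$ since $zx \in X$, while $z\,g^{\pm n}x \to z g^{\pm}$ by continuity of the boundary extension of $z$. Thus $z$ fixes both endpoints of the axis of every loxodromic element of $\Gamma_\mu = G$. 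Because $\mu$ is non-elementary, the set of such fixed points is dense in $\Lambda_\mu$, and $z$ acts as a homeomorphism of $\partial X$; therefore $z$ fixes $\Lambda_\mu$ pointwise, i.e.\ $z \in E_\mu = E(G)$.

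Combining the two steps finishes the proof: if $k(\mu) = 1$ then $E(G) \subseteq Z(G)$ by Step~1 and $Z(G) \subseteq E(G)$ by Step~2, so $E(G) = Z(G)$; conversely $E(G) = Z(G)$ gives in particular $E(G) \subseteq Z(G)$, hence $k(\mu) = 1$ by Step~1. Together with the reduction in the first paragraph, this proves $\mathbb{P}(\langle\langle w_n\rangle\rangle \textup{ is free}) \to 1 \iff E(G) = Z(G)$.

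The only point requiring genuine (if brief) work is Step~2 — that a central isometry must fix the entire limit set — which rests on the standard fact that the fixed points of loxodromic elements of a non-elementary group are dense in its limit set, available for these possibly non-proper hyperbolic actions from \cite{MT}; the finiteness of $E(G)$ coming from the WPD hypothesis is what makes $E(G)$ genuinely \emph{the} maximal finite normal subgroup, but is not otherwise needed. Everything else is bookkeeping with the definition of $k(\mu)$.
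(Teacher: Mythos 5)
Your proposal is correct and takes essentially the same route as the paper: the reduction to $k(\mu)=1$ via Theorem \ref{T:normal-intro}, plus your two algebraic steps, is exactly the content of Lemma \ref{L:EZ} ($H_\mu$ trivial if and only if $E_\mu = Z(\Gamma_\mu)$), which the paper invokes inside the proof of Theorem \ref{T:inj-later}. Your Step 2 matches the paper's argument that a central element lies in $E_G^+(h)$ for every loxodromic WPD $h$ (with that definition of $E_\mu$ the density-in-$\Lambda_\mu$ detour is not even needed), and if anything you are slightly more careful than the paper about fixing the endpoint pair pointwise rather than setwise.
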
 

In particular, we will show later that this is the case for mapping class groups.

\begin{remark} \label{R:prim}
Let us note that it is not hard (e.g. in the Cremona group) to choose a measure $\mu$ such that $\Gamma_\mu$ is primitive, i.e. $k(\mu) = 1$. 
Indeed, let $f$ be a loxodromic WPD element. 
Let us now pick $g \notin E^+(f) = \textup{Stab}_G(\textup{Fix}(f))$. Then $E := E^+(f) \cap E^+(gfg^{-1})$ is a finite group. For each $g_i \in E$, the
set $\textup{Fix}(g_i)$ of fixed points of $g_i$ on the boundary of $\mathbb{H}_{\mathbb{P}^2}$ 
has codimension at least $1$ in $\partial \mathbb{H}_{\mathbb{P}^2}$. 
Now, pick a loxodromic $h$ such that $\textup{Fix}(h) \cap \cup_{i = 1}^r  \textup{Fix}(g_i) = \emptyset$. 
Then the group $\Gamma := \langle f, g, h\rangle$ is primitive. 
\end{remark}

\subsection{The Poisson boundary} \label{S:Poiss}

The well-known \emph{Poisson representation formula} expresses a duality between bounded harmonic functions on the unit disk and 
bounded functions on its boundary circle. Indeed, bounded harmonic functions admit radial limit values almost surely, while integrating a boundary function against the Poisson kernel gives a harmonic function on the interior of the disk. 

This picture is intimately connected with the geometry of $SL_2(\mathbb{R})$; then in the 1960's Furstenberg and others extended this duality 
to more general groups. In particular, let $G$ be a countable group of isometries of a Riemannian manifold $X$, and let us consider a 
probability measure $\mu$ on $G$. One defines $\mu$-harmonic functions as functions on $G$ which satisfy the mean value property 
with respect to averaging using $\mu$; in formulas $f : G \to \mathbb{R}$ is $\mu$-harmonic if 
$$f(g) = \sum_{h \in G} f(gh) \ \mu(h) \qquad \forall g \in G.$$
Following Furstenberg \cite{Furstenberg}, a measure space $(M, \nu)$ on which $G$ acts is then a boundary if there is a duality between bounded, $\mu$-harmonic functions on $G$ and $L^\infty$ functions on $M$. 

A related way to interpret this duality is by looking at random walks on $G$. In many situations, (e.g. when $X$ is hyperbolic) the space $X$ is equipped naturally with a topological boundary $\partial X$, and almost every sample path $(w_n x)$ converges to some point on the boundary of $X$. 
Hence, one can define the \emph{hitting measure} of the random walk 
as the measure $\nu$ on $\partial X$ given on a subset $A \subseteq \partial X$ by  
$$\nu(A) := \mathbb{P}\left(\lim_{n \to \infty} w_n x \in A \right).$$
A fundamental question in the field is then whether the pair $(\partial X, \nu)$ equals indeed the Poisson boundary of the random walk $(G, \mu)$, 
i.e. if all harmonic functions on $G$ can be obtained by integrating a bounded, measurable function on $\partial X$. 

In the proper case, the classical criteria in order to identify the Poisson boundary
can be applied and one gets that the Gromov boundary $(\partial X, \nu)$ with the hitting measure is a model for the Poisson boundary. 
In the non-proper case, the classical entropy criterion is not expected to work, as there may be infinitely many group 
elements contained in a ball of fixed diameter. 

We prove, however, that as long as $\Gamma_\mu$ contains a WPD element, the Poisson boundary indeed coincides with the Gromov boundary. 

\begin{theorem}(Poisson boundary for WPD actions.) \label{T:Poiss-WPD}
Let $G$ be a countable group which acts by isometries on a $\delta$-hyperbolic metric space $(X, d)$, and let $\mu$ be a non-elementary probability measure on $G$ with finite logarithmic moment and finite entropy.  Suppose that there exists at least one $WPD$ element $h$ in the semigroup generated by the support of $\mu$.  Then the Gromov boundary of $X$ with the hitting measure is a model for the Poisson boundary of the random walk $(G, \mu)$.
\end{theorem}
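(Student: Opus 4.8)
The plan is to identify $(\partial X,\nu)$, where $\nu$ is the hitting measure, with the Poisson boundary by means of the strip criterion of Kaimanovich; the two hypotheses on $\mu$ in the statement --- finite entropy $H(\mu)<+\infty$ and finite logarithmic moment with respect to the gauge $|g|:=d(o,go)$ for a fixed basepoint $o\in X$ --- are precisely those required by that criterion. First, by \cite{MT}, almost every sample path of the walk driven by $\mu$, and of its reflection $\check\mu$, converges to a point of $\partial X$, so the forward and backward hitting measures $\nu,\rnu$ on $\partial X$ are defined, and, $\mu$ being non-elementary, they are non-atomic; thus $(\partial X,\nu)$ is a $\mu$-boundary, $(\partial X,\rnu)$ is a $\check\mu$-boundary, and for $\rnu\otimes\nu$-almost every pair $(\xi_-,\xi_+)$ the two points are distinct and are joined by a bi-infinite geodesic $\gamma=\gamma_{\xi_-,\xi_+}$, which can be selected measurably and $G$-equivariantly up to bounded error. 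By the strip criterion it then suffices to produce a measurable, $G$-equivariant assignment of non-empty subsets $S(\xi_-,\xi_+)\subseteq G$ with
$$\frac{1}{n}\log\#\{\,g\in S(\xi_-,\xi_+)\ :\ |g|\le|w_n|\,\}\longrightarrow 0\quad\text{almost surely};$$
since the finite logarithmic moment forces $\tfrac{1}{n}\log|w_n|\to 0$ a.s., it is enough to make $\#\{g\in S(\xi_-,\xi_+):|g|\le R\}$ grow polynomially in $R$.

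The construction of $S$ is where the single WPD element $h\in\Gamma_\mu$ enters decisively; the naive candidate $\{g:go\in\mathcal{N}_C(\gamma)\}$ is worthless here, because $X$ is not proper and a single geodesic can be fellow-travelled by infinitely many orbit points even inside a fixed radius (for instance by powers of a Dehn twist along a geodesic of the curve complex). Instead, let $\gamma_h$ be a quasi-axis of $h$. A standard consequence of the WPD property is that the stabilizer $E(h)$ of the pair of endpoints of $\gamma_h$ is virtually cyclic, and the orbit map sends it coarsely onto a bi-infinite quasigeodesic. Next, using the pivoting technique for random walks on hyperbolic spaces (in the spirit of \cite{MT}), one shows that for almost every sample path the geodesic $\gamma_{\xi_-,\xi_+}$ contains infinitely many $G$-translates of segments of $\gamma_h$ of length at least any prescribed bound: the increment sequence spells a fixed large power $h^N$ over a block of consecutive steps infinitely often (by independence of disjoint blocks), each such occurrence is a pivot which, being strongly contracting, persists, and the corresponding translated axis segment is therefore visible along $\gamma$ and is a function of $(\xi_-,\xi_+)$ alone. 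Fixing a length threshold $L_0$ depending only on $\delta$ and $h$, declare $g\in S(\xi_-,\xi_+)$ when $g\gamma_h$ fellow-travels one of these translated axis segments of $\gamma$ over length at least $L_0$. This $S$ is $G$-equivariant, non-empty for $\rnu\otimes\nu$-a.e.\ pair, and, with some care, measurable.

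To bound the count, note that inside the ball of radius $R$ about $o$ the geodesic $\gamma$ carries at most $O(R)$ of these segments, since consecutive ones are separated and each has definite length. Fix one such segment $\sigma=q\cdot\gamma_h|_{[a,b]}$; if $g\gamma_h$ fellow-travels $\sigma$ over length at least $L_0$, then $q^{-1}g$ coarsely carries a long sub-segment of $\gamma_h$ into a bounded neighbourhood of $\gamma_h$, and, choosing $L_0$ large enough relative to the translation length of $h$ and to the power $N$ furnished by the WPD property at a point of $\gamma_h$, a suitable $h^{-m}q^{-1}g$ lies in a finite joint coarse stabilizer of two points of $\gamma_h$. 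Hence $g$ lies in one of finitely many translates of $qE(h)$; since $E(h)$ is virtually cyclic, $\#\{g\in qE(h):|g|\le R\}=O(R)$, and summing over the $O(R)$ relevant segments gives $\#\{g\in S(\xi_-,\xi_+):|g|\le R\}=O(R^2)$, which is polynomial. The strip criterion then yields that $(\partial X,\nu)$ is a model for the Poisson boundary of $(G,\mu)$.

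The main obstacle is exactly this middle step: making the strips simultaneously rich enough that $\rnu\otimes\nu$-almost every geodesic meets one (so that $S$ is non-empty) and thin enough that the count is polynomial, in a non-proper setting where a tubular neighbourhood of a geodesic may contain infinitely many group elements of bounded displacement. The two facts that rescue the argument are the permanence of the $h^N$-pivots along a generic sample path --- which makes the translated axis segments, and hence $S$, genuinely a function of the boundary pair rather than of the path --- and the virtual cyclicity of $E(h)$ forced by the WPD property --- which confines the fellow-travellers of each segment to a single virtually cyclic coset. Establishing the permanence of pivots for an arbitrary WPD action under only a finite logarithmic moment, and verifying measurability and equivariance of the strip assignment, are the points demanding the most care.
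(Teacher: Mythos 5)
Your overall route is the paper's route: Kaimanovich's strip criterion, with strips manufactured from translates of the axis of the single WPD element $h$, the WPD property supplying the finiteness, and the finite logarithmic moment closing the argument. But two steps, as you wrote them, have genuine gaps. First, the counting. Your membership condition (``$g\gamma_h$ fellow-travels some length-$L_0$ segment of $\gamma$'') places no constraint on where $go$ sits relative to the matched segment: an element with $d(o,go)\le R$ can have its match located arbitrarily far out along $\gamma$, with $go$ lying near a completely different portion of the bi-infinite quasigeodesic $g\gamma_h$. Hence the bound ``$O(R)$ segments inside $B(o,R)$'' does not bound $\#\{g\in S(\xi_-,\xi_+): |g|\le R\}$; you would need an additional hyperbolicity argument (a far-away match together with $d(o,go)\le R$ forces $g\gamma_h$ to fellow-travel $\gamma$ back to radius roughly $R$, so one may relocate the match) before the coset count applies. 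The paper sidesteps this by anchoring the condition at the orbit points themselves: with $v=h^M$, it takes the ``bounded geometry'' elements $g$ with $\alpha\in\overline{S_{gvx}(gx,K)}$ and $\beta\in\overline{S_{gx}(gvx,K)}$, so that $gx$ is forced to lie, in the correct order, within bounded distance of the quasigeodesic from $\alpha$ to $\beta$; then the WPD property gives a uniform bound $N$ on the number of such elements with $gx$ in any ball of radius $4K$ (two of them differ by an element of $\textup{Stab}_{22K}(gx,gvx)$), and the count in $B(x,r)$ is linear in $r$ — no passage through cosets of $E(h)$ is needed, although your virtually cyclic coset count is itself fine.

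Second, non-emptiness and well-definedness. As literally defined, your strip refers to ``these translated axis segments'' produced by pivots of the sample path, so it is a function of the path, not of $(\xi_-,\xi_+)$; the strip criterion requires a measurable $G$-equivariant map defined on $\partial X\times\partial X$ alone, and your appeal to ``permanence of pivots'' to repair this is asserted rather than proved (indeed you flag it yourself as the delicate point, and establishing a pivoting argument under only a finite logarithmic moment is a substantial task). The paper's argument is softer and avoids pivoting entirely: with the purely boundary-defined strips above, $\mathbb{P}(1\in\mathcal{O}(\omega))=\nu(\overline{S})\,\check\nu(\overline{S'})>0$ by positivity of shadow measures, and ergodicity of the shift on the step space then shows that almost surely $w_n\in\mathcal{O}(\omega)$ for a positive density of times $n$, so the strips are nonempty almost surely. (A minor further point: in a non-proper hyperbolic space two boundary points need not be joined by a genuine geodesic; one works with $(1,K_1)$-quasigeodesics throughout, as the paper does.)
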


The result extends our earlier result in \cite{MT} for acylindrical actions. 


\subsection{Mapping class groups}

Let $S_{g, n}$ be a topological surface with genus $g$ and $n$ punctures, and let $Mod(S_{g,n})$ be its mapping class group, 
i.e. the group of homeomorphisms of $S_{g, n}$, up to isotopy. 
The mapping class group acts on a locally infinite, $\delta$-hyperbolic graph, known as the \emph{curve complex} \cite{mm1}.
Loxodromic elements for this action are the pseudo-Anosov mapping classes, and as they are all WPD elements, all results in our paper apply. 

As an application of Theorem \ref{T:normal-intro}, we prove that the normal closure of random 
mapping classes is a free group, answering a question of Margalit \cite{margalit}*{Problem 10.11}.

\begin{theorem} \label{T:mcg}
Let $G = Mod(S_{g,n})$ be the mapping class group of a surface of finite type, and suppose that $G$ is infinite. 
Let $\mu$ be a probability measure on $G$ with bounded support in the curve complex and such that $\Gamma_\mu = G$, 
and let $w_n$ be the $n^{th}$ step of the random walk generated by $\mu$.
Then the probability that the normal closure $\langle \langle w_n \rangle \rangle$ is free tends to $1$ as $n \to \infty$, 
with exponential decay.
\end{theorem}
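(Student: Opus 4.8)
The plan is to deduce Theorem~\ref{T:mcg} as a direct corollary of Theorem~\ref{T:normal-intro}, so the work consists entirely in verifying that the hypotheses of that theorem hold and that the characteristic index $k(\mu)$ equals $1$. First I would observe that the curve complex $\mathcal{C}(S_{g,n})$ is $\delta$-hyperbolic by Masur--Minsky \cite{mm1}, that $G = \Mod(S_{g,n})$ acts on it by isometries, and that $G$ is countable; the loxodromic elements for this action are exactly the pseudo-Anosov classes, each of which is WPD by Bestvina--Fujiwara \cite{bestvina-fujiwara}. Since $G$ is assumed infinite, it contains pseudo-Anosov elements (e.g. by Thurston), indeed a non-elementary supply of them with disjoint fixed point sets on $\partial\mathcal{C}(S)$; because $\Gamma_\mu = G$ the measure $\mu$ is automatically non-elementary and WPD. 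The measure $\mu$ is countable and has bounded support in the curve complex by hypothesis, hence bounded; and since $\Gamma_\mu = G$ is a group, $\mu$ is reversible. Therefore $\mu$ is admissible in the sense of Section~\ref{S:norma}, and Theorem~\ref{T:normal-intro}(1) applies, giving $\mathbb{P}(\langle\langle w_n\rangle\rangle \text{ is free}) \to 1/k(\mu)$.

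The crux is then to show $k(\mu) = 1$, i.e. that $G$ is primitive in its action on the curve complex, equivalently that the pointwise stabilizer $E_\mu = E(G)$ of the limit set $\Lambda_\mu = \partial\mathcal{C}(S)$ is trivial. Here I would use that $E(G)$ is the maximal finite normal subgroup of $G$: for surfaces of finite type with negative Euler characteristic, it is classical (e.g. via the Nielsen realization / the fact that the mapping class group has a torsion-free finite-index subgroup, and that any nontrivial finite normal subgroup would have to be fixed elementwise and thus act trivially on the curve complex, forcing it to act trivially on all of Teichm\"uller space) that $\Mod(S_{g,n})$ has no nontrivial finite normal subgroup in essentially all cases; the only classical exceptions are the hyperelliptic involution in $\Mod(S_2)$ and a handful of low-complexity surfaces. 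So the careful step is handling those exceptional surfaces: for instance on $S_2$ one checks that the hyperelliptic involution $\iota$ does \emph{not} fix $\partial\mathcal{C}(S_2)$ pointwise (it acts nontrivially on the curve complex, since it does not fix every isotopy class of simple closed curve, or by noting it does not lie in $E_\mu$ because the limit set is all of $\partial\mathcal C$ and $\iota$ moves some pseudo-Anosov axis endpoint), so $E_\mu$ is still trivial. After dispensing with the finitely many small-complexity surfaces by hand, one concludes $E(G) = \{1\}$, hence $H_\mu$ is trivial and $k(\mu) = 1$.

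With $k(\mu) = 1$ established, Theorem~\ref{T:normal-intro}(1) immediately yields $\mathbb{P}(\langle\langle w_n\rangle\rangle \text{ is free}) \to 1$, which is precisely the statement of Theorem~\ref{T:mcg}; one may additionally invoke part (2) of that theorem to get the explicit rate $1 - Bc^{\sqrt{n}}$ if one wishes to strengthen the conclusion. I expect the main obstacle to be purely bookkeeping: carefully pinning down the list of finite-type surfaces for which $E(G)$ could a priori be nontrivial (those where the curve complex is still defined and hyperbolic, excluding the sporadic sphere-with-few-punctures and torus cases where $\mathcal{C}(S)$ is modified or empty), and verifying in each remaining case that the action on $\partial\mathcal{C}(S)$ has trivial pointwise stabilizer. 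The probabilistic and geometric heavy lifting is entirely contained in Theorem~\ref{T:normal-intro}, so no new estimates are needed here.
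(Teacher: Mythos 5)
Your overall strategy is the same as the paper's (reduce to Theorem \ref{T:normal-intro} and compute the characteristic index), and the verification that $\mu$ is admissible is fine. But there is a genuine error in the crux step. First, $k(\mu)=1$ is \emph{not} equivalent to $E_\mu$ being trivial: by Lemma \ref{L:EZ}, $k(\mu)=1$ exactly when $E_\mu=Z(\Gamma_\mu)$, i.e.\ when $E_\mu$ is central, so the conjugation homomorphism $\Gamma_\mu\to\textup{Aut}\,E_\mu$ is trivial. Second, and more seriously, your handling of the exceptional surfaces is false: for $S_{1,0}$, $S_{1,1}$, $S_{1,2}$ and $S_{2,0}$ the hyperelliptic involution $\iota$ is \emph{central} in $\Mod(S)$ and fixes the isotopy class of every simple closed curve, hence acts trivially on the curve complex and fixes $\partial\mathcal{C}(S)$ pointwise; equivalently, since $\iota$ commutes with every pseudo-Anosov $f$ it fixes both endpoints of the axis of $f$, and these are dense in $\Lambda_\mu$. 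So $\iota\in E_\mu$ and $E_\mu\supseteq Z(G)=\Z/2\Z\neq\{1\}$ in these cases; the claim that ``$\iota$ does not fix $\partial\mathcal{C}(S_2)$ pointwise'' and that $E_\mu$ is still trivial would fail, and with it your derivation of $k(\mu)=1$ for those surfaces.

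The conclusion is nevertheless correct, and the paper's route (Section \ref{S:mcg}) shows what is actually needed: one proves Proposition \ref{P:mcg}, that $E^+_G(G)$ \emph{equals the center} $Z(G)$ for every infinite mapping class group. When the center is trivial this is your Nielsen realization argument (a nontrivial finite subgroup has lower-dimensional fixed locus in Teichm\"uller space, hence is not normal). In the four exceptional cases the real work is the upper bound $E(G)\subseteq Z(G)$, i.e.\ showing $E(G)$ is no larger than $\Z/2\Z$; the paper does this by passing to the quotient by the hyperelliptic involution and identifying the resulting groups (e.g.\ $\Mod(S_{0,6})$ for $S_{2,0}$, an index-five subgroup of $\Mod(S_{0,5})$ for $S_{1,2}$, and $\textup{PSL}(2,\Z)$ for $S_{1,0},S_{1,1}$), each of which has trivial maximal finite normal subgroup. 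Once $E_\mu=E^+_G(G)=Z(G)$ is known, $E_\mu$ is central, so $H_\mu$ is trivial, $k(\mu)=1$, and Theorem \ref{T:normal-intro} (equivalently the corollary stating that the probability tends to $1$ if and only if $E(G)=Z(G)$, or Theorem \ref{T:inj-later}) gives the result, with the rate $1-Bc^{\sqrt{n}}$ as you note. So the missing ingredient in your write-up is precisely the statement $E_\mu=Z(G)$ (rather than $E_\mu=\{1\}$), together with the upper bound on $E(G)$ in the hyperelliptic cases.
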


The result follows from Theorem \ref{T:normal-intro} and the fact that, by the Nielsen realization theorem, the maximal normal subgroup 
of $Mod(S_{g, n})$ always equals its center (which is trivial unless the mapping class group contains a central hyperelliptic involution). See Section \ref{S:mcg} for details. 
Note that in fact the action is acylindrical \cite{bow}, hence some applications such as the Poisson boundary already 
follow from \cite{MT}.

\subsection{Outer automorphisms of the free group}

Another application of our setup is to the group $Out(F_n)$ of outer automorphisms of a finitely generated free group $F_n$
of rank $n \geq 2$. 

There are several hyperbolic graphs on which $Out(F_n)$ acts: the main two are the \emph{free factor complex} and the \emph{free splitting complex}. In particular, the free factor complex $\mathcal{FF}(F_n)$ is hyperbolic by work of Bestvina and Feighn \cite{bestvina-feighn}. Moreover, an element is loxodromic on $\mathcal{FF}(F_n)$ if and only if it is \emph{fully irreducible}, and all fully irreducible elements satisfy the WPD property.  However, it is not known whether the action of $Out(F_n)$ on the free factor complex is acylindrical.

On the other hand, the free splitting complex is also hyperbolic, but
the action on the free splitting complex $\mathcal{FS}(F_n)$ is known
not to be acylindrical, by work of Handel and Mosher \cite{HM13}.
Moreover, an element is loxodromic if and only if it admits a filling
lamination pair. This is a weaker condition than being fully
irreducible, and the stabilizer of a quasiaxis of a loxodromic
element need not be virtually cyclic. 

Thus, this is an example of an action for which 
not every loxodromic element satisfies the WPD property. 
However, by Theorem \ref{T:generic-WPD} even for this action
WPD elements are generic for the random walk.
Note that genericity of fully irreducible elements was already known by \cite{rivin2}. 

We have the following identification for the Poisson boundary of $Out(F_n)$.

\begin{theorem} \label{T:boundary-Out}
Let $\mu$ be a measure on $Out(F_n)$ such that the semigroup generated by the support of $\mu$ contains 
at least two independent fully irreducible elements. Moreover, suppose that $\mu$ has finite entropy and finite logarithmic moment 
for the simplicial metric on the free factor complex. 
Then the Gromov boundary of the free factor complex is a model for the Poisson boundary of $(G, \mu)$. 
\end{theorem}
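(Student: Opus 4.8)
The plan is to reduce this statement to the general Poisson boundary result, Theorem \ref{T:Poiss-WPD}, applied to the action of $Out(F_n)$ on the free factor complex $\mathcal{FF}(F_n)$. First I would recall that $\mathcal{FF}(F_n)$ is Gromov hyperbolic (Bestvina--Feighn), that $Out(F_n)$ is a countable group, and that the loxodromic elements for this action are exactly the fully irreducible automorphisms. Under the hypothesis, the semigroup $\Gamma_\mu$ generated by $\textup{supp}(\mu)$ contains two independent fully irreducible elements, so the measure $\mu$ is non-elementary for this action in the sense used in the excerpt (two loxodromics with disjoint fixed sets). The moment hypotheses are assumed directly: $\mu$ has finite entropy $H(\mu) < +\infty$ and finite logarithmic moment $\int \log(1 + d(o, go))\, d\mu(g) < +\infty$ for the simplicial metric on $\mathcal{FF}(F_n)$, with $o$ a fixed basepoint. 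Thus all the hypotheses of Theorem \ref{T:Poiss-WPD} are met, \emph{provided} we can exhibit at least one WPD element in $\Gamma_\mu$.

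The key point is therefore the WPD condition. Here I would invoke the fact, stated in the discussion of $Out(F_n)$ above, that \emph{every} fully irreducible element acts as a WPD element on the free factor complex. Since $\Gamma_\mu$ contains (at least) two fully irreducible elements, it certainly contains a fully irreducible element, hence a WPD element for the action on $\mathcal{FF}(F_n)$. (Alternatively, one may appeal to Theorem \ref{T:generic-WPD}, which guarantees that WPD elements are generic along the random walk, but since every fully irreducible is already WPD on $\mathcal{FF}(F_n)$ the direct argument suffices.) With this in hand, Theorem \ref{T:Poiss-WPD} applies verbatim: the Gromov boundary $\partial \mathcal{FF}(F_n)$ equipped with the hitting measure $\nu$ of the random walk driven by $\mu$ is a model for the Poisson boundary of $(Out(F_n), \mu)$.

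To wrap this into a clean argument I would proceed in the following order: (i) fix the basepoint $o \in \mathcal{FF}(F_n)$ and record that $(\mathcal{FF}(F_n), d)$ is $\delta$-hyperbolic and $Out(F_n)$ acts on it by isometries; (ii) check non-elementariness from the two-independent-fully-irreducibles hypothesis; (iii) quote finite entropy and finite logarithmic moment, which are assumed; (iv) produce the WPD element, as above; (v) apply Theorem \ref{T:Poiss-WPD} to conclude. Steps (i)--(iii), (v) are essentially bookkeeping. The only substantive input is step (iv), and the main obstacle — if there is one — is purely one of citation: one needs the statement that fully irreducible elements are WPD on the free factor complex, which is exactly the fact recorded in the preceding subsection of the paper (attributable to Bestvina--Feighn together with the fact that the stabilizer of the axis of a fully irreducible is virtually cyclic). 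Once that citation is in place, there is nothing further to prove, so the theorem really is an immediate corollary of the general WPD Poisson boundary theorem.
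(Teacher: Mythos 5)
Your proposal is correct and follows exactly the paper's argument: the paper's own proof simply cites Bestvina--Feighn for the fact that fully irreducible elements act as WPD isometries on the free factor complex (which also supplies non-elementariness from the two independent fully irreducibles) and then applies Theorem \ref{T:Poiss-WPD}. Your additional bookkeeping steps (hyperbolicity, countability, moment hypotheses) are fine but add nothing beyond what the paper already does implicitly.
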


\begin{proof}
By \cite{bestvina-feighn}, the action of fully irreducible elements on the free factor complex is WPD. Hence, the claim follows by Theorem \ref{T:Poiss-WPD}. \end{proof}

Note that the identification of the Poisson boundary for $Out(F_n)$
has been obtained by Horbez \cite{horbez} using the action of
$Out(F_n)$ on the outer space $CV_n$.  This gives an identification of
the Poisson boundary with both $\partial CV_n$ and
$\partial \mathcal{FF}(F_n)$, as there is a coarsely defined Lipschitz
map $CV_n \to \mathcal{FF}(F_n)$. In our theorem above, the moment
condition required is a bit weaker, as we only need the logarithmic
moment condition to hold with respect to the metric on
$\mathcal{FF}(F_n)$ instead of the metric on $CV_n$.

\subsection{Tame automorphism groups} 

Other groups arising in algebraic geometry admit an action on a non-proper $\delta$-hyperbolic space with WPD elements. 

First of all, the group $\textup{Aut}(\mathbb{C}^2)$ of polynomial automorphisms of $\mathbb{C}^2$ 
 (see \cite{furter-lamy} and references therein, as well as \cite{minasyan-osin}) can be written as an amalgamated product of two of its subgroups, hence it acts 
 on the corresponding Bass-Serre tree, which is a Gromov hyperbolic space; in fact, for this action every loxodromic element is WPD, but the action is not acylindrical. 

Remarkably, Lamy and Przytycki recently extended this work to three variables. They considered the \emph{tame automorphism 
group} $\textup{Tame}(\mathbb{C}^3)$, which is the group generated by affine and elementary automorphisms of $\mathbb{C}^3$ (see \cite{pry-lamy} for a precise definition), and showed that this group also acts on a Gromov hyperbolic complex and there are WPD elements, so the methods of the present paper apply.
  
Let us finally remark that much less is known about the structure of the Cremona group in three variables, 
and these methods do not easily apply since there is no immediate analog of the hyperboloid, as the Cremona group no longer 
preserves a quadratic form. 

\subsection{Genericity of WPD elements}

Maher \cite{Mah} and Rivin \cite{rivin} considered random walks on the
mapping class group acting on the curve complex, and showed that
pseudo-Anosov mapping classes are typical for random walks.  More
generally, in \cite{MT}, we showed that for a group $G$ acting
non-elementarily on a Gromov hyperbolic space $X$, loxodromic elements
are typical for the random walk: i.e., the probability that the random
product of $n$ elements is loxodromic tends to one as $n$ tends to
infinity. 

One of the ingredients in our proofs is that, as long as there is one WPD
element in the support of the measure generating the random walk, then
WPD elements are generic.

We say that a measure $\mu$ is \emph{non-elementary} if $\Gamma_\mu$
contains at least two independent loxodromic elements, and is
\emph{bounded} if for some $x \in X$ the set
$(gx)_{g \in \textup{supp }\mu}$ is bounded in $X$. Finally, $\mu$ is
\emph{WPD} if $\Gamma_\mu$ contains an element $h$ which is WPD in
$G$.

We will show that generic elements are WPD with an explicit bound on
the rate of convergence: we say that a sequence of numbers $(p_n)$
tends to $1$ with \emph{exponential decay} if there are constants
$B > 0$ and $c < 1$ such that $p_n \ge 1 - B c^{n}$.

\begin{theorem}(Genericity of WPD elements.) \label{T:generic-WPD} %
Let $G$ be a group acting on a  Gromov hyperbolic space $X$,
and let $\mu$ be a countable, non-elementary, bounded, WPD probability measure on
$G$.  Then
$$\mathbb{P}(w_n \textup{ is WPD}) \to 1$$ %
as $n \to \infty$, with exponential decay.
\end{theorem}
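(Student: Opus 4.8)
The plan is to show that the WPD property, like the loxodromic property, is preserved under taking high powers and under conjugation, and then to ``import'' WPD-ness from the single given WPD element $h \in \Gamma_\mu$ to the random product $w_n$ by forcing $w_n$ to \emph{contain a conjugate of a power of $h$ as a subword in a coarsely geodesic way}. More precisely, here is the structure I would follow.

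\textbf{Step 1: WPD is a commensurability-type invariant of the axis.} First I would record the standard facts (from Bestvina--Fujiwara \cite{bestvina-fujiwara}) that if $g$ is loxodromic and WPD then so is $g^m$ for every $m \neq 0$, and so is $fgf^{-1}$ for every $f \in G$; more generally, if $g'$ is loxodromic and its quasi-axis fellow-travels the quasi-axis of a conjugate of a power of $h$ for a sufficiently long time (longer than a constant depending only on $\delta$, the WPD constants of $h$, and the translation lengths), then $g'$ is itself WPD. The point is that the WPD condition $\setnorm{\stab_K(x, g'^N x)} < \infty$ can be checked by comparing coarse stabilizers of long segments of the $g'$-axis with coarse stabilizers of long segments of the $h$-axis, which are finite by hypothesis. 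This ``transfer lemma'' is the technical heart.

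\textbf{Step 2: Random products contain long aligned copies of a fixed loxodromic.} Next I would invoke the machinery from \cite{MT} on random walks on (non-proper) hyperbolic spaces: since $\mu$ is non-elementary and bounded, almost every sample path tracks a quasigeodesic in $X$ with positive drift $L > 0$, and, crucially, there is a positive-probability ``pattern matching'' phenomenon — one can condition on a bounded-length block of increments of the walk to read off any prescribed element of $\Gamma_\mu$, in particular a fixed power $h^{n_0}$ of the WPD element $h$, and this block will be coarsely geodesically embedded in the trajectory of $w_n x$ with the two ``hooks'' of the quasigeodesic on either side ensuring the copy of the $h$-axis persists as part of a global quasi-axis of $w_n$. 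Standard exponential-decay estimates for the probability that a given persistent contracting block fails to appear among roughly $\sqrt{n}$ disjoint trial windows then give exactly the square-root exponential decay asserted.

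\textbf{Step 3: Assemble.} Combining Steps 1 and 2: with probability $1 - Bc^{\sqrt n}$, the sample path $w_n$ is loxodromic (this is where \cite{MT} genericity of loxodromics enters) \emph{and} its quasi-axis contains a fellow-traveling copy of a conjugate of $h^{n_0}$ of length exceeding the transfer-lemma threshold; hence by the transfer lemma $w_n$ is WPD. I would be slightly careful to arrange that the length of the matched $h^{n_0}$-block grows (or is at least a large fixed constant independent of $n$), which is automatic if we allocate a fixed-size window but only need a fixed threshold length.

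\textbf{Main obstacle.} The delicate point is Step 1 — making the ``transfer'' of the WPD property rigorous in the \emph{non-proper} setting, where one cannot use compactness arguments and must instead argue purely coarsely: one has to show that a group element coarsely fixing a long segment of the $w_n$-axis must, after restricting to the embedded sub-segment coming from (a conjugate of) the $h$-axis, coarsely fix a long segment of \emph{that} axis, which then lies in a finite set. Controlling the interaction between the quasigeodesic constants produced by the random-walk alignment and the WPD constants of $h$ — so that ``long enough'' really is a constant and not something that degrades with $n$ — is the part that needs the most care. The probabilistic Step 2 is essentially the same genericity/pattern-matching argument already used in \cite{MT} for loxodromic elements, adapted to guarantee a matched block long enough to trigger Step 1.
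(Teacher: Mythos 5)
Your Step 2 is essentially the paper's matching estimate (Proposition \ref{P:match-use}), and the square root exponential rate does come from that kind of estimate; but Step 1 --- which you yourself identify as the technical heart --- has a genuine gap. The WPD condition for $w_n$ is quantified over \emph{all} $K \geq 0$: for every $K$ one must produce $N$ with $\setnorm{\stab_K(x, w_n^N x)} < \infty$. The length of fellow-travelling with a translate of $\alpha_h$ needed to import finiteness from the WPD property of $h$ at scale $K$ is roughly $d\bigl(z, h^{M_W(K + O(\delta))} z\bigr)$, where $M_W$ is the WPD function of $h$ from Lemma \ref{L:WPD}, and this grows without bound as $K \to \infty$. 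Hence a match whose length exceeds a \emph{fixed} threshold (or even one growing linearly in $n$, which is all a fixed sample path can supply) only certifies finiteness of coarse stabilizers up to a bounded scale; it yields a statement of the type ``$\setnorm{\stab_K(x, w_n x)}$ is bounded for this fixed $K$'' --- which is exactly the asymptotic acylindricality of Theorem \ref{T:asymp-acyl}, and is where your ``match with the $h$-axis plus WPD of $h$'' argument genuinely lives --- but it does not prove that $w_n$ is WPD. There is no transfer lemma with a threshold depending only on $\delta$, the WPD data of $h$, and translation lengths: a non-WPD loxodromic can have an axis matching a translate of $\alpha_h$ along a segment of any prescribed finite length while failing WPD only at scales $K$ too large for that match to detect.

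The ingredient the paper uses to handle the universal quantifier over $K$, and which is absent from your outline, is genericity of \emph{asymmetric} elements (Proposition \ref{P:asymmetric}), proved via the self-matching/non-matching estimates. Given an arbitrary $K$, one takes $M$ with $d(x, w_n^M x) \geq 2(K + \tau(w_n))$; any $g \in \stab_K(p, w_n^M p)$ then coarsely preserves the \emph{middle} of $[p, w_n^M p]$ with a constant $L_1$ depending only on $\delta$ (not on $K$), so some block $g w_n^j [p, w_n p]$ lies in the $L_1$-neighbourhood of $\alpha_{w_n}$; asymmetry forces $w_n^{-i} g w_n^{j} \in \stab_{K_2}(p, w_n p)$ for a uniform $K_2 = K_2(\delta)$, and asymptotic acylindricality at this fixed scale bounds that set by $N$, giving $\setnorm{\stab_K(p, w_n^M p)} \leq N (M+1)^2 < \infty$ for \emph{every} $K$; one then concludes via the axial WPD reduction (Proposition \ref{P:axial}). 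To repair your proposal you would need to prove an asymmetry-type statement (and the non-matching estimates it rests on), at which point you have essentially reconstructed the paper's argument rather than found a shortcut around it.
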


In fact, we obtain that most random elements have bounded coarse
stabilizer, where the bound does not depend on the point chosen. We
call this property \emph{asymptotic acylindricality}.  We prove the
following estimate on the joint coarse stabilizer.

\begin{theorem}(Asymptotic acylindricality.) \label{T:aa} %
Let $G$ be a group acting on a Gromov hyperbolic space $X$.  Let $\mu$
be a countable, non-elementary, bounded, WPD probability measure on
$G$, and let $x \in X$.  Then for any $K \geq 0$ there is an $N > 0$
such that
$$\mathbb{P}( \setnorm{ \textup{Stab}_K(x, w_n x) } \leq N ) \to 1,$$
with exponential decay. 
\end{theorem}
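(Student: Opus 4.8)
The plan is to reduce Theorem~\ref{T:aa} to two ingredients. First, a deterministic coarse-geometry estimate: whenever the geodesic joining two points fellow-travels a translate of a long segment of the quasi-axis of the fixed WPD element, the joint coarse stabilizer of those points is bounded by a constant depending only on $K$. Second, a probabilistic statement in the spirit of \cite{MT}: along the sample path this configuration occurs with probability tending to $1$ at the required rate. Combining the two gives the theorem with $N$ the constant from the first ingredient.

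\textbf{Step 1: the deterministic bound.} Fix the WPD element $h$ in the semigroup generated by $\textup{supp}(\mu)$; it is loxodromic, with quasi-axis $\gamma$ (close to the orbit $\{h^m y\}$ of a fixed point $y \in X$). I claim that for every $C \ge 0$ there are constants $N_1 = N_1(K,C)$ and $N_0 = N_0(K,C)$ such that, for any $z \in X$: if some subsegment of a geodesic $[x,z]$ lies within Hausdorff distance $C$ of a translate $g\bigl(\gamma|_{[y,\,h^{N_1}y]}\bigr)$ of the sub-arc of $\gamma$ from $y$ to $h^{N_1}y$, then $\setnorm{\stab_K(x,z)} \le N_0$. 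To prove this, recall first the elementary fact that an isometry $f$ with $d(x,fx)\le K$ and $d(z,fz)\le K$ moves every point of $[x,z]$ by at most some $K' = K'(K,\delta)$ (inspect the $2\delta$-thin quadrilateral with vertices $x,z,fz,fx$). Hence if $f \in \stab_K(x,z)$, then $g^{-1}fg$ moves every point of $\gamma|_{[y,\,h^{N_1}y]}$ by at most $K'' := K'+2C$, and the Morse lemma for the quasigeodesic $\gamma$ then forces $g^{-1}fg$ to move $y$ and $h^m y$ by at most some $K''' = K'''(K'',\gamma)$ for every $m \le N_1$. Now let $N_1 = N_1(K''')$ be the index furnished by the WPD property of $h$ at the point $y$, so that $N_0 := \setnorm{\stab_{K'''}(y,\,h^{N_1}y)} < \infty$. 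Since $f \mapsto g^{-1}fg$ is injective and carries $\stab_K(x,z)$ into $\stab_{K'''}(y,\,h^{N_1}y)$, we get $\setnorm{\stab_K(x,z)} \le N_0$; note $N_0$ depends only on $K$, $C$, $h$ and the hyperbolicity data, not on $x$, $z$ or $n$.

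\textbf{Step 2: the configuration is generic.} Let $C$ be the tracking constant of \cite{MT} (so that sample-path geodesics fellow-travel the limiting quasigeodesic within $C$), and set $N := N_0(K,C)$. By Step~1 it suffices to show
\[ \P\bigl(\text{some subsegment of } [x,w_n x] \text{ lies within } C \text{ of a translate of } \gamma|_{[y,\,h^{N_1}y]}\bigr) \to 1 \]
with square root exponential decay. Write $h = s_1\cdots s_\ell$ with $s_i \in \textup{supp}(\mu)$, put $p_0 := \prod_i \mu(s_i) > 0$, and fix $M$ large (depending on $N_1$ and the drift and tracking constants of \cite{MT}). Over a block of $M\ell$ consecutive steps the increment of the random walk equals $h^M$ with probability $p_0^M > 0$; conditioned on such a block, at positions $(i,\,i+M\ell]$, the past $u := w_i$ and the future $v := w_{i+M\ell}^{-1}w_n$ are independent random products, and $w_n x = u\,h^M\,vx$. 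Since both the random walk and its reflection converge to the boundary with positive speed and non-atomic hitting measures (\cite{MT}), with probability bounded away from $0$ the points $x$ and $w_n x$ land in general position with respect to the two ends of the translated axis segment $u\bigl(\gamma|_{[y,\,h^My]}\bigr)$ — precisely, the relevant Gromov products based at $uy$ and at $u h^M y$ stay bounded — and a deterministic hyperbolic-geometry lemma then shows that $[x,w_n x]$ fellow-travels all but a bounded sub-arc of this segment; since $M$ is large, the part that survives contains a translate of $\gamma|_{[y,\,h^{N_1}y]}$. Running this over sufficiently many such blocks and combining the per-block successes by the type of conditioning argument used in \cite{MT} yields the displayed convergence with square root exponential decay, which completes the proof.

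\textbf{Main obstacle.} Step~1 is routine once the WPD property is invoked, and the occurrence of $h^M$ as a subword is cheap. The real difficulty is in Step~2: passing from ``a power of $h$ occurs as a subword of $w_n$'' to ``a long sub-arc of the corresponding axis translate is genuinely fellow-traveled by the geodesic $[x,w_n x]$'' requires the block to be deep (far from both $x$ and $w_n x$) and in general position relative to the axis, which is exactly where the non-atomicity of the hitting measures and the linear-progress estimates of \cite{MT} enter; and extracting the square root exponential rate, while the general-position events attached to distinct candidate blocks are not independent, is the delicate bookkeeping.
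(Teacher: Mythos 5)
Your Step 1 is essentially the deterministic reduction used in the paper: an element of $\stab_K(x,w_nx)$ moves every point of $[x,w_nx]$ by a bounded amount, hence coarsely stabilizes any matched translate of a long sub-arc of the axis of the fixed WPD element $h$, and after conjugating back the WPD property bounds the number of such elements independently of $x$, $n$ and the translating isometry; this is exactly how Theorem \ref{T:asymp-acyl} is deduced from the matching estimate (Proposition \ref{P:match-use}), with $N = N_W(K+2\delta)$.

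The genuine gap is in Step 2, and it is not ``delicate bookkeeping'' but the central point of the theorem. Your block mechanism (inserting $h^M$ as a subword and requiring general position of the past $u$ and future $v$ relative to the inserted axis translate) gives, for each fixed deep block, a success probability bounded below by a constant of the form $p_0^M\epsilon_0>0$; but the general-position events attached to different blocks depend on the \emph{entire} past and future of the walk, so they are not independent no matter how the blocks are spaced, and a per-attempt lower bound alone yields no estimate at all on the probability that every attempt fails -- so neither the $c^{\sqrt n}$ rate nor even the qualitative convergence to $1$ follows from what you wrote. The mechanism that resolves this in the paper is Proposition \ref{P:sqrtn}: the boundary-defined event $A$ (forward and backward limit points in shadow neighbourhoods of the endpoints of the axis) is approximated from inside by events $V_k\subseteq U_k$, where $U_k$ depends only on the increments $g_{-k},\dots,g_k$; the approximation error $\mu^{\Z}(U_k\setminus V_k)$ is exponentially small in $k$ by linear progress (Proposition \ref{L:progress-decay}) and decay of shadows (Lemma \ref{L:exp}); and one then takes $\ell$ shift-iterates spaced $2k+1$ apart, which \emph{are} independent because $U_k$ is a finite-window event. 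Optimizing $k\approx\ell\approx\sqrt n$ is precisely what produces the square-root exponential rate (and explains why the rate is in $\sqrt n$ rather than $n$). Your proposal contains no analogue of this finite-window approximation and spacing step, which is the missing idea. If you added it -- your Gromov-product general-position conditions can indeed be approximated by finite-window events with exponentially small error, exactly as in Proposition \ref{P:sqrtn} -- your block construction would give an alternative route to the matching estimate of Proposition \ref{P:match-use}; note that you would also need the sublinear tracking / linear progress estimates to pass from the bi-infinite picture to the finite segment $[x,w_nx]$, which you implicitly invoke when requiring the block to be deep.
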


\subsection{Matching estimates and rates}

In order to obtain our results, we need to show that a random element has finite joint coarse stabilizer, 
and to do so we recur to what we call \emph{matching estimates}. 

Following \cite{calegari-maher}, we say that two geodesics $\gamma$ and $\gamma'$ in $X$ have 
a \emph{match} if there is a subsegment of $\gamma$ close to a $G$-translate of a subsegment of $\gamma'$ (see Definition \ref{D:match}). 
Let $x \in X$ be a basepoint and $(w_n)$ be a sample path. The two key estimates we will prove and use are the following. 

\begin{enumerate}

\item \emph{Matching estimate} (Proposition \ref{P:match-use}):
given a loxodromic element $g$, we show that the probability that the geodesic $[x, w_n x]$ has a match 
with a translate of the axis of $g$ is at least $1 - B c^{n}$. 

\item \emph{Non-matching estimate} (Proposition \ref{P:non-match}):
given a geodesic segment $\eta$ in $X$ of length $s$, the probability that there is a match between $[x, w_n x]$ 
and a $G$-translate of $\eta$ is at most $B c^{s}$. 
\end{enumerate}

\subsection{Asymmetric elements}

Another important tool in our proofs is the notion of asymmetric element, which was introduced in \cite{maher-sisto}. 
We call a loxodromic element $g \in G$ \emph{asymmetric} if any element which coarsely stabilizes a segment 
of the axis of $g$ actually coarsely stabilizes the set $\{ g^i x\}_{i \in \mathbb{Z}}$ (see Definition \ref{D:asym} for the precise statement). 
In \cite{maher-sisto} it is proven that if the action of $G$ is acylindrical, then asymmetric elements are generic. In this paper, we generalize this result 
to WPD actions, and use it to prove the other results.

Let $G_{WPD}$ be the set of WPD elements in $G$. 
For a loxodromic $g \in G$, let us denote as $\Lambda(g) := \{\lambda^+_g, \lambda_g^-\}$ the two fixed points of $g$ on $\partial X$. 
We denote as $E_G(g)$ the stabilizer of $\Lambda(g)$ as a set, and as $E_G^+(g)$ the pointwise stabilizer of $\Lambda(g)$.
Moreover, for a subgroup $H < G$ we denote as 
$$E_G(H) := \bigcap_{H \cap G_{WPD}} E_G(h)$$ 
the intersection of all $E_G(h)$ as $h$ lies in $H \cap G_{WPD}$ (a priori, this set may be smaller than the set 
of $WPD$ elements for the action of $H$ on $X$). Note that $E_G(G)$ is the maximal finite normal subgroup of $G$. 

We have the following characterization of $E_G(w_n)$ for generic elements $w_n$. Let $E_\mu := E_G^+(\Gamma_\mu)$. 

\begin{theorem}\label{T:Eplus}
Given $\delta \ge 0$ there are constants $K$ and $L$ with the
following properties.  Let $G$ be a group acting by isometries on a
$\delta-$hyperbolic space $X$, and let $\mu$ be a countable,
non-elementary, reversible, bounded, WPD probability distribution on
$G$.  Then there are constants $B > 0$ and $c < 1$ such that the
probability that $w_n$ is loxodromic, $(1, L, K)$-asymmetric, and WPD with
$$E_G(w_n) = E_G^+(w_n) =  \langle w_n \rangle \ltimes E_\mu$$ 
is at least $1 - Bc^{n}$.  
\end{theorem}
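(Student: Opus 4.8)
The plan is to combine the matching estimates with the genericity of asymmetric elements, in the following order. First I would record the structural facts about $E_G$ that hold for any loxodromic WPD element $g$: since $g$ is WPD, $E_G(g)$ is virtually cyclic, contains $g$ with finite index, and fits in a short exact sequence $1 \to E_G^+(g) \to E_G(g) \to \mathbb{Z}/2$ where $E_G^+(g)$ is the pointwise stabilizer of $\Lambda(g)$; moreover $E_G^+(g)$ itself sits in $1 \to F \to E_G^+(g) \to \mathbb{Z}$ with $F$ finite. The target identity $E_G(w_n) = E_G^+(w_n) = \langle w_n \rangle \ltimes E_\mu$ is therefore equivalent to two assertions: (a) the finite part of $E_G^+(w_n)$ is exactly $E_\mu = E_G^+(\Gamma_\mu)$, i.e.\ no \emph{extra} finite symmetry is acquired by the axis of $w_n$, and (b) the orientation-reversing part is trivial, i.e.\ $E_G(w_n) = E_G^+(w_n)$; the semidirect product structure then follows since $\langle w_n \rangle$ maps onto the $\mathbb{Z}$ quotient with kernel the finite part.

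Next I would invoke the generation of $(1,L,K)$-asymmetric loxodromic WPD elements: by Theorem~\ref{T:generic-WPD} and the WPD version of the Maher–Sisto argument alluded to in the text, with probability at least $1 - Bc^{\sqrt n}$ the element $w_n$ is loxodromic, WPD, and $(1,L,K)$-asymmetric, and furthermore $[x,w_nx]$ fellow-travels a long translate of the axis of a fixed WPD reference element $h \in \Gamma_\mu$ (matching estimate, Proposition~\ref{P:match-use}). The asymmetry hypothesis is precisely what lets me promote coarse stabilization of an axis \emph{segment} to coarse stabilization of the whole orbit $\{w_n^i x\}$, hence to membership in $E_G(w_n)$. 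Now suppose $f \in E_G(w_n)$. Using asymmetry plus a ping-pong/quasi-geodesic stability argument, $f$ coarsely fixes a long subsegment of $[x, w_n x]$, which matches a long subsegment of a translate of $\mathrm{axis}(h)$; the non-matching estimate (Proposition~\ref{P:non-match}) forces any such coarse stabilizer of a sufficiently long segment to lie in a uniformly bounded set, and WPD of $h$ pins it down to $E_\mu$ together with powers of $w_n$. This gives the inclusion $E_G(w_n) \subseteq \langle w_n \rangle \ltimes E_\mu$; the reverse inclusion is automatic since $E_\mu$ fixes all of $\Lambda_\mu \supseteq$ (a dense set forcing) $\Lambda(w_n)$ once $w_n \in \Gamma_\mu$, and $w_n$ normalizes $E_\mu$ because $E_\mu$ is normal in $\Gamma_\mu$. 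Finally, the orientation-reversing direction: an element of $E_G(w_n) \setminus E_G^+(w_n)$ would conjugate $w_n$ to something with the reversed pair of fixed points, i.e.\ would match a translate of $\mathrm{axis}(h)$ \emph{with reversed orientation}; ruling this out generically is done by an independence argument between the forward and backward boundary maps, exactly the mechanism described in the "matching estimates and rates" subsection — the forward and backward limit points of the walk are almost surely in "general position" relative to each other, so no isometry can swap them while also respecting the finite group $E_\mu$.

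I expect the main obstacle to be the last point, namely excluding the orientation-reversing symmetry while keeping the square-root exponential rate. The issue is that one cannot merely quote "asymmetric generically" as a black box: one must track that the asymmetry constant $K$ is large enough relative to $E_\mu$ and $\delta$ that the matched segment of $\mathrm{axis}(h)$ genuinely detects orientation, and then feed this into the two-sided ergodic/matching argument on $(G,\mu)^{\mathbb{Z}}$ so that the probability of an orientation-reversing coincidence also decays like $Bc^{\sqrt n}$ rather than merely tending to $0$. Concretely, I would set up a set $A$ in the bi-infinite step space whose image under $(\partial_-, \partial_+)$ avoids the "diagonal" configurations that would permit a symmetry, show $A$ has positive measure with image containing an open set, and apply the rate estimate from Section~\ref{S:aa}. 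The bookkeeping of constants ($K, L$ depending only on $\delta$; then $B, c$ depending on $\mu$) is the part most likely to require care, but it is routine given the estimates already assembled in the paper.
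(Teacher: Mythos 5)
There is a genuine gap at the central step, namely the passage from ``coarse stabilizer of a long matched segment'' to ``contained in $E_\mu$''. You match a segment of $\alpha_{w_n}$ (equivalently of $[x,w_nx]$) with a translate of the axis of a \emph{single} fixed WPD reference element $h\in\Gamma_\mu$ and then assert that ``WPD of $h$ pins it down to $E_\mu$ together with powers of $w_n$''. What WPD of $h$ actually gives (via the Bestvina--Fujiwara result, Theorem \ref{T:bf}) is that an element coarsely stabilizing a long subsegment of $\alpha_h$ lies in $E_G(h)$ — and $E_G(h)$ can be strictly larger than $E_\mu=E_G^+(\Gamma_\mu)$, since $E_\mu$ is the intersection of the groups $E_G^+(g)$ over \emph{all} WPD $g\in\Gamma_\mu$. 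Closing this gap is exactly the content of Proposition \ref{P:h1-h2} in the paper: one first produces two non-commensurable WPD elements $h_1,h_2\in\Gamma_\mu$ with $E_G^+(h_1)\cap E_G^+(h_2)=E_G^+(\Gamma_\mu)$ (this uses Neumann's theorems on finite conjugacy classes and on covering a group by finitely many cosets), and then takes the special reference element $f=h_1^{nm}h_2^{nm}$, whose axis contains long pieces of translates of both $\alpha_{h_1}$ and $\alpha_{h_2}$, so that $\stab_K(z,fz)\subseteq E_G^+(h_1)\cap E_G^+(h_2)=E_\mu$. The matching estimate must then be applied to this particular $f$, not to an arbitrary WPD element. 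Without this construction the inclusion $\ker\phi\subseteq E_\mu$ (equivalently, that the axis of $w_n$ acquires no extra finite symmetry beyond $E_\mu$) is not established. A secondary misattribution: the non-matching estimate (Proposition \ref{P:non-match}) is not what bounds these stabilizers; the bound comes from the WPD function (Lemma \ref{L:WPD}/Theorem \ref{T:bf}), while non-matching enters elsewhere (asymmetry and self-matches).

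Your treatment of the orientation-reversing case also relies on the wrong mechanism. A ``general position of the forward and backward limit points'' argument cannot by itself exclude an isometry swapping $\lambda_{w_n}^{\pm}$: such symmetries are not excluded for non-WPD actions (the paper's example of a dense subgroup of $SL(2,\R)$), whatever the boundary configuration. The paper rules them out because an orientation-reversing element of $E_G(w_n)$ forces a linear-size self-match of $[p,w_np]$ (Corollary \ref{C:orientable}), and the probability of such a self-match decays like $Bc^{\sqrt n}$ by Proposition \ref{P:self-match} — an estimate whose proof uses asymptotic acylindricality, i.e.\ the WPD input, in an essential way. Your overall skeleton (split into ``finite part equals $E_\mu$'' plus ``no orientation reversal'', use asymmetry to upgrade segment-stabilization to membership in $E_G(w_n)$, and the easy reverse inclusion $E_\mu\subseteq E_G^+(w_n)$) does agree with the paper, but these two missing ingredients — the Neumann-based construction of $f$ and the self-match argument — are the substantive content of the proof.
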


Note that the action of $E_\mu$ on $E_G(w_n)$ is precisely responsible for the value of $k$ in Theorems 
\ref{T:inj-intro} and \ref{T:normal-intro}. Indeed, one obtains that the cyclic group $\langle w_n \rangle$ is 
normal in $E_G(w_n)$ if and only if the image of $w_n$ in $\textup{Aut }E_\mu$ is trivial. 
Now, the random walk on $\Gamma_\mu$ pushes forward to a random walk on the finite group 
$\textup{Aut }E_\mu$, and this random walk equidistributes on the image of $\Gamma_\mu$ inside $\textup{Aut }E_\mu$, 
which we denote as $H_\mu$. This explains the asymptotic probability of $\frac{1}{\#H_\mu}$ in Theorem \ref{T:normal-intro}. 

\subsection{Further questions}
We conclude with a few questions for further exploration. 
\begin{enumerate}
\item
Can one drop ``reversible" as an hypothesis in Theorem \ref{T:normal-intro}? 
\item
Do our results still hold for measures $\mu$ with finite exponential moment, rather than bounded measures?
\item
Does the radius of injectivity $\textup{inj}(N_n)$ typically goes to
infinity as $n \to \infty$, and at what rate? 
\end{enumerate}
We believe that the answers to all these questions should be positive, but we do not attempt to solve them here. 

\subsection{Acknowledgements}

We would like to thank Mladen Bestvina for pointing out that the
Poisson boundary result from \cite{MT} holds in the WPD case.
We also thank Carolyn Abbott, Jeffrey Diller, Igor Dolgachev, Mattias
Jonsson, Stephane Lamy, Piotr Przytycki and Samuel Taylor 
for useful discussions and comments.

We would particularly like to thank the referee for a number of
insightful comments, which amongst many simplifications and
improvements, also enabled us to improve the rates from square root
exponential in the original version to exponential in the current
version.

The first named author acknowledges support from the Simons Foundation and PSC-CUNY.
The second named author is partially supported by NSERC and the Alfred P. Sloan Foundation.

\section{Background material}

Let $X$ be a $\delta$-hyperbolic metric space, and let $G$ be a group of isometries of $X$.
Let $\mu$ be a probability measure on $G$. This defines a \emph{random walk} by choosing for each $n$ an element $g_n$ of $G$ 
with distribution $\mu$ independently of the previous ones, and considering the product 
$$w_n := g_1 \dots g_n.$$
The sequence $(w_n)_{n \geq 0}$ is called a \emph{sample path} of the random walk, and we are interested in the asymptotic behavior 
of typical sample paths. 

\subsection{Isometries of hyperbolic spaces}

Recall that isometries of a $\delta$-hyperbolic space (even if it is
not proper) can be classified into three types (see \cite{gromov}, \cite{DSU}).  
In particular, $g \in \textup{Isom}(X)$ is:
\begin{itemize}
\item \emph{elliptic} if $g$ has bounded orbits; 
\item \emph{parabolic} if it has unbounded orbits, but zero translation length; 
\item \emph{loxodromic} (or \emph{hyperbolic}) if it has positive translation length. 
\end{itemize}
Here, the \emph{translation length} of $g \in \textup{Isom}(X)$ is the quantity 
\begin{equation} \label{eq:translation-length} %
\tau(g) := \lim_{n \to \infty} \frac{d(g^n x, x)}{n},
\end{equation}
where the limit always exists and is independent of the choice of $x$. 
Moreover, a loxodromic element has two fixed points on the Gromov boundary of $X$, one attracting and one repelling. 

A semigroup inside $\textup{Isom}(X)$ is \emph{non-elementary} if it contains two loxodromic elements which have disjoint fixed point sets 
on $\partial X$.  

We will use the following elementary properties of $\delta$-hyperbolic
spaces, which we state without proof.  A \emph{quasiaxis} for a
loxodromic isometry $g$ of $X$ is a quasigeodesic which is invariant
under $g$.  In fact, the constants may be chosen to depend only on
$\delta$, see for example Bonk and Schramm
\cite{bonk-schramm}*{Proposition 5.2} or Kapovich and Benakli
\cite{kapovich-benakli}*{Remark 2.16}.

\begin{proposition}\label{P:quasi-axis}
Given a constant $\delta \ge 0$, there is a constant $K_1$ such that
every loxodromic isometry of a $\delta$-hyperbolic space has a
$(1, K_1)$-quasiaxis.
\end{proposition}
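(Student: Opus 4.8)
The plan is to produce the quasi-axis as a bi-infinite broken geodesic through a suitable orbit of $g$, reparametrised so that $g$ becomes a shift. Let $\lambda^+,\lambda^-$ be the two fixed points of $g$ on $\partial X$, which are distinct since $g$ is loxodromic. Using the standard fact that in a $\delta$-hyperbolic geodesic space any two distinct boundary points are joined by a $(1,c_0)$-quasigeodesic line, with $c_0=c_0(\delta)$ (this holds without assuming properness, though it is the one step that needs care when $X$ is non-proper), fix such a line $\gamma_0$ from $\lambda^-$ to $\lambda^+$. Since $g$ fixes $\lambda^\pm$, every translate $g^k\gamma_0$ is again a $(1,c_0)$-quasigeodesic line with the same endpoints, so by stability of quasigeodesics (the Morse lemma) all of them lie within Hausdorff distance $R_0=R_0(\delta)$ of $\gamma_0$. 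Set $x:=\gamma_0(0)$; the quasi-axis will be $L:=\bigcup_{n\in\mathbb{Z}}[g^nx,g^{n+1}x]$, a $g$-invariant bi-infinite path, parametrised affinely on each segment so that $g^nx$ sits at parameter $n\tau(g)$ and $g$ acts on $L$ by $t\mapsto t+\tau(g)$.

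The heart of the matter is the displacement estimate $d(x,g^kx)=|k|\,\tau(g)+O(\delta)$ for all $k\in\mathbb{Z}$, with the error depending on $\delta$ only. The lower bound $d(x,g^kx)\ge|k|\,\tau(g)$ is free, since $n\mapsto d(x,g^nx)$ is subadditive and hence, by Fekete's lemma, $\tau(g)=\inf_n d(x,g^nx)/n$. For the upper bound I would use hyperbolicity through $\gamma_0$: projecting $g^kx=g^k\gamma_0(0)$ back onto $\gamma_0$ is legitimate because $g^k\gamma_0$ stays within $R_0$ of $\gamma_0$, and the map ``apply $g^k$, then project to $\gamma_0\cong\mathbb{R}$'' is a coarse isometry of $\mathbb{R}$ preserving the two ends, hence a coarse translation by some $b_k$ with additive error $O(R_0+c_0)=O(\delta)$. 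Comparing the coarse actions of $g^k$, $g^l$, $g^{k+l}$ gives $|b_{k+l}-b_k-b_l|=O(\delta)$, so $k\mapsto b_k$ is a quasimorphism $\mathbb{Z}\to\mathbb{R}$ of defect $O(\delta)$; its homogenisation is $k\mapsto k\,\tau(g)$ because $b_k/k\to\tau(g)$, whence $|b_k-k\,\tau(g)|=O(\delta)$, and undoing the projections yields $d(x,g^kx)=|b_k|+O(\delta)=|k|\,\tau(g)+O(\delta)$. In particular $d(x,gx)=\tau(g)+O(\delta)$, so each segment of $L$ has length $\tau(g)+O(\delta)$.

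Granting this, checking that $L$ with the chosen parametrisation is a $(1,K_1)$-quasi-axis is routine bookkeeping: at integer parameters $m\tau(g),n\tau(g)$ the displacement is $d(g^mx,g^nx)=d(x,g^{|n-m|}x)=|n-m|\,\tau(g)+O(\delta)$, which is the parameter difference up to $O(\delta)$; inside one segment, of length $\tau(g)+O(\delta)$ parametrised over an interval of length $\tau(g)$, the map distorts distances by a factor $1+O(\delta)/\tau(g)$, which over an interval of length at most $\tau(g)$ again costs only an additive $O(\delta)$; and an arbitrary pair of parameters splits into an integer block plus at most two partial segments. Collecting constants gives $K_1=K_1(\delta)$, and $gL=L$ was built in. The \emph{main obstacle} is the displacement estimate of the second paragraph: the naive bound $d(x,g^kx)\le k\,d(x,gx)$ has an error growing linearly in $k$, so one genuinely has to use hyperbolicity, and keeping the final constant depending on $\delta$ alone (not on $g$) is what forces an argument like the quasimorphism trick above — or, as an alternative, first passing to a power $g^{n_0}$ with $\tau(g^{n_0})$ large compared to $\delta$, so the orbit becomes a local quasigeodesic with backtracking small relative to the segment lengths and a local-to-global principle applies, and then transferring the conclusion back to $g$. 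A secondary point to treat carefully, rather than reprove, is the existence of the bi-infinite boundary quasigeodesic $\gamma_0$ in the possibly non-proper space $X$.
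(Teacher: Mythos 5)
The paper lists Proposition \ref{P:quasi-axis} among the ``elementary properties\dots which we state without proof,'' so there is no in-paper argument to compare against; judged on its own terms, your proof is correct. The two external ingredients you invoke are indeed the only delicate points: that two distinct Gromov boundary points of a (possibly non-proper) geodesic $\delta$-hyperbolic space are joined by a $(1,c_0(\delta))$-quasigeodesic line, and Morse stability for bi-infinite $(1,c_0)$-quasigeodesics with common endpoints at infinity; both are standard (the first by a diagonal construction through geodesics $[x_i,y_i]$ with $x_i\to\lambda^-$, $y_i\to\lambda^+$, parametrised from near-projections of a fixed basepoint), and you rightly flag rather than reprove them. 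The heart of your argument is sound: nearest-point projection makes $g^k$ act on $\gamma_0$ as a coarse translation by $b_k$ with error $O(\delta)$, the defect bound $|b_{k+l}-b_k-b_l|=O(\delta)$ plus homogenisation gives $|b_k-k\beta|=O(\delta)$, and combining with $d(x,g^kx)=|b_k|+O(\delta)$ and the Fekete lower bound $d(x,g^kx)\ge|k|\tau(g)$ yields the displacement estimate $d(x,g^kx)=|k|\tau(g)+O(\delta)$ with error depending on $\delta$ alone; this is exactly what is needed for the broken path through the orbit to be a $(1,K_1(\delta))$-quasigeodesic even when $\tau(g)$ is small compared with $\delta$, and your bookkeeping (vertex-to-vertex estimates plus additive distortion at most $d(x,gx)-\tau(g)=O(\delta)$ inside each affinely parametrised segment) does close the argument. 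One caveat on the alternative you mention in passing: passing to a power $g^{n_0}$ with large translation length only produces a $g^{n_0}$-invariant quasi-axis, and restoring $g$-invariance needs an extra step, so the fixed-point/quasimorphism route you actually follow is the cleaner one.
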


To simplify notation, we will refer to a 
$(1, K_1)$-quasiaxis as a \emph{quasiaxis} for $g$.  

We will use the following fellow travelling properties of quasigeodesics 
in Gromov hyperbolic spaces.

The Morse lemma states that a quasigeodesic in a $\delta$-hyperbolic
space is contained in an $L$-neighborhood of a geodesic connecting its
endpoints, where $L$ depends only on $\delta$ and the quasigeodesic
constants.  The following result is a mild generalization of the Morse
lemma, and is exactly the Morse lemma if $K$ equals zero.  Given a
finite quasigeodesic $\gamma$ with endpoints $x$ and $y$, let
$\gamma^-_K := \gamma \setminus ( B_K(x) \cup B_K(y) )$.

\begin{proposition}[\cite{delzant}*{Proposition 1.3.3}] \label{P:fellow
  travel2} %
Given $\delta \ge 0$ and $K_1 \ge 0$, there is a constant $L$ such
that for any $K \geq 0$ and for any two $(1, K_1)$-quasigeodesics
$\gamma$ and $\eta$ in a $\delta$-hyperbolic space, with endpoints
distance at most $K$ apart, any point on $\gamma_{K}^-$ lies within
distance at most $L$ from a point on $\eta$.
\end{proposition}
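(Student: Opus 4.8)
The plan is to reduce the statement to the standard Morse-type stability of quasigeodesics in a $\delta$-hyperbolic space, applied to the two endpoints of $\gamma$ and $\eta$. First I would fix the endpoints: write $\gamma$ as a $(1, K_1)$-quasigeodesic from $x$ to $y$ and $\eta$ as a $(1, K_1)$-quasigeodesic from $x'$ to $y'$, with $d(x, x') \le K$ and $d(y, y') \le K$. By the stability of quasigeodesics, there is a constant $M = M(\delta, K_1)$ such that $\gamma$ is contained in the $M$-neighborhood of any geodesic $[x, y]$, and likewise $\eta$ is within $M$ of any geodesic $[x', y']$; so it suffices to prove the statement for genuine geodesics $[x,y]$ and $[x', y']$ (replacing $\gamma^-_K$ by $([x,y])^-_{K+M}$ costs only a bounded amount in the final constant, and one absorbs the switch from quasigeodesic-distance to geodesic-distance back at the end).

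Next I would handle the geodesic case. Let $p$ be a point on $([x, y])^-_{K}$, so $d(p, x) \ge K$ and $d(p, y) \ge K$. By $\delta$-thinness of the geodesic triangle with vertices $x$, $y$, $x'$, the point $p$ lies within $\delta$ of $[x, x'] \cup [x', y]$; and by thinness of the triangle with vertices $x'$, $y$, $y'$, any point of $[x', y]$ lies within $\delta$ of $[x', y'] \cup [y, y']$. Since $d(x, x') \le K$ the geodesic $[x, x']$ has length at most $K$, so it is contained in $B_K(x)$; since $d(p, x) \ge K$ (and comparing lengths, the nearest-point projection argument gives $p$ at distance $\ge$ something like $K - \delta$ from $[x,x']$), $p$ cannot be $\delta$-close to $[x, x']$ once $K$ is large enough, and symmetrically $p$ is not close to $[y, y']$. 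Hence $p$ is within $2\delta$ of $[x', y']$, which is within $M$ of $\eta$; this gives the claim with $L = 2\delta + M + (\text{the } K_1\text{-adjustment})$, uniformly in $K$.

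I expect the only genuinely delicate point to be bookkeeping the chain of $\delta$'s and $M$'s so that the final $L$ depends only on $\delta$ and $K_1$ and, crucially, \emph{not} on $K$: the role of $K$ is precisely to push $p$ away from the ``bad'' ends $[x, x']$ and $[y, y']$, and one must check the triangle inequalities are arranged so that no term involving $K$ survives in the bound on $d(p, \eta)$ — it only appears as a hypothesis guaranteeing $p$ avoids the two short bridging geodesics. A mild subtlety is that $\gamma^-_K$ is defined by removing metric balls $B_K(x) \cup B_K(y)$ in $X$, not by trimming arclength, so I would first note $\gamma^-_K \subseteq ([x,y])^-_{K - M}$ up to the quasigeodesic stability constant, which is why I allow myself to enlarge $L$ by an additive constant depending on $\delta, K_1$ at the end. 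Everything else is the routine thin-triangles estimate and I would state it as such.
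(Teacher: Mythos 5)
The paper states this proposition without proof (it is listed among the ``fellow-travelling properties\dots whose proofs we omit''), so there is no argument of record to compare yours against. Your route --- reduce to genuine geodesics via the Morse lemma, then run thin triangles across the quadrilateral $x,y,y',x'$ --- is the standard one, and the overall structure is sound: in particular you correctly identify that the removal of the $K$-balls is what keeps $K$ out of the final constant, and that replacing $\gamma,\eta$ by $[x,y],[x',y']$ only shifts the trimming parameter by the Morse constant $M(\delta,K_1)$, which is absorbed into $L$.

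One step, as written, is false, though harmlessly so. You assert that a nearest-point comparison gives $d(p,[x,x'])\ge K-\delta$, so that $p$ ``cannot be $\delta$-close to $[x,x']$ once $K$ is large enough''. Neither claim holds: since $[x,x']$ has length at most $K$ while $d(p,x)\ge K$, the only bound this comparison yields is $d(p,[x,x'])\ge 0$, and $p$ genuinely can be $\delta$-close to $[x,x']$ (for instance if $x'$ lies on $[x,y]$ at distance exactly $K$ from $x$, one may have $p=x'$). Moreover ``$K$ large enough'' is not available to you, since $L$ must work uniformly for all $K\ge 0$. The repair is immediate within your own setup: if $d(p,q)\le\delta$ with $q\in[x,x']$, then $d(q,x)\ge d(p,x)-\delta\ge K-\delta$, and since $d(x,x')\le K$ and $q$ lies on the geodesic $[x,x']$, this forces $d(q,x')\le\delta$, hence $d(p,x')\le 2\delta$; as $x'$ is an endpoint of $[x',y']$ (hence within $M$ of $\eta$), the conclusion still holds in this case. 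The same two-line fix applies at the $y$-end, and, after the Morse reduction, with $K$ replaced by $K-M$, which only contributes $M(\delta,K_1)$ to the constant. With that correction your proof goes through and gives $L$ depending only on $\delta$ and $K_1$, as required.
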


We say a set $\gamma$ is $Q$\emph{-quasiconvex} if for any points $x$ and $y$
in $\gamma$, any geodesic $[x, y]$ is contained in a $Q$-neighbourhood
of $\gamma$.  Given a $Q$-quasiconvex set $\gamma$ in a hyperbolic
space $X$ and a point $x \in X$, let $\pi_\gamma(x)$ be a nearest
point on $\gamma$ to $x$. In a $\delta$-hyperbolic space, the nearest
point projection is not unique, but any two projections have uniformly
bounded distance, where the bound only depends on $\delta$ and $K$,
hence we shall always pick one and denote it $\pi_\gamma(x)$.

If two points $x$ and $y$ have nearest point projections
$\pi_\g(x)$ and $\pi_\g(y)$ which are sufficiently far apart, then the
piecewise geodesic running through $x$, $\pi_\g(x)$, $\pi_\g(y)$ and
then $y$, which we shall call a \emph{nearest point projection path}, 
is a quasigeodesic:

\begin{proposition}[\cite{cdp}*{Proposition 10.2.1}] \label{P:npp-qg} %
Given $\delta$ and $Q$, there are constants $L$ and $K$ such that for
any $\delta$-hyperbolic space $X$, for any $Q$-quasiconvex set
$\gamma$ in $X$, and any pair of points $x$ and $y$ in $X$, if
$d(\pi_\g(x), \pi_\g(y)) \ge L$, then the nearest point projection
path
$$[x, \pi_\g(x)] \cup [\pi_\g(x), \pi_\g(y)] \cup [\pi_\g(y), y]$$
is a $(1, K)$-quasigeodesic.
\end{proposition}

Let us recall that given $x, y \in X$ and $R \geq 0$, we define the
\emph{shadow} $S_x(y, R)$ as
$$S_x(y, R) := \{  z \in X \ : \ \gp{x}{z}{y} \geq d(x, y) - R \}.$$
The number $r = d(x, y) - R$ is called the \emph{distance parameter}
of the shadow.

\begin{proposition}\label{P:fellow travel3}
Given constants $\delta \ge 0, K_1 \ge 0$ and $R \ge 0$, there are
constants $D$ and $L$ with the following properties.  Let $x$ and $y$
be two points in a $\delta$-hyperbolic space $X$ with $d(x, y) \ge D$.
Let $A = S_x(y, R)$ and $B = S_y(x, R)$ be the corresponding shadows,
and let $\gamma$ be a $(1, K_1)$-quasigeodesic with one endpoint
in $A$ and the other endpoint in $B$.  Then any geodesic $[x, y]$ is
contained in an $L$-neighborhood of $\gamma$.
\end{proposition}

\begin{proof}
Let $p, q$ be the endpoints of $\gamma$, with $p \in A$ and $q \in B$,
and let $p'$, $q'$ be, respectively, a nearest point projection of $p$
to $[x, y]$ and of $q$ to $[x, z]$.  Then, by \cite[Proposition
2.4]{MT}, $d(p',y) \leq R + O(\delta)$ and
$d(q', z) \leq R + O(\delta)$.  We shall assume that we have chosen
$D \ge 2R + L_1 + O(\delta)$, where $L_1$ is the constant from
Proposition \ref{P:npp-qg}.  Then by Proposition \ref{P:npp-qg}, the
piecewise geodesic through $p, p', q'$ and $q$ is a quasigeodesic,
with quasigeodesic constants depending only on $\delta$.  As
quasigeodesics fellow travel, $[p, q]$ is contained in an
$L$-neighborhood of $\gamma$, where $L$ depends only on $\delta, K_1$
and $R$, as the quasigeodesic fellow travelling constants depend
only on $\delta$ and $K_1$.
\end{proof}

\subsection{Random walks on weakly hyperbolic groups}

In \cite{MT}, we established many properties of typical sample paths for random walks on general groups of isometries of $\delta$-hyperbolic spaces. 
Namely: 

\begin{theorem}[\cite{MT}] \label{T:MT}
Let $\mu$ be a countable, non-elementary measure on a group
of isometries of a $\delta$-hyperbolic metric space $X$, and let
$x \in X$.  Then
\begin{enumerate}
\item almost every sample path $(w_n x)$ converges to some point $\xi$ in the Gromov boundary of $X$; 
\item if $\mu$ has finite first moment in $X$, there exists $L > 0$ such that for almost all sample paths we have 
$$\lim_{n \to \infty} \frac{d(w_n x, x)}{n} = L;$$
\item moreover, if $\mu$ is bounded, there exists $L > 0, B \ge 0$ and $c < 1$ such that
the translation length grows linearly with exponential decay:
$$\mathbb{P}(\tau(w_n) \geq nL) \ge 1 - Bc^n.$$ 
\end{enumerate}
\end{theorem}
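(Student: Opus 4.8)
The plan is to follow the strategy of \cite{MT}, in which all three statements are deduced from one quantitative input: a decay estimate for the $\mu^{*n}$-measure of \emph{shadows} with large distance parameter. I would first establish this input and then read off (1)--(3) in turn. Since $\mu$ is non-elementary, $\Gamma_\mu$ contains two loxodromics with disjoint fixed point sets, and a ping-pong argument produces a finite set $F \subseteq \Gamma_\mu$ together with a constant $p_0 > 0$ such that, with probability at least $p_0$, a single step moves the basepoint deep into a prescribed shadow, and such displacements can be chained independently. The core lemma is then: for every $\epsilon > 0$ there are $R_0 \ge 0$ and constants $B > 0$, $c < 1$ such that for all $y \in X$ and all $n$,
\[
\mu^{*n}\bigl(\{\, g \in G \ : \ gx \in S_x(y, R), \ d(x,y) - R \ge \epsilon n \,\}\bigr) \le B c^{\sqrt n},
\]
with the bound improved to $B c^{n}$ when $\mu$ is bounded. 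In words, the probability that after $n$ steps the walk has not yet passed a point at distance $\gtrsim \epsilon n$ decays (stretched-)exponentially. I would prove this by a first-passage/renewal decomposition: condition on the last time before $n$ at which $w_k x$ is close to $x$, apply the Markov property, and sum a geometric series; boundedness of $\mu$ is what removes the $\sqrt n$ versus $n$ loss in the tail.

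For part (1), apply the shadow estimate with $R_n = \epsilon n$: the events that $w_{n+1}x$ fails to lie in $S_x(w_n x, R_n)$ have summable probability, so by Borel--Cantelli the shadows $S_x(w_n x, R_n)$ eventually form a decreasing chain whose diameters at infinity shrink. Hence $(w_n x)$ is Cauchy in the Gromov bordification and converges to a boundary point $\xi = \xi(\omega) \in \partial X$; the hitting measure $\nu$ is the law of $\xi$, and it is non-atomic and $\mu$-stationary. For part (2), the array $a_{m,n} := d(w_m x, w_n x)$ is subadditive and stationary in the appropriate sense, so under the finite first moment hypothesis Kingman's subadditive ergodic theorem gives an almost sure and $L^1$ limit $L := \lim_n d(x, w_n x)/n \ge 0$. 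Positivity $L > 0$ is the delicate point: using the ping-pong set $F$ one shows a definite fraction of steps make definite, never-undone progress, i.e. there is $\delta_0 > 0$ with $\P(d(x,w_n x) \ge \delta_0 n) \to 1$ (again via the renewal decomposition above), whence $L \ge \delta_0 > 0$.

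For part (3), assume $\mu$ bounded, so the increments $d(w_k x, w_{k+1}x)$ are bounded by some $C$. I would invoke the standard $\delta$-hyperbolic fact that if $\gp{x}{g^{-1}x}{gx} \le \tfrac12 d(x,gx) - C'$ then $g$ is loxodromic with $\tau(g) \ge d(x,gx) - 2\,\gp{x}{g^{-1}x}{gx} - C''$. Thus it suffices to show, with probability at least $1 - Bc^n$, that $d(x, w_n x) \ge L'n$ for some $L' > 0$ and that $\gp{x}{w_n^{-1}x}{w_n x} \le \tfrac{L'}{4} n$. The first holds by the bounded case of the drift estimate from part (2). For the second, the event $\gp{x}{w_n^{-1}x}{w_n x} \ge sn$ forces $w_n x$ to lie in a shadow of $w_n^{-1}x$ with distance parameter $\gtrsim sn$; since the forward and backward limit points are almost surely distinct (non-elementarity), applying the bounded-case shadow estimate to $w_n$ and to its time-reversal simultaneously bounds this probability by $Bc^n$. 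Combining the two events and adjusting constants yields $\P(\tau(w_n) \ge Ln) \ge 1 - Bc^n$.

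The main obstacle is the pair of quantitative hearts of the argument: the shadow-decay estimate (the renewal bookkeeping, and especially extracting the genuine exponential rate from boundedness rather than the generic $\sqrt n$ rate) and the strict positivity of the drift in part (2). Once these are in hand, the passage to the three stated conclusions — convergence via Borel--Cantelli, the limit via Kingman, and the loxodromic criterion via the Gromov-product inequality — is comparatively soft.
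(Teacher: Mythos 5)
Your proposal sets out to re-prove Theorem \ref{T:MT} from first principles, but this is not what the paper does: the theorem is quoted from \cite{MT}, and the only proof content the authors supply here is the reduction that removes the separability hypothesis of \cite{MT}. Since $\mu$ is countable, they build a countable $\Gamma_\mu$-invariant graph $X'$ together with a $\Gamma$-equivariant quasi-isometric embedding $i \colon X' \to X$, apply the separable case of the convergence theorem to $X'$, and push the limit forward through the induced boundary inclusion $\partial X' \to \partial X$ (this is the lemma following the theorem, after \cite{GST}*{Remark 4}). This step is the whole point of the discussion in the paper, because the spaces of interest, e.g.\ the Picard--Manin hyperboloid $\mathbb{H}_{\mathbb{P}^2}$, are not separable; your sketch never addresses it.

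Judged as a standalone argument, the sketch also has real gaps. Part (1) carries no moment hypothesis at all, but your route to it rests on the ``core lemma'' proved by a renewal decomposition, i.e.\ on quantitative linear progress, which is not available for a general countable non-elementary $\mu$; in \cite{MT} convergence to the boundary is obtained by a different mechanism (stationary measures on the horofunction compactification, non-atomicity, and transfer to the Gromov boundary) precisely to avoid any moment condition. Relatedly, the quantitative input is misstated: the decay of shadows actually used in this paper (Lemma \ref{L:exp}, from \cite{maher-linear}, \cite{maher-exp}) is exponential in the \emph{distance parameter} of the shadow and is stated for bounded measures; the $c^{\sqrt{n}}$ rates in this paper come from the matching estimates of Section \ref{S:aa}, not from a ``generic $\sqrt{n}$ versus $n$'' dichotomy in shadow decay. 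For (2), Kingman only gives existence of the limit; positivity is the delicate point, and ``a definite fraction of steps make definite, never-undone progress'' is exactly what needs proof --- in \cite{MT} it is extracted from the boundary convergence and shadow machinery rather than from the ping-pong renewal you describe. Your outline of (3), by contrast, is sound and is essentially the argument this paper does write out for the closely related Theorem \ref{T:growth}: combine linear progress with the bound $\tau(w_n) \ge d(x, w_n x) - 2\gp{x}{w_n x}{w_n^{-1}x} - O(\delta)$ and control the Gromov product using independence of $w_n$ and the reversed increment together with exponential decay of shadows (Proposition \ref{P:gp2}).
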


Note that in \cite{MT} the previous result is proven under the assumption that $X$ is \emph{separable}, i.e. it contains a countable dense set. 
However, since the measure $\mu$ is countable one can drop the separability assumption, as remarked in \cite[Remark 4]{GST}. 
In fact, the only point where separability is used is to prove convergence to the boundary, and one can prove it for general metric spaces 
from the separable case and the following fact. 

\begin{lemma}[\cite{GST}*{Remark 4}]
Let $\Gamma$ be a countable group of isometries of a $\delta$-hyperbolic metric space $X$. Then there exists a separable metric space $X'$ 
(in fact, a simplicial graph with countably many vertices) and a $\Gamma$-equivariant quasi-isometric embedding $i \colon X' \to X$. 
As a consequence, $i$ extends continuously to a $\Gamma$-equivariant inclusion $\partial X' \to \partial X$ between the Gromov boundaries.
\end{lemma}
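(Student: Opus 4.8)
The plan is to construct $X'$ as an orbit graph. Fix a basepoint $x_0 \in X$ and let the vertex set of $X'$ be the orbit $\Gamma x_0$, which is countable since $\Gamma$ is countable. For the edges, one natural choice is to join two vertices $\gamma x_0$ and $\gamma' x_0$ whenever $d(\gamma x_0, \gamma' x_0) \le C$ for a suitable constant $C$; choosing $C$ large enough (depending on $\delta$ and on how the orbit sits in $X$) guarantees the resulting graph is connected, because any two orbit points can be linked by a geodesic in $X$ which can then be approximated by a chain of orbit points at controlled distance. Equip $X'$ with the simplicial (path) metric. The group $\Gamma$ acts on $X'$ by permuting vertices via its action on the orbit, and this is visibly an action by graph automorphisms, hence by isometries of the simplicial metric. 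By construction $X'$ has countably many vertices, so it is separable.

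Next I would verify that the inclusion $i \colon X' \to X$ sending $\gamma x_0 \mapsto \gamma x_0$ (extended affinely or coarsely along edges) is a $\Gamma$-equivariant quasi-isometric embedding. Equivariance is immediate from the definition of the two actions. For the quasi-isometric embedding, one bound is easy: adjacent vertices in $X'$ map to points at distance $\le C$ in $X$, so $d_X(i(u), i(v)) \le C \, d_{X'}(u, v)$. The reverse bound — that $d_{X'}(u, v) \le A \, d_X(i(u),i(v)) + B$ — is where the geometry enters: given two orbit points at distance $D$ in $X$, take a geodesic between them and subdivide it into segments of length about $C/2$; each subdivision point lies within bounded distance of some orbit point (again for $C$ chosen appropriately, using that the orbit is "coarsely dense along geodesics between orbit points", which may require enlarging $C$ or passing to a slightly coarser construction), and consecutive such orbit points are then at bounded distance, hence adjacent in $X'$. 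This yields a path in $X'$ of length $O(D)$, giving the linear upper bound on $d_{X'}$.

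For the boundary statement: a $(\lambda,\epsilon)$-quasi-isometric embedding of geodesic (or $\delta$-hyperbolic length) spaces sends quasigeodesic rays to quasigeodesic rays and is coarsely injective, so it induces a continuous injection $\partial i \colon \partial X' \to \partial X$ on Gromov boundaries, compatible with the boundary actions by functoriality of the boundary construction under quasi-isometric embeddings. Equivariance of $\partial i$ follows from equivariance of $i$. Finally, to deduce convergence to the boundary for a general (non-separable) $X$ from the separable case: apply Theorem~\ref{T:MT}(1) to the random walk on $\Gamma$ acting on $X'$, obtaining that $(w_n x_0)$ converges in $\partial X'$ almost surely; then push forward through $\partial i$, which is continuous, to conclude that $(i(w_n x_0)) = (w_n x_0)$ converges in $\partial X$ almost surely.

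The main obstacle I anticipate is calibrating the constant $C$ defining the edges so that $X'$ is simultaneously connected and quasi-isometrically embedded — in a non-proper space one cannot invoke local compactness, so one must argue purely coarsely that geodesics between orbit points can be "traced" by orbit points at bounded gaps. One clean way around this, if a single constant proves awkward, is to take $X'$ to be the Cayley graph of $\Gamma$ with respect to the (possibly infinite but countable) generating set $S = \{\gamma : d(x_0, \gamma x_0) \le C\}$; then connectedness is automatic from $S$ generating $\Gamma$ (which holds once $C$ is large enough that $S$ generates, using a geodesic-chaining argument once), the $\Gamma$-action is the left regular action hence free and by isometries, and the orbit map to $X$ is the desired quasi-isometric embedding by the Švarc–Milnor-type estimate. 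Either way, all the estimates are standard $\delta$-hyperbolic bookkeeping once the combinatorial model is set up correctly.
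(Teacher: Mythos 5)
There is a genuine gap, and it sits exactly where you flagged your ``main obstacle'': the claim that, after enlarging the constant $C$, the orbit $\Gamma x_0$ is coarsely dense along geodesics between orbit points. This is false in general for a countable group acting on a non-proper (or even proper but non-cobounded) hyperbolic space, and no choice of $C$ repairs it. A minimal counterexample: let $\Gamma=\mathbb{Z}$ act on $X=\mathbb{H}^2$ by powers of a parabolic isometry $g$, with $x_0$ on an invariant horocycle. Then $d_X(x_0,g^nx_0)\asymp \log n$, while any two orbit points joined by an edge in your graph satisfy $|i-j|\le M(C)$, so $d_{X'}(x_0,g^nx_0)\asymp n$; the geodesic $[x_0,g^nx_0]$ dives distance about $\log n$ into the horoball, far from every orbit point. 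Hence $i$ is Lipschitz but not a quasi-isometric embedding, for every $C$. Your fallback (the Cayley graph of $\Gamma$ with respect to $S=\{\gamma:\ d(x_0,\gamma x_0)\le C\}$ plus a ``\v{S}varc--Milnor-type estimate'') produces a graph quasi-isometric to the one above and fails for the same reason: \v{S}varc--Milnor needs the action to be cobounded (and proper), which is precisely what is not assumed here, and indeed in the paper's intended applications (mapping class groups, $\textup{Out}(F_n)$, the Cremona group) the orbit map from any Cayley graph is badly distorted. (Also, $S$ need not generate $\Gamma$ for any finite $C$.) The part of your argument about boundaries --- that a $\Gamma$-equivariant quasi-isometric embedding induces a continuous $\Gamma$-equivariant inclusion $\partial X'\to\partial X$, and that convergence then transfers from the separable case --- is fine; the defect is solely in the construction of $X'$.

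The missing idea is that one must enlarge the \emph{vertex set}, not the edge constant. The paper takes, for every pair of orbit points, a chosen geodesic between them, samples a maximal $10\delta$-separated set of points on it, and lets the vertex set $Y$ be the $\Gamma$-orbit of all these sampled points together with $\Gamma x_0$; edges join vertices at distance at most $100\delta$, via chosen geodesic segments. With this vertex set, any geodesic $[v,w]$ between vertices lies, by thin quadrilaterals, within $2\delta$ of the four chosen geodesics joining the relevant orbit points, hence every point of $[v,w]$ is within $12\delta$ of a vertex; chaining $10\delta$-separated points along $[v,w]$ then gives $d_{X'}(v,w)\le d_X(v,w)/10\delta$, while the trivial bound gives $d_X(v,w)\le 100\delta\, d_{X'}(v,w)$. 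So the coarse density you needed is manufactured by construction rather than assumed of the orbit; without this enlargement of $Y$ the lower bound on $d_X$ in terms of $d_{X'}$ simply fails, as the parabolic example shows.
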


By the theorem in the separable case, given $x' \in X'$ almost every sample path $(w_n x')$ converges to a point $\xi' \in \partial X'$, 
hence if $x = i(x')$ then almost every sample path $(w_n x)$ converges to $i(\xi') \in \partial X$, hence the random walk on $X$ 
converges almost surely to the boundary. 

Another ingredient in the proof of the previous theorem is the following lemma about the measure of shadows \cite{MT}*{Proposition 5.4}, which we will use in the later sections.

\begin{proposition}\label{P:positive}
Let $G$ be a non-elementary, countable group acting by isometries on a
Gromov hyperbolic space $X$, and let $\mu$ be a
non-elementary probability distribution on $G$. Then there is a number
$R_0$ such that if $g, h \in G$ are group elements such that $h$ and $h^{-1}g$ lie in the
semigroup generated by the support of $\mu$, then 
\[ \nu(\overline{S_{h x}( g x , R_0)}) > 0, \]
where $\overline{A}$ denotes the closure in $X \cup \partial X$. 
\end{proposition}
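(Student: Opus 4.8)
The plan is to reduce, via a single application of the Markov property, the positivity of $\nu$ on $\overline{S_{hx}(gx, R_0)}$ to a purely geometric statement about shadows based at the basepoint $x$, and then to deduce that statement from the fact that the hitting measure $\nu$ has support of cardinality at least two (indeed, $\textup{supp}(\nu) = \Lambda_\mu$, which is infinite since $\mu$ is non-elementary). Accordingly, I first fix two distinct points $p \ne q \in \textup{supp}(\nu)$ and set $R_0 := \gp{x}{p}{q} + C\delta$ for a suitable constant $C$ depending only on $\delta$; note that $R_0$ is allowed to depend on $\mu$ and on $x$.

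For the reduction, recall that $h$ and $h^{-1}g$ both lie in the semigroup $\Gamma_\mu$ generated by the (countably many) atoms of $\mu$, so there are $a, b \ge 0$ with $\mu^{*a}(h) > 0$ and $\mu^{*b}(h^{-1}g) > 0$, and hence the event $\mathcal{E} := \{\, w_a = h,\ w_{a+b} = g \,\}$ has positive probability by independence of the increments. On $\mathcal{E}$ one has $w_{a+b+k} = g\, v_k$, where $v_k := g_{a+b+1}\cdots g_{a+b+k}$ is a $\mu$-random walk independent of $\mathcal{E}$; by Theorem~\ref{T:MT}(1) it converges almost surely to a boundary point $\eta$ of law $\nu$, so on $\mathcal{E}$ the original walk converges to $g\eta$. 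Therefore
$$\nu\big(\overline{S_{hx}(gx,R_0)}\big) \;\ge\; \P(\mathcal{E})\cdot\P\big(g\eta\in\overline{S_{hx}(gx,R_0)}\big) \;=\; \P(\mathcal{E})\cdot\nu\big(g^{-1}\overline{S_{hx}(gx,R_0)}\big),$$
and since $g$ acts as a homeomorphism of $X\cup\partial X$ with $g^{-1}S_{hx}(gx,R_0) = S_{g^{-1}hx}(x,R_0)$, it suffices to show $\nu\big(\overline{S_z(x,R_0)}\big) > 0$ for $z := g^{-1}hx$ — and in fact I will establish this for every $z \in X$.

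For the geometric step I use the identity $\gp{x}{\xi}{z} + \gp{z}{\xi}{x} = d(x,z)$ (exact for $\xi \in X$, valid up to $O(\delta)$ for $\xi \in \partial X$), which shows that any boundary point $\xi$ with $\gp{x}{\xi}{z} \le R_0 - O(\delta)$ lies in $\overline{S_z(x,R_0)}$. Now at least one of $p, q$, say $p$, satisfies $\gp{x}{p}{z} \le \gp{x}{p}{q} + \delta$ — otherwise the $\delta$-inequality applied to the triple $p, z, q$ based at $x$ forces $\gp{x}{p}{q} > \gp{x}{p}{q}$. By the same $\delta$-inequality, every $\xi$ in the neighbourhood $\{\, \xi : \gp{x}{\xi}{p} > \gp{x}{p}{z} + \delta \,\}$ of $p$ satisfies $\gp{x}{\xi}{z} \le \gp{x}{p}{z} + \delta \le \gp{x}{p}{q} + 2\delta \le R_0 - O(\delta)$, hence lies in $\overline{S_z(x,R_0)}$. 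Since this set contains an open neighbourhood of $p$ and $p \in \textup{supp}(\nu)$, we conclude $\nu(\overline{S_z(x,R_0)}) > 0$, which together with the previous paragraph proves the proposition.

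The step requiring the most care is the last one: the hyperbolicity constants must be tracked carefully, and the constant $C = C(\delta)$ chosen, so that the neighbourhood of $p$ is genuinely open, has positive $\nu$-measure, and is contained in $\overline{S_z(x,R_0)}$ simultaneously for all $z \in X$ — this uniformity over $z$ is what makes $R_0$ a single constant. This is where non-elementarity (to produce the two points $p, q$) and the structure of $\textup{supp}(\nu)$ are used. The other ingredients — the positivity of $\P(\mathcal{E})$ coming from the countability of $\mu$, and the fact that $v_k$ restarts an independent copy of the random walk — are routine.
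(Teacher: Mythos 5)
Your proof is correct, and in fact the paper does not prove this proposition at all: it is quoted verbatim from \cite{MT}*{Proposition 5.4}. Your argument is essentially the argument given there --- your conditioning on the event $\{w_a=h,\ w_{a+b}=g\}$ is just the stationarity inequality $\nu(A)\ge \mu^{*a}(h)\,\mu^{*b}(h^{-1}g)\,\nu(g^{-1}A)$ in probabilistic dress, and the remaining step (a single $R_0$, depending only on $\delta$ and two distinct points $p\neq q$ in the support of $\nu$, such that $\nu(\overline{S_z(x,R_0)})>0$ for every $z\in X$, via the trichotomy $\min\{\gp{x}{p}{z},\gp{x}{q}{z}\}\le \gp{x}{p}{q}+\delta$) is the same Gromov-product argument used in the source.
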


We will also use the well-known fact that in a Gromov hyperbolic space the complement of a shadow is approximately a shadow, 
as in the following proposition (see \cite{MT}, Proposition 2.4 and Corollary 2.5).

\begin{proposition} \label{P:nested}
Given non-negative constants $\delta, K$ and $L$, there are constants $C$ and $D$, such that
in any $\delta$-hyperbolic space $X$ we have: 
\begin{enumerate}
\item for any pair of points $x, y$ in $X$ and any $R \geq 0$ we have
$$X \setminus S_x(y, R) \subseteq S_y(x, d(x, y) - R + C);$$
\item
for any $R \ge 0$, and  any bi-infinite $(K, L)$-quasigeodesic $\gamma$, parameterized such that $\gamma(0)$ is a nearest point on $\gamma$
to the basepoint $x$, then for any shadow set
$V = S_{x}(\gamma(t), R)$ which does not contain $x$, with
$t \ge 0$, and for any point $y \in U = S_{x}(\gamma(t + D), R)$, we
have the inclusion
\[ X \setminus V \subseteq S_{y}(x, d(x, \gamma(t)) - R +
C). \]
\end{enumerate}
\end{proposition}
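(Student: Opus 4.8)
The plan is to treat the two parts separately: part (1) is a soft consequence of an exact identity, while part (2) is the ``complement of a shadow is a shadow'' picture drawn in Figure \ref{pic:atree}, which I would either read off a comparison tree or, for a rigorous argument, extract from the four-point inequality for Gromov products.

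For part (1) I would start from the identity $(z\cdot y)_x + (z\cdot x)_y = d(x,y)$, valid for every $z\in X$ with no hyperbolicity needed, as one sees by expanding both Gromov products. If $z\notin S_x(y,R)$ then $(z\cdot y)_x < d(x,y)-R$, hence $(z\cdot x)_y = d(x,y)-(z\cdot y)_x > R \ge d(x,y)-\big(d(x,y)-R+C\big)$, so $z\in S_y(x,d(x,y)-R+C)$. Thus $C=0$ works for interior points, and a point $z\in\partial X$ picks up only the usual $O(\delta)$ slack coming from the $\liminf$ definition of Gromov products to the boundary, so $C$ may be taken to depend on $\delta$ alone.

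For part (2), the geometric content is as follows. Since $\gamma$ is a $(K,L)$-quasigeodesic with $\gamma(0)$ the nearest point of $\gamma$ to $x$, the concatenation of $[x,\gamma(0)]$ with $\gamma|_{[0,\infty)}$ is a quasigeodesic ray with constants depending only on $\delta,K,L$; consequently $(\gamma(t)\cdot\gamma(t+D))_x \ge d(x,\gamma(t)) - C_1$ and $d(x,\gamma(t+D)) \ge d(x,\gamma(t)) + D_0$, where $C_1 = C_1(\delta,K,L)$ and $D_0 = D_0(D,K,L)$ can be made as large as we wish by taking $D$ large. The hypothesis $y\in S_x(\gamma(t+D),R)$ places $y$ coarsely ``beyond'' $\gamma(t+D)$, hence beyond $\gamma(t)$, as seen from $x$, while $z\notin V$ says $(\gamma(t)\cdot z)_x < d(x,\gamma(t))-R$, i.e. $z$ branches off the ray before coming within $R$ of $\gamma(t)$. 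In the comparison tree for the six points $x,\gamma(t),\gamma(t+D),y,z$ (which embed with $O(\delta)$ error) the median of $\{x,y,z\}$ is exactly the branch point of $z$, at distance $<d(x,\gamma(t))-R$ from $x$, so $(x\cdot z)_y = d(x,y) - d(x,\mathrm{med}) > d(x,y)-\big(d(x,\gamma(t))-R\big)$, which is the claim. To make this rigorous I would instead argue directly: by the identity above it suffices to bound $(z\cdot y)_x \le d(x,\gamma(t))-R+C$, and I would apply the $\delta$-inequality $(\gamma(t)\cdot z)_x \ge \min\{(\gamma(t)\cdot y)_x,(y\cdot z)_x\}-\delta$; if the minimum is $(y\cdot z)_x$ we are done, and in the other case one inserts $\gamma(t+D)$ instead, using the two quasigeodesic estimates together with $y\in S_x(\gamma(t+D),R)$ to force $z$ to lie coarsely beyond $\gamma(t+D)$, hence beyond $\gamma(t)$, contradicting $z\notin V$ once $D_0$ exceeds the accumulated hyperbolicity constants.

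The hard part is the bookkeeping in the regime where $V$ is a \emph{wide} shadow, namely when $R$ is close to $d(x,\gamma(t))$ so that the distance parameter $d(x,\gamma(t))-R$ is small but still positive (the role of the hypothesis $x\notin V$). There $S_x(\gamma(t+D),R)$ also has a small distance parameter, so the estimate ``$[x,y]$ passes uniformly near $\gamma(t)$'' degrades in an $R$-dependent way and one cannot simply assert that $y$ fellow-travels $[x,\gamma(t)]$ all the way to $\gamma(t)$; the case split described above is precisely what absorbs this, and choosing $D$ (hence $D_0$) large depending only on $\delta,K,L$ makes all the resulting contradictions go through. One checks at the end that every constant produced, in particular the final $C$, depends only on $\delta,K$ and $L$, as the statement requires.
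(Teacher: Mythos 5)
Your part (1) is fine: the exact identity $(z\cdot y)_x+(z\cdot x)_y=d(x,y)$ gives the inclusion with $C=0$, and since the paper explicitly omits the proof of this proposition (it only draws the approximate tree of Figure \ref{pic:atree}), there is no argument of the authors to compare yours against; the issue lies entirely in your part (2).

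The gap is the sentence asserting that ``choosing $D$ (hence $D_0$) large \dots makes all the resulting contradictions go through.'' Write $r=d(x,\gamma(t))-R$. Your first use of the $\delta$-inequality gives: either $(z\cdot y)_x<r+\delta$ (done), or $(y\cdot\gamma(t))_x<r+\delta$. In the second case, inserting $\gamma(t+D)$ only yields $(y\cdot\gamma(t))_x\ge \min\{r+D_0,\ d(x,\gamma(t))-C_1\}-\delta$, where $C_1=C_1(\delta,K,L)$ is the slack in ``$\gamma(t)$ lies coarsely on $[x,\gamma(t+D)]$''; the second entry equals $r+R-C_1$ and does not improve as $D$ grows, so the contradiction with $z\notin V$ only goes through when $R\ge C_1+O(\delta)$. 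Thus the delicate regime is not the one you flag ($R$ close to $d(x,\gamma(t))$, i.e.\ small distance parameter --- your case split handles that), but small $R$ compared with the quasigeodesic constants; your tree heuristic silently assumes that $[x,y]$ passes $R$-close to $\gamma(t)$, which is exactly what fails there. Moreover, in that regime the statement with constants depending only on $\delta,K,L$ and $R\ge 0$ arbitrary cannot be proved, because it fails: already in $\mathbb{H}^2$ (rescaled if necessary for the given $\delta>0$), take $\gamma$ a geodesic through $x=\gamma(0)$ and $R=0$; then $V=S_x(\gamma(t),0)$ is precisely the subray $\gamma([t,\infty))$ and $U=\gamma([t+D,\infty))$, and choosing $y\in U$ with $d(x,y)$ enormous and $z$ at distance $1$ from $y$ but off the ray gives $z\notin V$ with $(z\cdot x)_y\le 1$, far below $d(x,y)-\bigl(d(x,\gamma(t))-R+C\bigr)$, so the inclusion fails for every choice of $C$ and $D$. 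The usable statement carries the extra hypothesis $R\ge R_1(\delta,K,L)$ (harmless for the proposition's one application, in the proof of Proposition \ref{P:sqrtn}, where $R=R_0$ is a fixed constant that may be enlarged); under that hypothesis one does get $(y\cdot\gamma(t))_x\ge r+\delta$ for all $y\in U$ once $D$ is large, and then your two-step Gromov-product argument closes with $C=\delta$.
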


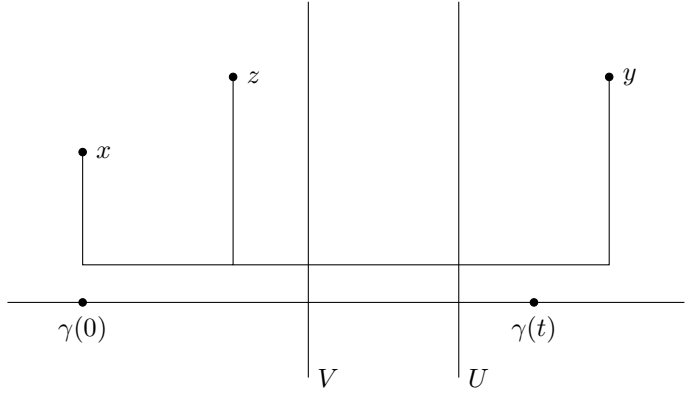
\begin{figure}[H]
\begin{center}
\begin{tikzpicture}

\tikzstyle{point}=[circle, draw, fill=black, inner sep=1pt]

\draw (-1, 0) -- (8, 0);

\draw (0, 2) -- (0, 0.5) -- (2, 0.5) -- (2, 3);
\draw (2, 0.5) -- (7, 0.5) -- (7, 3);

\draw (0, 2) node [point, label=right:{$x$}] {};
\draw (7, 3) node [point, label=right:{$y$}] {};
\draw (2, 3) node [point, label=right:{$z$}] {};
\draw (0, 0) node [point, label=below:{$\gamma(0)$}] {};
\draw (6, 0) node [point, label=below:{$\gamma(t)$}] {};

\draw (3, -1) node [right] {$V$} -- (3, 4);

\draw (5, -1) node [right] {$U$} -- (5, 4);

\end{tikzpicture}
\end{center}
\caption{The complement of a shadow is contained in a shadow.}\label{pic:atree}
\end{figure}

We will also use the following exponential decay estimates.
For $Y \subset X$ let $H^+(Y)$ denote the probability that the random
walk ever hits $Y$, i.e. that there is at least one index $n \in \N$ such
that $w_n x \in Y$.

\begin{lemma}[Exponential decay of shadows, \cite{maher-exp}*{Lemma
  2.10}] \label{L:exp} %
Let $G$ be a group which acts by isometries on a Gromov
hyperbolic space $X$, and let $\mu$ be a countable, non-elementary,
bounded probability measure on $G$.  Then there exist constants
$B > 0$ and $c < 1$ such that for any shadow $S_x(y, R)$ with distance parameter $r = d(x, y)  - R$,
we have the estimates
\begin{equation} \label{eq:exp nu}
\nu( \overline{S_x(y, R)} ) \le B c^r, 
\end{equation}
and
\begin{equation} \label{eq:exp hit}
H^+( S_x(y, R) ) \le B c^r.
\end{equation}
In particular, for all $n$:
\begin{equation} \label{eq:exp mu}
\mathbb{P}(w_n x \in S_x(y, R)) \leq B c^r.
\end{equation}
\end{lemma}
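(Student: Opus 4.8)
The plan is to deduce all three estimates from a single quantitative fact about the random walk: the drift is positive with exponential control on the lower tail, i.e.\ $\mathbb{P}(d(x, w_n x) \le nL') \le B_0 c_0^n$ for some $L' > 0$ (this follows from Theorem~\ref{T:MT}(3) together with the fact that $\tau(w_n) \le d(x, w_n x)$, or can be quoted directly from \cite{maher-exp}). The three inequalities \eqref{eq:exp nu}, \eqref{eq:exp hit}, \eqref{eq:exp mu} are logically related: \eqref{eq:exp mu} is the weakest and follows from \eqref{eq:exp hit}, which in turn implies \eqref{eq:exp nu} by letting $n \to \infty$ (the hitting probability of $Y$ dominates the probability that the limit point $\xi$ lies in $\bar Y$, up to enlarging the shadow using Proposition~\ref{P:nested}). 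So the real work is \eqref{eq:exp hit}.

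To prove \eqref{eq:exp hit}, fix a shadow $S = S_x(y, R)$ with distance parameter $r = d(x,y) - R$; we may assume $r$ is large, as otherwise the bound is trivial by taking $B$ large. The key geometric observation is that if a sample path ever enters $S$, say $w_n x \in S$ for some $n$, then by the definition of the shadow the path has made progress at least $r$ toward $y$ in the Gromov-product sense: $(y \cdot w_n x)_x \ge r$, which forces $d(x, w_n x) \ge (y \cdot w_n x)_x \ge r$ but more usefully forces $w_n x$ to be "pointed toward $y$." I would partition according to the first such hitting time $n$ and according to whether $n$ is small or large relative to $r$. If $n \ge r / (2L')$, then in particular $d(x, w_n x)$ is bounded below by the drift estimate only gives an upper bound, so instead: for the path to still be near $x$ (within the shadow region does not mean near $x$)—rather, reverse the logic. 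Enter $S$ at time $n$ means $d(x, w_n x) \ge r$; combined with boundedness of $\mu$ (each step moves by at most $D = \sup_{g \in \mathrm{supp}\,\mu} d(x, gx)$), this gives $n \ge r/D$. So we only need to sum over $n \ge r/D$. For each such $n$, the event that $w_n x \in S$ is controlled by the convergence of $w_n x$ to the boundary together with the deviation estimate: the probability that the path deviates so as to land in a shadow of distance parameter $r$ at a time $n$ with $nL' $ comparable to or less than... The cleanest route is to use that $\mathbb{P}(w_n x \in S_x(y,R))$ is summable in $n$ over $n \ge r/D$ because for $n$ in the range $[r/D, 2r/L']$ the path must have sublinear displacement relative to its eventual drift direction (exponentially unlikely by the drift lower tail, giving $c_0^n \le c_0^{r/D}$), while for $n \ge 2r/L'$ the path has typically traveled distance $\gtrsim nL' \gg r$ past $y$, so landing back inside the distance-$r$ shadow of $y$ requires an exponentially unlikely backtracking — quantified again by applying the drift estimate to the reversed walk from time $n$, or by a Markov-type argument on excursions. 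Summing the geometric series in $n$ yields $H^+(S) \le B c^r$.

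The main obstacle is the middle/large-$n$ regime of \eqref{eq:exp hit}: showing that once the walk has drifted well past the "depth" $r$, it is exponentially unlikely (uniformly in how far past) to return to the shallow shadow $S_x(y,R)$. This is exactly the kind of estimate where one wants a strong Markov property for the hitting time combined with the fact that shadows seen from a point deep in the boundary direction have even smaller hitting probability — i.e.\ a self-improving or renewal-type bound. I expect to handle it by decomposing at the first hitting time $\tau_S$ and using that, conditioned on $\{\tau_S = n, w_n x = z\}$ with $z \in S$, the distance from $z$ to $y$ is small ($\le R + O(\delta)$) while $d(x,z)$ could be large; then Proposition~\ref{P:nested} lets one re-express "the walk is currently in $S$ having started far out" as "the walk from $z$ must hit a shadow (seen from $z$) with large distance parameter $\approx d(x,z) - R$," which closes the induction. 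This bootstrapping, rather than any single hard inequality, is the delicate point; everything else is the drift estimate plus bookkeeping.
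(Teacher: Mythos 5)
Your reduction has the logical skeleton right (the real content is the hitting estimate \eqref{eq:exp hit}, and \eqref{eq:exp mu} is immediate from it), but the proof you sketch for \eqref{eq:exp hit} has concrete gaps. First, a factual error: conditioned on $w_nx\in S_x(y,R)$, it is \emph{not} true that $d(w_nx,y)\le R+O(\delta)$; the shadow contains points arbitrarily far beyond $y$ (its defining condition is that the Gromov product $\gp{x}{w_nx}{y}$ is at least $r$, i.e.\ geodesics from $x$ to $w_nx$ pass near $y$), so the ``re-expression'' step in your final paragraph starts from a false premise. Second, the summation strategy cannot work: for large $n$ the probability $\mathbb{P}(w_nx\in S_x(y,R))$ does not decay in $n$ --- it converges to something comparable to the hitting measure of the shadow --- so $\sum_{n\ge r/D}\mathbb{P}(w_nx\in S)$ diverges, and the ``backtracking'' heuristic is misleading: being in the shadow at a late time is not a small-displacement event, it is a wrong-direction event whose probability is exactly the quantity $\nu(S)$ you are trying to bound, so the argument becomes circular. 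What is genuinely missing is the key uniform estimate that makes any renewal/first-passage argument close: there exist $D_0$ and $p<1$, depending only on $\mu$ and $\delta$, such that from \emph{any} location the walk ever hits \emph{any} shadow of distance parameter at least $D_0$ with probability at most $p$; this is where non-elementarity (two independent loxodromics) and boundedness are actually used, and iterating it along nested shadows is what produces the exponential decay. Your sketch never isolates or proves this, and the drift lower bound alone does not supply it.

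For comparison, the paper does not reprove the lemma at all: \eqref{eq:exp nu} is quoted from \cite{maher-linear}*{Lemma 5.4} (equivalently \cite{maher-exp}*{Lemma 2.10}), \eqref{eq:exp hit} is deduced from \eqref{eq:exp nu} as in \cite{MT}*{Equation (5.3)} --- note this is the opposite implication direction from yours: one bounds the probability of ever hitting the shadow by the $\nu$-measure of a slightly enlarged shadow, using the Markov property at the hitting time together with a uniform lower bound on the probability that the walk started deep inside a shadow converges into its enlargement --- and \eqref{eq:exp mu} is then immediate. If you want a self-contained proof rather than a citation, the place to invest effort is the uniform escape probability described above, not the time decomposition.
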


\noindent Indeed, equation \eqref{eq:exp nu} is \cite{maher-linear}*{Lemma 5.4}, and equation
\eqref{eq:exp hit} follows from  \eqref{eq:exp nu}  as in \cite{MT}*{Equation
  (5.3)}.  Equation \eqref{eq:exp mu} is an immediate consequence of
\eqref{eq:exp hit}.

Finally, we will also use the following positive drift, or linear progress, result.

\begin{proposition}[Exponential decay of linear progress,
\cite{maher-exp}] \label{L:progress-decay} %
Let $G$ be a group acting on a hyperbolic space $X$.  Let $\mu$ be a
countable, non-elementary measure on $G$ which has bounded support in
$X$. Then there exist constants $B > 0$, $L > 0$ and $0 < c < 1$ such
that for all $n$:
$$\mathbb{P}(d(x, w_n x) \leq Ln ) \leq B c^{n}.$$
\end{proposition}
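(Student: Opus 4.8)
The plan is to cut the walk into many independent blocks and combine the exponential decay of shadows (Lemma~\ref{L:exp}) with the elementary identity expressing the displacement of a broken path as the sum of the edge lengths minus twice the sum of the Gromov products at the breakpoints.

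First I would fix a block length $m$, to be chosen large only at the end. Writing $n = km + r$ with $0 \le r < m$ and using $d(x, w_n x) \ge d(x, w_{km} x) - m\tilde D$, where $\tilde D := \sup_{g \in \textup{supp}\,\mu} d(x, gx) < \infty$ by boundedness, it is enough to handle $n = km$. Set $u_i := g_{(i-1)m+1}\cdots g_{im}$, so that the $u_i$ are i.i.d.\ with law $\mu^{*m}$ and $w_{km} = u_1 \cdots u_k$, and put $x_0 := x$, $x_i := w_{im} x$. Iterating $d(a,b) = d(c,a) + d(c,b) - 2\,(a\cdot b)_c$ yields the exact identity
$$d(x, w_{km} x) = \sum_{i=1}^{k} d(x, u_i x) - 2\sum_{i=2}^{k} (x_0 \cdot x_i)_{x_{i-1}}.$$
Applying the isometry $w_{(i-1)m}^{-1}$ one rewrites $(x_0 \cdot x_i)_{x_{i-1}} = (y_{i-1} \cdot u_i x)_x$, where $y_{i-1} := w_{(i-1)m}^{-1} x$ is a function of $u_1,\dots,u_{i-1}$ only, while $u_i$ is independent of these.

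Next I would control the two sums. The first is a sum of i.i.d.\ variables bounded by $m\tilde D$ whose common mean, by Theorem~\ref{T:MT}(2) and dominated convergence, is at least $Lm/2$ once $m$ is large; a Hoeffding bound then gives $\mathbb{P}\big(\sum_{i=1}^{k} d(x, u_i x) \le (Lm/4)\,k\big) \le e^{-c_1 k}$ with $c_1 > 0$ \emph{not depending on $m$}. For the second sum, the key observation is that, conditionally on $u_1,\dots,u_{i-1}$, the event $\{(y_{i-1} \cdot u_i x)_x \ge t\}$ is exactly $\{u_i x \in S_x(y_{i-1}, R)\}$ for a shadow with distance parameter $t$, so by \eqref{eq:exp mu} its conditional probability is at most $Bc^{t}$, with $B$ and $c$ independent of $m$ and of the conditioning. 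This uniform sub-exponential tail gives a uniform bound $\mathbb{E}\big[e^{\theta (x_0 \cdot x_i)_{x_{i-1}}} \mid u_1,\dots,u_{i-1}\big] \le M(\theta) < \infty$ for all small $\theta > 0$; iterating the conditional expectations and applying Markov's inequality yields $\mathbb{P}\big(\sum_{i=2}^{k} (x_0\cdot x_i)_{x_{i-1}} \ge A k\big) \le C_0 \rho_0^{\,k}$ for a suitable constant $A$ and some $\rho_0 < 1$, again \emph{independent of $m$}. Choosing $m$ large enough that $Lm/8 > 2A$, the displacement identity then forces $d(x, w_{km} x) \ge (Lm/8)\,k$ outside an event of probability at most $e^{-c_1 k} + C_0 \rho_0^{\,k}$, and unwinding the reduction $n = km + r$ (absorbing small $n$ into the multiplicative constant) gives the assertion with, say, $L$ replaced by $L/16$.

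Given Lemma~\ref{L:exp} there is no real obstacle: what remains is the triangle-inequality identity, a Hoeffding estimate, and a Chernoff estimate for a sum of conditionally sub-exponential terms. The one point requiring care is bookkeeping: since the block length $m$ is selected last, one must make sure the constants $c_1$, $A$, $M(\theta)$, $\rho_0$ depend only on $\mu$, on $\tilde D$, on $\delta$, and on the constants $B, c$ of Lemma~\ref{L:exp}, and not on $m$ itself. If instead one wanted to avoid citing \cite{maher-exp} and argue from scratch, the substantive work would be Lemma~\ref{L:exp} — the exponential decay of the hitting measure and of the hitting probability of shadows — which is where hyperbolicity genuinely enters, through a nesting argument for shadows in the spirit of Proposition~\ref{P:nested}.
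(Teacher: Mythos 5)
Your argument is correct, but it is worth noting that the paper itself offers no proof of this proposition: it is quoted verbatim from \cite{maher-exp}, so the right comparison is with the argument there rather than with anything in this text. Your derivation is a genuinely different (and self-contained, given the quoted toolkit) route: you take positive drift, Theorem \ref{T:MT}(2), as an input to get a lower bound of order $Lm/2$ on $\mathbb{E}[d(x,w_m x)]$ for a fixed large block length $m$, and then use only the exact identity $d(x_0,x_k)=\sum_i d(x_{i-1},x_i)-2\sum_{i\ge 2}(x_0\cdot x_i)_{x_{i-1}}$ (valid in any metric space) together with the uniform-in-$n$ shadow estimate \eqref{eq:exp mu} to control the backtracking terms, via a Hoeffding bound for the i.i.d.\ block displacements and a conditional Chernoff bound for the Gromov products; all your constants are indeed independent of $m$ for the reasons you give, and the reduction from $n$ to $km$ and the absorption of small $n$ into $B$ are routine. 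By contrast, the proof in \cite{maher-exp} does not presuppose positive drift: it works directly with (nested) shadows and first-passage/Markov-type estimates to produce linear progress and the exponential bound simultaneously, with hyperbolicity entering through the shadow geometry rather than through a prior law of large numbers. What your approach buys is brevity and transparency given the black boxes already quoted in this paper (Lemma \ref{L:exp} and Theorem \ref{T:MT}); what it costs is logical economy, since it leans on the a.s.\ drift theorem, so one should check there is no circularity --- there is none here, because decay of shadows (from \cite{maher-linear}) and positive drift (from \cite{MT}) are established independently of the present proposition, but that dependence structure deserves an explicit sentence if you were to write this up.
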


\subsection{The Poisson boundary}

Given a countable group $G$ and a probability measure $\mu$ on $G$, one defines the space of bounded $\mu$-harmonic functions as 
$$H^\infty(G, \mu) := \left\{ f : G \to \mathbb{R} \textup{ bounded }\ : \ f(g) = \sum_{h \in G} f(gh) \mu(h) \ \forall g \in G \right\}.$$
Suppose now that $G$ acts by homeomorphisms on a measure space $(M, \nu)$. Then the measure $\nu$ is $\mu$-stationary 
if 
$$\nu = \sum_{h \in G} \mu(h)\ h_\star \nu.$$
A $G$-space $M$ with a $\mu$-stationary measure $\nu$ is called a \emph{$\mu$-boundary} if for almost every sample path $(w_n)$
the measure $w_n \nu$ converges to a $\delta$-measure. Given a $\mu$-boundary, one has the \emph{Poisson transform} 
$\Phi : L^\infty(M, \nu) \to H^\infty(G, \mu)$ defined as 
$$\Phi(f)(g) := \int_M f(g x) \ d\nu(x).$$

\begin{definition}
The space $(M, \nu)$ is the \emph{Furstenberg-Poisson boundary} of $(G, \mu)$ if the Poisson transform $\Phi$ is a
bijection between $L^\infty(M, \nu)$ and $H^\infty(G, \mu)$.
\end{definition}

It turns out that the Furstenberg-Poisson boundary is well-defined up
to $G$-equivariant measurable isomorphisms. 
Moreover, it is the maximal $\mu$-boundary in following sense: if $(B_{FP}, \nu_{FP})$ is the Furstenberg-Poisson boundary  and $(B, \nu)$
is another $\mu$-boundary, then there exists a $G$-equivariant measurable map $(B_{FP}, \nu_{FP}) \to (B, \nu)$.
Finally, such a boundary can be defined as the measurable quotient of the sample space of the random walk $(G, \mu)$ by identifying 
two sample paths if they eventually coincide (to be precise, one should cast this definition in the context of measurable partitions, 
as defined by Rokhlin \cite{Rok}).

\subsection{The strip criterion} 

In order to obtain the Poisson boundary for WPD actions, we will use Kaimanovich's \emph{strip criterion}. 
This basically says that if bi-infinite paths for the random walks can be approximated by subsets of $G$, called \emph{strips}, 
then one can conclude that the relative entropies of the conditional random walks vanish, hence the proposed geometric boundary 
is indeed the Poisson boundary.

Given a measure $\mu$ on $G$, its \emph{reflected measure} if $\rmu(g) := \mu(g^{-1})$. 
Moreover, we denote as $\rnu$ the hitting measure for the random walk associated to the reflected measure $\rmu$. 
We say that the measure $\mu$ has \emph{finite entropy} if 
$$H(\mu) := - \sum_{g \in G} \mu(g) \log \mu(g) < \infty.$$
Let $x \in X$ be a basepoint. The measure $\mu$ has \emph{finite logarithmic moment} if $\int_G \log^+ d(x, gx) \ d\mu(g) < \infty$.
Let us denote as 
$$B_G(g) := \{ h \in G \ : \ d(x, hx) \leq d(x, gx) \}.$$
We shall use the following \emph{strip criterion} by Kaimanovich.

\begin{theorem}[\cite{kaimanovich}] \label{T:strip}
Let $\mu$ be a probability measure with finite entropy on $G$, and let
$(\partial X , \nu )$ and $(\partial X , \rnu )$ be $\mu$- and
$\rmu$-boundaries, respectively. If there exists a measurable
$G$-equivariant map $S$ assigning to almost every pair of points $(\a
, \b ) \in \partial X \cross \partial X$ a non-empty ``strip'' $S(\a ,
\b ) \subset G$, such that for all $g$
\[ \frac{1}{n} \log \left| S(\a , \b ) g \cap B_G( w_n )\right| \to 0 \qquad \text{ as } n \to \infty, \]
for $\nu \cross \rnu $-almost every $(\a , \b ) \in \partial X
\cross \partial X$, then $(\partial X , \nu )$ and $(\partial X ,
\rnu )$ are the Poisson boundaries of the random walks $(G, \mu)$ and
$(G, \rmu)$, respectively.
\end{theorem}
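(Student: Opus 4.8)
The plan is to deduce this from Kaimanovich's entropy criterion for the Poisson boundary. Recall the shape of that criterion: if $\mu$ has finite entropy, a $\mu$-boundary $(B,\nu)$ is the full Poisson boundary if and only if a suitable conditional entropy of the walk vanishes, and this in turn holds as soon as one can exhibit, measurably in the boundary value $b\in B$, a sequence of finite sets $A_n(b)\subset G$ with $\mathbf{P}_b(w_n\in A_n(b))\to 1$ and $\tfrac1n\log\#A_n(b)\to 0$ — a ``subexponential gauge'' for the conditional walk $\mathbf{P}_b$. Since the target boundary is $(\partial X,\nu)$ and the boundary value of a forward path is its limit point $\alpha=\lim_n w_n x\in\partial X$, the task is to manufacture such gauges; the strips, which a priori depend on \emph{both} the forward limit $\alpha$ and the backward limit $\beta$, enter after passing to the bilateral step space $(G,\mu)^{\Z}$, on which both the forward walk (by Theorem \ref{T:MT}(1) for $\mu$) and the backward walk (Theorem \ref{T:MT}(1) for $\rmu$) converge to boundary points $\alpha,\beta$.

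The geometric input is that $S(\alpha,\beta)$ coarsely contains the whole bilateral trajectory. Indeed the bi-infinite sequence $(w_n x)_{n\in\Z}$ tracks a bi-infinite quasigeodesic joining $\beta$ to $\alpha$, and since $S(\alpha,\beta)$ is by construction a $G$-equivariant coarse approximation of such a quasigeodesic, after replacing $S$ by a fixed bounded neighbourhood of itself (which does not affect the counting hypothesis) we get $w_n\in S(\alpha,\beta)$ for all $n$, with full probability. Simultaneously, by the linear-progress estimate (Theorem \ref{T:MT}(2)--(3), i.e.\ $d(x,w_nx)\le L'n$ eventually with high probability) the element $w_n$ lies in $B_G(w_n)$ with radius comparable to $Ln$. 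Combining, $w_n$ lies with conditional probability tending to $1$ in $A_n:=S(\alpha,\beta)\cap B_G(w_n)$, and the hypothesis $\tfrac1n\log\#\big(S(\alpha,\beta)\,g\cap B_G(w_n)\big)\to 0$ — applied with $g=e$, the neighbourhood absorbed — is exactly $\tfrac1n\log\#A_n\to 0$. This verifies the gauge condition for the bilateral conditioning; Kaimanovich's criterion then upgrades it to vanishing conditional entropy for the walk conditioned on $(\alpha,\beta)$, and, using that $\nu$ and $\rnu$ are $\mu$- and $\rmu$-stationary boundaries so that the bilateral path measure disintegrates correctly over $\nu\cross\rnu$, this forces $(\partial X,\nu)$ to be the Poisson boundary of $(G,\mu)$. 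Interchanging $\mu$ and $\rmu$ gives the statement for $(\partial X,\rnu)$.

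The reason the counting hypothesis is demanded for every translate $S(\alpha,\beta)g$, not just for $g=e$, is technical but essential: verifying the entropy criterion requires controlling not a single set $A_n$ but the family obtained by sliding the basepoint of the conditional walk along the trajectory, which produces translated strips. I expect the genuinely delicate points of a self-contained argument to be (i) the equivalence between maximality of the boundary and existence of a subexponential gauge — the heart of Kaimanovich's method, resting on the Abramov--Rokhlin and Shannon--McMillan--Breiman machinery for the conditional processes — and (ii) the measurability of $(\alpha,\beta)\mapsto S(\alpha,\beta)$ and of the resulting sets $A_n$, needed so that the conditional measures and their entropies are well defined. The geometric tracking statement, by contrast, is a routine consequence of $\delta$-hyperbolicity together with Theorem \ref{T:MT}. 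As the statement above is precisely \cite{kaimanovich}, in the sequel we simply invoke it in this form.
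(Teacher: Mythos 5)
The paper gives no proof of this statement at all: it is quoted verbatim as a theorem of Kaimanovich, and your closing move of simply invoking \cite{kaimanovich} is exactly what the paper does. To that extent the two ``approaches'' coincide.

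However, the sketch you wrap around the citation would not survive as an actual argument, and it is worth seeing why. The hypotheses impose no geometric structure whatsoever on the strips: $S(\a,\b)$ is an arbitrary non-empty subset of $G$, assigned measurably and $G$-equivariantly, and nothing says it is ``by construction a coarse approximation of a quasigeodesic from $\b$ to $\a$,'' nor that $w_n \in S(\a,\b)$ for all $n$ with full probability. (Even in this paper's application, where the strip $\mathcal{O}(\a,\b)$ does lie along a quasigeodesic, one only obtains $w_i \in \mathcal{O}(\omega)$ for a positive density of times $i$, via the ergodic theorem --- not for all $n$.) Your proposed fix of replacing $S$ by a bounded neighbourhood of itself is also unsafe here: since the action is non-proper, a set of the form $\{h \in G : d(hx, S(\a,\b)x) \le C\}$ can be infinite even when $S(\a,\b)$ is a single element, so the counting hypothesis is not automatically preserved. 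Kaimanovich's actual deduction from the conditional entropy criterion uses only non-emptiness, countability of $G$, equivariance, and shift-invariance of the bilateral path measure: by equivariance $S(w_n^{-1}\a, w_n^{-1}\b) = w_n^{-1}S(\a,\b)$, so $\mathbb{P}(w_n \in S(\a,\b)g)$ is independent of $n$, and since the strip is non-empty and $G$ is countable there is some $g$ for which this constant probability is positive; together with the trivial fact $w_n \in B_G(w_n)$ and the subexponential bound on $\setnorm{S(\a,\b)g \cap B_G(w_n)}$, this feeds the entropy criterion. This also explains the quantifier ``for all $g$'': one cannot choose in advance which translate of the strip catches the walk with positive probability --- it is not about ``sliding the basepoint along the trajectory.'' None of this undermines the paper's use of the theorem, but as a blind reconstruction of its proof your geometric-tracking step is a genuine gap.
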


\section{Background on the Cremona group} \label{S:Cremona}

We will start by recalling some fundamental facts about the Cremona group, and especially 
its action on the Picard-Manin space. For more details, see \cite{CL}, \cite{DF}, \cite{Fa} and references therein.

\subsection{The Picard-Manin space}

If $X$ is a smooth, projective, rational surface the group 
$$N^1(X) := H^2(X, \mathbb{Z}) \cap H^{1, 1}(X, \mathbb{R})$$
is called the \emph{N\'eron-Severi group}. Its elements are Cartier divisors on $X$ modulo numerical equivalence.
The intersection form defines an integral quadratic form on $N^1(X)$. 
We denote $N^1(X)_\mathbb{R} := N^1(X) \otimes \mathbb{R}$.

If $f : X \rightarrow Y$ is a birational morphism, then the pullback map $f^\star : N^1(Y) \to N^1(X)$ is injective
and preserves the intersection form, so $N^1(Y)_\mathbb{R}$ can be thought of as a subspace of $N^1(X)_\mathbb{R}$. 

A \emph{model} for $\CP$ is a smooth projective surface $X$ with a birational morphism $X \rightarrow \CP$. 
We say that a model $\pi' : X' \rightarrow \CP$ dominates the model $\pi : X \rightarrow \CP$ if the 
induced birational map $\pi^{-1} \circ \pi' : X' \dashrightarrow X$ is a morphism.
By considering the set $\mathcal{B}_X$ of all models which dominate $X$, 
one defines the space of \emph{finite Picard-Manin classes} as the injective limit
$$\mathcal{Z}(X) := \lim_{X' \in B_X} N^1(X')_\mathbb{R}.$$
In order to find a basis for $\mathcal{Z}(X)$, one defines an equivalence relation on the set of pairs $(p, Y)$ where $Y$ is a model of $X$ and $p$ a point in $Y$, 
as follows. One declares $(p, Y) \sim (p', Y')$ if the induced birational map $Y \dashrightarrow Y'$ maps $p$ to $p'$ 
and is an isomorphism in a neighbourhood of $p$. We denote the quotient space as $\mathcal{V}_X$. 
Finally, the \emph{Picard-Manin space} of $X$ is the $L^2$-completion 
$$\mathcal{Z}(X) := \left\{ [D] + \sum_{p \in \mathcal{V}_X} a_p [E_p] \ : \ [D] \in N^1(X)_\mathbb{R}, a_p \in \mathbb{R}, \sum_{p \in \mathcal{V}_X} a_p^2 < + \infty \right\}.$$
In this paper, we will only focus on the case $X = \mathbb{P}^2(\mathbb{C})$. Then the N\'eron-Severi group of $\CP$ 
is generated by the class $[H]$ of a line, with self-intersection $+1$. Thus, the Picard-Manin space is 
$$\overline{\mathcal{Z}}(\mathbb{P}^2) := \left\{ a_0 [H] + \sum_{p \in \mathcal{V}_{\CP}} a_p [E_p], \quad \sum_p a_p^2 < +\infty \right\}.$$
It is well-known that if one blows up a point in the plane, then the corresponding exceptional divisor has self-intersection $-1$, and 
intersection zero with divisors on the original surface. 

Thus, the classes $[E_p]$ have self-intersection $-1$, are mutually orthogonal, and are orthogonal to $N^1(X)$. Hence, 
the space $\overline{\mathcal{Z}}(\mathbb{P}^2)$ is naturally equipped with a quadratic form of signature $(1, \infty)$, 
thus making it a Minkowski space of uncountably infinite dimension. 
Thus, just as classical hyperbolic space can be realized as one sheet of a hyperboloid inside 
a Minkowski space, inside the Picard-Manin space one defines 
$$\mathbb{H}_{\mathbb{P}^2} := \{ [D] \in \overline{\mathcal{Z}}(\mathbb{P}^2) \ : \ [D]^2 = 1, [H] \cdot [D] > 0 \}$$
which is one sheet of a two-sheeted hyperboloid. The restriction of the quadratic intersection form to 
$\mathbb{H}_{\mathbb{P}^2}$ defines a Riemannian metric of constant curvature $-1$, thus making $\mathbb{H}_{\mathbb{P}^2}$
into an infinite-dimensional hyperbolic space. More precisely, the induced distance $\textup{dist}$ satisfies the 
formula 
$$\cosh \textup{dist}([D_1], [D_2]) = [D_1] \cdot [D_2].$$

Each birational map $f$ acts on $\overline{\mathcal{Z}}$ by orthogonal transformations. 
To define the action, recall that for any rational map $f : \CP \dashrightarrow \CP$ there exist a surface $X$ and morphisms $\pi, \sigma : X \to \CP$ 
such that $f = \sigma \circ \pi^{-1}$. Then we define $f^\star = (\pi^\star)^{-1} \circ \sigma^\star$, and $f_\star = (f^{-1})^\star$. 
Moreover, $f_\star$ preserves the intersection form, hence it acts as an isometry of $\mathbb{H}_{\mathbb{P}^2}$: in other words, the map 
$f \mapsto f_\star$ is a group homomorphism 
$$\textup{Bir} \ \CP \to \textup{Isom}(\mathbb{H}_{\mathbb{P}^2})$$
hence one can apply to the Cremona group the theory of random walks on groups acting 
on non-proper $\delta$-hyperbolic spaces.

The space $\mathbb{H}_{\mathbb{P}^2}$ is not separable; however, any countable subgroup of the Cremona group 
preserves a closed, totally geodesic, separable, subset of $\mathbb{H}_{\mathbb{P}^2}$ (see also \cite{DelzPy}, Remark 1).

\begin{definition} The \emph{dynamical degree} of a birational transformation $f : X \dashrightarrow X$ is defined as
$$\lambda(f) := \lim_{n \to \infty} \Vert (f^n)^\star \Vert^{1/n}$$
where $\Vert \cdot \Vert$ is any operator norm on the space of endomorphisms of $H^\star(X, \mathbb{R})$. 
\end{definition}

Note that $\lambda(f) = \lambda(gfg^{-1})$ is invariant by conjugacy. Moreover, if $f$ is represented by three homogeneous polynomials 
of degree $d$ without common factors, then the action of $f^\star$ on the class $[H]$ of a line is $f^\star([H]) = d [H]$, hence 
$$\lambda(f)  = \lim_{n \to \infty} \textup{deg}(f^n)^{1/n}.$$
Moreover, the degree is related to the displacement in the hyperbolic space $ \mathbb{H}_{\mathbb{P}^2}$: in fact, (see \cite{Fa}, page 17)
$$\textup{deg}(f) = f^\star [H] \cdot [H] = [H] \cdot f_\star[H] = \cosh d(x, fx)$$
if $x = [H] \in \mathbb{H}_{\mathbb{P}^2}$.
As a consequence, the dynamical degree $\lambda(f)$ of a transformation $f$ is related to its translation length $\tau(f)$ by the equation
(\cite{CL}, Remark 4.5): 
$$\tau(f) = \lim_{n \to \infty} \frac{\textup{dist}(x, f^n x)}{n} = \lim_{n \to \infty} \frac{\cosh^{-1} \textup{deg}(f^n)}{n} = \log \lambda(f).$$
Hence, a Cremona transformation $f$ is loxodromic if and only if $\lambda(f) > 1$. 

\section{Growth of translation length}

Let us now start by proving that for bounded probability measures translation length grows linearly along almost every sample path. 
This is a variation of \cite[Theorem 1.2]{MT} and \cite[Theorem 1.2]{dahmani-horbez}.

\begin{theorem}\label{T:growth}
Let $G$ be a group acting on a Gromov hyperbolic space $X$. 
Let $\mu$ be a countable non-elementary measure on $G$ whose support is bounded in $X$. Then for almost every sample path 
we have 
$$\lim_{n \to \infty} \frac{\tau(w_n)}{n} = L$$
where $L > 0$ is the drift of the random walk.
\end{theorem}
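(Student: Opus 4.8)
The plan is to deduce the almost-sure convergence $\tau(w_n)/n \to L$ from the known facts that (i) the displacement $d(x, w_n x)/n \to L$ almost surely (Theorem \ref{T:MT}(2), which applies since a bounded measure has finite first moment), and (ii) the translation length makes linear progress with exponential decay, $\mathbb{P}(\tau(w_n) \ge nL') \ge 1 - Bc^n$ for some $L' > 0$ (Theorem \ref{T:MT}(3)). Since always $\tau(w_n) \le d(x, w_n x)$, the upper bound $\limsup_n \tau(w_n)/n \le L$ is immediate from (i). The work is all in the lower bound, and the natural first move is: by Borel--Cantelli applied to (ii), almost surely $\tau(w_n) \ge nL'$ for all large $n$, which already gives $\liminf_n \tau(w_n)/n \ge L' > 0$ — but this only yields a positive constant, not the sharp value $L$.

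To upgrade $L'$ to $L$, the key step is a \emph{concatenation/quasigeodesic} argument. Fix a large integer $m$. The idea is that for a generic sample path, the geodesic-like segments $[w_{km} x, w_{(k+1)m} x]$, $k = 0, 1, \dots$, fit together into a quasigeodesic, so that traversing $w_{nm}$ moves $x$ a distance comparable to $n \cdot d(x, w_m x)$; and since each block $w_{(k+1)m}^{-1} w_{km}$ has, by (i), displacement close to $mL$, one gets $d(x, w_{nm} x) \gtrsim nmL$, hence $\tau(w_{nm}) \gtrsim nmL$ up to the additive hyperbolicity constants and a bounded error. Concretely: by (i) and (ii) (applied to the stationary increments), for a set of sample paths of probability $\ge 1 - \varepsilon$ one can arrange that each increment segment has length within $(1 \pm \varepsilon)mL$ and that consecutive increment segments overlap in a controlled way, so that a local-to-global (Gromov) criterion forces the concatenation of $n$ such segments to be a $(1+\varepsilon', K)$-quasigeodesic. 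Taking $n \to \infty$ along a fixed path gives $\tau(w_{nm}) \ge (1 - O(\varepsilon)) nmL - O(1)$, so $\liminf_n \tau(w_{nm})/(nm) \ge (1 - O(\varepsilon))L$; letting $m \to \infty$ and $\varepsilon \to 0$, together with the easy interpolation $\tau(w_{nm + r}) \ge \tau(w_{nm}) - O(r)$ for bounded $r$ using boundedness of the support, yields $\liminf_n \tau(w_n)/n \ge L$.

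An alternative, and perhaps cleaner, route avoids the explicit quasigeodesic estimate: use the Gromov product characterization $\tau(g) \ge d(x, gx) - 2(g^{-1}x \cdot gx)_x - O(\delta)$, valid in any $\delta$-hyperbolic space, together with the fact (provable from Theorem \ref{T:MT}(1) and the exponential decay of shadows, Lemma \ref{L:exp}) that for a generic path the Gromov product $(w_n^{-1} x \cdot w_n x)_x$ is \emph{sublinear} in $n$ — indeed bounded in probability — because the forward and backward limit points of the bi-infinite random walk are distinct almost surely. Combining $d(x, w_n x)/n \to L$ with $(w_n^{-1}x \cdot w_n x)_x = o(n)$ then gives $\tau(w_n)/n \to L$ directly.

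The main obstacle is the passage from the crude lower bound $L'$ to the sharp constant $L$: one must rule out the possibility that the path $w_n x$ "folds back" enough to make the translation length grow strictly slower than the displacement. This is exactly where either the concatenation-into-quasigeodesic argument (needing a local-to-global criterion with explicit constants depending only on $\delta$) or the sublinearity of the Gromov product $(w_n^{-1}x \cdot w_n x)_x$ (needing that the bi-infinite walk has distinct endpoints, plus a quantitative decay estimate) carries the weight. I expect the second formulation to be shorter to write, since the needed decay of Gromov products is close to statements already available via the exponential decay of shadows in Lemma \ref{L:exp}.
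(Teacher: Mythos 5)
Your preferred second route is exactly the paper's proof: the paper first proves (Proposition \ref{P:gp2}) that the Gromov product $(w_n x \cdot w_n^{-1} x)_x$ grows sublinearly almost surely, and then concludes from the drift statement of Theorem \ref{T:MT}(2) via the formula $\tau(g) = d(x,gx) - 2(gx \cdot g^{-1}x)_x + O(\delta)$, precisely as you propose; the concatenation-into-quasigeodesic route is not used, and the crude lower bound from Theorem \ref{T:MT}(3) plays no role.

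The one step you have underestimated is the sublinearity of $(w_n x \cdot w_n^{-1} x)_x$ itself. It does not follow directly from ``the forward and backward limit points of the bi-infinite walk are distinct'' plus exponential decay of shadows: $w_n$ and $w_n^{-1}$ are built from the same increments, so they are far from independent, and $w_n^{-1}x$ does not converge to a boundary point as $n$ varies (only its distribution agrees with the reflected walk at time $n$), so Lemma \ref{L:exp} cannot be applied to this pair as it stands. The paper restores independence by splitting at the midpoint: write $w_{2n} = w_n u_n$ with $u_n = w_n^{-1} w_{2n}$ independent of $w_n$. Decay of shadows then bounds $(w_n x \cdot u_n^{-1} x)_x$; linear progress with exponential decay (Proposition \ref{L:progress-decay}, together with Proposition \ref{P:nested}) shows that with high probability $(w_{2n} x \cdot w_n x)_x$ and $(w_{2n}^{-1} x \cdot u_n^{-1} x)_x$ are both at least $(L-\epsilon)n$; and a four-point thin-triangle lemma transfers the smallness of $(w_n x \cdot u_n^{-1} x)_x$ to $(w_{2n} x \cdot w_{2n}^{-1} x)_x$. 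This yields $\mathbb{P}\bigl((w_n x \cdot w_n^{-1} x)_x \ge \epsilon n\bigr) \le B c^{\epsilon n}$ and hence sublinearity by Borel--Cantelli. With that estimate supplied, your argument coincides with the paper's; without it, the key lemma is only asserted, not proved.
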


\begin{proof}[Proof of Theorem \ref{T:growth}]
Since the support is bounded in $X$, by Theorem \ref{T:MT}
there exists $L > 0$ such that almost surely
$$\lim_{n \to \infty} \frac{d(x, w_n x)}{n} = L.$$
Moreover, proceeding as in \cite[Section 5.8]{MT} and  using the exponential decay of shadows \cite[eq. (16)]{MT}, 
(see also \cite[proof of Prop. 2.6]{dahmani-horbez}),
there exist $B >0$ and $0 < c < 1$ such that for any $\epsilon >0$ we have  
\begin{equation}
\label{eq:exp-gproduct}
\mathbb{P}( \gp{x}{w_n x}{ w_n^{-1}x} \geq \epsilon n) \leq B c^{\epsilon n}.
\end{equation}
Now, by Borel-Cantelli, we obtain almost surely 
$$\lim_{n \to \infty} \frac{ \gp{x}{w_n x }{ w_n^{-1} x} }{n} = 0.$$
The claim then follows by using the well-known formula (see \cite{MT}, Appendix A)
$$\tau(g) = d(x, gx) - 2 \gp{x}{gx}{ g^{-1}x} + O(\delta).$$
\end{proof}

\section{WPD actions}

\subsection{The WPD condition}

Let $G$ be a group acting by isometries on a metric space $X$.  Recall that the action of $G$ on $X$ is \emph{proper} if the map $G \times X \to X \times X$ given by $(g, x) \mapsto (x, gx)$ is proper, i.e. the preimages of compact sets are compact.  A related notion is that the action is \emph{properly discontinuous} if for every $x \in X$ there exists an open neighbourhood $U$ of $x$ such that $gU \cap U \neq \emptyset$ holds for at most finitely many elements $g$.  If the space $X$ is not proper, it is very restrictive to ask for the action to be proper (for instance, point stabilizers for a proper action must be finite).  However, Bestvina-Fujiwara \cite{bestvina-fujiwara} defined the notion of \emph{weak proper discontinuity}, or \emph{WPD}; essentially, a loxodromic isometry $g$ is a WPD element if its action is proper in the direction of its axis.

\begin{definition}
Let $G$ be a group acting on a hyperbolic space $X$, and $h$ a loxodromic element of
$G$. One says that $h$ satisfies the \emph{weak proper discontinuity
  condition} (or $h$ is a WPD element) if for every $K > 0$ and every
$x \in X$, there exists $M \in \N$ such that 
\[ \setnorm{ \{g \in G \ : \  d(x, gx) < K, d(h^M x, gh^M x) < K \}
} < \infty. \]
\end{definition}
If we define the \emph{joint coarse stabilizer} of two points $x, y \in X$ as
$$\stab_K(x,y) := \{ g \in G \ : \ d(x, gx) \leq K \textup{ and } d(y,gy) \leq K \}$$
then the WPD condition says that for any $K$ and any $x$ there exists an integer $M$ such that 
$\stab_K(x, h^M x)$
is a finite set. A trivial consequence of the definition of WPD is the following.

\begin{lemma} \label{L:WPD} 
Let $G$ be a group acting on a Gromov hyperbolic space $X$, and
let $h$ be a WPD element in $G$. Then there are functions
$M_{W} \colon \R_{\ge 0} \to \N$ and
$N_{W} \colon \R_{\ge 0} \to \N$ such that for any $x \in X$, any
$K \geq 0$, and for any $f \in G$ one has
$$ \setnorm{ \stab_K(fx, fh^{M_{W}(K)} x) } \le N_{W}(K). $$
\end{lemma}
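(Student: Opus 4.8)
The statement is essentially just a reformulation of the WPD definition made uniform over the group via the action by isometries. The plan is to use the defining WPD property of $h$ at the single basepoint $x_0 := x$ (actually one fixed basepoint suffices, as I explain below), and then translate everything by $f$ using that $G$ acts by isometries. Concretely, fix once and for all a basepoint $x_0 \in X$. For a given $K \ge 0$, apply the WPD definition to $h$ at the point $x_0$ with constant $K$: this yields an integer $M_W(K) \in \N$ such that $\stab_K(x_0, h^{M_W(K)} x_0)$ is a finite set; let $N_W(K)$ be its cardinality (or anything $\ge$ it). Now for an arbitrary $x \in X$ and $f \in G$, I want to bound $\setnorm{\stab_K(fx, fh^{M_W(K)}x)}$.

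The key observation is the conjugation trick: for any $g \in G$, one has $d(fx, g fx) = d(x, f^{-1} g f x)$ and similarly $d(fh^{M}x, g f h^{M} x) = d(h^{M}x, f^{-1} g f h^{M} x)$, since $f$ is an isometry. Hence the map $g \mapsto f^{-1} g f$ is a bijection from $\stab_K(fx, fh^{M}x)$ onto $\stab_K(x, h^{M}x)$ (with $M = M_W(K)$). So it suffices to bound $\setnorm{\stab_K(x, h^{M}x)}$ uniformly in $x$ — i.e., I have reduced the "$f$" part of the statement completely, and what remains is uniformity over the choice of basepoint $x$.

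For the uniformity over $x$: this is where one must be slightly careful, since the WPD definition a priori gives a bound $N_W(K,x)$ and an integer $M_W(K,x)$ both depending on $x$. The standard fact (and I expect this is the one genuine point to check) is that if $h$ is WPD with respect to one basepoint then the bound can be taken independent of the basepoint, at the cost of enlarging $K$ by a bounded amount. Indeed, given any $x \in X$, fellow-travelling of quasigeodesics in the $\delta$-hyperbolic space $X$ (Lemma \ref{P:fellow travel}, Proposition \ref{P:fellow travel2}) shows that the segment $[x, h^{M}x]$ runs close to the $h$-axis, hence close to $[x_0, h^{M'}x_0]$ for suitable $M'$ comparable to $M$; any $g$ fixing $x$ and $h^{M}x$ coarsely then coarsely fixes two points on this axis that are far apart, with constants controlled only by $\delta$, $K$, $K_1$, and the translation length of $h$. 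Choosing $M_W(K)$ large enough that the corresponding points on the $h_0 := h$-axis are at distance $\ge R(K')$ apart (where $R, N$ are the WPD functions for $h$ at $x_0$ with the enlarged constant $K' = K + K' (\delta)$), one gets $\setnorm{\stab_K(x, h^{M_W(K)}x)} \le N_W(K)$ with $N_W(K)$ depending only on $h$, $K$, and $\delta$ — in particular, independent of $x$.

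The main obstacle — the only place the argument is not purely formal — is precisely this last step: promoting the pointwise WPD bound to a basepoint-uniform one. Everything else (the conjugation reduction, the translation by $f$) is a one-line isometry computation. Once uniformity in $x$ is established, one sets $M_W(K)$ to be the integer produced above and $N_W(K)$ the corresponding cardinality bound, and the displayed inequality $\setnorm{\stab_K(fx, fh^{M_W(K)}x)} \le N_W(K)$ follows for all $x$ and all $f$ simultaneously, as claimed.
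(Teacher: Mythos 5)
Your proposal is correct, and its first step is exactly the paper's entire proof: the authors simply observe that $\stab_K(fx, fy) = f\, \stab_K(x,y)\, f^{-1}$, so $\setnorm{\stab_K(fx, fh^M x)} = \setnorm{\stab_K(x, h^M x)}$ is finite and independent of $f$, and they stop there. They do not address the uniformity in the basepoint $x$ that the statement literally asserts; implicitly the basepoint is treated as fixed, and indeed every later use of the lemma (e.g.\ in the proof of Theorem \ref{T:asymp-acyl}) only ever applies it to translates of points on a fixed quasi-axis of the WPD element, so the $f$-equivariance is all that is actually needed downstream. Your extra step --- using parameterized fellow-travelling (Lemma \ref{P:fellow travel}) to show that an element coarsely fixing $x$ and $h^M x$ must, with a constant enlarged only by quantities depending on $\delta$ and on $h$ (its translation length and quasi-axis constant), coarsely fix two far-apart points $h^i x_0$, $h^j x_0$ on the axis, and then conjugating by $h^i$ to land in $\stab_{K''}(x_0, h^{j-i} x_0)$ --- is precisely what is needed to make $M_W$ and $N_W$ depend on $K$ (and $h$) alone, so on this point your write-up is more complete than the paper's. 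Two small cautions in filling in details: when $x$ is far from the axis, only a middle subsegment of $[x, h^M x]$ (not the whole geodesic) is close to $[x_0, h^{M'}x_0]$, which is all you use; and the WPD definition hands you an exponent $M_0$ for each constant rather than a distance threshold $R$, so you should note that for $M \ge M_0$ one has $\stab_{K''}(x_0, h^{M}x_0) \subseteq \stab_{K'''}(x_0, h^{M_0}x_0)$ after a further bounded enlargement of the constant, which yields the cardinality bound uniformly in $M$ and hence in $x$.
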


\begin{proof}
By definition, note that 
$$\stab_K(fx, fy) = f \stab_K(x, y) f^{-1}$$
hence the cardinality 
$$\setnorm{ \stab_K(fx, f h^M x) } = \setnorm{ f (\stab_K(x, h^M x) ) f^{-1} } =  \setnorm{ \stab_K(x, h^M x)} $$
is finite and independent of $f$, proving the claim. 
\end{proof}

Given a loxodromic element $g$, its associated \emph{maximal elementary subgroup} $E_G(g)$ is defined as the stabilizer of the
two endpoints of a quasiaxis of $g$, i.e.
$$E_G(g) = \stab^G(\{ \lambda_g^+, \lambda_g^-  \})$$  
(note that elements of $E_G(g)$ may permute the two fixed points).
We will use the following result due to
Bestvina and Fujiwara \cite{bestvina-fujiwara}*{Proposition 6}.

\begin{theorem}\label{T:bf}
Let $G$ act on $X$ with a WPD element $h$, with a 
quasiaxis $\alpha_h$.  Then $E_G(h)$ is the unique maximal virtually cyclic
subgroup containing $h$.  Furthermore, for any constant $K \ge 0$
there is a number $L$, depending on $h, \delta, K_1$ and $K$, such
that if $g \in G$ is an element which $K$-coarsely stabilizes a
subsegment of $\alpha_h$ of length $L$, then $g$ lies in $E_G(h)$.
\end{theorem}

That is, if $\alpha_h$ is a quasiaxis of a WPD element $h$, then
$$E_G(h) = \{ g \in G \ : \ d_{Haus}(g \alpha_h, \alpha_h) < \infty \}.$$ 
This is stated in \cite{bestvina-fujiwara} for a group action in which
all loxodromic elements are WPD, but the proof works for any group
acting non-elementarily on a Gromov hyperbolic space as long as $h$ is a WPD element. 

\section{The Poisson boundary}

Let us now use the WPD property to prove that the Poisson boundary coincides with the Gromov boundary, proving Theorem \ref{T:Poiss-WPD} in the Introduction. 

Similarly to \cite[Section 6]{MT}, the idea is to define appropriately the strips for Kaimanovich's criterion using ``elements of bounded geometry'' as below, and using 
the WPD condition to show that the number of elements in such strips grows at most linearly. 

The main difference is that we do not obtain a bound on the growth of all strips, but only on strips between \emph{almost all} pairs of boundary points. 
In fact, if $h$ is a WPD element, then one can use the WPD condition to obtain a bound of the number of bounded geometry elements in a ball
(see Lemma \ref{L:bound-geom} below). Moreover, by ergodicity, for almost every pair of boundary points, any $(1, K_1)$-quasigeodesic between them 
will fellow travel a translate of a quasiaxis of $h$, hence we can use the previous claim to bound the number of elements in any strip 
between almost every pair of boundary points. 

\subsection{Elements of bounded geometry}

Let $R \geq 0$ and $v \in G$. Then for any pair $(\alpha, \beta) \in \partial X \times \partial X$, with $\alpha \neq \beta$, define the set of \emph{bounded geometry elements} as
$$\mathcal{O}_{R, v}(\alpha, \beta) := \{ g \in G \ : \  \alpha \in \overline{ S_{gvx}(gx, R)} \textup{ and } \beta \in \overline{ S_{gx}(gvx, R) } \}.$$
An example of a bounded geometry element is illustrated below in Figure \ref{pic:bounded geometry}.
Note that for any $g \in G$ we have $\mathcal{O}_{R, v}(g \alpha, g \beta) = g \mathcal{O}_{R, v}(\alpha, \beta)$. Moreover, we define the ball in the group with respect to the metric on $X$ as  
$$B_G(y, r) := \{g \in  G \ : \ d(y, gx) \leq r \}$$
where $y \in X$ and $r \geq 0$.

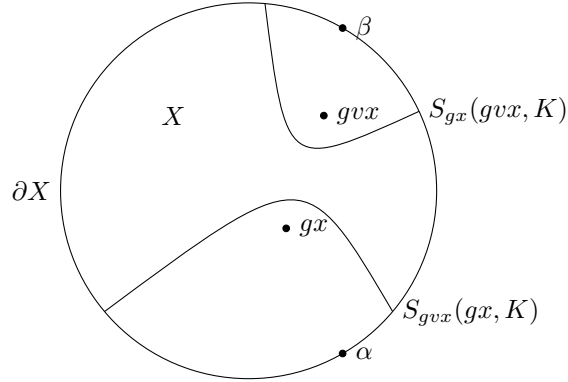
\begin{figure}[H] \begin{center}
\begin{tikzpicture}[scale=0.5]

\tikzstyle{point}=[circle, draw, fill=black, inner sep=0pt, minimum width=2.5pt]

\draw (0,0) circle (5cm);

\draw (60:5) node [point, label=right:$\beta$] {}; 
\draw (-60:5) node [point, label=right:$\alpha$] {};

\draw (85:5) .. controls (1, 0.5) .. (25:5) node [right] {$S_{g x}(g v
  x, R)$};

\draw (-140:5) .. controls (1.5, 0.75) .. (-40:5) node [right] {$S_{g v x}(g
  x, R)$};

\draw (1, -1) node [point, label=right:$g x$] {};

\draw (2, 2) node [point, label=right:$g v x$] {};

\draw (-2, 2) node {$X$};

\draw (-5, 0) node [left] {$\partial X$};

\end{tikzpicture}
\end{center} 
\caption{A bounded geometry element $g$ in $\mathcal{O}_{R, v}(\alpha,
  \beta)$.} \label{pic:bounded geometry}
\end{figure}

The most crucial property of bounded geometry elements is that their number in a ball grows linearly with the radius of the ball. 

\begin{proposition} \label{P:bound-geom} %
Let $G$ be a group acting on a hyperbolic space $X$, let $x \in X$,
and let $h$ be a WPD element. Then for any $R > 0$, there is a 
positive power $v = h^M$ of the WPD element and a constant $C$
such that for any radius $r > 0$ and any pair of distinct boundary points $\alpha, \beta \in \partial X$ one has 
$$\setnorm{ B_G(x, r) \cap \mathcal{O}_{R, v}(\alpha, \beta) } \leq C r.$$
\end{proposition}
This fact follows from the next lemma, which uses the WPD property in a crucial way. 

\begin{lemma} \label{L:bound-geom} %
Let $G$ be a group acting on a hyperbolic space $X$, let $x \in X$, and 
let $h$ be a WPD element. Then for any $R > 0$, there are positive
constants $L$, $M$ and $N$, such that if  $v = h^M$ then
$$\setnorm{  B_G(z, L) \cap \mathcal{O}_{R, v}(\alpha, \beta) } \leq N$$
for any $z \in X$ and any pair of distinct boundary points $\alpha, \beta$.
\end{lemma}

\begin{proof} 

We choose $L$ to be the maximum of the two fellow travelling constants
(both called $L$) in Propositions \ref{P:fellow travel2} and
\ref{P:fellow travel3}.  

We choose $M$ to be sufficiently large such
that $d(x, h^M x)$ is at least the separation distance $D$ from
Proposition \ref{P:fellow travel3} and $M$ is also at least
$M_W(10 L)$, where $M_W$ is the function arising from the WPD
condition in Lemma \ref{L:WPD}.  Finally, we choose $N$ to be
$N_W(10 L)$, where again $N_W$ is the function described in Lemma
\ref{L:WPD}.

Let us consider two elements $g, g'$ which belong to $\mathcal{O}(\alpha, \beta) \cap B_G(z, 4K)$.
Then if we let $f =  g' g^{-1}$, then 
\begin{equation} \label{E:d1}
d(gx, fgx) \leq 2 L.
\end{equation}
Let $\gamma$ be a $(1, K_1)$-quasigeodesic which joins $\alpha$ and $\beta$, and denote $S_1 := \overline{S_{gvx}(gx, R)}$, $S_2 := \overline{S_{gx}(gvx, R)}$. 
By construction, $\alpha$ belongs to both $S_1$ and $f S_1$ hence both $\alpha$ and $f \alpha$ belong to $f S_1$;
similarly, $\beta$ and $f \beta$ belong to $f S_2$. Hence, the two quasigeodesics $\gamma$ and $f \gamma $ have endpoints in $f S_1$ and $f S_2$, 
hence they must fellow travel in their middle: more precisely, by Proposition \ref{P:fellow travel3} they must pass within distance $L$ from both $fgx$ and $y:= fgvx$. 
Hence, if we call $q$ a nearest point to $fgx$ on $f \gamma$, we have $d(fgx, q) \leq L$. Moreover, if we call $p$ a nearest point on $\gamma$ to $y$, and $p'$ a nearest point on $f\gamma$ to $y$, we have 
$$d(p, p') \leq d(p, y) + d(y, p') \leq 2L.$$
Combining this with eq. \eqref{E:d1} we get 
$$|d(gx, p) - d(fgx, p')| \leq 4 L.$$
Moreover, since $f$ is an isometry we have $d(fgx, fp) = d(gx, p)$, hence 
\begin{equation} \label{E:dist}
|d(fgx, fp) - d(fgx, p')| \leq 4 L.
\end{equation}
Now, the points $q$, $p'$ and $fp$ both lie on the quasigeodesic
$f\gamma$; let us assume that $fp$ lies in between $q$ and $p'$, and
draw a geodesic segment $\gamma'$ between $q$ and $p'$, and let $p''$
be a nearest point projection of $fp$ to $\gamma'$ (the case where
$p'$ lies between $q$ and $fp$ is completely analogous).  By fellow
travelling (Proposition \ref{P:fellow travel2}), we have $d(fp, p'') \leq L$.  
Then, since $p', p''$ and $q$ lie on a geodesic,
we have
$$d(p', p'') = |d(q, p') - d(q, p'')| \leq $$
and by using eq. \eqref{E:dist}
$$\leq |d(fgx, p') - d(fgx, fp)| + d(fgx, q) + d(fgx, q) + d(fp, p'') \leq 4L + L + L + L$$
hence 
$$d(fp, p') \leq d(fp, p'') + d(p', p'') \leq 7L + L = 8 L  $$
and finally
$$d(y, fy) \leq d(y, p') + d(p', fp) + d(fp, fy) \leq L + 8 L + L = 10 L.$$
Thus 
$d(gvx, fgvx) = d(fgvx, f^2gvx) \leq 10 L$
hence 
$$f \in \stab_{10 L}( g x, gvx)$$
so by Lemma \ref{L:WPD} there are only $N = N_W(10 L)$ possible choices of $f$, as claimed.
\end{proof}

\begin{proof}[Proof of Proposition \ref{P:bound-geom}]
Let $\gamma$ be a $(1, K_1)$-quasigeodesic in $X$ which joins $\alpha$ and $\beta$. By definition, if $g$ belongs to $\mathcal{O}_{R, v}(\alpha, \beta)$, then $gx$ lies within distance  $\leq L$ of $\gamma$. Then one can pick points $(z_n)_{n \in \mathbb{Z}}$ along $\gamma$ such that any point of $\gamma$ is within distance $\leq L$ of some $z_n$. Then, any ball of radius $r$ contains at most $c r$ of such $z_n$, where $c$ depends only on $L$ and the quasigeodesic constant 
of $\gamma$. The claim then follows from Lemma \ref{L:bound-geom}.
\end{proof}

We now turn to the proof of Theorem \ref{T:Poiss-WPD}.
By Theorem \ref{T:MT}, we know that since both $\mu$ and its reflected measure $\check{\mu}$ are non-elementary, 
both the forward random walk and the backward random walk converge almost surely to points on the boundary of $X$. 
Thus, one defines the two boundary maps $\partial_\pm : (G^\mathbb{Z}, \mathbb{\mu}^\mathbb{Z}) \to \partial X$ as follows.
Let $\omega = (g_n)_{n \in \mathbb{Z}}$ be a bi-infinite sequence of increments, and define 
$$\partial_+(\omega) := \lim_{n \to \infty} g_1 \dots g_n x, \qquad \partial_-(\omega) := \lim_{n \to \infty} g_0^{-1} g_{-1}^{-1} \dots g_{-n}^{-1} x$$
the two endpoints of, respectively, the forward random walk and the backward random walk. 
Then choose $R \geq R_0$ as in Proposition \ref{P:positive} and $v = h^M$ as in Proposition \ref{P:bound-geom}. Define 
$$\mathcal{O}(\omega) := \mathcal{O}_{R, v}(\partial_+(\omega), \partial_-(\omega))$$
the set of bounded geometry elements along the $(1, K_1)$-quasigeodesic which joins $\partial_+(\omega)$ and $\partial_-(\omega)$.
Note that, if $T : G^\mathbb{Z} \to G^\mathbb{Z}$ is the shift in the space of increments, we have 
$$\mathcal{O}(T^n \omega) = \mathcal{O}(w_n^{-1} \partial_+(\omega), w_n^{-1} \partial_-(\omega)) = w_n^{-1} \mathcal{O}(\omega).$$
Now we will show that for almost every bi-infinite sample path $\omega$ the set $\mathcal{O}(\omega)$ is non-empty and has 
at most linear growth. 
In fact, by definition of bounded geometry,
$$p := \mathbb{P}(1 \in \mathcal{O}(\omega)) = \nu(\overline{S}) \rnu(\overline{S'}) > 0,$$
where $S = S_{vx}(x, R)$ and $S' = S_x(vx, R)$, and their measures are positive by Proposition \ref{P:positive}.
Moreover, since the shift map $T$ preserves the measure in the space of increments, we also have for any $n$
$$\mathbb{P}(w_n \in \mathcal{O}(\omega)) = \mathbb{P}(1 \in \mathcal{O}(T^n \omega)) = p > 0.$$
Thus, by the ergodic theorem, the number of times $w_n$ belongs to $\mathcal{O}(\omega)$ grows almost surely linearly with $n$: namely, 
for a.e. $\omega$ 
$$\lim_{n \to \infty} \frac{\setnorm{ \{1 \leq i \leq n \ : \ w_i \in \mathcal{O}(\omega)\} } }{n} = p > 0.$$
Hence the set $\mathcal{O}(\omega)$ is almost surely non-empty (in fact, it contains infinitely many elements).
On the other hand, by Proposition \ref{P:bound-geom} the set $\mathcal{O}(\omega)$ has at most linear growth, i.e.
there exists $C > 0$ such that for any $z \in X$ we have
\begin{equation} \label{E:lin-g}
\setnorm{ \mathcal{O}(\omega) \cap B_G(z, r) } \leq Cr \qquad \forall r > 0.
\end{equation}
The Poisson boundary result now follows from the strip criterion (Theorem \ref{T:strip}). 
Let $P(G)$ denote the set of subsets of $G$. Then, we define the strip map 
$S : \partial X \times \partial X \to P(G)$ as $S(\alpha, \beta) := \mathcal{O}_{R, v}(\alpha, \beta)$; 
hence, applying equation \eqref{E:lin-g} with $z = x$, $r = d(w_n x, x)$ we obtain
$$ \setnorm{ S(\alpha, \beta)g \cap B_G(w_n) } \leq C d(w_n x, x).$$
Then, since $\mu$ has finite logarithmic moment, one has almost surely 
$$\lim_{n \to \infty} \frac{1}{n} \log d(w_n x, x) \to 0,$$
which verifies the criterion of Theorem \ref{T:strip}, establishing that the Gromov boundary of $X$ is a model 
for the Poisson boundary of the random walk. 

\begin{remark} \label{remark:cobounded} %
We would like to thank the referee for pointing out an alternative
approach if the action of $G$ on $X$ is cobounded.  In this case,
Osin's \cite{osin} construction of the projection complex $Y$ may be
realized as a quotient of $X$, and the action of $G$ on $Y$ is
acylindrical.  Hence, one can use \cite{MT} to identify the Poisson
boundary of $(G, \mu)$ with the hitting measure on $\partial Y$.  One
may verify that the Lipschitz map from $X$ to $Y$ is alignment
preserving as defined by Dowdall and Taylor \cite{dowdall-taylor},
whose work then shows that the subset of $\partial X$ consisting of
quasigeodesic rays with infinite diameter image in $Y$ maps
injectively into $\partial Y$.  A quasigeodesic ray in $X$ has
infinite image in $Y$ if it fellow travels with infinitely many
distinct translates of a quasiaxis for the chosen WPD element, and this
happens for a full measure subset of $\partial X$ with respect to the
hitting measure.  Therefore $\partial X$ with the hitting measure is a
model for the Poisson boundary.  It is not clear to the authors how to
extend this argument to the non-cobounded case.
\end{remark}

\section{Genericity of WPD elements}\label{S:wpd}

Let $G$ be a group acting by isometries on a hyperbolic space $X$, and
let $\mu$ be a countable, non-elementary, bounded, WPD probability
distribution on $G$.  Let $h$ be a WPD element in $\Gamma_\mu$.  We
start by showing that the probability that a random walk gives a
WPD element tends to one exponentially quickly, and furthermore that
the probability that a quasiaxis of the WPD element fellow travels with a
translate of a quasiaxis of $h$ also tends to one exponentially quickly.

Before proceeding, we give a brief overview of the argument.  We wish
to show that a random walk on $G$ gives rise to a WPD element with
probability tending to one exponentially quickly.  Given a WPD element
$h$ with quasiaxis $\alpha_h$, one may construct a \emph{projection
  complex} $P$ on which $G$ also acts.  The projection complex is a
quasi-tree, hence hyperbolic, and has the property that any
element $g$ of $G$ which acts loxodromically on $P$ acts as a WPD
element on $X$, and furthermore, any quasiaxis $\alpha_g$ of $g$ has a
large subsegment which fellow travels with a translate of a quasiaxis
$\alpha_h$ of $h$.  Furthermore, this gives control over the size of
the joint stabilizer $\stab_K(x, gx)$, a property we call \emph{asymptotic acylindricality} (see Section \ref{S:aa}).
If $G$ acts non-elementarily on $X$, then it also acts non-elementarily
on $P$, so the fact that random walks on groups acting on hyperbolic
spaces give loxodromic elements with probability tending to one,
applied to the action on $P$, gives the required result.

The property that two axes have subsets that fellow travel each other
will be useful, and so we will use the following definition from
\cite{calegari-maher}, see also \cite{maher-sisto}.

\begin{definition}\label{D:match}
Let $G$ be a group acting on a Gromov hyperbolic space $X$.  Given
constants $K$ and $L$ we say that two geodesics $\gamma$ and $\gamma'$
in $X$ have an \emph{$(L, K)-$match} if there exist geodesic
subsegments $\alpha \subseteq \gamma$ and $\alpha' \subseteq \gamma'$
of length $\geq L$ and some $g \in G$ such that $g\alpha$ and
$\alpha'$ have Hausdorff distance $\leq K$.
\end{definition}

The main result of this section, from which we will derive Theorem \ref{T:generic-WPD}, is the following. 

\begin{theorem} \label{T:lox-implies-WPD} %
Let $G$ be a group with a non-elementary action by isometries on a
hyperbolic space $X$, and let $h$ be a WPD element for this action.
Then there is a constant $K$ with the following properties.  For any
$L$ there exists a non-elementary acylindrical action of $G$ on a quasi-tree $Y$ 
such that if $g \in G$ acts loxodromically on $Y$, then the action of
$g$ on $X$ is loxodromic and WPD.  Furthermore, a quasiaxis 
for $g$ in $X$ has an $(L, K)$-match with a quasiaxis for $h$ in $X$. 
\end{theorem}

This result is implicit in the constructions of projection complexes
in \cites{bbf, osin, DGO, balasubramanya, bbfs} and thus is likely
well-known. As we are unable to find a reference in the literature, in
the next two subsections we provide a proof using published results of
\cite{bbfs}. 

\subsection{Projection complexes}

We now review the projection complex construction from \cite{bbf},
\cite{bbfs}.  We do not give complete details, but we state precisely the properties we use.

Let $h$ be a WPD element. In general, $h$ may
only have an invariant quasiaxis in $X$, which is coarsely preserved
by $E_G(h)$.  However, $X$ embeds quasi-isometrically inside a
hyperbolic space $X'$ such that $h$ has an invariant \emph{geodesic} axis
$A_h$ which is preserved by $E_G(h)$, see for example
\cite{bbf-arxiv}*{Lemma 4.9}\footnote{This result appears in the
  initial \url{arXiv} version but was omitted from the published
  version \cite{bbf2}.}. 
In the rest of this section, we will assume that the action of $G$ on
$X$ has the property that $E_G(h)$ preserves a 
geodesic axis. At the end, we will remark that we can obtain the same results
for general actions $X$ with different constants by using the
quasi-isometry between $X$ and $X'$.

Note that, as in this section we need to consider distances in several metric spaces, 
we will write $d_X$ instead of $d$ for the distance in $X$.
We say that a collection of geodesics in $X$ has \emph{$D$-bounded
  projections} if for any two distinct geodesics $A$ and $B$ in the
collection, the nearest point projection $\pi_A(B)$ has diameter at
most $D$.  Let $\A$ be the set of distinct translates of $A_h$ under
$G$.  As $h$ is a WPD element, there is a constant $D$ such that $\A$
has $D$-bounded projections, see for example \cite{bbf}*{Theorem H}.
Given three distinct elements $A, B$ and $C$ of $\A$, define
\[ d_C(A, B) := \textup{diam} \{ \pi_C(A) \cup
\pi_C(B) \}.  \]
Bestvina, Bromberg and Fujiwara \cite{bbf} define a \emph{projection
  complex} $P_L(\A)$, which is a graph whose vertices are elements of
$\A$, and in which two distinct vertices $A$ and $B$ are connected by
an edge if $d_C(A, B) \le L$ for all $C \in \A \setminus \{ A, B\}$.
In fact, their construction is more general, 
but we shall restrict attention to a version that applies 
in the context of WPD actions. We shall give $P_L(\A)$ the natural path
metric in which every edge has length one, and we shall denote this
metric by $d_{P_L}$.

Bestvina, Bromberg and Fujiwara \cite{bbf} showed that $P_L(\A)$ is a
quasi-tree for all $L$ sufficiently large.  Osin \cite{osin} defined a
slightly different space on which the action of $G$ is acylindrically
hyperbolic, and Balasubramanya \cite{balasubramanya} showed that this
construction could be modified to guarantee that the space is a
quasi-tree.  In fact, we shall use the version from Bestvina,
Bromberg, Fujiwara and Sisto \cite{bbfs}, which also construct a
projection complex which is a quasi-tree and on which $G$ acts
acylindrically.

The result below summarizes the properties of the construction that we
use.

\begin{theorem} \cite{bbfs}*{Theorem 4.1, Theorem 3.10} %
Let $G$ be a group acting by isometries on a $\delta$-hyperbolic space
$X$.  Let $h$ be a WPD element such that $E_G(h)$ preserves a
geodesic axis $A_h$.  Let $\mathcal{A}$ be the set of
distinct translates of $A_h$ under $G$, and suppose that $\A$ has
$D$-bounded projections.  The nearest point projection maps $\pi_A$
may be replaced with maps $\pi'_A$ such that for all $A$ and $B$ in
$\A$, $\pi'_A(B) \subseteq N_D(\pi_A(B))$, and for all $L$
sufficiently large, the projection complex $P_L(\A)$, constructed
using the modified projection maps $\pi'_A$, is a quasi-tree on which
$G$ acts acylindrically.  Furthermore, if $G$ acts non-elementarily on
$X$, then it acts non-elementarily on $P_L(\A)$.
\end{theorem}

We will also use the following result from \cite{bbfs}, which shows
that the distance in $P_L(\A)$ between two axes $A$ and $B$ is
coarsely equivalent to the number of other axes to which $A$ and $B$
have large diameter projections.

\begin{theorem} \cite{bbfs}*{Corollary 3.7} %
Let $G$ be a group acting by isometries on a $\delta$-hyperbolic space
$X$.  Let $h$ be a WPD element such that $E_G(h)$ preserves a
geodesic axis $A_h$.  Let $\mathcal{A}$ be the set of
distinct translates of $A_h$ under $G$.  For $A, B \in \mathcal{A}$,
denote
\[ Y_L(A, B) := \{ C \in \A \setminus \{ A, B \} \mid \pi'_C(A, B) \ge L\}. \]
Then there is a constant $L_0$ such that for all $L \ge L_0$, the
metric on $P_L(\A)$ is coarsely equivalent to the number of elements
in $Y_L(A, B)$.  In fact, for $A \not = B$,
\[ \lfloor \tfrac{1}{2} (\# \norm{ Y_L(A, B) } + 1) \rfloor + 1 \le
d_{P_L}(A, B) \le \# \norm{ Y_L(A, B) } + 1. \]
\end{theorem}

We next show that loxodromic isometries of the projection complex act
as WPD elements on $X$.

\subsection{Loxodromic isometries of the projection complex}

We will make use of the following elementary result.

\begin{lemma} \label{L:large projection} %
Let $\alpha$ and $\beta$ be two geodesic segments in a
$\delta$-hyperbolic space $X$, of length at least $L$, contained in
$K$-neighbourhoods of each other.  Then
$$\textup{diam } \pi_\alpha(\beta)  \ge L - 4 K.$$
\end{lemma}

\begin{proof}
Let $a$ be an endpoint of $\alpha$.  As $\alpha \subset N_K(\beta)$,
there is a point $b \in \beta$ such that $d_X(a, b) \le K$.  Let $a'$
be a nearest point on $\alpha$ to $b$, so $d_X(a', b) \le K$.  By the
triangle inequality, $d_X(a, a') \le 2K$.  Applying the same argument
to the other endpoint of $\alpha$ implies that the diameter of
$\pi_\a(\b)$ is at least $L - 4K$.
\end{proof}

We now show that distance in $P_L(\A)$ is a coarse lower bound for the
distance between elements of $\A$ in $X$.

\begin{proposition} \label{P:pc distance} %
Let $G$ be a group acting on a $\delta$-hyperbolic space $X$, and let
$h$ be a WPD isometry such that $E_G(h)$ preserves a geodesic axis
$A_h$.  Let $\A$ be the collection of distinct translates of $A_h$
under $G$, with $D$-bounded projections.  Then there are constants $K$
and $Q > 0$ with the following properties.

There exists $L_0$ such that for all $L \ge L_0$,
and for any $A$ and $B$ in $\A$, the distance $d_{P_L}$ in the
projection complex $P_L(\A)$ is a coarse lower bound for distance in
$X$, i.e.
\begin{equation}\label{eq:pc-lower-bound}
d_X(A, B) \ge Q d_{P_L}(A, B) - Q.
\end{equation}
Furthermore, any shortest geodesic $[a, b]$ from $A$ to $B$ in $X$ has
an $(L, K)$-match with the axis of $h$.
\end{proposition}

\begin{proof}
We give a brief outline of the argument.  Let $A$ and $B$ be two
elements of $\A$, and let $\gamma_1$ be a shortest path from $A$ to
$B$ in $X$.  Their distance $d_{P_L}(A, B)$ in the projection complex
is coarsely equal to the number of elements in $Y_L(A, B)$, the
collection of $C \in \A$ to which the projections of $A$ and $B$ are
distance at least $L$ apart.  This means that the nearest point
projection path from $A$ to $B$ via $C$ in $X$ is a quasigeodesic, and
so the shortest path $\gamma_1$ from $A$ to $B$ fellow travels with
$C$ distance roughly $L$.  However, as the collection of geodesics in
$\A$ has $D$-bounded projections, the fellow travelling segments of
translates of $A_h$ can't overlap too much along $\gamma_1$, so this
gives a lower bound on the length of $\gamma_1$, which is linear in
the number of elements in $Y_L(A, B)$, and hence linear in
$d_{P_L}(A, B)$.

We now give the details of this argument.  Recall that Proposition
\ref{P:npp-qg} says that if two points have nearest point projections
to a geodesic that are distance at least $L_1$ apart, then the nearest
point projection path is a $(1, K_1)$-quasigeodesic, where $K_1$ and
$L_1$ depend only on $\delta$.  Furthermore, by Proposition \ref{P:fellow
  travel2}, there is a constant $K_2$ such that if two
$(1, K_1)$-quasigeodesics have common endpoints, then their Hausdorff
distance is at most $K_2$.  Here $K_2$ depends on $\delta$ and $K_1$,
but as $K_1$ only depends on $\delta$, $K_2$ only depends on $\delta$.

Choose $L_0 = 9D + 8K_2 + L_1$, and let $L \geq L_0$. 
Let $\gamma_1 = [a, b]$ be a shortest
path from $A$ to $B$ in $X$.  We may assume that $A \not = B$ and so
$d_{P_L}(A, B) \ge 1$, thus by the definition of $P_L(\A)$ there is at
least one $C \in \A$ such that $d_C(A, B) \ge L$.  This implies that
$d_X(\pi_C(a), \pi_C(b)) \ge L - 2D \ge L_1$, so by Proposition
\ref{P:npp-qg}, the nearest point projection path
$\gamma_2 = [a, \pi_C(a)] \cup [\pi_C(a), \pi_C(b)] \cup [\pi_C(b),
b]$ is a $(1, K_1)$-quasigeodesic.

By our choice of $K_2$, $\gamma_1$ and $\gamma_2$ are contained in
$K_2$-neighbourhoods of each other.  The segment
$[\pi'_C(a), \pi'_C(b)]$ has length at least $L$, and so
$[\pi_C(a), \pi_C(b)]$ has length at least $L - 2D > L_1$.  As the
nearest point projection path is a $(1, K_1)$-quasigeodesic it is
contained in a $K_2$-neighbourhood of $\gamma_1$, and so
$[\pi'_C(a), \pi'_C(b)]$ is contained in a $(K_2 + D)$-neighbourhood
of $\gamma_1$.  As $C$ is a translate of the axis $A_h$, this implies
that the geodesic $\gamma_1 = [a, b]$ has an $(L, K)$-match with
$A_h$, giving the final statement of the result with $K = K_2 + D$.

The choice of $C$ in $Y_L(A, B)$ was arbitrary, so for every $C$ in
$Y_L(A, B)$, the geodesic $\gamma_1 = [a, b]$ $K$-fellow travels with
$C$ distance at least $L$.  If the number of elements of $Y_L(A, B)$
is at least $2 D d_X(A, B)/L + 1$, then there are at least two
distinct translates $C$ and $C'$ of $A_h$ which have subsegments of
length at least $L/2$ which $K$-fellow travel.  By Lemma
\ref{L:large projection}, the nearest point projection of $C$ to $C'$
has diameter at least $L/2 - 4K$.  Our choice of $L_0$ ensures that
$L/2 - 4 K > D$, which contradicts the fact that elements of $\A$ have
$D$-bounded projections.  Therefore
\[ d_X(A, B) \ge \frac{L}{2 D} ( \# \norm{ Y_L(A, B) } - 1 ), \]
and so the result follows by choosing $Q$ equal to $L/2D$.
\end{proof}

We now show that if an isometry acts loxodromically on the
projection complex $P_L(\A)$, then it acts loxodromically on $X$.

\begin{corollary}\label{C:lox-is-lox}
Let $G$ be a group acting by isometries on a hyperbolic space $X$,
with a WPD element $h$ such that $E_G(h)$ preserves a geodesic axis.
Let $P_L(\A)$ be the corresponding projection complex determined by
$h$.  Then for all $L$ sufficiently large, if $g$ acts loxodromically
on $P_L(\A)$, then $g$ acts loxodromically on $X$.
\end{corollary}

\begin{proof}
Recall that if $g$ is a loxodromic isometry of $P_L(\A)$, then the
translation length of $g$ is positive, i.e. $\tau_{P_L}(g) > 0$.  
Let $A \in \A$ and $a$ be a point on the axis $A$.
We observe that $d_X(a, g^n a) \ge d_X(A, g^n A)$, as $a$
lies in $A$.
Choosing $L \ge L_0$, where $L_0$ is the constant from
Proposition \ref{P:pc distance}, we may apply
\eqref{eq:pc-lower-bound} to the pair $A$ and $g^n A$ and obtain
$d_X(A, g^n A) \ge Q d_{P_L}(A, g^n A) - Q$.  
Moreover, by
definition of translation length \eqref{eq:translation-length}, 
$d_{P_L}(A, g^n A) \ge n \tau_{P_L}(g)$
for any $A \in \A$ and any $n \geq 0$.  Hence
$$d_X(a, g^n a) \ge Q n \tau_{P_L}(g) - Q.$$
Dividing by $n$ and taking the limit as $n \to \infty$
shows that $\tau_X(g) \ge Q \tau_{P_L}(g) > 0$, and
so the action of $g$ on $X$ is loxodromic, as required.
\end{proof}

\begin{corollary}\label{cor:axis-match}
Let $G$ be a group acting by isometries on a hyperbolic space $X$,
with a WPD element $h$ such that $E_G(h)$ preserves a geodesic axis,
and let $P_L(\A)$ be the corresponding projection complex determined
by $h$.  Then there are constants $K$ and $L_0$, such that for all
$L \ge L_0$, if $g$ acts loxodromically on $P_L(\A)$, then $g$ acts
loxodromically on $X$.  Furthermore, a quasiaxis of $g$ has an
$(L, K)$-match with the axis of $h$.
\end{corollary}


\begin{figure}[H]
\begin{center}
\begin{tikzpicture}

\tikzstyle{point}=[circle, draw, fill=black, inner sep=1pt]

\begin{scope}[yscale=-1, xscale=0.8, xshift=-5cm]
\draw (2, -3.5) .. controls (2, -2.5) and (2, -2.5) .. (3, -2.5) --
(5, -2.5) node (a) [point, midway, label=above:$a$] {}
.. controls (6, -2.5) and (6, -2.5) .. (6, -3.5) node [right] {$A$};
\end{scope}

\begin{scope}[yscale=-1, xscale=0.8, xshift=+7cm]
\draw (2, -3.5) .. controls (2, -2.5) and (2, -2.5) .. (3, -2.5) --
(5, -2.5) node (ga) [point, midway, label=above:$g^n a$] {}
.. controls (6, -2.5) and (6, -2.5) .. (6, -3.5) node [right] {$g^n A$};
\end{scope}

\begin{scope}[yscale=-1, xscale=0.8, xshift=+1cm]
\draw (2, -3.5) .. controls (2, -2.5) and (2, -2.5) .. (3, -2.5) node
(pa) [point, label=above:$\pi_C(a)$] {} --
(5, -2.5) node (pga) [point, label=above:$\pi_C(g^n a)$] {}
.. controls (6, -2.5) and (6, -2.5) .. (6, -3.5) node [right] {$C$};
\end{scope}

\draw [thick] (a) .. controls (0, 1.5) and (2.5, 1.5) .. (pa) -- (pga)
 .. controls (5, 1.5) and (8.5, 1.5) .. (ga);

\draw [thick] (a) .. controls (-0.5, 0.5) and (-0.5, 0.5) .. (0, 0.5)
-- (8, 0.5) .. controls (8.5, 0.5) and (8.5, 0.5) .. (ga);

\draw (-2, 0) -- (10, 0) node [below] {$\alpha_g$};
\draw (-0.5, 0) node [point, label=below:{$p$}] {};
\draw (8.5, 0) node [point, label=below:{$q$}] {};

\end{tikzpicture}
\end{center}
\caption{The geodesic from $a$ to $g^n a$, and the nearest point
  projection path via $C$.}\label{pic:axis}
\end{figure}

\begin{proof}
Let $L_1$ be the maximum
of the fellow travelling constant $L$ from Proposition \ref{P:fellow
  travel2} for $(1, K_1)$-quasigeodesics, and the constant $L$ from
Proposition \ref{P:npp-qg}, such that if two points have nearest point
projections to a geodesic distance at least $L$ apart, then the
nearest point projection path is a $(1, K_1)$-quasigeodesic.  Let $D$
be a constant such that the geodesics in $\A$ have $D$-bounded
projections.  Finally, choose $L_0$ sufficiently large such that
Corollary \ref{C:lox-is-lox} holds, and furthermore, choose $L_0 \ge
2D + L_1$.

Let $A$ be an axis for $h$, and let $a$ be a point on $A$.  As $g$
acts loxodromically on $P_L(\A)$, the distance $d_{P_L}(A, g^n A)$
tends to infinity as $n$ tends to infinity.  By
\eqref{eq:pc-lower-bound}, there is an $n$ sufficiently large such
that $Y_L(A, g^n A)$ is non-empty.  Let $C$ be an element of
$Y_L(A, g^n A)$.  Recall that by the definition of the projection
complex, the diameter of $\pi_C'(A) \cup \pi'_C(g^n C)$ is at least
$L$.  The image of the modified projection maps $\pi'_C$ is contained
within a $D$-neighbourhood of the nearest point projection maps
$\pi_C$, so $d_X( \pi_C(a), \pi_C(g^n a)) \ge L - 2D$.  By our choice
of $L_0$, $L - 2D \ge L_1$, so by Proposition \ref{P:npp-qg}, the
nearest point projection path
$\eta = [a, \pi_C(a)] \cup [\pi_C(a), \pi_C(g^n a)] \cup [\pi_C(g^n
a), g^n a]$ is a $(1, K_1)$-quasigeodesic.  In particular, there is a
segment $[\pi_C(a), \pi_C(g^n a)]$ of length at least $L - 2D$
contained in an $L_1$-neighbourhood of any geodesic $[a, g^n a]$.

By Proposition \ref{P:quasi-axis}, there is a $(1, K_1)$-quasigeodesic
$\alpha_g$ in $X$, which is a quasiaxis 
for $g$ acting on $X$.  Let
$p$ be a nearest point on 
$\alpha_g$ to $a$, and let $q$
be a nearest point on $\alpha_g$ to $g^n a$.  As $g$ is an isometry,
$d_X(a, p) = d_X(g^n a, g^n p)$.  The point $g^n p$ lies on
$g^n \alpha_g$, which by Proposition \ref{P:fellow travel2}, is
contained in an $L_1$-neighbourhood of $\alpha_g$, and so
$d_X(g^n a, q) \le d_X(a, p) + L_1$, which in particular is
independent of $n$.

Therefore, by Proposition \ref{P:fellow travel2}, outside an
$(d_X(a, p) + L_1)$-neighbourhood of its endpoints, the geodesic
$[a, g^n a]$ is contained in an $L_1$-neighbourhood of $\alpha_g$.  By
\eqref{eq:pc-lower-bound} the number of geodesics in $\A$ which may
have segments of length $L$ which $K$-fellow travel a geodesic of
length $(d_X(a, p) + L_1)$ is at most $(d_X(a, p) + L_1)/Q$.  In
particular, for $n$ sufficiently large, there is an element $C$ in
$Y_L(A, g^n A)$ which has a subsegment of length at least $L - 2D$
contained in an $L_1$-neighbourhood of $[a, g^n a]$, distance at least
$d_X(a, p) + L_1$ from its endpoints, and hence contained in an
$2L_1$-neighbourhood of $\alpha_g$.  The translate $C$ of $A_h$ then
has an $(L, K)$-match with $\alpha_g$ for $K = 2L_1 + 2D$.
\end{proof}

Recall that the following (a priori weaker) definition, which we shall
refer to as \emph{axial WPD}, is equivalent to WPD.

\begin{definition} \label{D:axial} %
Let $G$ be a group acting on a $\delta$-hyperbolic space $X$, and let
$h$ be a loxodromic isometry with a quasiaxis 
$\alpha_h$.  Then $h$ is
an \emph{axial WPD} if there exists $p \in \alpha_h$ such that for any
constant $K \ge 0$, there is an $M > 0$, such that
\[ \setnorm{ \stab_K(p) \cap \stab_K(h^M p)  } < \infty. \]
\end{definition}

\begin{lemma} \label{L:axial} %
Let $G$ be a group acting on a $\delta$-hyperbolic space $X$, and let
$h$ be a loxodromic isometry.  Then $h$ is an axial WPD if and only if
$h$ is WPD.
\end{lemma}

\begin{proof}
If $h$ is WPD, then it is an axial WPD.  We now show the other
direction.  By the triangle inequality, for any $x, y \in X$,
$g \in G$, and $K \geq 0$
$$\stab_K(y) \cap \stab_K(h^M y) \subseteq \stab_{K'}(x) \cap \stab_{K'}(h^M x)$$
where $K' = K + 2 d(x, y)$. 
\end{proof}

We now show that for $L$ sufficiently large, loxodromics on $P_L(\A)$
act as WPD elements on $X$.

\begin{proposition} \label{P:LW2}
Let $G$ be a group acting by isometries on a hyperbolic space $X$,
with a WPD element $h$ so that $E_G(h)$ preserves a geodesic axis, 
and let $P_L(\A)$ be the corresponding projection complex determined by $h$.  Then there is a constant $L_0$
such that for all $L \ge L_0$, if $g$ acts loxodromically on $P_L(\A)$,
then $g$ acts as a WPD element on $X$.
\end{proposition}

\begin{proof}
Let $A_h$ be the geodesic axis of $h$ in $X$, 
and let $\alpha_g$ be a
$(1, K_1)$-quasiaxis for $g$ in $X$.  Let $p$ be a nearest point on
$\alpha_g$ to $A_h$, and let $K$ be a constant.

The group $G$ acts on both $X$ and $P_L(\A)$.  We will write
$\stab^X_K(x)$ for the coarse stabilizer of a point $x \in X$ and
$\stab^P_K(A)$ for the coarse stabilizer of a point $A \in P_L(\A)$.

Let $f$ be an isometry such that
$f \in \stab^X_K(p) \cap \stab^X_K(g^m p)$.
In particular, by the triangle inequality and the fact that $p$ is a nearest point projection,
$d_X(A_h, f A_h) \le 2 d_X(A_h, \alpha_g) + K$, and similarly,
$d_X(g^m A_h, f g^m A_h) \le 2 d_X(A_h, \alpha_g) + K$. Using
\eqref{eq:pc-lower-bound} implies that for
$K' = (2 d_X(A_h, \alpha_g) + K)/Q$,
\[ f \in \stab^{P}_{K'}(A_h) \cap \stab^P_{K'}(g^m A_h).  \]
The isometry $g$ acts as a WPD element on $P_L(\A)$, and let
$M_{W}$ and $N_{W}$ be the corresponding functions from Lemma
\ref{L:WPD}.  For all $m \ge M_{W}(K')$ there are at most
$N_{W}(K')$ elements $f$.  Therefore $g$ acts as an axial WPD
element on $X$, hence by Lemma \ref{L:axial} as a WPD element, as required.
\end{proof}

Theorem \ref{T:lox-implies-WPD} now follows immediately from Corollary \ref{cor:axis-match} and Proposition \ref{P:LW2}
in the case that $E_G(h)$ preserves a geodesic axis; the general case follows as discussed 
by replacing $X$ with a quasi-isometric space $X'$ on which $E_G(h)$ preserves a geodesic axis. 

\subsection{WPD isometries are generic}

We may now prove the following slightly stronger form of Theorem
\ref{T:generic-WPD}.

\begin{theorem} \label{T:generic match} %
Let $G$ be a group acting on a Gromov hyperbolic space $X$, and let
$\mu$ be a countable, non-elementary, bounded, WPD probability measure
on $G$. Then there exist constants $B > 0$, $c < 1$ such that the probability that 
$w_n$ is WPD satisfies
$$\mathbb{P}(w_n \text{ is WPD}) \geq 1 - B c^n $$ 
for any $n$. 

Furthermore, for any WPD element $h \in \Gamma_\mu$, there is a constant $K$,
such that for all $L \geq 0$, the probability that a quasiaxis 
for $w_n$ has an $(L, K)$-match with a quasiaxis 
for $h$ tends to one as $n \to \infty$, with exponential decay.
\end{theorem}

\begin{proof}
Let $h \in \Gamma_\mu$ be a WPD element and let $K$ be given by Theorem \ref{T:lox-implies-WPD}. 
For any $L \geq 0$, let $Y$ be the quasi-tree given by Theorem \ref{T:lox-implies-WPD}.
As $\mu$ is bounded in $X$, it is also bounded in $Y$.  As
$\Gamma_\mu$ contains $h$ and acts non-elementarily on $X$, it also
acts non-elementarily on $Y$.  A bounded non-elementary random walk on
a group acting on a Gromov hyperbolic space gives rise to a loxodromic
element with probability tending to one with exponential decay, by
\cite{MT}.  If $w_n$ is loxodromic on $Y$, then it is WPD on $X$, as
required.  The final statement follows immediately from the final
statement in Theorem \ref{T:lox-implies-WPD}.
\end{proof}

\section{Asymptotic acylindricality} \label{S:aa}

We say that a group $G$ acting by isometries on a Gromov hyperbolic
space $X$ is \emph{acylindrical} if for all $K \ge 0$, there are constants
$R \ge 0$ and $N \ge 0$, such that for all points $x$ and $y$ in $X$,
with $d(x, y) \ge R$, one has the bound
\[ \setnorm{ \stab_K(x) \cap \stab_K(y)  } \le N. \]

\begin{definition}
Let $\mu$ be a probability measure on a group $G$ acting by isometries
on a metric space $X$, and let $x \in X$.  
We say that the random walk generated
by $\mu$ is \emph{asymptotically acylindrical} if there is a function
$N_{ac} \colon \R_{\ge 0} \to \R_{\ge 0}$ such that for all $K \ge 0$,
the probability that
\[ \setnorm{ \stab_K(x) \cap \stab_K(w_n x)  } \le N_{ac}(K) \]
tends to one as $n$ tends to infinity.
\end{definition}

\subsection{More matching estimates}

We now show that for any WPD element $h$ in $\Gamma_\mu$, the
probability that $[x, w_n x ]$ has an $(L, K)$-match with a translate
of a quasiaxis $\alpha_h$ of $h$ tends to one as $n$ tends to infinity.

The following results are analogous to \cite[Propositions 3.2]{maher-sisto},
where the action is assumed to be acylindrical.

\begin{proposition}\label{P:match-use}
Let $G$ be a group acting by isometries on a Gromov hyperbolic space
$X$ with a WPD element $h$, with quasiaxis
$\alpha_h$.  Let
$x$ be a basepoint in $X$.  Then there is a constant $K_0$ such that
for any countable, non-elementary, WPD probability distribution $\mu$
on $G$, which is bounded in $X$, the following properties hold.

\begin{enumerate}

\item If $w_n$ is loxodromic, then let $\alpha_{w_n}$ be a quasiaxis
for $w_n$, and let $p$ be a nearest point on
$\alpha_{w_n}$ to the basepoint $x$. Then for any $K \ge K_0$ and any
$L \ge 0$, there are constants $B_1 > 0 $ and $c_1 < 1$ such that the
probability that $w_n$ is loxodromic and that $[p, w_n p]$ has an
$(L, K)$-match with $\alpha_h$ is at least $1 - B_1 c_1^{n}$.

\item There is a constant $K$ such that for any $L \ge 0$, there are
constants $B_2 > 0 $ and $c_2 <1$ such that the probability that
$\gamma_n = [x, w_n x]$ has an $(L, K)$-match with $\alpha_h$ is at
least $1 - B_2 c_2^{n}$.

\end{enumerate}

\end{proposition}

\begin{proof}


Let $K$ be the constant from Theorem \ref{T:generic match}. Then, for
any $L \ge 0$, Theorem \ref{T:generic match} implies that the
probability that $\alpha_{w_n}$ and $\alpha_h$ have a $(L, K)$-match
tends to one with exponential decay.  This is illustrated in Figure
\ref{pic:axis2} below.

\begin{figure}[H]
\begin{center}
\begin{tikzpicture}

\tikzstyle{point}=[circle, draw, fill=black, inner sep=1pt]

\begin{scope}[yshift=+2cm, xshift=-4cm]
\draw [thick] (2, -3.5) .. controls (2, -2.5) and (2, -2.5) .. (3,
-2.5) -- (5, -2.5) .. controls (6, -2.5) and (6, -2.5) .. (6, -3.5)
node [right] {$\alpha_h$};
\end{scope}

\draw (0, 2) node (x) [point, label=above:$x$] {};
\draw (8, 2) node (wx) [point, label=above:$w_n x$] {};

\draw [thick] (x) .. controls (0, 0.5) and (0, 0.5) .. (1, 0.5)
-- (7, 0.5) .. controls (8, 0.5) and (8, 0.5) .. (wx);

\draw [thick] (-2, 0) -- (10, 0) node [below] {$\alpha_{w_n}$};
\draw (0, 0) node (p) [point, label=below:{$p$}] {};
\draw (8.5, 0.5) node (wp) [point, label=below:{$w_n p$}] {};

\draw (-2, 0.25) -- (p) -- (wp) -- (10, 0.25) node [above] {$\gamma$};

\end{tikzpicture}
\end{center}
\caption{The quasiaxis $\alpha_{w_n}$ has an $(L, K)$-match with the
  quasiaxis $\alpha_h$.}\label{pic:axis2}
\end{figure}

By Proposition \ref{P:fellow travel2}, there exists $L_1$, which only
depends on $\delta$, such that any two $(1, K_1)$-quasigeodesics with
common endpoints are contained in $L_1$-neighbourhoods of each other.
The point $w_n p$ lies on the quasiaxis 
$w_n \alpha_{w_n}$,
which is contained in an $L_1$-neighbourhood of $\alpha_{w_n}$.  In
particular, the distance from $w_n p$ to $\alpha_{w_n}$ is at most
$L_1$, and so again, by Proposition \ref{P:fellow travel2}, the
geodesic $[p, w_n p]$ is contained in a $2 L_1$-neighbourhood of
$\alpha_{w_n}$.  Let $\gamma$ be the orbit of $[p, w_n p]$ under
powers of $w_n$. Then $\gamma$ is a connected bi-infinite quasiaxis
for $w_n$, contained in an $2 L_1$-neighbourhood of $\alpha_{w_n}$.
Let $q$ be a nearest point projection of $x$ to $\gamma$, and let
$q'$ be a nearest point projection of $w_n x$ to $\gamma$.  As
$\alpha_{w_n}, w_n \alpha_{w_n}$ and $\gamma$ are all contained in
$2 L_1$-neighbourhoods of each other, $d_X(p, q) \le 2 L_1$ and
$d_X(w_n p, q') \le 2 L_1$.

By Proposition \ref{P:npp-qg}, there are $L_2$ and $K_2$,
which only depend on $\delta$, such that if $d_X(q, q') \ge L_2$ ,
then the nearest point projection path
$[x, q] \cup [q, q'] \cup [q', w_n x]$ is a $(1, K_2)$-quasigeodesic.
As $[q, q']$ and $[p, w_n p]$ are Hausdorff distance $2 L_1$ apart,
there are constants $K_3$ and $L_3$, which only depend on $\delta$,
such that if $d_X(p, w_n p) \ge L_3$, then the path
$[x, p] \cup [p, w_n p] \cup [w_n p, w_n x]$ is a
$(1, K_3)$-quasigeodesic.

The distance $d_X(p, w_n p)$ is at least the translation length
$\tau(w_n)$.  By Theorem \ref{T:MT}, the translation length grows
linearly with exponential decay, so the probability that
$d_X(p, w_n p) \ge L_3$ tends to one with exponential decay.
Therefore the probability that the path
$[x, p] \cup [p, w_n p] \cup [w_n p, w_n x]$ is a
$(1, K_3)$-quasigeodesic tends to one with exponential decay.

If $\alpha_h$ has an $(L, K)$-match with $\alpha_{w_n}$, then it has
an $(L, K + 2 L_1)$-match with $\gamma$.  If this match is disjoint
from the orbit of $p$ under powers of $w_n$, then we are done.  If the
orbit of $p$ is contained in the match, then, at worst, $p$ divides the
subsegment of $\gamma$ realizing the match in two equal parts, so the
probability that $[p, w_n p]$ has an $(L/2, K + 2 L_1)$-match with
$\alpha_h$ tends to one exponentially quickly.  This gives the first
statement of the result, for appropriate choices of constants.

For the second statement, the path $[p, w_n p]$ is a subsegment of the
$(1, K_3)$-quasigeodesic $[x, p] \cup [p, w_n p] \cup [w_n p, w_n x]$.
By Proposition \ref{P:fellow travel2}, there is a constant $L_4$,
which only depends on $\delta$, such that $[p, w_n p]$ is contained in
an $L_4$-neighbourhood of $[x, w_n x]$.  Therefore, the
$(L/2, K + 2 L_1)$-match with $[p, w_n p]$ gives an
$(L/2, K + 2 L_1 + L_4 )$-match with $[x, w_n x]$, as required.
\end{proof}

Finally, we show:

\begin{lemma} \label{L:match2} %
Let $G$ be a group acting on a Gromov hyperbolic space $X$.  Let $\mu$
be a countable, non-elementary, bounded, WPD probability distribution
on $G$, and let $h$ be a WPD element in $G$ which lies in
$\Gamma_\mu$. Then there is a constant $K_0$ such that for any
$\epsilon > 0$, any $K \geq K_0$, and any $L > 0$ there are constants
$B > 0$ and $c < 1$ such that the probability that every segment
$[w_i x, w_{i + \epsilon n} x]$ for $0 \leq i \leq n(1-\epsilon)$ has
a $(L, K)$-match with a translate of a 
quasiaxis of $h$ is at least $1 - B c^{n}$.
\end{lemma}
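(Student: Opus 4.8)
The plan is to deduce this from the single-segment matching estimate of Proposition \ref{P:match-use} by combining the Markov property with a union bound over the roughly $n$ subsegments, and then checking that a polynomial factor does not destroy square-root exponential decay.

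Since $h$ is WPD it is in particular loxodromic, and by hypothesis $h \in \Gamma_\mu$; so Proposition \ref{P:match-use}(1) applies with $g = h$: for the given $K \ge K_0$ and $L$ there are constants $B_0 > 0$ and $c_0 < 1$ such that for every $m \ge 0$,
$$\mathbb{P}\big( [x, w_m x] \textup{ has an }(L, K)\textup{-match with a }G\textup{-translate of }\alpha_h \big) \ge 1 - B_0 c_0^{\sqrt m}.$$
Now fix $i$ with $0 \le i \le n(1-\epsilon)$, write $m = \lfloor \epsilon n \rfloor$, and consider the increment $u := w_i^{-1} w_{i+m} = g_{i+1} \cdots g_{i+m}$. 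By the Markov property $u$ is independent of $w_i$ and distributed as $w_m$. Applying the isometry $w_i^{-1}$ carries a geodesic $[w_i x, w_{i+m} x]$ to a geodesic $[x, u x]$ and carries any $G$-translate of $\alpha_h$ to another $G$-translate of $\alpha_h$; since the existence of an $(L,K)$-match between a pair of points and a $G$-orbit of $\alpha_h$ is an isometry-invariant property that depends only on the pair of endpoints (the $2\delta$-ambiguity in the choice of geodesic being harmless up to a bounded adjustment of $K$), the event that $[w_i x, w_{i+m} x]$ matches a translate of $\alpha_h$ has the same probability as the corresponding event for $[x, w_m x]$. Hence for each such $i$ this probability is at least $1 - B_0 c_0^{\sqrt m} \ge 1 - B_0 c_0^{-1}\big( c_0^{\sqrt\epsilon} \big)^{\sqrt n}$, where we used $\sqrt{\lfloor \epsilon n\rfloor} \ge \sqrt\epsilon\,\sqrt n - 1$ for $\epsilon n \ge 1$.

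Finally, the number of integers $i$ with $0 \le i \le n(1-\epsilon)$ is at most $n+1$, so a union bound gives
$$\mathbb{P}\big( \textup{some }[w_i x, w_{i+\epsilon n} x] \textup{ has no }(L,K)\textup{-match with a translate of }\alpha_h \big) \le (n+1)\, B_0 c_0^{-1}\, c_1^{\sqrt n},$$
where $c_1 := c_0^{\sqrt\epsilon} < 1$. For any fixed $c \in (c_1, 1)$ we have $(n+1)\,c_1^{\sqrt n} \le B' c^{\sqrt n}$ for a suitable constant $B'$ and all $n$, since $\log(n+1) = o(\sqrt n)$; adjusting the constants for the finitely many small $n$ then yields the claimed bound $1 - B c^{\sqrt n}$. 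I do not expect a genuine obstacle: all the geometric content — fellow-travelling with the axis of $h$ of geodesics whose endpoints lie in disjoint shadows around $\lambda_h^{\pm}$, together with the $\sqrt n$-rate of Proposition \ref{P:sqrtn} — is already packaged in Proposition \ref{P:match-use}, and the only point to verify is the elementary fact that a union bound costing a factor of $n$ is harmless against decay of type $c^{\sqrt n}$.
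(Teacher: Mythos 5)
Your proposal is correct and is essentially the paper's own argument: the paper likewise applies Proposition \ref{P:match-use} to each of the at most $n(1-\epsilon)$ segments and concludes by a union bound, absorbing the polynomial factor into the constants $B$ and $c$. The only difference is that you spell out the Markov-property/isometry-invariance reduction of $[w_i x, w_{i+\epsilon n}x]$ to $[x, w_{\epsilon n}x]$ and the elementary estimate $(n+1)c_1^{\sqrt{n}} \le B' c^{\sqrt{n}}$, both of which the paper leaves implicit.
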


\begin{proof}
By Proposition \ref{P:match-use}, for each $i$ the probability that
$[w_i x, w_{i + \epsilon n} x]$ does not have a $(L, K)$-match with a
translate of a
quasiaxis of $h$ is at most $B_1 c_1^{\epsilon n}$ for
some $c_1 < 1$, and there are at most $n (1-\epsilon)$ possible values
of $i$, hence the total probability is at most
$B_1 (1-\epsilon) n c_1^{\epsilon n}$.  The result then follows for
suitable choices of $B$ and $c$.
\end{proof}

\subsection{Proof of asymptotic acylindricality}

We now show that if $\Gamma_\mu$ contains a WPD element, then the
random walk determined by $\mu$ is asymptotically acylindrical with
exponential decay, which is Theorem \ref{T:aa} in the Introduction.

\begin{theorem} \label{T:asymp-acyl} %
Let $G$ be a group acting by isometries on a Gromov hyperbolic space
$X$, let $x \in X$, and let $\mu$ be countable, non-elementary,
bounded, WPD probability distribution on $G$.  Then for any
$K \geq 0$, there are constants $N > 0$, $B >0 $ and $c < 1$ such that
\[ \P \left( \setnorm{ \textup{Stab}_K(x, w_n x)  } \le N \right) \ge
1 -  B c^{n}. \]
\end{theorem}

\begin{proof}
Without loss of generality we assume that $E_G(h)$ preserves a geodesic axis; 
the general case follows as before by replacing the space $X$ by a quasi-isometric space $X'$ 
and changing constants. 
Recall that distance between elements of $A$ in $X$ is a coarse upper bound for the
distance in $P_L(\A)$.  So if an isometry coarsely stabilizes $x$ in
$X$, then it coarsely stabilizes $A_h$ in $P_L(\A)$.  By linear
progress with exponential decay, the distance $d_{P_L}(A_h, w_n A_h)$
grows linearly with exponential decay.  As the action of $G$ on
$P_L(\A)$ is acylindrical, the probability that the coarse stabilizer
of $A_h$ and $w_n A_h$ is bounded tends to one exponentially quickly,
so this also holds for the coarse stabilizer of $x$ and $w_n x$.

We now make this precise.  As the action of $G$ on the projection
complex $P_L(\A)$ is acylindrical, there are functions $R_{ac}$ and
$N_{ac}$ such that for all $K \ge 0$, and all $A$ and $B$ in $P_L(\A)$
with $d_{P_L}(A, B) \ge R_{ac}(K)$, we have
\[ \# \norm{ \stab^P_K(A, B) } \le N_{ac}(K). \]
Let $A_h$ be the geodesic axis of $h$. 
Then if $d_X(x, f x) \le K$, then by the triangle inequality
$d_X(A_h, f A_h) \le K + 2 d_X(x, A_h)$.  Recall that by Proposition
\ref{P:pc distance}, distance between elements of $A$ in $X$ is a
coarse upper bound for the distance in $P_L(\A)$.  In particular, if we set
$K' = (K + 2 d_X(x, A_h))/Q$, where $Q$ is from
Proposition \ref{P:pc distance}, then $d_{P_L}(A_h, f A_h) \le K'$.

This implies that if $f \in \stab^X_K(x, w_n x)$, then
$f \in \stab^P_{K'}(A_h, w_n A_h)$.  By linear progress with
exponential decay (Proposition \ref{L:progress-decay}), the probability
that $d_{P_L}(A_h, w_n A_h) \ge R_{ac}(K')$ tends to one exponentially
quickly.  Therefore the probability that
$\# \norm{ \stab^P_{K'}(A_h, w_n A_h) } \le N_{ac}(K')$ tends to one
with exponentially decay, and so the probability that
$\# \norm{ \stab^X_{K}(x, w_n x) } \le N_{ac}(K')$ also tends to one
exponentially quickly, as required.
\end{proof}

\section{Non-matching estimates}

So far, we have established generic properties of our random walks by
proving \emph{matching estimates}, i.e. by showing that with high
probability there is a subsegment of the sample path that fellow
travels some given element.  However, in order to establish our
results on the normal closure, we need to prove that the probability
of such a matching to occur too often is not so high: we call this a
\emph{non-matching estimate}. Note that, while matching happens for
random walks on any group of isometries of a hyperbolic space, to
prove non-matching one uses crucially the WPD property (and in fact,
non-matching may not hold in the non-WPD case, for example, for a
dense subgroup of $SL(2, \R)$ acting on $\mathbb{H}^2$).

We now define notation for the nearest point projection of a location
$w_m x$ of the random walk to a geodesic $\gamma_n$ from $x$ to
$w_n x$.

\begin{definition}
Given integers $0 \le m \le n$, let $\gamma_n$ be a geodesic from $x$
to $w_n x$, and let $\gamma_n(t_m)$ be a nearest point on $\gamma_n$
to $w_m x$.
\end{definition}

The main non-matching estimate is the following proposition, which
says that the probability that $\gamma_n$ contains in its
neighbourhood a translate of a given geodesic segment $\eta$ starting
at $\gamma_n(t_m)$ is bounded above by an exponential function of
$\norm{ \eta }$.  We will prove it by using the asymptotic
acylindricality property established in the previous section.

\begin{proposition} \label{P:non-match} %
Given a constant $\delta \ge 0$ there is a constant $K_0 \ge 0$ with
the following properties.  Let $G$ be a group which acts by isometries
on the  $\delta$-hyperbolic space $X$, and let $\mu$ be a
countable, bounded probability distribution on $G$, such that 
the random walk generated by $\mu$ is asymptotically acylindrical 
with exponential decay. 

Then for any constant $K \ge K_0$ there are constants $B > 0$ and
$c < 1$, such that for any geodesic segment $\eta$ and any integers $m \geq 0$, $n \geq 0$, 
the probability that a $G$-translate of $\eta$ is contained in a
$K$-neighbourhood of $[ \gamma_n(t_m), \gamma_n(t_m + \norm{\eta}) ]$
is at most $B c^{ \norm{\eta} }$.
\end{proposition}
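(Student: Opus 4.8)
The plan is to turn a match into a statement about a single increment of the random walk and then bound its probability using the asymptotic acylindricality hypothesis together with the exponential decay of shadows; the square root in the exponent will be inherited from asymptotic acylindricality (Theorem~\ref{T:asymp-acyl}), invoked at a scale proportional to $\norm{\eta}$.

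Set $s:=\norm{\eta}$, let $a,b$ be the endpoints of $\eta$, and let $D$ bound the support of $\mu$ in $X$; we may assume $s$ is large. On the event in question the segment $[\gamma_n(t_m),\gamma_n(t_m+s)]$ lies inside $\gamma_n$, so $s\le d(x,w_nx)\le Dn$ and hence $n\ge s/D$; thus every estimate from \cite{MT}, or from Lemmas~\ref{L:exp} and~\ref{L:progress-decay}, that decays like $c^{\epsilon n}$ is at most $c^{(\epsilon/D)\sqrt s}$ on this event. Accordingly, at a total cost $\le Bc^{\sqrt s}$, we may condition on the sample path making linear progress and tracking $\gamma_n$ in the quantitative sense used in the proof of Proposition~\ref{P:match-use}: there is an index $i^{*}$, with $i^{*}-m$ comparable to $s/\ell$ (where $\ell$ is the drift), such that $\gamma_n(t_m)$ and $\gamma_n(t_m+s)$ lie within a bounded distance of $w_mx$ and $w_{i^{*}}x$ respectively. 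Now suppose the bad event holds, witnessed by $g\in G$. Since $g\eta$ and $[\gamma_n(t_m),\gamma_n(t_m+s)]$ are geodesics of the same length $s$ lying in each other's bounded neighbourhoods, $\delta$-hyperbolicity (Lemma~\ref{P:fellow travel}, Proposition~\ref{P:fellow travel2}) forces them to fellow-travel, with boundedly close endpoints. Transporting by $w_m^{-1}$, this says precisely that the increment $u:=w_m^{-1}w_{i^{*}}$ --- independent of the first $m$ steps and distributed as the walk after $i^{*}-m\asymp s/\ell$ steps --- satisfies $d(x,ux)\asymp s$ while some translate $g'\eta$ of $\eta$ (with $g':=w_m^{-1}g$) fellow-travels $[x,ux]$ up to a fixed constant; in particular $g'a$ is boundedly close to $x$ and $g'b$ boundedly close to $ux$.

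It remains to bound, over the increment $u$, the probability that such a translate exists. Any two such translates $g'_1,g'_2$ differ (up to reversing orientation) by an element of the \emph{fixed} set $\stab_{K_1}(a,b)$, for a constant $K_1$ depending only on $\delta,K,D$; conjugating this relation by a witness also shows that $\stab_{K_1}(a,b)$ embeds into $\stab_{K_2}(x,ux)$ for a related constant $K_2$, so $\setnorm{\stab_{K_1}(a,b)}\le\setnorm{\stab_{K_2}(x,ux)}$. Fix $N=N(K_2)$ from Theorem~\ref{T:asymp-acyl}. If the deterministic quantity $\setnorm{\stab_{K_1}(a,b)}$ exceeds $N$, then the bad event forces $\setnorm{\stab_{K_2}(x,ux)}>N$, which by asymptotic acylindricality has probability $\le Bc^{\sqrt{i^{*}-m}}\le Bc^{c_1\sqrt s}$ (the polynomial-in-$s$ union over the admissible values of $i^{*}$ being harmlessly absorbed). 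If instead $\setnorm{\stab_{K_1}(a,b)}\le N$, one wants to argue that, the candidate translates forming a bounded union of cosets of $\stab_{K_1}(a,b)$, the point $ux$ is pinned --- up to a bounded error and to at most $N$ choices --- to an endpoint $g'b$ at distance $\asymp s$ from $x$, hence to a shadow from $x$ of distance parameter $\asymp s$, so that Lemma~\ref{L:exp} bounds its probability by $\le Bc_0^{s}$; summing the at most $N$ such estimates together with the exceptional sets collected above then gives $\le Bc^{\sqrt s}$. The main obstacle is the honest execution of this last alternative: in the absence of properness the set of $g'\in G$ with $g'a$ close to $x$ need not be bounded, so one must bring asymptotic acylindricality to bear a second time to exclude that infinitely many well-separated translates of $\eta$ are simultaneously viable for fellow-travelling a random geodesic of length $\asymp s$. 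It is precisely this recourse to asymptotic acylindricality --- at scale $\asymp\norm{\eta}$, with its intrinsic rate $c^{\sqrt n}$ --- that yields the exponent $\sqrt{\norm{\eta}}$ rather than $\norm{\eta}$.
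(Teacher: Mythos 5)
Your reduction to a single increment $u=w_m^{-1}w_{i^*}$ and your first alternative (conjugating a witness to embed $\stab_{K_1}(a,b)$ into $\stab_{K_2}(x,ux)$ and invoking Theorem \ref{T:asymp-acyl}) are sound, but the argument has a genuine gap exactly where you flag it, and the fix you gesture at does not exist in the form you describe. In the second alternative, knowing $\setnorm{\stab_{K_1}(a,b)}\le N$ only tells you that the witnesses \emph{for a fixed sample path} lie in a single coset $g'\stab_{K_1}(a,b)$; it does not bound, independently of $u$, the family of candidate translates $g'$ with $g'a$ close to $x$, which in a non-proper space may be infinite and may produce infinitely many distinct shadows around the various points $g'b$. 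A union bound over Lemma \ref{L:exp} therefore cannot be run, and a "second application of asymptotic acylindricality" does not repair this: Theorem \ref{T:asymp-acyl} controls joint coarse stabilizers of pairs of random-walk locations, not the number of group elements coarsely moving a fixed point $a$ near the basepoint, which is precisely the quantity you would need to make the pinning argument finite.

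The paper's proof circumvents this obstruction by never enumerating candidate translates. It splits $\eta$ into halves $\eta_1\eta_2$, sets $U_g\subseteq V_g$ to be the events that $g\eta$ (resp.\ its first half $g\eta_1$) lies in a $K$-neighbourhood of the corresponding subsegment of $\gamma_n$, and proves two things: (i) for each \emph{fixed} $g$, $\P(U_g\mid V_g)\le Bc^{s}$ by exponential decay of shadows applied to the independent increment carrying the walk across the second half (Proposition \ref{P:g match}, using the projection control of Proposition \ref{P:m+s}); and (ii) with probability $1-Bc^{\sqrt s}$ the covering depth of $\{V_g\}$ is at most $N_{ac}(K_4)$, because either $\eta_1$ is $(K_3,N_{ac}(K_4))$-stable (giving the depth bound deterministically), or else the walk locations $w_ix,w_jx$ found near the endpoints of $g\eta_1$ via Lemma \ref{L:proximal}, with $|i-j|\gtrsim s$, fail to be $(K_4,N_{ac}(K_4))$-stable, an event of probability $O(s^2c^{\sqrt{s}})$ by Theorem \ref{T:asymp-acyl}. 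Then $\P(U)\le\sum_g\P(U_g)\le Bc^{s}\sum_g\P(V_g)\le N_{ac}(K_4)Bc^{s}\P(V)$, and the possibly infinite set of translates never needs to be counted. Your case split on $\setnorm{\stab_{K_1}(a,b)}$ is the right instinct (it mirrors the paper's stability dichotomy for $\eta_1$), but without the conditional-probability/covering-depth mechanism the small-stabilizer case remains unproved, so the proposal as written is incomplete.
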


Before embarking on the details, we give a brief overview of the
contents of this section.  Fix a geodesic segment $\eta$ of length
$2s$.  We wish to estimate the probability that some translate of
$\eta$ is contained in a neighbourhood of
$[ \gamma_n(t_m), \gamma_n(t_m + 2s) ]$.  Let $U \subset (G, \mu)^\Z$
be the event that some translate of $\eta$ is contained in a
neighbourhood of $[ \gamma_n(t_m), \gamma_n(t_m + 2s) ]$, and let $V$
be the event that some translate of the first half of $\eta$ is
contained in a neighbourhood of
$[ \gamma_n(t_m), \gamma_n(t_m + s) ]$.  Since $U \subseteq V$, 
the conditional probability of $U$ given $V$ satisfies $\P(U) = \mathbb{P}(U \cap V) \le \P( U \mid V)$.  Let
$U_g$ be the event that a specific translate $g \eta$ is contained in
a neighbourhood of $[ \gamma_n(t_m), \gamma_n(t_m + 2s) ]$, and let
$V_g$ be the event that the first half of $g \eta$ is contained in a
neighbourhood of $[ \gamma_n(t_m), \gamma_n(t_m + s) ]$.  The event
$U$ is the union of the events $U_g$, and the event $V$ is the union
of the events $V_g$.  It follows from exponential decay of shadows
that $\P(U_g \mid V_g)$ decays exponentially in $s$.  In order to use
this fact to estimate $\P(U \mid V)$ we need the following extra
information: it follows from asymptotic acylindricality that with high
probability any point of $V$ is contained in a bounded number of sets
$V_g$, and this is enough for the exponential decay in $s$ of
$\P(U_g \mid V_g)$ to imply exponential decay in $s$ of
$\P(U \mid V)$.

We now give the details of the results discussed above.  We will need
information about the distribution of the nearest point projections of
the locations $w_m x_0$ of the random walk to the geodesic $\gamma_n$,
and we start with the following estimate on Gromov products, which
follows directly from exponential decay of shadows.

\begin{proposition}\label{P:gp}
Let $G$ be a group acting by isometries on a  Gromov
hyperbolic space $X$, and let $\mu$ be a countable, non-elementary,
bounded probability distribution on $G$.  Then there are constants $B$
and $c < 1$ such that for all $0 \le i \le n$ and for any $r \geq 0$,
\[ \P ( \gp{w_i x}{x}{w_n x} \ge r ) \le B c^r.  \]
\end{proposition}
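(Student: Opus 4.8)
The plan is to reduce this to the exponential decay of shadows (Lemma \ref{L:exp}) by the same device used in the proof of Proposition \ref{P:gp2}: peel off the increment of the random walk between times $i$ and $n$, which is independent of $w_i$, and then recognize the event $\{ \gp{w_i x}{x}{w_n x} \ge r \}$ as the event that an independent copy of the walk lands in a shadow whose distance parameter is exactly $r$.

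In detail, set $u := w_i^{-1} w_n = g_{i+1} \cdots g_n$, so that $w_n = w_i u$. By the Markov property, $u$ is independent of $w_i$ and has the same law as $w_{n-i}$. Since $w_i$ acts by isometries and the Gromov product is symmetric in its two non-basepoint arguments,
\[ \gp{w_i x}{x}{w_n x} = \gp{x}{w_i^{-1} x}{w_i^{-1} w_n x} = \gp{x}{w_i^{-1} x}{u x}. \]
Next I would record the elementary fact that, for any $y, z \in X$, one has $\gp{x}{y}{z} \ge r$ if and only if $z \in S_x(y, d(x,y) - r)$, and that this shadow has distance parameter $d(x,y) - (d(x,y) - r) = r$, independently of everything else.

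Now condition on the value of $w_i^{-1}$. For a fixed $h \in G$, using independence of $u$ and $w_i$,
\[ \P\bigl( u x \in S_x(hx, d(x, hx) - r) \mid w_i^{-1} = h \bigr) = \P\bigl( u x \in S_x(hx, d(x, hx) - r) \bigr) \le B c^{r}, \]
where the last inequality is equation \eqref{eq:exp mu} of Lemma \ref{L:exp}: indeed $u$ has the law of $w_{n-i}$, and the distance parameter of the shadow $S_x(hx, d(x,hx) - r)$ equals $r$, so the bound is uniform in $n-i$ and in $h$; the constants $B, c$ depend only on $\mu$ and $\delta$. (If $r > d(x, hx)$ the shadow is empty and the bound is trivial.) Averaging over the values of $h$ then gives
\[ \P\bigl( \gp{w_i x}{x}{w_n x} \ge r \bigr) = \sum_h \P\bigl( u x \in S_x(hx, d(x,hx) - r) \bigr)\, \P(w_i^{-1} = h) \le B c^{r}, \]
uniformly over $0 \le i \le n$, as required.

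There is no serious obstacle here; the content is entirely in applying the right isometry and symmetry to put the basepoint at $x$, and the only point to watch is the bookkeeping that shows the distance parameter of the relevant shadow is exactly $r$ — independent of $i$, of $n$, and of the conditioned value $h$ — which is precisely what makes the resulting constants uniform.
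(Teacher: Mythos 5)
Your proof is correct and follows essentially the same route as the paper's: decompose $w_n = w_i u$ with $u = w_i^{-1}w_n$ independent of $w_i$, translate so the Gromov product event becomes a shadow event with distance parameter $r$, and apply the exponential decay of shadows (Lemma \ref{L:exp}) uniformly over the conditioned value and over $n-i$. The only cosmetic difference is which point you view as landing in the shadow (the paper places $x$ in a shadow based at $w_i x$, you place $ux$ in a shadow toward $w_i^{-1}x$ via symmetry of the Gromov product), which if anything makes the appeal to \eqref{eq:exp mu} slightly cleaner.
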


\begin{proof}
If $\gp{w_i x}{x}{w_n x} \ge r$, then $x$ lies in a shadow
$S_{w_i}(w_n x, R)$, with $d(w_i x, w_n x) - R \ge r + O(\delta)$.
The random variables $w_i$ and $w_i^{-1} w_n$ are independent, so by
exponential decay of shadows \cite[eq. (16)]{MT}, this occurs with probability at most
$B c^{r + O(\delta)}$.
\end{proof}

Linear progress for the locations of the sample path $w_m x_0$ in $X$,
and exponential decay for the distribution of the Gromov products
$\gp{w_m x_0}{x_0}{w_n x_0}$ imply that the points $\gamma_n(t_m)$ are
reasonably evenly distributed along $\gamma_n = [x_0, w_n x_0]$.  We
now make this precise.  As $\mu$ has bounded support in $X$, there is
a constant $D$ such that any point in $\gamma_n$ lies within distance
at most $D$ from a nearest point projection $\gamma_n(t_i)$ of one of
the locations of the random walk $w_i x$, for $0 \le i \le n$, and
furthermore, we may choose $D$ to be an upper bound for the diameter
of the support of $\mu$ in $X$.  For any constant $s \ge 0$, let
$P_{s}$ be the collection of indices $0 \le i \le n$ such that
$t_i \in [s, s + D]$.  This collection is non-empty if
$s \le \norm{\gamma_n}$.  We emphasize that $P_s$ only contains
indices between $0$ and $n$, there may be other locations of the
bi-infinite random walk which have nearest point projections to
$\gamma_n$ contained in $[\gamma(s), \gamma(s+D)]$, and we consider
this separately in Proposition \ref{P:tail indices} below.

\begin{proposition}\label{P:npp} 
Let $G$ be a group which acts by isometries on the 
hyperbolic space $X$, and let $\mu$ be a countable, non-elementary,
bounded probability distribution on $G$.  Then there are constants
$0 < L_1 \le L_2$, $B \ge 0$ and $c < 1$ such that for any $s > 0$ and
any $n \geq 0$,
\[ \P( P_s \subseteq [ L_1 s, L_2 s ] ) \ge 1 - B c^s.  \]
\end{proposition}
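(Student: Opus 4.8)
The plan is to show that the indices in $P_s$ correspond to locations $w_i x$ of the random walk whose nearest-point projections onto $\gamma_n$ lie in the window $[\gamma_n(s), \gamma_n(s+D)]$, and that such locations must themselves be at distance roughly $s$ from the basepoint $x$. The two ingredients are linear progress with exponential decay (Proposition \ref{L:progress-decay}), which forces $d(x, w_i x)$ to grow linearly in $i$, and exponential decay of Gromov products (Proposition \ref{P:gp}), which says that the projection $\gamma_n(t_i)$ of $w_i x$ is never much farther from $x$ along $\gamma_n$ than $w_i x$ itself. Concretely, if $i \in P_s$, then on one hand $t_i \ge s$, and by thin triangles $d(x, w_i x) \ge t_i - \gp{w_i x}{x}{w_n x} - O(\delta) \ge s - \gp{w_i x}{x}{w_n x} - O(\delta)$; on the other hand $d(x, w_i x) \le t_i + \gp{w_i x}{x}{w_n x} + O(\delta) \le s + D + \gp{w_i x}{x}{w_n x} + O(\delta)$.

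First I would fix the constant $D$ as in the paragraph preceding the statement (an upper bound for the diameter of $\mathrm{supp}\,\mu$ in $X$, and large enough that every point of $\gamma_n$ is within $D$ of some projection $\gamma_n(t_i)$, $0 \le i \le n$). Next, using that $\mathrm{supp}\,\mu$ is bounded, there is a constant $D' \geq 0$ with $d(x, w_i x) \le D' i$ for every $i$ deterministically; combined with $d(x, w_i x) \ge s - \gp{w_i x}{x}{w_n x} - O(\delta)$ this gives $i \ge (s - \gp{w_i x}{x}{w_n x} - O(\delta))/D'$, so as long as all the relevant Gromov products are $\le \tfrac{1}{2} s$, say, we get $i \ge c_1 s$ for a suitable $c_1 > 0$. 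For the upper bound, linear progress with exponential decay (Proposition \ref{L:progress-decay}) gives constants $L > 0$, $B_0$, $c_0 < 1$ with $\P(d(x, w_i x) \le Li) \le B_0 c_0^{i}$; on the complement, $Li \le d(x, w_i x) \le s + D + \gp{w_i x}{x}{w_n x} + O(\delta)$, so again assuming the Gromov products are $\le \tfrac12 s$ we get $i \le c_2 s$ for a suitable $c_2$. Setting $L_1 s := c_1 s$ and $L_2 s := c_2 s$ (after harmlessly enlarging/shrinking constants) gives $P_s \subseteq [L_1 s, L_2 s]$ on the good event.

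It then remains to bound the probability of the bad event. There are three sources of failure: (i) some $i$ with $0 \le i \le n$ has $\gp{w_i x}{x}{w_n x} \ge \tfrac12 s$; (ii) some $i \le c_2 s$ has $d(x, w_i x) \le Li$ (violating linear progress). For (i), Proposition \ref{P:gp} gives $\P(\gp{w_i x}{x}{w_n x} \ge \tfrac12 s) \le B c^{s/2}$ for each fixed $i$; but we need this uniformly over the (up to $n+1$) indices $i$, and $n$ is unbounded, so a naive union bound over all $i$ fails. The fix is to observe that only indices $i$ with $\gamma_n(t_i) \in [\gamma_n(s-D), \gamma_n(s+2D)]$ can possibly lie in $P_s$ or be relevant, and for any such $i$, thin triangles force $d(x, w_i x) \ge s - O(D) - \gp{w_i x}{x}{w_n x}$, so by linear progress with exponential decay $i$ is itself at most $O(s)$ outside an event of probability $\le B c^{s}$; thus effectively only $O(s)$ indices $i$ need to be controlled, and the union bound over those $O(s)$ indices, each contributing $B c^{s/2}$, still gives $O(s) \cdot B c^{s/2} \le B' c'^{\, s}$ after enlarging $c'$. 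For (ii), union bound over $i \le c_2 s$ gives $\le c_2 s \cdot B_0 c_0^{\,0}$? no — one uses instead the full strength of Proposition \ref{L:progress-decay} summed over $i$, namely $\sum_{i \ge c_1 s} B_0 c_0^{i} \le B_0' c_0^{c_1 s}$, to say that \emph{beyond} index $c_1 s$ linear progress holds; for the finitely many small indices $i < c_1 s$ one simply notes their projections cannot reach $[\gamma_n(s), \gamma_n(s+D)]$ because $d(x, w_i x) \le D' i < D' c_1 s < s$ once $c_1$ is chosen with $D' c_1 < 1$, so they are automatically excluded from $P_s$. Combining all the error terms and absorbing into single constants $B$ and $c < 1$ yields $\P(P_s \subseteq [L_1 s, L_2 s]) \ge 1 - B c^s$.

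The main obstacle is exactly point (i): getting a bound uniform in $n$. The whole point is that $P_s$ is a local object — it only sees a bounded-length window of $\gamma_n$ — so only $O(s)$-many locations of the random walk can have projections near that window (with exponentially small exceptional probability), and therefore only $O(s)$-many Gromov-product estimates need to be union-bounded rather than $n$-many. Making this reduction precise, and checking that the implied constants do not secretly depend on $n$, is where care is required; the rest is routine thin-triangle bookkeeping.
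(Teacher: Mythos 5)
Your overall skeleton is the same as the paper's: the lower bound $L_1 s \le \min P_s$ is essentially deterministic from bounded support, and the upper bound is meant to come from linear progress plus exponential decay of Gromov products, with the linear-progress failures summed as a geometric series. However, there is a genuine gap at exactly the step you single out as the main obstacle, namely the reduction to ``$O(s)$ relevant indices''. As you justify it, the reduction is circular. Linear progress gives a \emph{lower} bound $d(x, w_i x) \ge L i$, and the thin-triangle inequality you invoke, $d(x, w_i x) \ge s - O(D) - \gp{w_i x}{x}{w_n x}$, is also a lower bound; two lower bounds on $d(x, w_i x)$ cannot cap $i$ from above. To conclude $i \le O(s)$ for an index whose projection lies in the window you need an \emph{upper} bound on $d(x, w_i x)$, and since $d(x, w_i x) = t_i + \gp{w_i x}{x}{w_n x} + O(\delta)$, that upper bound is precisely the Gromov-product control your union bound is supposed to deliver. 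So the restriction to $O(s)$ indices presupposes the conclusion, and with it the fixed-threshold ($\tfrac12 s$) union bound cannot be made uniform in $n$.

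The fix --- and the paper's actual argument --- is to abandon the fixed threshold and let it grow with the index. On the event that $d(x, w_m x) \ge Lm$ for all $m \ge 2s/L$ (whose complement has probability at most $Bc^s$ by summing Proposition \ref{L:progress-decay} over $m$), an index $m \ge 2s/L$ with $t_m \in [s, s+D]$ forces $\gp{w_m x}{x}{w_n x} \ge Lm - s - D - O(\delta)$, because $d(x, w_m x) = t_m + \gp{w_m x}{x}{w_n x} + O(\delta)$. By Proposition \ref{P:gp} this event has probability at most $B c^{Lm - s - D - O(\delta)}$, which decays exponentially in $m$ for fixed $s$; summing this geometric series over \emph{all} $m \ge 2s/L$ gives a total error of order $c^{s}$, uniformly in $n$, with no need to pre-select $O(s)$ indices and no per-index bound at level $s/2$ at all. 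With that replacement (your deterministic exclusion of the small indices via bounded support is fine, and matches the paper's choice $L_1 = 1/D$), the proof closes; as written, the key uniform-in-$n$ step is not justified.
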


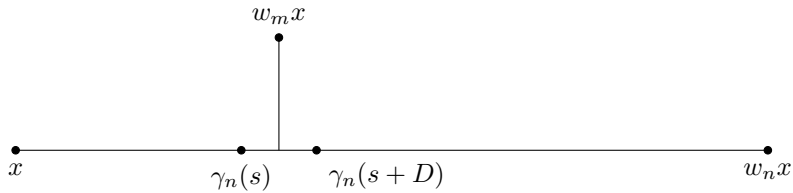
\begin{figure}[H]
\begin{center}
\begin{tikzpicture}

\tikzstyle{point}=[circle, draw, fill=black, inner sep=1pt]

\draw (0, 0) node [point, label=below:{$x$}] {} -- (10, 0) node
[point, label=below:{$w_n x$}] {};

\draw (3.5, 1.5) node [point, label=above:{$w_m x$}] {} -- (3.5, 0);
\draw (3, 0) node [point, label=below:{$\gamma_n(s)$}] {};
\draw (4, 0) node [point, label=below right:{$\gamma_n(s + D)$}] {};

\end{tikzpicture}
\end{center}

\caption{The set $P_s$ defined right before Proposition \ref{P:npp}. The index $m$ belongs to $P_s$ as its projection to $[x, w_n x]$ lies 
within distance $s$ and $s+D$ from the basepoint.}\label{fig:npp}
\end{figure}

\begin{proof}
If $s > d(x, w_n x)$, then $P_s = \varnothing$, and the statement
follows immediately, so we may assume that $\gamma_n(s)$ determines a
point in $\gamma_n$.

By linear progress with exponential decay (Proposition
\ref{L:progress-decay}), there are constants $L > 0, B_1 \ge 0$ and
$c_1 < 1$ such that for any $m \geq 0$
\[ \P( d( x, w_m x) \le L m ) \le B_1 c_1^m. \]
Therefore, by summing the geometric series we get 
\[ \P( d( x, w_m x ) \le L m \text{ for any } m \ge N ) \le
\frac{B_1}{1-c_1} c_1^N.  \]
In particular, there are constants $B_2$ and $c_2 < 1$ such that 
\begin{equation}\label{eq:distance}
\P( d( x, w_m x ) \ge L m \text{ for all } m \ge 2s/L ) \ge 1 -
B_2 c_2^s.
\end{equation}
If \eqref{eq:distance} holds, and if $m \ge 2s/L$, then
$d( x, w_m x) \ge Lm \ge 2s$, so by thin triangles 
and the definition of the Gromov product, if the nearest point projection
$\gamma(t_m)$ of $w_m x$ lies in $[ \gamma_n(s), \gamma_n(s + D)]$,
then
\begin{equation}\label{eq:npp}
\gp{w_m x}{x}{w_n x} \ge d(x, w_m x) - s - D - O(\delta).
\end{equation}

By exponential decay for Gromov products (Proposition \ref{P:gp}),
there are constants $B_3$ and $c_3$ such that
$\P( \gp{w_m x}{x}{w_n x} \ge r ) \le B_3 c_3^r$. In particular,
\[ \P ( \gp{w_m x}{x}{w_n x} \ge Lm - s - D - O(\delta) ) \le B_3 c_3^{Lm
  - s - D - O(\delta)}.  \]
This implies that there are constants $B_4$ and $c_4 < 1$ such that for any $n$
\begin{equation}\label{eq:gp}
\P ( \gp{w_m x}{x}{w_n x} \ge Lm - s - D - O(\delta) \text{ for any } m
\ge 2s/L) \le B_4 c_4^s.
\end{equation}

Except for a set of probability at most $B_2 c_2^s + B_4 c_4^s $, we
may assume that \eqref{eq:distance} holds, and \eqref{eq:gp} does not
hold.  Equation \eqref{eq:npp} then implies that $\gamma(t_m)$ does
not lie in $[\gamma_n(s) , \gamma_n(s + D)]$ for all $m \ge 2s/L$.
This gives the required upper bound, with $L_2 = 2/L$, and suitable
choices of $B$ and $c$.  As $\mu$ has bounded support in $X$, the
lower bound may be chosen to be $L_1 = 1/D$.
\end{proof}

We now obtain estimates for the nearest point projections of the
remaining locations of the random walk $w_m x$ to a geodesic $\gamma_n
= [x, w_n x]$, i.e. for those indices $m \le 0$ and $m \ge n$.

\begin{proposition} \label{P:tail indices} %
Let $G$ be a group which acts by isometries on the 
hyperbolic space $X$, and let $\mu$ be a countable, non-elementary,
bounded probability distribution on $G$.  Then there are constants $B$
and $c$ such that for all $s \ge 0$ the probability that all of the
nearest point projections of $\{ w_m x \ : \ m \le 0 \}$ to
$\gamma_n = [x, w_n x]$ are contained within distance $s$ of the
initial point $x$, and all of the nearest point projections of
$\{ w_m x \ : \ m \ge n \}$ to $\gamma_n$ are contained within
distance $s$ of the terminal point $w_n x$, is at least $1 - Bc^s$.
\end{proposition}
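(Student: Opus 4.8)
The plan is to translate the statement about nearest point projections into one about the random walk entering certain shadows, and then invoke the exponential decay of Lemma \ref{L:exp}. First I would use the standard $\delta$-hyperbolic fact that the nearest point projection $\gamma_n(t_m)$ of $w_m x$ onto $\gamma_n = [x, w_n x]$ satisfies
$$d(x, \gamma_n(t_m)) = \gp{x}{w_m x}{w_n x} + O(\delta) \qquad \text{and}\qquad d(w_n x, \gamma_n(t_m)) = \gp{w_n x}{w_m x}{x} + O(\delta)$$
(nearest point projections being coarsely well defined), so that it suffices to bound, uniformly in $n$, the probability that $\gp{x}{w_m x}{w_n x} > s - O(\delta)$ for some $m < 0$, together with the symmetric probability that $\gp{w_n x}{w_m x}{x} > s - O(\delta)$ for some $m > n$; the cases $m = 0$ and $m = n$ are trivial since the projections are then $x$ and $w_n x$ themselves.

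For the $m < 0$ side, note that $\gp{x}{w_m x}{w_n x} \ge s - O(\delta)$ is equivalent to $w_m x$ lying in the shadow $S_x(w_n x, d(x, w_n x) - s + O(\delta))$, whose distance parameter equals $s - O(\delta)$ regardless of $n$ or of $d(x, w_n x)$. The locations $\{ w_m x : m < 0 \}$ are built from the increments $g_0, g_{-1}, g_{-2}, \dots$, which are independent of the increments $g_1, \dots, g_n$ defining $w_n$; conditioning on $w_n = h$, the sequence $(w_{-1}, w_{-2}, \dots)$ is precisely the random walk driven by the reflected measure $\rmu$ (which is again countable, non-elementary, and bounded), and it is independent of $h$. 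Hence the conditional probability that some such $w_m x$ lands in $S_x(hx, d(x, hx) - s + O(\delta))$ is the probability of that walk ever hitting a shadow of distance parameter $s - O(\delta)$, which by \eqref{eq:exp hit} applied to $\rmu$ is at most $Bc^{s - O(\delta)}$. Summing over $h$ against the law of $w_n$ gives a bound of the form $B'c^s$.

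The $m > n$ side is symmetric: writing $w_{n+k} = w_n u_k$ with $u_k = g_{n+1}\cdots g_{n+k}$ a forward $\mu$-walk independent of $w_n$, and applying the isometry $w_n^{-1}$, the event that $\gamma_n(t_{n+k})$ lies at distance $> s$ from $w_n x$ becomes the event that $u_k x$ lies in a shadow based at $w_n^{-1}x$ with distance parameter $s - O(\delta)$; conditioning on $w_n = h$, applying \eqref{eq:exp hit} to $\mu$, and integrating over $h$ again yields $B'c^s$. Adding the two contributions bounds the bad event by $2B'c^s$, so the event in the statement has probability at least $1 - 2B'c^s$, and renaming constants finishes the proof. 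The steps requiring care are the bookkeeping of the $O(\delta)$ terms, so that the shadow distance parameters come out uniformly comparable to $s$, and checking that $\rmu$ inherits the hypotheses of Lemma \ref{L:exp}; there is no genuinely hard step, since the essential input — exponential decay of the hitting probability of a shadow in terms of its distance parameter — is already available.
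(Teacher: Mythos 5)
Your proposal is correct and follows essentially the same route as the paper: both reduce, via the Markov property, to the statement that an independent random walk (the reflected walk for $m \le 0$, and the forward walk translated by $w_n^{-1}$ for $m \ge n$) ever hits a shadow whose distance parameter is $s - O(\delta)$, and then apply the exponential decay of hitting probabilities from Lemma \ref{L:exp}. The only cosmetic difference is that you phrase the bad event via the Gromov product and a shadow aimed at the far endpoint, while the paper uses the shadow $S_x(\gamma(s), R)$ based at a point of the geodesic itself; these are equivalent up to $O(\delta)$.
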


\begin{proof}
By the Markov property, the backward random walk
$(w_{-n} x)_{n \in \N}$ is independent of $\gamma_n$.  Similarly, the
forward random walk starting at $w_n x$ is also independent of
$\gamma_n$.  More precisely, applying the isometry $w_n^{-1}$, the
random walk $w_n^{-1}(w_m x)_{m \ge n}$ starting at $x$, is
independent of $w_n^{-1} \gamma_n$.  Therefore, it suffices to show
that for any geodesic ray $\gamma$ starting at $x$, a random walk has
nearest point projection to an initial segment of $\gamma$ with high
probability.

Let $\gamma$ be a geodesic ray starting at $x$, with unit speed
parameterization, and consider the forward locations of the random
walk $(w_n x)_{n \in \N}$. Let $\gamma(t_n)$ be the nearest point
projection of a location $w_n x$ to $\gamma$.  If $t_n \ge s$, then
$w_n x$ lies in the shadow $S_x(\gamma(s), R)$, for some $R$ which
only depends on $\delta$.  By \eqref{eq:exp hit} the probability that
$(w_n)_{n \in \Z}$ ever hits $S_x(\gamma(s), R)$ is at most $B c^s$.
Therefore the probability that this does not occur for any index $n$
is at least $1 - B c^s$.
\end{proof}

We now consider the following situation: we have chosen an index
$0 \le m \le n$, and a constant $s \ge 0$.  We wish to estimate the
probability that there is a translate of a geodesic $\eta$ of length
$2s$ close to $\gamma_n$ starting at $\gamma_n(t_m)$.  In order to do
this, it will be convenient to have information about the distribution
of the nearest point projections of $w_k x_0$ to $\gamma_n$, and in
particular, the sets $P_{t_m + s}$ and $P_{t_m + 2s}$.  Proposition
\ref{P:m+s} below assembles the geometric information we need from all
of the results above, and in particular shows that with high
probability, there are linear bounds on the sizes of $P_{t_m + s}$ and
$P_{t_m + 2s}$, and that these sets are disjoint.

\begin{proposition}\label{P:m+s}
Let $G$ be a group which acts by isometries on the 
hyperbolic space $X$, and let $\mu$ be a countable, non-elementary,
bounded probability distribution on $G$.  Then there are constants
$0 < L_1 \le L_2$, such that for any $0 < \e < 1$, there are constants
$B \ge 0$ and $c < 1$ such that for any $0 \le m \le n$ and $s >
0$,
the probability that all of the following events occur is at least
$1 - B c^s$:
\begin{align}
&  \gp{w_m x}{x}{w_n x} \le \e s \tag{\ref{P:m+s}.1} \label{eq:m+s.1} \\
&   L_1 s \le \min P_{t_m + s} \le \max P_{t_m + s} \le L_2 s
  \tag{\ref{P:m+s}.2} \label{eq:m+s.2} \\ 
&   2 L_1  s \le \min P_{t_m + 2s} \le \max P_{t_m + 2s} \le 2 L_2 s
  \tag{\ref{P:m+s}.3} \label{eq:m+s.3} \\ 
&   \gp{w_i x}{x}{w_n x} \le \e s \textup{ for all } i \in P_{t_m + s} \cup
  P_{t_m + 2s} \tag{\ref{P:m+s}.4} \label{eq:m+s.4} \\
&   \max P_{t_m + s} \le \min P_{t_m + 2s}
  \tag{\ref{P:m+s}.5} \label{eq:m+s.5}
\end{align}
\end{proposition}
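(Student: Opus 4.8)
The plan is to read off \eqref{eq:m+s.1}--\eqref{eq:m+s.5} from the four estimates already available: exponential decay of Gromov products (Proposition \ref{P:gp}), linear progress with exponential decay (Proposition \ref{L:progress-decay}), the localization of the sets $P_s$ along $\gamma_n$ (Proposition \ref{P:npp}), and the control on the tail projections (Proposition \ref{P:tail indices}). The estimate \eqref{eq:m+s.1} is immediate from Proposition \ref{P:gp} with $r = \e s$, giving complementary probability $\le B c^{\e s}$; after replacing $c$ by $c^{\e}$ this has the required form, which is precisely why the constants $B,c$ are permitted to depend on $\e$.

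For \eqref{eq:m+s.2} and \eqref{eq:m+s.3}, the idea is to re-root the walk at $w_m x$: conditionally on $g_1,\dots,g_m$, the Markov property makes $w'_j := w_m^{-1} w_{m+j}$ ($j \ge 0$) a fresh copy of the random walk, independent of $w_1,\dots,w_m$, and on the event \eqref{eq:m+s.1} the point $\gamma_n(t_m)$ lies within $O(\e s + \delta)$ of $w_m x$. By thin triangles the portion of $\gamma_n = [x, w_n x]$ lying past $\gamma_n(t_m)$ then fellow-travels $w_m[x, w'_{n-m} x]$, so for $i \ge m$ the nearest-point projection $\gamma_n(t_i)$ is, up to bounded error, the $w_m$-translate of the nearest-point projection of $w'_{i-m} x$ to $[x, w'_{n-m}x]$, and $t_i - t_m$ is correspondingly its arc-length parameter. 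Applying Proposition \ref{P:npp} to the primed walk at parameters $s$ and $2s$ then localizes $P_{t_m+s}$ and $P_{t_m+2s}$ in the claimed linear windows, with $L_1, L_2$ the constants of Proposition \ref{P:npp} and the bounded errors absorbed by shrinking $L_1$ and enlarging $L_2$; each holds off a set of probability $\le B c^s$. The indices $i \le m$ (and the indices $i \ge n$) are excluded from these $P$-sets by Proposition \ref{P:tail indices} applied to the backward walk and to the forward walk restarted at $w_n x$: with probability $\ge 1 - B c^s$ none of their projections reaches as far along $\gamma_n$ as $\gamma_n(t_m + s)$.

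Estimate \eqref{eq:m+s.4} then follows by the same mechanism as \eqref{eq:m+s.1}: by \eqref{eq:m+s.2}--\eqref{eq:m+s.3} there are only $O(s)$ indices in $P_{t_m+s} \cup P_{t_m+2s}$, and for each the bound $\P(\gp{w_i x}{x}{w_n x} \ge \e s) \le B c^{\e s}$ of Proposition \ref{P:gp} holds; a union bound over these $O(s)$ indices leaves complementary probability $\le B s c^{\e s}$, which is of the form $B'(c')^{s}$. Finally \eqref{eq:m+s.5} comes from coarse monotonicity of $i \mapsto t_i$: linear progress forces $d(x, w_i x)$ to grow with $i$, and on the complement of all the bad events above one has $t_i = d(x, w_i x) + O(\e s + \delta)$, so for $s$ larger than a fixed multiple of $D$ no index can simultaneously project into $[\gamma_n(t_m+s), \gamma_n(t_m + s + D)]$ and beyond $\gamma_n(t_m + 2s)$; hence $\max P_{t_m+s} \le \min P_{t_m+2s}$. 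Taking the union of the finitely many bad events and adjusting $B$ and $c$ yields the proposition.

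The probabilistic content is thus entirely contained in the four cited results, and the only genuinely delicate point is the bookkeeping: keeping every implied constant independent of $m$, $n$ and of the chosen geodesic representative $\gamma_n$, and in particular checking that re-rooting at $w_m x$ transfers the localization of Proposition \ref{P:npp} with controlled error. That uniformity is the step I would spend the most care on.
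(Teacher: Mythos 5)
Your handling of \eqref{eq:m+s.1}--\eqref{eq:m+s.4} is essentially the paper's argument: \eqref{eq:m+s.1} and \eqref{eq:m+s.4} come from Proposition \ref{P:gp} (with a union bound over the $O(s)$ indices permitted by \eqref{eq:m+s.2}--\eqref{eq:m+s.3}), and \eqref{eq:m+s.2}--\eqref{eq:m+s.3} come from the Markov property, i.e.\ working with the geodesic $[w_m x, w_n x]$ and applying Proposition \ref{P:npp} to the re-rooted walk, with Proposition \ref{P:tail indices} excluding the indices $k \le m$ and $k \ge n$. One small correction there: on the event \eqref{eq:m+s.1} the transfer error between projections to $\gamma_n$ and to $[w_m x, w_n x]$ is of size $O(\e s + \delta)$, not bounded; this is harmless (the paper covers the enlarged window of length $\e s + D + \delta$ by $O(\e s/D)$ windows of length $D$ and takes a union bound, and your ``adjust $L_1, L_2$'' absorbs the same linear-in-$s$ shift), but it should be stated as such.

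The genuine gap is in \eqref{eq:m+s.5}. What must be ruled out is a \emph{pair} of indices $j < i$ with $t_i \in [t_m+s,\, t_m+s+D]$ and $t_j \in [t_m+2s,\, t_m+2s+D]$; your sentence about a single index projecting into both windows is vacuously true and not the content of \eqref{eq:m+s.5}. Your proposed mechanism --- ``linear progress forces $d(x, w_i x)$ to grow with $i$'', hence coarse monotonicity of $i \mapsto t_i$ --- is not a deterministic fact: linear progress only gives the lower bound $d(x, w_i x) \ge L i$ with high probability, while the step bound gives $d(x, w_j x) \le D j$, and since $D$ may be much larger than $L$ an earlier location can perfectly well be farther from $x$ (and project farther along $\gamma_n$) than a later one; the index windows from \eqref{eq:m+s.2}--\eqref{eq:m+s.3} do not exclude this either. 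The violating configuration is merely exponentially unlikely, and showing this requires one more probabilistic input, which is how the paper closes the argument: if such a pair $j < i$ exists, then by thin triangles $\gp{w_j x}{x}{w_i x} \ge s - D + O(\delta)$ (the projection of $w_j x$ to $\gamma_n$ lies at least $s - D$ beyond that of $w_i x$), and since $w_j$ and $w_j^{-1} w_i$ are independent, Proposition \ref{P:gp} bounds the probability of this for a fixed pair by $B c^{\,s - D + O(\delta)}$; a union bound over the at most polynomially many (in $s$) admissible pairs given \eqref{eq:m+s.2}--\eqref{eq:m+s.3} keeps the bound exponential in $s$. Without this step (or an equivalent), \eqref{eq:m+s.5} is not established.
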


The proposition is illustrated in Figure \ref{fig:m+s} below, where
the index $m + a$ belongs to $P_{t_m + s}$, and $m + b$ belongs to
$P_{t_m + 2s}$.

\begin{figure}[H]
\begin{center}
\begin{tikzpicture}

\tikzstyle{point}=[circle, draw, fill=black, inner sep=1pt]

\draw (0, 0) node [point, label=below:{$x$}] {} -- (10, 0) node
[point, label=below:{$w_n x$}] {};

\draw (2, 1.5) node [point, label=above:{$w_m x$}] {} --
      (2, 0) node [point, label=below:{$\gamma_n(t_m)$}] {};
\draw (4.5, 1.5) node [point, label=above:{$w_{m+a} x$}] {} --
      (4.5, 0) node [point] {};
\draw (6.5, 1.5) node [point, label=above:{$w_{m+b} x$}] {} --
      (6.5, 0) node [point] {};

\draw (2, 1.5) .. controls (2, 0) and (9, 0) ..
      (10, 0) node [midway, above] {$\gamma$};
      
\draw (4, 0) node [point, label=below:{$\gamma_n(t_m + s)$}] {};
\draw (6, 0) node [point, label=below:{$\gamma_n(t_m + 2s)$}] {};

\end{tikzpicture}
\end{center}
\caption{Nearest point projections relative to $\gamma_n(t_m)$.}\label{fig:m+s}
\end{figure}
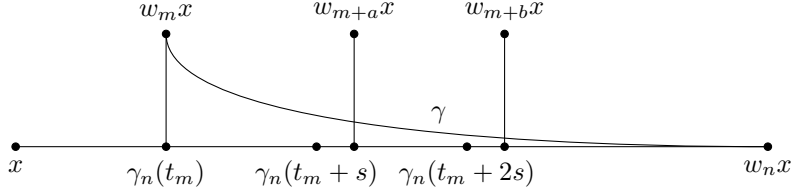

\begin{proof}
We say that a function $\cE(s) \colon \R \to \R$ is \emph{exponential
  in $s$} if there are constants $B \ge 0$ and $c < 1$ such that
$\cE(s) \le B c^s$ for all $s \ge 0$.  We observe that the sum of any
two functions which are exponential in $s$ is exponential in $s$, and
if $p(s)$ is a polynomial in $s$, and $\cE(s)$ is exponential in $s$,
then $p(s) \cE(s)$ is also exponential in $s$.

By exponential decay for Gromov products (Proposition \ref{P:gp}),
eq. \eqref{eq:m+s.1} holds with probability at least $1 - \cE_1(s)$,
where $\cE_1(s) = B c^s$.

Let $\gamma$ be a geodesic from $w_m x$ to $w_n x$, with unit speed
parameterization, and write $\gamma(t_k)$ for a nearest point
projection of $w_k x$ to $\gamma$.  By the Markov property, we may
apply Proposition \ref{P:tail indices} to $\gamma$, and so there are
constants $B \ge 0$ and $c < 1$ such that the probability that
\begin{equation}\label{eq:tail indices}
\{ \gamma(t_k) \ : \ k \in \Z, k \le m \} \subset [w_m x, \gamma(s/2)]
\end{equation}
holds with probability at least $1 - \cE_2(s)$, where
$\cE_2(s) = B c^s$.

By thin triangles and assuming that
$\gp{w_m x}{x}{w_n} \leq \epsilon s$, if the nearest point projection
to $\gamma_n$ of a location $w_{m+a} x$ lies in
$[\gamma_n(t_m + s), \gamma_n(t_m + s) + D]$, then the nearest point
projection of $w_{m+a} x$ to $\gamma$ lies in
$[\gamma(s), \gamma(s + \e s + D + \delta)] $.  Proposition
\ref{P:npp} applied to each of the $(\e s + \delta) / D$ subsegments
of $[s, s + \e s + D + \delta]$ of length $D$ implies that
$L_1 s \le a \le L_2( s + \e s + D + \delta )$ with probability at
least $1 - \cE_3(s)$, where
$\cE_3(s) = ( (\e s + \delta) / D ) B c^s$.  Therefore
\eqref{eq:m+s.2} holds (with a slightly larger value of $L_2$).
Furthermore, by \eqref{eq:tail indices} there are no locations $w_k x$
with $k \le m$ or $k \ge n$ which have nearest point projections in
$[\gamma(s), \gamma(s + \e s + D + \delta)] $.

The exact same argument works for \eqref{eq:m+s.3}, as long as
$t_m + 5s/2 \le \norm{\gamma}$.

Exponential decay for Gromov products then implies \eqref{eq:m+s.4}
with probability at least $1 - \cE_4(s) $, where
$\cE_4(s) = 3 (L_2 - L_1) s B c^{\e s}$. The constant
$3 (L_2 - L_1) s$ here derives from the cardinality of
$P_{t_m + s} \cup P_{t_m + 2s}$ when \eqref{eq:m+s.2} and
\eqref{eq:m+s.3} hold.

Finally, if there is some $b < a$, then
$\gp{w_{m+b} x}{x}{ w_{m+a} x} \ge s - D + O(\delta)$, and so the
probability that this does not occur for any $a$ and $b$
(i.e. \eqref{eq:m+s.5} holds) is at least $1 - \cE_5(s)$, where
$\cE_5(s) = 3 (L_2 - L_1) s B c^{s - D + O(\delta)}$.

Therefore all equations \eqref{eq:m+s.1}--\eqref{eq:m+s.5} hold with
probability at least $1 - \cE(s)$, where $\cE(s)$ is the sum of the
functions $\cE_1(s)$--$\cE_5(s)$ above.  All of these functions are
exponential in $s$, so $\cE(s)$ is also exponential in $s$, as
required.
\end{proof}

We now show that for any fixed translate $g \eta$ of a geodesic $\eta$
of length $2s$, if the first half of $\eta$ is contained in a
neighbourhood of $[\gamma_n(t_m), \gamma_n(t_m + s)]$, then the
probability that $\eta$ is contained in a neighbourhood of
$[\gamma_n(t_m), \gamma_n(t_m + 2s)]$ decays exponentially in $s$.

\begin{proposition}\label{P:g match} 
Let $G$ be a group which acts by isometries on the 
hyperbolic space $X$, and let $\mu$ be a countable, non-elementary,
bounded probability distribution on $G$.  Then there are constants
$B \ge 0$ and $c < 1$ such that for any geodesic segment $\eta$ of
length $2s$ with initial half-segment $\eta_1$ of length $s$, if there
is an isometry $g \in G$ such that $g \eta_1$ is contained in a
$K$-neighbourhood of $[\gamma_n(t_m), \gamma_n(t_m + s)]$, then the
probability
that
$g \eta$ is contained in a $K$-neighbourhood of
$[\gamma_n(t_m), \gamma_n(t_m + 2 s)]$ is at most $B c^s$.
\end{proposition}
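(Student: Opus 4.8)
The plan is to show that, once the translate $g\eta_1$ of the first half has been matched to $[\gamma_n(t_m),\gamma_n(t_m+s)]$, a match of the whole of $g\eta$ to $[\gamma_n(t_m),\gamma_n(t_m+2s)]$ forces the random walk to travel an additional distance comparable to $s$ in a direction that is essentially already pinned down; exponential decay of shadows (Lemma~\ref{L:exp}) then produces the factor $c^s$.

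First I would set up the geometry. Write $y_0:=g\eta(0)$, $y_1:=g\eta(s)$, $y_2:=g\eta(2s)$ for the endpoints and midpoint of the \emph{fixed} geodesic $g\eta$, so $d(y_1,y_2)=s$. Since $g\eta_1$ and $[\gamma_n(t_m),\gamma_n(t_m+s)]$ are geodesics of the same length, the hypothesis that $g\eta_1\subseteq N_K([\gamma_n(t_m),\gamma_n(t_m+s)])$ forces, up to an orientation case split, $y_1$ to lie within $K$ of $\gamma_n(t_m+s)$; the reversed case, in which $y_1$ is near $\gamma_n(t_m)$, would require $g\eta$ to double back along $\gamma_n$ and is incompatible with $g\eta\subseteq N_K([\gamma_n(t_m),\gamma_n(t_m+2s)])$, so it is disposed of separately. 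If in addition $g\eta\subseteq N_K([\gamma_n(t_m),\gamma_n(t_m+2s)])$, then $y_2$ lies within $K$ of $\gamma_n(t_m+2s)$, and the four points $x, y_1, y_2, w_n x$ sit coarsely in this order along $\gamma_n$; hence $w_n x$ lies beyond $y_2$ as seen from $y_1$, i.e. $w_n x\in S_{y_1}(y_2,R_1)$ for a constant $R_1=R_1(K,\delta)$, a shadow of distance parameter $\ge d(y_1,y_2)-R_1=s-R_1$.

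To convert this into a probability bound I would invoke the strong Markov property. By Proposition~\ref{P:m+s} (in particular \eqref{eq:m+s.4}), outside an event of probability at most $Bc^s$ there is an index $i\in P_{t_m+s}$ with $\gp{w_i x}{x}{w_n x}\le\e s$; combined with the previous paragraph this gives $d(w_i x,y_1)\le\e s+C$ for a constant $C$, so the stopping time $\kappa:=\min\{\,i:d(w_i x,y_1)\le\e s+C\,\}$ satisfies $\kappa\le n$ on (the good part of) the first--half match. Conditioning on $\mathcal{F}_\kappa$, the sequence $(w_\kappa^{-1}w_{\kappa+j})_{j\ge0}$ is an independent copy of the random walk; moving the basepoint of the shadow from $y_1$ to $w_\kappa x$ (at cost $2\e s+2C$) together with equivariance shows that $w_n x\in S_{y_1}(y_2,R_1)$ implies that this fresh copy hits $S_x(w_\kappa^{-1}y_2,R_1+2\e s+2C)$, a shadow of distance parameter $\ge(1-3\e)s-C'$. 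By the hitting estimate~\eqref{eq:exp hit} its conditional probability is at most $Bc^{(1-3\e)s-C'}$; taking $\e=\frac{1}{6}$ and folding in the failure probability of Proposition~\ref{P:m+s} gives the bound $Bc^s$ after renaming constants.

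The main obstacle is the bookkeeping around the conditioning on the first--half match: both $[\gamma_n(t_m),\gamma_n(t_m+s)]$ and $[\gamma_n(t_m),\gamma_n(t_m+2s)]$ lie on $\gamma_n=[x,w_n x]$, which depends on the \emph{entire} sample path, so a priori the moment at which the first half gets matched is not a stopping time for the forward filtration, and one cannot naively restart the walk there. The point of defining $\kappa$ through the fixed point $y_1=g\eta(s)$, rather than through $\gamma_n$, is precisely to make it a genuine stopping time, the price being ``within $\e s+O(1)$'' instead of ``within $O(1)$'', which only rescales the rate by $1-3\e$; one then needs just Proposition~\ref{P:m+s} to guarantee $\kappa\le n$ on the first--half match. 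The remaining delicate point is to relate the $\mathcal{F}_\kappa$--conditional estimate to the conditional probability given the full first--half match, exactly as in the analogous non--matching estimates of \cite{calegari-maher} and \cite{maher-sisto}; all the rest is routine assembly of the fellow--travelling lemmas (Lemma~\ref{P:fellow travel}, Propositions~\ref{P:fellow travel2} and~\ref{P:fellow travel3}), the nested--shadow estimate (Proposition~\ref{P:nested}), and exponential decay of shadows (Lemma~\ref{L:exp}).
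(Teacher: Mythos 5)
Your proposal is correct and rests on the same mechanism as the paper's proof: both reduce the second-half match to the event that the walk enters a shadow of distance parameter comparable to $s$, based near the midpoint of $g\eta$ and aimed at the fixed far endpoint $g\eta(2s)$, with Proposition \ref{P:m+s} tying the geometric match to the locations of the walk and exponential decay of shadows producing the factor $c^s$. The implementation of the probabilistic step differs mildly. The paper takes indices $m+a \in P_{t_m+s}$ and $m+b \in P_{t_m+2s}$, notes that the full match forces $w_{m+b}x$ into a shadow $S_{w_{m+a}x}(g\eta(2s),R)$, and uses independence of $w_{m+a}$ and $w_{m+a}^{-1}w_{m+b}$ together with a union bound over the at most $2(L_2-L_1)s$ elements of $P_{t_m+2s}$. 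You instead anchor a genuine stopping time $\kappa$ at the fixed point $y_1=g\eta(s)$, invoke the strong Markov property, and apply the hitting estimate \eqref{eq:exp hit} to the terminal point $w_n x$; this trades the union bound (and the awkwardness that membership of an index in $P_{t_m+s}$ depends on the whole path) for an $\epsilon s$ loss in the distance parameter, which is harmless once $\epsilon$ is fixed small. The caveat you flag yourself --- upgrading the estimate to the conditional probability given the first-half match, which is the form actually used (with the factor $\P(V_g)$) in the proof of Proposition \ref{P:non-match} --- is treated no more explicitly in the paper's own proof, so it is not a gap of your argument relative to the paper, though it is indeed the point requiring the most care.
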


\begin{proof}
By Proposition \ref{P:m+s}, there are constants $B_1$ and $c_1 < 1$
such that \eqref{eq:m+s.1}--\eqref{eq:m+s.5} hold, with probability at
least $1 - B_1 c_1^s$.

If $g \eta_1$ is contained in a $K$-neighbourhood of
$[\gamma_n(t_m), \gamma_n(t_m + s)]$, then in order for $\eta$ to be
contained in a $K$-neighbourhood of
$[\gamma_n(t_m), \gamma_n(t_m + 2 s)]$, for any index $m + b \in P_{t_m + 2s}$ 
the point $w_{m + b} x$ must lie in a shadow $S_{w_{m+a} x}(g \eta(2s), R)$,
where $R$ depends only on $K$ and $\delta$.  As $w_{m+a}$ and
$w_{m+a}^{-1} w_{m+b}$ are independent, and there are at most
$2(L_2 - L_1)s$ elements of $P_{t_m + 2s}$, this happens with
probability at most $2(L_2 - L_1)s B_2 c_2^s$, by exponential decay
for shadows.  The result then follows for suitable choices of $B$ and
$c$.
\end{proof}

Proposition \ref{P:g match} above only holds for a \emph{fixed}
translate $g \eta$.  We will use asymptotic acylindricality to extend
this result to hold for \emph{some} translate $g \eta$, where $g$ runs
over all elements of $G$.  We start with a result from Calegari and
Maher \cite{calegari-maher}, which says that every point in $\gamma_n$
is close to some location $w_k x_0$.  We say that a point
$\gamma(t) \in \gamma_n$ is \emph{$K$-close} if
$d(\gamma(t), w_i x) \le K$ for some $0 \le i \le n$.  We shall denote
the set of $K$-close points by $\gamma_{n , K}$.

\begin{lemma}\cite{calegari-maher}*{Lemma 5.13}\label{L:proximal}
Given $\delta \ge 0$ and positive constants $D, L$ and $\e$, there is
a constant $K \ge 0$ such that for any sequence of points
$x_0, x_1, \ldots x_n$ in a $\delta$-hyperbolic space $X$, with
$d(x_i, x_{i+1}) \le D$, and $d(x_0, x_n) \ge L n$, and for any
geodesic $\gamma_n$ from $x_0$ to $x_n$, the total length of
$\gamma_{n, K}$ is at least
\[ \norm{\gamma_{n, K}} \ge (1 - \e) \norm{\gamma_n}.  \]
\end{lemma}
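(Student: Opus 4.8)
The plan is to show that when $K$ is large the set of points of $\gamma_n$ that are not $K$-close has small total length, by playing off two competing length bounds: the ``edge path'' obtained by concatenating the geodesics $[x_i,x_{i+1}]$ has length at most $nD$, whereas $\norm{\gamma_n}=d(x_0,x_n)\ge Ln$. Write $\tilde\gamma$ for this edge path; it is a connected subset of $X$ of length at most $nD$ whose endpoints are $x_0=\gamma_n(0)$ and $x_n=\gamma_n(\norm{\gamma_n})$. If $\gamma_n(t)$ is not $K$-close then $d(\gamma_n(t),x_i)>K$ for all $i$, and since each point of $[x_i,x_{i+1}]$ lies within $D$ of $x_i$, this forces $d(\gamma_n(t),\tilde\gamma)>K-D$. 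We may assume $\norm{\gamma_n}>2K$, since otherwise every point of $\gamma_n$ is within $K$ of $x_0$ or $x_n$ and the conclusion is trivial; in particular $\tilde\gamma$ has length $nD\ge\norm{\gamma_n}>2K$.

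Next I would use two standard facts about the nearest point projection $\pi=\pi_{\gamma_n}\colon X\to\gamma_n$: that $\pi$ is coarsely $1$-Lipschitz, $d(\pi(p),\pi(q))\le d(p,q)+O(\delta)$; and the bounded geodesic image property, that any subset $B\subseteq X$ with $\mathrm{diam}\,B\le r$ and $d(B,\gamma_n)\ge r$ has $\mathrm{diam}\,\pi(B)=O(\delta)$. Parametrizing $\tilde\gamma$ by arclength and tracking $\pi(\tilde\gamma(s))$ as $s$ runs from one end to the other, coarse Lipschitzness shows the projection moves along the interval $\gamma_n$ in bounded steps from $\gamma_n(0)$ to $\gamma_n(\norm{\gamma_n})$, so for every $t$ there is a point $y_t\in\tilde\gamma$ with $d(\pi(y_t),\gamma_n(t))=O(\delta)$. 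When $\gamma_n(t)$ is not $K$-close this $y_t$ must be far from $\gamma_n$: indeed $d(y_t,\gamma_n(t))>K-D$ because $y_t\in\tilde\gamma$, whence $d(y_t,\gamma_n)\ge d(y_t,\gamma_n(t))-d(\gamma_n(t),\pi(y_t))>K-D-O(\delta)=:K'$. Hence the non-$K$-close part of $\gamma_n$ is contained in an $O(\delta)$-neighbourhood of $\pi(F)$, where $F:=\{y\in\tilde\gamma:d(y,\gamma_n)>K'\}$ is the ``far part'' of $\tilde\gamma$.

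Finally, I would cover $\tilde\gamma$ by balls of radius $K'/4$ with centres on $\tilde\gamma$; since $\tilde\gamma$ has length at most $nD$ this needs at most $O(nD/K')$ balls. A ball of this cover meeting $F$ has its centre at distance $>3K'/4$ from $\gamma_n$, hence the ball lies at distance $\ge K'/2$ from $\gamma_n$ while having radius $K'/4$, so its projection has diameter $O(\delta)$ by the bounded geodesic image property. Thus $\pi(F)$ --- and therefore the non-$K$-close part of $\gamma_n$ --- is covered by $O(nD/K')$ sets of diameter $O(\delta)$, so it has total length at most $C_1\,nD\delta/K'$ for a constant $C_1=C_1(\delta)$. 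Since $n\le\norm{\gamma_n}/L$, this is at most $C_1 D\delta\norm{\gamma_n}/(LK')$, and choosing $K$ large enough that $K'=K-D-O(\delta)\ge C_1D\delta/(L\e)$ makes the non-$K$-close part have length at most $\e\norm{\gamma_n}$, i.e.\ $\norm{\gamma_{n,K}}\ge(1-\e)\norm{\gamma_n}$. I expect the main obstacle to be making the bounded geodesic image estimate and the covering count genuinely quantitative --- that is, controlling how far along $\gamma_n$ the projection of a long, distant excursion of $\tilde\gamma$ can spread in terms of the excursion's length and its distance from $\gamma_n$, and carefully propagating the additive $O(\delta)$ errors through the cover; once that estimate is secured, the remaining length bookkeeping is routine.
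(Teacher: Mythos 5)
The paper does not actually prove this lemma (it is quoted from Calegari--Maher), so there is no in-paper argument to compare against; your proof is correct and follows essentially the same standard strategy as the cited one: play the length $nD$ of the broken path through the $x_i$ against $\norm{\gamma_n}\ge Ln$, and use the coarse Lipschitz and strong contraction properties of nearest-point projection to a geodesic to show that the part of $\gamma_n$ far from every $x_i$ is covered by at most $O(nD/K')$ sets of diameter $O(\delta)$. The initial reduction to $\norm{\gamma_n}>2K$ (which forces $n>2K/D$) also absorbs the additive $O(\delta)$ error terms, so the final bookkeeping closes up with $K$ chosen in terms of $\delta, D, L, \e$ only.
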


Let $U$ be the event that some translate of $\eta$ is contained in a
neighbourhood of $[\gamma_n(t_m), \gamma_n(t_m + 2s) ]$, and let $V$
be the event that the first half of some translate of $\eta$ is
contained in a neighbourhood of $[\gamma_n(t_m), \gamma_n(t_m + s) ]$.
We wish to estimate $\P(U)$.  However, as $U \subset V$, the formula
for conditional probability implies that $\P(U) \le \P( U \mid V)$, so
it suffices to estimate $\P(U \mid V)$.

Let $U_g$ be the event that the translate $g \eta$ is contained in a
neighbourhood of $[\gamma_n(t_m), \gamma_n(t_m + 2s)]$, and let $V_g$
be the event that the first half of the translate $g \eta$ is
contained in a neighbourhood of $[\gamma_n(t_m), \gamma_n(t_m + s) ]$.
The set $U$ is equal to the union of the $U_g$, and similarly $V$ is
equal to the union of the $V_g$.  For each $g$, we have
$\P(U_g \mid V_g) \le B c^s$, by Proposition \ref{P:g match}.  We wish
to use this information to estimate $\P(U \mid V)$.  The key property
is that asymptotic acylindricality implies that with high probability
each point of $V$ is contained in a bounded number of sets $V_g$, and
so exponential decay for the individual conditional probabilities
$\P(U_g \mid V_g)$ gives exponential decay for $\P(U \mid V)$.  We now
give the details of this argument.

Let $V$ and $\{ V_i \}_{i \in I}$ be a collection of subsets of a
probability space.  We say that the collection of sets
$\{ V_i \}_{i \in I}$ \emph{covers} the set $V$ if
$V \subset \bigcup_{i \in I} V_i$.  We say that the \emph{covering
  depth} of the $\{ V_i \}_{i \in I}$ is
$\sup_{v \in V} \setnorm{ \{ i \in I \ : \ v \in V_i \}}$.  If the
covering depth of $\{ V_i \}_{i \in I}$ is $N$, and all sets are
measurable, then $\P(V) \le \sum_{i \in I} \P(V_i) \le N \P(V)$.

We will also make use of the following definition:

\begin{definition}
We say that a pair of points $x$ and $y$ are \emph{$(K, N)$-stable} if
$$\setnorm{\stab_K(x) \cap \stab_K(y)} \le N.$$  
We say that a geodesic segment $\eta$ is \emph{$(K, N)$-stable} if its endpoints are $(K, N)$-stable.
\end{definition}

\begin{proof}[Proof (of Proposition \ref{P:non-match})]
Let $s := \norm{\eta}/2$. 
We wish to estimate the probability that a translate of $\eta$ is contained in a $K$-neighbourhood of
$[\gamma(t_m), \gamma(t_m + 2s)]$.  Let $\eta_1$ be the
initial subsegment of $\eta$ with length
$\norm{\eta_1} = \norm{\eta}/2 = s$.  
By Proposition \ref{P:m+s}, we may assume that
\eqref{eq:m+s.1}--\eqref{eq:m+s.5} hold, with probability at least
$1 - B c^s$.

Let us suppose now that a translate $g \eta$ is contained in a $K$-neighbourhood of
$[\gamma(t_m), \gamma(t_m + 2s)]$. 
By thin triangles, the geodesic $g \eta_1$ is contained in a
$(K + 2 \delta)$-neighbourhood of the geodesic $[w_m x, w_{m+a} x]$.
By Lemma \ref{L:proximal}, choosing $\e = 1/8$, there is a constant
$K_1$ such that there are indices $i$ and $j$, with $w_{i} x$
within distance $K_2 = K_1 + K + 2 \delta$ of
$[g \eta_1(0),  g \eta_1(s/4)]$ and $w_{j} x$ within distance $K_2$ of
$[g \eta_1(3 s / 4), g \eta_1(s)]$.  In particular
$d(w_{i} x, w_{j} x) \ge s / 2 - 2 K_2$, and so
\begin{equation} \label{eq:ij}
\norm{i - j} \ge ( s / 2 - 2 K_2 ) / D.
\end{equation}
Set $K_3 = \max \{ K_2, 5 K\}$ and $K_4 = K_3 + 2K + 2 \delta$.

Let $U \subseteq (G, \mu)^\N$ be
the set of sample paths for which a translate of $\eta$ is contained
in a $K$-neighbourhood of $[\gamma(t_m), \gamma(t_m + 2s)]$, and let
$U_g$ be the set of sample paths for which $g \eta$ is contained in a
$K$-neighbourhood of $[\gamma(t_m), \gamma(t_m + 2 s)]$.  Let
$V \subseteq (G, \mu)^\N$ be the set of sample paths for which a
translate of $\eta_1$ is contained in a $K$-neighbourhood of
$[\gamma(t_m), \gamma(t_m + s)]$, and let $V_g$ be the set of sample
paths for which $g \eta_1$ is contained in a $K$-neighbourhood of
$[\gamma(t_m), \gamma(t_m + s)]$.  As $U \subseteq V$, the conditional
probability $\P(U \vert V)$ satisfies $\P(U) \leq \P(U \vert V)$. 

Proposition \ref{P:g match} shows that for any $g$ the conditional probability $\P( U_g \vert V_g)$ decays
exponentially in $n$.  The sets $\{ U_g \}_{g \in G} $ cover $U$, in
fact $U = \bigcup_{g \in G} U_g$, and similarly
$V = \bigcup_{g \in G} V_g$.  The covering depth of $\{ V_g \}$ is an upper
bound on the covering depth of $\{ U_g \}$.  We now show that with high
probability the covering depth of $\{ V_g \}$ is bounded, i.e. there exists a set $S$ 
of large measure such that the covering depth of $\{ V_g \cap S \}$ is bounded.

We now have two cases. If $\eta_1$ is not $(K_3, N_{ac}(K_4))$-stable, then $w_{i} x$ and
$w_{j} x$ are not $(K_4, N_{ac}(K_4))$-stable, where $N_{ac}(K)$ is
the function from asymptotic acylindricality.  Then by Theorem \ref{T:asymp-acyl} the probability that, given 
$i$ and $j$, the points $w_i x$ and $w_j x$ are not $(K_4, N_{ac}(K_4))$-stable is at most 
$B c^{|j-i|} \leq B_3 c_3^{s/ 2D}$
for some constants $B_3$ and
$c_3 < 1$, where we used eq. \eqref{eq:ij}. 
Recall that by construction $m \leq i \leq j \leq m +a$, and by \eqref{eq:m+s.2} we have $a \leq L_2 s$, hence there are at most $(L_2 s)^2$ such choices of $i, j$. Hence, 
the probability that there are such indices $i$ and $j$ is at most
$ 2 (L_2 s)^2 B_3 c_3^{s/ 2D}$. 

If $\eta_1$ is $(K_4, N_{ac}(K_4))$-stable, then by definition the covering depth of
$V_g$ is at most $N_{ac}(K_4)$.  By Proposition \ref{P:g match}, there
are constants $B_4$ and $c_4 < 1$ such that
$\P(U_g \vert V_g) \le B_4 c_4^s$.  As $U_g \subseteq V_g$, this implies
$\P(U_g) \le B_4 c_4^s \P(V_g)$.  Therefore
\[ \P(U) \le \sum_{g \in G} \P(U_g) \le B_4 c_4^s \sum_{g \in G}
\P(V_g) \le N_{ac}(K_4) B_4 c_4^s \P(V) \le N_{ac}(K_4) B_4 c_4^s. \]
Therefore, the probability that a translate of $\eta$ is contained in
a $K$-neighbourhood of $[\gamma_n(t_m), \gamma_n(t_m + s) ]$ is at
most
$B c^s + 2 (L_2s)^2 B_3 c_3^{s/2D} + N_{ac}(K_4) B_4 c_4^s$,
which has exponential decay in $s$, as required.
\end{proof}

We are now interested in the particular case of matching between two subsegments of a given geodesic segment. 
We call this phenomenon a \emph{self-match}. Here is the precise definition.

\begin{definition} \label{D:self-match}
We say that a geodesic segment $\gamma$ has an $(L, K)$\emph{-self match} if there exist two disjoint subsegment $\eta, \eta' \subseteq \gamma$ 
of length $L$ and an element $g \in G \setminus \{1\}$ such that the Hausdorff distance between $g \eta$ and $\eta'$ is at most $K$.
\end{definition}
 
\begin{proposition} \label{P:self-match} %
Let $G$ be a group acting by isometries on a Gromov hyperbolic space
$X$, and let $\mu$ be a countable, non-elementary, bounded probability
distribution on $G$, such that the
random walk generated by $\mu$ is asymptotically acylindrical
with  exponential decay. 
Then there is a constant $K_0$, depending only on
$\delta$, such that for any $K \ge K_0$, there exists $B > 0$ such that for any $L \ge 0$ and any $n \geq 0$
the probability that $\gamma_n$ has an $(L , K )$-self match is at most
$n^3 B c^{L}$.
\end{proposition}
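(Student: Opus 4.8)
The plan is to reduce an $(L,K)$-self match of $\gamma_n$ to a single application of the non-matching estimate, Proposition~\ref{P:non-match}, after conditioning on an initial portion of the random walk, and then to take a union bound over a cubic number of index-triples. Throughout, the geometric reductions are deterministic; all of the probability comes from Proposition~\ref{P:non-match}.

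\textbf{Discretization.} Since $\mu$ has bounded support in $X$, the nearest point projections $\gamma_n(t_0),\dots,\gamma_n(t_n)$ of the locations $w_0 x,\dots,w_n x$ are $D$-dense in $\gamma_n$, for a constant $D$ depending only on $\delta$ and the support of $\mu$ (exactly the fact recorded just before Proposition~\ref{P:npp}). Hence any length-$L$ subsegment of $\gamma_n$ lies within Hausdorff distance $\le D$ of one of the \emph{standard} segments $[\gamma_n(t_i),\gamma_n(t_i+L)]$. Absorbing this error into the matching constant, and swapping the two subsegments (replacing $g$ by $g^{-1}$) so that the source segment is the one nearer $x$, an $(L,K)$-self match forces the existence of indices $i<j$ with $t_i+L\le t_j$ and an element $g\in G$ such that $g[\gamma_n(t_i),\gamma_n(t_i+L)]$ lies in a $K_1$-neighbourhood of $[\gamma_n(t_j),\gamma_n(t_j+L)]$, where $K_1=K+O(D+\delta)$. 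It thus suffices to bound, for each fixed pair $(i,j)$, the probability of this event by $(n+1)Bc^{\sqrt L}$ with $B,c$ independent of $i,j,n$, and then sum over the $O(n^2)$ pairs (plus a factor $2$ for the swap).

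\textbf{Freezing the source segment.} Fix $(i,j)$ and condition on $g_1,\dots,g_j$, which determines $w_j$. By thin triangles applied to $x,w_j x,w_n x$, the part of $\gamma_n=[x,w_n x]$ before the projection $\gamma_n(t_j)$ of $w_j x$ fellow-travels the corresponding initial part of $[x,w_j x]$ with constant $O(\delta)$ — crucially, independently of $\gp{w_j x}{x}{w_n x}$, which only controls where the two geodesics diverge, namely beyond $\gamma_n(t_j)$. Since $t_i+L\le t_j$, the segment $[\gamma_n(t_i),\gamma_n(t_i+L)]$ is therefore within Hausdorff distance $O(\delta)$ of a geodesic segment $\bar\eta$ determined by $g_1,\dots,g_j$. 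Symmetrically, the part of $\gamma_n$ beyond $\gamma_n(t_j)$ fellow-travels $[w_j x,w_n x]=w_j[x,v x]$ with constant $O(\delta)$, where $v:=w_j^{-1}w_n=g_{j+1}\cdots g_n$; by the Markov property the walk $\ell\mapsto g_{j+1}\cdots g_{j+\ell}$ is independent of $g_1,\dots,g_j$ and distributed as a $\mu$-random walk. Hence $[\gamma_n(t_j),\gamma_n(t_j+L)]$ is within Hausdorff distance $O(\delta)$ of $w_j\zeta$ for some length-$L$ subsegment $\zeta$ of $[x,vx]$, and applying the density fact to the fresh walk, $\zeta$ lies within Hausdorff distance $\le D+O(\delta)$ of a standard segment $[\gamma'(t'_m),\gamma'(t'_m+L)]$ of $\gamma':=[x,vx]$, for some $0\le m\le n-j$.

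\textbf{Applying the non-matching estimate, and the obstacle.} Combining these steps, on the conditioned space the self-match event forces some $h\in G$ and some $m\le n-j$ with $h\,(w_j^{-1}\bar\eta)$ contained in a $K_2$-neighbourhood of $[\gamma'(t'_m),\gamma'(t'_m+L)]$, where $K_2=K+O(D+\delta)$ depends only on $K$, $\delta$ and the support of $\mu$. Here $w_j^{-1}\bar\eta$ is a \emph{fixed} geodesic segment of length $L$ (it depends only on $g_1,\dots,g_j$), so for each fixed $m$, Proposition~\ref{P:non-match} applied to the fresh walk $(g_{j+\ell})_{\ell\ge 1}$ — with the fixed segment $w_j^{-1}\bar\eta$, the index $m$, and the constant $K_2$ — bounds the conditional probability by $Bc^{\sqrt L}$. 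Summing over the at most $n+1$ values of $m$, taking expectation over $g_1,\dots,g_j$, summing over pairs $(i,j)$ and over the swap factor, yields $n^3 Bc^{\sqrt L}$ after enlarging $B$. The one delicate point, which the argument is arranged to avoid, is that a subsegment in the middle of $[x,w_n x]$ a priori depends on both ends of the walk, so it cannot be frozen simply by conditioning on a prefix; the key is that thin triangles freeze the source segment with a \emph{constant} error $O(\delta)$ rather than the a priori error $\gp{w_j x}{x}{w_n x}$, so that Proposition~\ref{P:non-match} can be invoked with a fixed constant $K_2$ — this matters because its constants (through the asymptotic acylindricality function) may degenerate badly if $K_2$ were allowed to grow with $L$. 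The remaining work — making the fellow-traveling and projection-density bounds precise, and checking that the post-$j$ walk inherits the hypotheses of Proposition~\ref{P:non-match} — is routine.
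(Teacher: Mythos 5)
Your proposal is correct and follows essentially the same route as the paper's proof: pick a walk location whose projection separates the two subsegments, use thin triangles to freeze the earlier subsegment as a segment determined by the past, invoke Proposition~\ref{P:non-match} for the independent tail of the walk via the Markov property, and take a union bound over the $O(n^3)$ discretized choices of split point and of the two positions. The only differences are cosmetic (you discretize by projection indices of walk locations rather than by integer distances, and you spell out the conditioning/freezing step that the paper leaves implicit), and the small slack in constants and in the exponent $\sqrt{L-O(D)}$ is absorbed exactly as in the paper.
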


\begin{proof}
Suppose that $\gamma_n$ has an $(L, K)$-self-match.  Then there is a
subgeodesic $\eta = [\gamma_n(t), \gamma_n(t + L)]$ such that a
translate $g \eta$ is contained in a $K$-neighbourhood of $\gamma_n$,
and the nearest point projection of $g \eta$ to $\gamma_n$ is
disjoint from $\eta$.  Without loss of generality, we may assume that
the translate of $\eta$ is contained in a $K$-neighbourhood of
$[\gamma_n(t+L), \gamma_n(\norm{\gamma_n})]$.

There is a constant $D$ such that the nearest point projection of the
sample path $\{ w_m x \colon 0 \le m \le n \}$ to $\gamma_n$ is
$D$-coarsely onto, and the diameter of the support of $\mu$ in $X$ is
at most $D$.  Let $w_m x$ be a location of the random walk such that
the nearest point projection $\gamma_n(t_m)$ lies within distance $D$
of the interval of $\gamma_n$ between $\eta$ and the nearest point
projection of $g \eta$.

Then $\eta$ is contained in a $(K + D + \delta)$-neighbourhood of
$[x, w_m x]$, and $g \eta$ is contained in a
$(K + D + \delta)$-neighbourhood of $[w_m x , w_n x]$.  We do not need
to consider all possible subsegments of $[x, w_m x]$, as it suffices
to consider those whose endpoints are integer distances from $x$.
More precisely, there is a subsegment
$\eta_- = [\gamma_n(a), \gamma_n(b)]$ of $\eta$, for integers
$a \le b$, with $\norm{\eta_-} \ge \norm{\eta} - 2$.  If we set
$K_1 := K + D + \delta + 1$, then the geodesic $\eta_-$ $K_1$-matches
$\gamma' = [w_m x, w_n x]$ at distance $\gamma'(c)$ from $w_m x$,
where $c$ is also an integer.

There are at most $n$ choices for $m$, at most
$d(x, w_m x) \le Dm \le Dn$ choices for $a$, and at most
$d(w_m x, w_n x) \le D (n-m) \le Dn$ choices for $c$, so in total at
most $D^2 n^3$ choices for the triple $(m, a, c)$.  Given a triple of
choices $m, a$ and $c$, and the constant $K_1$, Proposition
\ref{P:non-match} implies that there are constants $B_1$ and $c_1$
such that the probability that a translate of $\eta_-$ is contained in
a $K_1$-neighbourhood of $[w_m x, w_n x]$ is at most
$B_1 c_1^{L - 2D}$.  Therefore the probability that $\gamma_n$
has an $(L, K)$-self-match is at most
$D^2 n^3 B_1 c_1^{L - 2D}$, and the result follows by suitable
choices of $B$ and $c$ (since $D$ is a constant).
\end{proof}

We will use the following result due to Dahmani and Horbez
\cite[Proposition 2.5]{dahmani-horbez}: they do not explicitly state
the rate, but it follows immediately from the proof.

\begin{proposition} \label{P:match-axis} %
Given $\delta$ and $K_1$ there is a constant $K$ with the following
properties.  Let $G$ be a group acting on a $\delta$-hyperbolic space
$X$, and let $\mu$ be a countable, non-elementary, bounded probability
distribution on $G$.  Let $\ell > 0$ be the drift of the random walk
generated by $\mu$.  If $w_n$ is loxodromic, let $p$ denote a nearest
point projection of $x$ to a
quasiaxis for $w_n$. Then
there exist constants $B > 0$, $c < 1$ such that for any
$\epsilon > 0$ we have
$$\mathbb{P}\left( \gamma_n \textup{ has a }((\ell-\epsilon)n, K)\textup{-match with }[p, w_n p] \right) \geq 1 - B c^{\epsilon n}. $$
\end{proposition}

Finally, we record the following result, which is an immediate
consequence of Propositions \ref{P:self-match} and Proposition
\ref{P:match-axis} above.

\begin{corollary}\label{C:self-match}
For any $\delta \ge 0$, there is a constant $K_0$ with the following
properties.  Let $G$ be a group acting by isometries on a
$\delta$-hyperbolic space $X$, and let $\mu$ be a countable,
non-elementary, bounded probability distribution on $G$, such that
the random walk generated by $\mu$ is asymptotically acylindrical with 
exponential decay. 
Let $\ell > 0$ be the drift for $\mu$, and let $p$ be a point on a quasiaxis for $w_n$.  

Then for any
$K \ge K_0$ and $\e > 0$, there are constants $B$ and $c < 1$ such
that for any $n \ge 0$ the probability that either
$\gamma_n = [x, w_n x]$ or $[p, w_n p]$ has an
$(\e \ell n , K )$-self match is at most $B c^{n}$.
\end{corollary}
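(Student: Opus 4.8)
The corollary is an immediate consequence of Propositions \ref{P:self-match} and \ref{P:match-axis}, and the plan is to reduce both claims to the former. For the segment $\gamma_n = [x, w_n x]$ this is essentially automatic: apply Proposition \ref{P:self-match} with $L := \e \ell n$, which bounds the probability that $\gamma_n$ has an $(\e \ell n, K)$-self match by $n^3 B_0\, c_0^{\sqrt{\e \ell n}} = n^3 B_0 \big( c_0^{\sqrt{\e \ell}} \big)^{\sqrt n}$. Since $n^3 (c_0/c_1)^{\sqrt n} \to 0$ for any $c_0 < c_1 < 1$, the polynomial factor is absorbed and this is at most $B\, c^{\sqrt n}$ for suitable $B \ge 0$ and $c < 1$ (for the finitely many small $n$ one enlarges $B$).

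For $[p, w_n p]$ I would transfer a hypothetical self match of $[p, w_n p]$ into a self match of $\gamma_n$ and then quote the case just proved. First, $w_n$ is loxodromic with probability at least $1 - Bc^n$ by Theorem \ref{T:MT}. Once $w_n$ is loxodromic, the nearest point projections $p$ of $x$ and $w_n p$ of $w_n x$ to the axis of $w_n$ each lie within a distance $C_0 = C_0(\delta, K_1)$ of $\gamma_n$, so by Lemma \ref{P:fellow travel} (this is also the content of Proposition \ref{P:match-axis}) the geodesic $[p, w_n p]$ lies within Hausdorff distance $C_0$, after enlarging $C_0$, of a subsegment $\bar\gamma$ of $\gamma_n$. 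Suppose now $[p, w_n p]$ has an $(\e \ell n, K)$-self match, given by disjoint subsegments $\eta, \eta'$ of length $\e \ell n$ and an element $h \ne 1$ with $h\eta$ and $\eta'$ at Hausdorff distance at most $K$. Transporting $\eta$ and $\eta'$ across the Hausdorff-$C_0$ correspondence between $[p, w_n p]$ and $\bar\gamma$ — which is coarsely monotone and coarsely isometric — and trimming bounded end pieces, one gets disjoint subsegments $\sigma, \sigma'$ of $\gamma_n$ of length at least $\e \ell n - O(C_0)$ with $h\sigma$ and $\sigma'$ at Hausdorff distance at most $K + O(C_0) =: K_2$. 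For $n$ larger than some $n_0 = n_0(\e, \delta)$ this length exceeds $\half \e \ell n$, so $\gamma_n$ has a $(\half \e\ell n, K_2)$-self match, an event of probability at most $B c^{\sqrt n}$ by the previous paragraph applied with $\e/2$ and $K_2$ in place of $\e$ and $K$ (note $K_2 \ge K \ge K_0$). Adding this to the $Bc^n$ coming from loxodromicity, and enlarging $B$ to cover $n \le n_0$, bounds the probability of an $(\e\ell n, K)$-self match of $[p, w_n p]$ by $B c^{\sqrt n}$; a union bound over the two events finishes the proof.

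The only genuine work is the transfer step in the second paragraph: one must check that pushing the self match of $[p, w_n p]$ onto $\gamma_n$ keeps the two matched subsegments disjoint (as Definition \ref{D:self-match} requires), changes their lengths by only an additive constant depending on $\delta$ and $K_1$, and leaves the matching isometry nontrivial, so that Proposition \ref{P:self-match} really does apply to the resulting configuration with no loss in the $c^{\sqrt n}$ rate. Everything else is routine coarse geometry together with the absorption of a polynomial factor into $c^{\sqrt n}$.
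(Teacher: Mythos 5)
Your argument is correct and follows the same route the paper intends: the paper gives no separate proof, stating only that the corollary is an immediate consequence of Propositions \ref{P:self-match} and \ref{P:match-axis}, i.e.\ apply Proposition \ref{P:self-match} with $L = \e\ell n$ (absorbing the polynomial factor into $c^{\sqrt n}$) and transfer a self match of $[p, w_n p]$ to one of $\gamma_n$ via the fellow-travelling between these two segments. Your transfer step, including the disjointness and constant-loss bookkeeping (noting that the fellow-travelling constant requires $\tau(w_n)$ bounded below, which holds with probability $1 - Bc^n$ and is harmless otherwise since a linear-size self match cannot exist in a short segment), is exactly the intended verification.
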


\section{Asymmetric elements}

We now use the non-matching results to show that a generic element is
asymmetric in the following sense. This definition is a variation of
the one used in \cite{maher-sisto}, where similar results are obtained in
the case that the action is acylindrical.

\begin{definition} \label{D:asym}
We say that a loxodromic isometry $g \in G$ is
\emph{$(\e, L, K)$-asymmetric} if for any subsegment
$[p, q] \subset \alpha_g$ of length at least $\e d(p, gp)$, and any
group element $h$, if $h [p, q]$ is contained in an $L$-neighbourhood
of $\alpha_g$, then there is an $i \in \Z$ such that
$d(hp, g^i p) \le K$ and $d(h q, g^{i} q) \le K$.
\end{definition}

\begin{proposition}\label{P:asymmetric}
Given a constant $\delta \ge 0$, for any constants $\e > 0$ and
$L \ge 0$, there is a constant $K$ such that if $G$ is a group acting
on a $\delta$-hyperbolic space $X$, and $\mu$ is a countable,
non-elementary, bounded probability distribution on $G$, such that the random walk
generated by $\mu$ is asymptotically acylindrical with exponential decay, 
then there are constants $B$ and $c < 1$ such that
the probability that $w_n$ is $(\e, L, K)$-asymmetric is at least
$1 - B c^{n}$.
\end{proposition}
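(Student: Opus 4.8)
The plan is to combine the matching estimates (Propositions~\ref{P:match-axis} and~\ref{P:match-use}) with the self-match estimate (Corollary~\ref{C:self-match}), using asymptotic acylindricality (Theorem~\ref{T:asymp-acyl}) to control the coarse stabilizers of long segments of the axis. Fix once and for all a loxodromic WPD element $g_0 \in \Gamma_\mu$, let $\ell > 0$ denote the drift, and put $\epsilon' := \epsilon/100$. First I would produce a single event of probability at least $1 - Bc^{\sqrt n}$ on which all of the following hold simultaneously: (a) $w_n$ is loxodromic with $\tau(w_n) \ge \ell n/2$ (Theorem~\ref{T:MT}); (b) the pairs $\{x, w_n x\}$ and $\{p, w_n p\}$ have coarse stabilizers with at most $N$ elements, for a suitable stabilizer constant, where $p$ is a closest point projection of $x$ to an axis $\alpha_{w_n}$ (Theorem~\ref{T:asymp-acyl}); (c) the fundamental domain $[p, w_n p]$ of $\alpha_{w_n}$ has an $(L_0, K_0)$-match with a $G$-translate of $\alpha_{g_0}$, for a fixed large constant $L_0$ and a constant $K_0$ depending only on $\delta$ (Proposition~\ref{P:match-use}(2)); and (d) neither $\gamma_n = [x, w_n x]$ nor $[p, w_n p]$ has an $(\epsilon' \ell n, K)$-self match (Corollary~\ref{C:self-match}). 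From (c), $w_n$-equivariance, and Lemma~\ref{L:WPD} applied to $g_0$ (with $L_0$ larger than the relevant WPD window), one reads off an \emph{asymptotic acylindricality along the axis}: every subsegment of $\alpha_{w_n}$ of length at least $L_0$ has a coarse stabilizer with at most $N$ elements.

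Next I work on this event and assume, for contradiction, that $w_n$ is not $(\epsilon, L, K)$-asymmetric: there is a subsegment $[p', q'] \subset \alpha_{w_n}$ with $d(p', q') \ge \epsilon\, d(p', w_n p') \ge \epsilon(\tau(w_n) - K_1) \gtrsim \epsilon \ell n$, and an $h \in G$ with $h[p', q']$ in the $L$-neighbourhood of $\alpha_{w_n}$, but with $d(hp', w_n^i p') > K$ or $d(hq', w_n^i q') > K$ for every $i \in \Z$. Replacing $h$ by $h w_n^{-k}$ for an appropriate $k$, which only shifts the index $i$ in the conclusion, I may assume that $p'$ lies in the fundamental domain $[p, w_n p]$. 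Since $h[p', q']$ is a geodesic lying in the $L$-neighbourhood of the $(1, K_1)$-quasigeodesic $\alpha_{w_n}$, it coarsely is a subsegment of $\alpha_{w_n}$; parametrising $\alpha_{w_n}$ by arclength so that $w_n$ acts coarsely as the shift by $\tau := \tau(w_n)$, the nearest point projections of $hp'$ and $hq'$ to $\alpha_{w_n}$ have parameters differing by $d(p', q') + O(\delta)$. Choosing the integer $m$ that best matches the phase and setting $g := w_n^{-m} h$, I obtain that $g$ moves the parameters of both $p'$ and $q'$ by a common amount $\rho$ with $|\rho| \le \tau/2 + O(\delta)$. (The orientation-reversing case is disposed of by the same mechanism used in the second case below.)

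Now fix $K$ large compared with $K_0$, $K_1$, the stabilizer constant from (b), and $L$. If $|\rho| \le K_3$, where $K_3 < K$ is a threshold chosen so that $K_3 + K_1 + O(L + \delta) \le K$, then $g$ moves every point of $[p', q']$ by at most $K$, so $g$ coarsely stabilizes the long segment $[p', q']$ and hence, by asymptotic acylindricality along the axis, lies in a set of at most $N$ elements; in particular $d(gp', p') \le K$ and $d(gq', q') \le K$, and unwinding $g = w_n^{-m} h$ yields the forbidden index $i = m$ --- a contradiction. If instead $|\rho| > K_3$, then along successively shorter subsegments of $[p', q']$ the powers $g^j$ act coarsely as the shift by $j\rho$ along $\alpha_{w_n}$, so for a suitable $j$ the isometry $g^j$ carries a subsegment of $\alpha_{w_n}$ of length at least $\epsilon' \ell n$ onto a disjoint subsegment, both contained in at most two adjacent fundamental domains of $\alpha_{w_n}$; conjugating by a power of $w_n$ and using that $[p, w_n p]$ coarsely fellow travels $\gamma_n$, this produces an $(\epsilon' \ell n, K)$-self match of $\gamma_n$ or of $[p, w_n p]$, contradicting (d). (When $|\rho|$ is so small that this $j$ would be too large for the accumulated error terms, one instead pigeonholes the elements $1, g, g^2, \dots$, which all coarsely stabilize a fixed long subsegment of $\alpha_{w_n}$, against the $N$-element bound from asymptotic acylindricality along the axis: this forces $g^{j_1} = g^{j_2}$ for some $j_1 \ne j_2$, so $g$ has finite order, which is impossible as $g$ shifts the axis by $\rho \ne 0$.) Thus in all cases we reach a contradiction, proving that $w_n$ is $(\epsilon, L, K)$-asymmetric off an event of probability at most $Bc^{\sqrt n}$.

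The step I expect to be the main obstacle is this last case. Converting ``$h$ fails to coarsely agree with a power of $w_n$ on $[p', q']$'' into an honest self match of $\gamma_n$ (or of $[p, w_n p]$) of length proportional to $n$ requires carefully handling the quasi-periodicity of $\alpha_{w_n}$ --- in particular the regime in which $h$ shifts the axis by an amount that is neither within $K$ of $0$ nor within $K$ of a multiple of $\tau(w_n)$ --- and balancing the length of the self match one can extract against the coarse-stabilizer constants, controlling the error accumulated under iterating $g$. This is exactly where asymptotic acylindricality along the axis, and hence the WPD hypothesis, enters in an essential way; a careful bookkeeping of the constants $K_0, K_1, L_0$ and of the chosen thresholds is needed to make the dichotomy above clean.
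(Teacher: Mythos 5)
The main gap is exactly at the step you flag yourself: the orientation-preserving case in which $h$ shifts the axis by an amount $\rho$ with $K_3<|\rho|\ll n$. To extract an $(\e'\ell n,K)$-self match (Definition \ref{D:self-match} requires the two subsegments of $\gamma_n$ to be \emph{disjoint}) you must move a linear-length window by a linear amount, so you iterate $g$ about $j\approx \e'\ell n/|\rho|$ times; since $\alpha_{w_n}$ is only \emph{coarsely} preserved by $g$ on a segment, the fellow-travelling error grows linearly in $j$, and the iteration only stays controlled when $|\rho|$ is already a definite fraction of $n$. Your fallback for the remaining regime does not work: the powers $1,g,g^2,\dots$ do \emph{not} lie in any pointwise coarse stabilizer $\stab_K(z,z')$ of points on the axis, because each $g^j$ moves such points by roughly $j|\rho|>K$; and if you instead mean the setwise stabilizer (elements carrying a subsegment into a neighbourhood of $\alpha_{w_n}$), that set has no finite cardinality bound and is not what Theorem \ref{T:asymp-acyl} controls, so the pigeonhole gives nothing. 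The paper closes precisely this regime with a different idea: two explicit distance estimates show that the offending element is itself loxodromic (via Proposition \ref{P:concatenate}) with translation length comparable to the shift and with a genuinely invariant quasi-axis fellow-travelling $\alpha_{w_n}$ along a linear-length overlap; powers of that element translate along their \emph{own} invariant axis with uniformly bounded error, which yields a linear-size self match with fixed constants, and Proposition \ref{P:self-match} then kills the probability. Without some substitute for this "upgrade to a loxodromic with invariant axis" step, your argument does not cover shifts of constant or sublinear size, which is the heart of the proposition.

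Two secondary problems. First, the claim that a single match of $[p,w_np]$ with a translate of $\alpha_{g_0}$ (plus $w_n$-equivariance and Lemma \ref{L:WPD}) gives a bounded coarse stabilizer for \emph{every} subsegment of $\alpha_{w_n}$ of fixed length $L_0$ is unjustified: the match occupies one bounded window per fundamental domain, and a constant-length subsegment elsewhere on the axis need not fellow-travel any translate of $\alpha_{g_0}$, so no WPD bound applies to it. (In your $|\rho|\le K_3$ branch this detour is also unnecessary, since "$g$ moves every point of $[p',q']$ by at most $K$" already gives $d(gp',p')\le K$ and $d(gq',q')\le K$ directly.) Second, your proof invokes a loxodromic WPD element $g_0\in\Gamma_\mu$ and Proposition \ref{P:match-use}, but the proposition as stated assumes only that the random walk is asymptotically acylindrical with square root exponential decay, not that $\Gamma_\mu$ contains a WPD element; the paper's own proof uses only the self-match estimate (Proposition \ref{P:self-match} / Corollary \ref{C:self-match}), Proposition \ref{P:orientation}, Proposition \ref{P:concatenate} and hyperbolic geometry, so it works in the stated generality, whereas yours quietly strengthens the hypotheses.
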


We first recall the following useful fact about isometries of Gromov
hyperbolic spaces.

\begin{proposition} \label{P:concatenate} %
Given $\delta \ge 0$ there is a constant $K_0$ such that for any
$K \ge K_0$, if $X$ is a $\delta$-hyperbolic space, and $g$ is an
isometry for which there is a point $x \in X$ such that
$d(x, gx) \ge 3 K$ and $\gp{g x}{x}{g^2 x} \le K$, then $g$ is
loxodromic, and any quasiaxis $\alpha_g$ of $g$ passes within distance
$2K$ of $g x$.
\end{proposition}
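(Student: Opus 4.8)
The plan is to run the standard ``local-to-global'' argument: concatenate the translates of a fixed geodesic segment under the powers of $g$ into one bi-infinite path, show this path is a quasigeodesic with constants depending only on $\delta$, and then read off loxodromicity and the position of the axis from the stability of quasigeodesics in $\delta$-hyperbolic spaces. Concretely, fix a geodesic $\sigma = [x, gx]$ and put $\sigma_i := g^i \sigma = [g^i x, g^{i+1}x]$, so $P := \bigcup_{i\in\Z}\sigma_i$ is a $g$-invariant path whose segments all have the same length $D_0 := d(x, gx) \ge 3K$, and whose consecutive segments make a uniform ``wide angle'': applying the isometry $g^{i-1}$ to the hypothesis $\gp{gx}{x}{g^2x}\le K$ gives $\gp{g^i x}{g^{i-1}x}{g^{i+1}x} \le K$ for every $i$.

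Next I would show $P$ is a $(\lambda,\epsilon)$-quasigeodesic with $\lambda,\epsilon$ depending only on $\delta$. The key is an induction on $k-j$, using the four-point inequality for Gromov products, establishing that for all $j\le i\le k$ the geodesic $[g^j x, g^k x]$ passes within $K + c_1(\delta)$ of the intermediate vertex $g^i x$, i.e. $\gp{g^i x}{g^j x}{g^k x}\le K + c_1(\delta)$. Granting this, the exact identity
\[ d(g^j x, g^k x) = (k-j)D_0 - 2\sum_{\ell=j+1}^{k-1}\gp{g^\ell x}{g^j x}{g^{\ell+1}x} \]
together with the trivial bound $d(g^j x, g^k x)\le (k-j)D_0$ shows $(k-j)(D_0 - 2K - c_2(\delta)) \le d(g^j x,g^k x)\le(k-j)D_0$; choosing $K_0 > c_2(\delta)$ (so that $3K - 2K - c_2(\delta) > 0$ whenever $K\ge K_0$) makes $P$ a quasigeodesic with the claimed uniform constants. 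In particular $d(x, g^n x)$ grows linearly in $n$, so $\tau(g) > 0$ and $g$ is loxodromic, and $P$ is a $g$-invariant bi-infinite quasigeodesic, hence its two endpoints on $\partial X$ are precisely the two fixed points of $g$.

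Finally, $P$ and the $(1,K_1)$-quasi-axis $\alpha_g$ supplied by Proposition \ref{P:quasi-axis} are two quasigeodesics with the same pair of endpoints on $\partial X$ and with constants bounded in terms of $\delta$; by stability of quasigeodesics they lie at Hausdorff distance at most some $c_3(\delta)$, so the vertex $gx\in P$ lies within $c_3(\delta)$ of $\alpha_g$. Enlarging $K_0$ once more so that $c_3(\delta)\le 2K_0$ gives $d(gx,\alpha_g)\le 2K$, as required. The main obstacle is the inductive local-to-global estimate: one has to track how the per-vertex error terms accumulate along the path and verify they stay dominated by the segment length $3K$ — this is exactly what pins down the threshold $K_0$ and keeps all constants depending on $\delta$ alone. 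The remaining steps are routine hyperbolic geometry (thin triangles, the Morse lemma, and the definition of translation length).
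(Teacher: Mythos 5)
Your overall strategy is sound, and it is genuinely different from the paper's. The paper disposes of loxodromicity in one line by citing the translation-length estimate $\tau(g) \ge d(x, gx) - 2 \gp{g x}{x}{g^2 x} - O(\delta)$ from \cite{MT}*{Proposition 5.8}, which with the hypotheses gives $\tau(g) \ge 3K - 2K - O(\delta) > 0$ once $K_0 \gg \delta$, and then gets the axis estimate by observing that $[x, gx]$ has a subsegment in an $L_1$-neighbourhood of $\alpha_g$, so thin triangles give $d(gx, \alpha_g) \le \gp{g x}{x}{g^2 x} + L_1 + O(\delta) \le 2K$. You instead re-derive the content of that cited inequality from scratch via the broken-path (local-to-global) argument for the $g$-invariant concatenation $P = \bigcup_i g^i[x,gx]$; this is more self-contained but longer, and your telescoping identity plus the inductive bound $\gp{g^i x}{g^j x}{g^k x} \le K + c_1(\delta)$ is exactly the mechanism underlying the cited estimate.

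One step in your write-up needs repair. The path $P$ is \emph{not} a quasigeodesic with constants depending only on $\delta$: the multiplicative constant is fine because $d(x,gx) \ge 3K$, but the additive constant is of order $K$ (each vertex can contribute a length-versus-distance deficit of up to $2K$, and $K \ge K_0$ is arbitrary), so the stability constant $c_3$ in your final paragraph depends on $K$ as well, and the closing move ``enlarge $K_0$ so that $c_3(\delta) \le 2K_0$'' is circular as written. Fortunately the fix is already contained in your inductive estimate: from $\gp{g x}{g^{-n} x}{g^{n} x} \le K + c_1(\delta)$ one gets $d(gx, [g^{-n}x, g^{n}x]) \le K + c_1(\delta) + O(\delta)$, since the distance from a point to a geodesic exceeds the Gromov product of the endpoints at that point by at most $O(\delta)$; for $n$ large the endpoints $g^{\pm n}x$ lie in disjoint shadows about $\lambda_g^{\pm}$, so by Proposition \ref{P:fellow travel3} the relevant portion of $[g^{-n}x, g^{n}x]$ lies within a $\delta$-dependent constant $L$ of the quasi-axis $\alpha_g$ from Proposition \ref{P:quasi-axis}. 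This yields $d(gx, \alpha_g) \le K + c_1(\delta) + L + O(\delta) \le 2K$ once $K_0$ dominates the $\delta$-dependent terms, with no appeal to a Morse lemma whose input constants grow with $K$.
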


\begin{proof}
This follows from the following estimate for the translation length of
an isometry:
\[ \tau(g) \ge d(x, g x) - 2 \gp{g x}{x}{g^2 x} - O(\delta), \]
see for example \cite{MT}*{Proposition 5.8}.  As long as
$\tau(g) \ge O(\delta)$, then any path $[x, gx]$ has a subsegment
which is contained in an $L_1$-neighbourhood of $\alpha_g$, and so by
thin triangles, the distance from $g x$ to $\alpha_g$ is at most
$\gp{g x}{x}{g^2 x} + L_1 + O(\delta)$.
\end{proof}

Let $\gamma_1$ and $\gamma_2 = [x, y]$ be two
$(1, K_1)$-quasigeodesics.  Parameterizations
$\gamma_1 \colon I_1 \to X$ and $\gamma_2 \colon I_2 \to X$ determine
orientations of $\gamma_1$ and $\gamma_2$.  Let $x' = \gamma_1(s)$ be
a nearest point on $\gamma_1$ to $x$, and let $y' = \gamma_1(t)$ be a
nearest point on $\gamma_1$ to $y$.  We say these orientations agree
if $s < t$ for any choice of nearest points $x' = \gamma_1(s)$ and
$y' = \gamma_1(t)$, and we say they disagree if $s > t$ for any choice
of nearest points $x' = \gamma_1(s)$ and $y' = \gamma_1(t)$.  In any
other case we say that the orientation of $\gamma_2$ is not
well-defined with respect to $\gamma_1$.  We omit the proof of the
following basic fact.

\begin{proposition}\label{P:orientation}
Given constants $\delta, K_1$ and $L$, there is a constant $L'$ with
the following properties.  Let $X$ be a $\delta$-hyperbolic space, and let $\gamma_1$ and $\gamma_2$
be $(1, K_1)$-quasigeodesics in $X$ such that $\gamma_2$ is contained in an $L$-neighbourhood of
$\gamma_1$.  If the length of $\gamma_2$ is at least $L'$, then
the orientation of $\gamma_2$ either agrees or disagrees with that of
$\gamma_1$.
\end{proposition}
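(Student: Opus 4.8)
The plan is to reduce this to the coarse well-definedness of nearest-point projection onto a quasigeodesic, together with a single length estimate. First I would record the standard fact (a consequence of the Morse lemma and thin triangles) that there is a constant $C_0 = C_0(\delta, K_1)$ such that for any point $z \in X$, any two closest points on $\gamma_1$ to $z$ lie within distance $C_0$ of one another. Since $\gamma_1$ is a $(1, K_1)$-quasigeodesic, this implies that the parameter set $S_z := \{ s : \gamma_1(s) \text{ is a closest point to } z \}$ lies in an interval of length at most $\Delta := C_0 + K_1$: indeed if $s_1, s_2 \in S_z$ then $|s_1 - s_2| \le d(\gamma_1(s_1), \gamma_1(s_2)) + K_1 \le C_0 + K_1$.

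Next I would estimate the distance between the projections of the two endpoints of $\gamma_2 = [x, y]$. Let $x' = \gamma_1(s)$ and $y' = \gamma_1(t)$ be closest points on $\gamma_1$ to $x$ and to $y$ respectively. Since $\gamma_2$ lies in the $L$-neighbourhood of $\gamma_1$, its endpoints lie within $L$ of $\gamma_1$, so $d(x, x') \le L$ and $d(y, y') \le L$ (if the infimum defining the projection is not attained, one replaces these by approximate minimisers, changing the constants by a harmless additive amount). Hence $d(x', y') \ge d(x, y) - 2L$. On the other hand, since $\gamma_2$ is a $(1, K_1)$-quasigeodesic of length at least $L'$, we have $d(x, y) \ge L' - K_1$, so $d(x', y') \ge L' - K_1 - 2L$.

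Finally I would combine the two ingredients. For any $s \in S_x$ and $t \in S_y$, the points $\gamma_1(s)$ and $\gamma_1(t)$ lie within $C_0$ of $x'$ and $y'$ respectively, so $d(\gamma_1(s), \gamma_1(t)) \ge d(x', y') - 2C_0$, and therefore $|s - t| \ge d(\gamma_1(s), \gamma_1(t)) - K_1 \ge L' - 2K_1 - 2L - 2C_0 =: M$. Thus every parameter value in $S_x$ is at distance at least $M$ from every parameter value in $S_y$. A short elementary argument now shows that if $M > \Delta$ the two sets cannot interleave in $\R$: if there were $s_1 \in S_x, t_1 \in S_y$ with $s_1 < t_1$ and $s_2 \in S_x, t_2 \in S_y$ with $s_2 > t_2$, then from $t_1 - s_1 \ge M$, $s_2 - t_2 \ge M$, $|s_1 - s_2| \le \Delta$ and $|t_1 - t_2| \le \Delta$ one obtains $M \le (s_1 - t_1) + 2\Delta \le -M + 2\Delta$, a contradiction. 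Consequently, choosing $L'$ so large that $L' - 2K_1 - 2L - 2C_0 > C_0 + K_1$ (say $L' > 3K_1 + 2L + 3C_0$) forces $S_x$ to lie entirely to the left, or entirely to the right, of $S_y$ in $\R$; in the first case the orientations agree and in the second they disagree, with the value of $L'$ as the required constant.

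The only real subtlety — and there is no deep obstacle — is bookkeeping: tracking the Morse-lemma constant $C_0$, propagating it correctly through $\Delta$ and $M$, and making sure the inequalities point the right way, while also being careful that ``closest point'' is interpreted coarsely enough that $d(x, x') \le L$ (or $\le L + \text{const}$) is legitimate when the infimum is not attained. Everything else is the triangle inequality and the definition of a $(1, K_1)$-quasigeodesic.
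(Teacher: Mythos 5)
Your proof is correct. The paper states this proposition without proof (``We omit the proof of the following basic fact''), and your argument --- coarse well-definedness of the nearest-point projection onto a $(1,K_1)$-quasigeodesic (Morse lemma constant $C_0$), the endpoint estimate $d(x',y') \ge L' - K_1 - 2L$, and the non-interleaving argument once $M > \Delta$, giving $L' > 3K_1 + 2L + 3C_0$ --- is precisely the standard reasoning the omitted proof would consist of, with the bookkeeping (including the caveat about approximate minimisers in a non-proper space) handled correctly.
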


Recall that we say a function $\cE(n) \colon \N \to \N$ is exponential
in $n$ if there are constants $B$ and $c < 1$ such that
$\cE(n) \le B c^{n}$ for all $s \ge 0$.  Clearly, if $\cE_1(n)$ is
exponential in $n$, and $\cE_2(n)$ is exponential in $n$, then the sum
of these two functions is exponential in $n$.

We may now complete the proof of Proposition \ref{P:asymmetric}.

\begin{proof}[Proof of Proposition \ref{P:asymmetric}]
If $L' \ge L$, then $N_L(\alpha_g) \subseteq N_{L'}(\alpha_g)$, so if
the result holds for some $K'$ and $L'$, it also holds for $K'$ and
$L$. Therefore, without loss of generality we may assume that
$L \ge 1 + \delta$.

Let $\alpha_{w_n}$ be a quasiaxis for $w_n$, and let $x'$ be the nearest
point projection of the basepoint $x$ to $\alpha_{w_n}$.  If the
result holds for some $\e > 0$, it holds for any larger value of $\e$,
so we may assume that $\e \le 1$.  Furthermore, as $\alpha_{w_n}$ is
$w_n$-invariant, after translating by a power of $w_n$, and possibly
replacing $\e$ by $\e/2$, we may assume that $w_n^i [p, q]$ is
contained in $[x', w_n x']$.  By abuse of notation, we will relabel
$w_n^i [p, q]$ as $[p, q]$.

If $h [p, q]$ is contained in a $L$-neighbourhood of $\alpha_{w_n}$,
then as $\alpha_{w_n}$ is $w_n$-invariant, then after replacing $h$ by
$w_n^k h$, we may assume that the nearest point projection of
$h [p, q]$ to $\alpha_{w_n}$ is contained in $[x', w_n^2 x']$.  By
abuse of notation, we will relabel $w_n^k h$ as $h$.

Given $L$, let $L'$ be the constant from Proposition
\ref{P:orientation}.  As $d(x', w_n x')$ tends to infinity almost
surely as $n$ tends to infinity, we may assume that
$d(x', w_n x') \ge L'/\e$, and so $d(p, q) \ge L'$.  In particular,
the orientation of $h [p, q]$ is well defined with respect to
$\alpha_{w_n}$, and either agrees, or disagrees with the orientation
of $\alpha_{w_n}$.

First consider the case in which $h$ reverses the orientation of
$[p, q]$ with respect to $\alpha_{w_n}$, as illustrated below in
Figure \ref{pic:reverse}.  We will show that if this occurs, it gives
a self-match for $\gamma_n$ which occurs with probability which is at
most exponential in $n$.

\begin{figure}[H]
\begin{center}
\begin{tikzpicture}

\tikzstyle{point}=[circle, draw, fill=black, inner sep=1pt]

\draw (3, 0) -- (15, 0) node [above right] {$\alpha_{w_n}$};

\draw (4, 0) node [point, label=above:$x'$] {};
\draw (9, 0) node [point, label=above:$w_n x'$] {};
\draw (14, 0) node [point, label=above:$w_n^2 x'$] {};

\draw [arrows=-triangle 45] (5, 0) -- (6.5, 0);
\draw [arrows=-triangle 45] (9.5, -0.5) -- (8, -0.5);

\draw (5, 0) node [point, label=above:$p$] {};
\draw (8, 0) node [point, label=above:$q$] {};

\draw (6.5, -0.5) node [point, label=below:$h q$] {} --
      (9.5, -0.5) node [point, label=below:$h p$] {};

\end{tikzpicture}
\end{center}\caption{An orientation reversing translate of $[p, q]$
  close to $\alpha_{w_n}$.}\label{pic:reverse}
\end{figure}
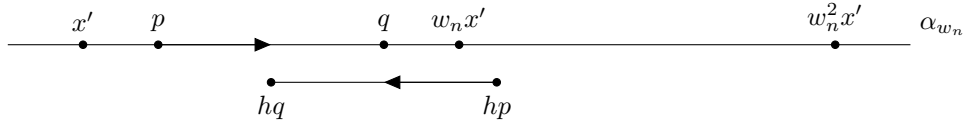

By replacing $[p, q]$ by either its initial half, or terminal half, we
may assume that either $[p, q]$ or $w_n^{-1}[p, q]$ has nearest point
projection to $\alpha_{w_n}$ contained in $[x', w_n x']$.  Again
replacing $[p, q]$ by either its initial half, or terminal half, we
may assume that $h[p, q]$ lies within distance $K$ of a disjoint
subsegment of $[x', w_n x']$ of length at least
$\e d(x', w_n x') / 4$.  This gives rise to an
$(\e d(x', w_n x') /4, K)$-self match for $[x', w_n x']$.

Let $\ell > 0$ be the linear progress constant for $\mu$, and fix some
$0 < \e' < \min \{ \ell, 1 \} /2$.

The subsegment $[x', w_n x']$ of $\alpha_{w_n}$ is contained in an
$L_1$-neighbourhood of $[x, w_n x]$, and by Proposition
\ref{P:match-axis}, given $\e' > 0$, there are constants $B_1$ and
$c_1 < 1$ such that the probability that the length of $[x', w_n x']$
is at least $(\ell - \e')n$ is at least $1 - \cE_1(n)$, where
$\cE_1(n) = B_1 c_1^n$, where $\ell$ is the linear progress constant
for $\mu$.

This gives an $(\e(\ell - \e')n/4, K)$-self match for
$[x, w_n x]$, and by Proposition \ref{P:self-match}, there are
constants $B_2$ and $c_2 < 1$ such that the probability that this
occurs is at most $\cE_2(n) = B_2 c_2^{n}$.

Therefore, the existence of an orientation reversing translate of
$[p, q]$ occurs with probability at most $\cE_1(n) + \cE_2(n)$, which
is exponential in $n$, as required.

We now consider the case in which the orientation of $h [p, q]$ agrees
with that of $\alpha_{w_n}$.  We may replace $[p, q]$ by either its
initial half or terminal half subinterval (in which case replace $\e$
by $\e/2$), and possibly replace $h$ by $w_n^{-1} h$, to ensure that
the nearest point projection of $h [p, q]$ to $\alpha_{w_n}$ is
contained in $[x', w_n x']$.  This is illustrated below in Figure
\ref{pic:asymmetric}.

\begin{figure}[H]
\begin{center}
\begin{tikzpicture}

\tikzstyle{point}=[circle, draw, fill=black, inner sep=1pt]

\draw (3, 0) -- (15, 0) node [above right] {$\alpha_{w_n}$};

\draw (4, 0) node [point, label=above:$x'$] {};
\draw (14, 0) node [point, label=above:$w_n x'$] {};

\draw (6, -0.5) node [point, label=below:$h p$] {} --
      (11, -0.5) node [point, label=below:$h q$] {};

\draw (6, 0) node [point, label=above:$p'$] {};
      
\draw (5, 0) node [point, label=above:$p$] {};
\draw (10, 0) node [point, label=above:$q$] {};

\begin{scope}[xshift=+1cm]
\draw (6, 0) node [point, label=above:$t$] {};
\draw (7, 0) node [point, label=above:$t'$] {};
\draw (7, -0.5) node [point, label=below:$h t$] {};
\draw (8, -0.5) node [point, label=below:$h t'$] {};
\draw (8, 0) node [point, label=above:$t''$] {};
\draw (8, -1.25) node [point, label=below:$h^2 t$] {};
\end{scope}

\end{tikzpicture}
\end{center}\caption{An orientation preserving translate of
  $[p, w_n p]$ close to $\alpha_{w_n}$.}\label{pic:asymmetric}
\end{figure}
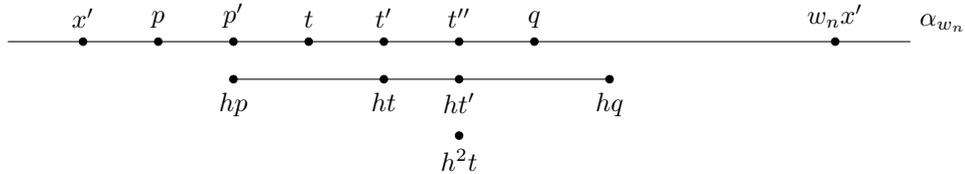

Let $p'$ be a nearest point on $\alpha_{w_n}$ to $hp$.
If $d(p, p') \ge \e \ell n/10$, then this gives a linear size
self-match of $[x, w_n x]$, and again by Proposition
\ref{P:self-match} there are constants $B_3$ and $c_3 < 1$ such that
the probability that this occurs is at most
$\cE_3(n) = B_3 c_3^{n}$.

We shall choose a constant $K = 4L + O(\delta)$, but in order to
guarantee that there is no circularity in our choice of constants, we
now recall some basic facts about Gromov hyperbolic spaces and give an
explicit choice of the $O(\delta)$ term in terms of geometric
constants which only depend on $\delta$.

Recall that every quasiaxis is a $(1, K_1)$-quasigeodesic, where $K_1$ only
depends on $\delta$.  Let $L_1$ be a Morse constant for
$(1, K_1)$-quasigeodesics, i.e. any geodesic $[x, y]$ with endpoints
in a $(1, K_1)$-quasigeodesic $\alpha$ is contained in an
$L_1$-neighbourhood of $\alpha$.  As $K_1$ only depends on $\delta$,
the Morse constant $L_1$ also only depends on $\delta$.

Given constants $\delta \ge 0$ and $K_1 \ge 0$ there are constants
$K_2$ and $K_3$, such that for any $(1, K_1)$-quasigeodesic $\alpha$,
and any two points $x$ and $y$ in $X$, if $x'$ is the nearest point
projection of $x$ to $\alpha$ and $y'$ is the nearest point projection
of $y$ to $\alpha$, then if $x'$ and $y'$ are distance at least $K_2$
apart, then the geodesic from $x$ to $y$ is Hausdorff distance at most
$K_3$ from the piecewise geodesic path
$[x, x'] \cup [x', y'] \cup [y', y]$.  Furthermore
\begin{equation}\label{eq:npp-bound}
d(x', y') \ge d(x, y) - d(x, x') - d(y, y') - K_3. 
\end{equation}
As $K_1$ only depends on $\delta$, the constants $K_2$ and $K_3$ also
only depend on $\delta$.  We may now set
$K = 4 L + 2 K_1 + 3 K_2 + 3 K_3 + 6 \delta$.

Now suppose that $p'$ is close to $p$ and the length of $[p, p']$ is
greater than $K$ but less than $\e \ell n/10$.  Let $t$ be any point
in $[p', q]$.  Let $t'$ be a nearest point on $[p, q]$ to $h t$, and
let $t''$ be a nearest point on $[p, q]$ to $h t'$.

\begin{claim}
We have chosen $K$ sufficiently large such that $d(t, t') \ge K_2$.
\end{claim}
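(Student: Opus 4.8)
The plan is to read the claim off from the orientation-agreeing fellow-travelling of $h[p,q]$ with the axis $\alpha_{w_n}$, and then to verify that the value $K = 4L + 2K_1 + 3K_2 + 3K_3 + 6\delta$ fixed just above the claim is large enough to swallow all the additive error constants that turn up. So this is really a bookkeeping step rather than a new geometric argument.

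First I would set up coordinates: parameterise $\alpha_{w_n}$ by arclength with $p$ at coordinate $0$ and $q$ at coordinate $d(p,q) > 0$, and recall from Proposition \ref{P:orientation} (applicable since $d(p,q)$ is linear in $n$, hence eventually exceeds the constant $L'$) that, being in the orientation-agreeing case, the nearest-point projection $\pi$ to $\alpha_{w_n}$ carries $h[p,q]$ onto the subsegment $[p',q']$ of $\alpha_{w_n}$ in an orientation-preserving way, up to an additive error $C_0$ depending only on $\delta, K_1$ and $L$; here $p' = \pi(hp)$ sits at some coordinate $\sigma$ with $|\sigma| = d(p,p')$, and $q' = \pi(hq)$ sits at coordinate approximately $\sigma + d(p,q)$. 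For $t \in [p',q]$, writing $t$ within the Morse constant $L_1$ of the axis point at coordinate $v$, the isometry $h$ carries that axis point to within $L_1 + C_0$ of the axis point at coordinate $v + \sigma$; projecting back with the help of \eqref{eq:npp-bound}, the point $t'$ is, within a constant depending only on $\delta, K_1, K_3$, the axis point at coordinate $v + \sigma$, at least whenever $v + \sigma$ still lies inside $[0, d(p,q)]$. In that case $d(t,t') = \norm{v - (v+\sigma)} \pm \text{(const)} = d(p,p') \pm \text{(const)}$, where the error constant involves only $\delta, K_1, L_1, K_3$; since we are in the case $d(p,p') > K$ and $K$ was defined to exceed $K_2$ by strictly more than this error constant, we get $d(t,t') \ge K_2$, as claimed.

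The one delicate point is the degenerate possibility that $v + \sigma$ falls outside $[0, d(p,q)]$, so that $ht$ projects near an endpoint of $[p,q]$ and the displacement comparison above breaks down. Here I would argue that this can only happen when $t$ lies within $O(d(p,p'))$ of one end of $[p',q]$ — i.e. in exactly the part of $[p',q]$ that the reductions just before the claim have already cut away — or, alternatively, that it forces $d(p,p')$ to be comparable to $d(p,q)$, which puts us in the linear-size self-match situation already excluded with probability at least $1 - Bc^{\sqrt n}$ by Proposition \ref{P:self-match}. Past that, the proof of the claim is pure constant-chasing through Propositions \ref{P:orientation} and \ref{P:concatenate} together with \eqref{eq:npp-bound}: no further geometric input is needed, and the only real work is keeping track of which of $\delta, K_1, K_2, K_3, L$ each error term depends on so that the explicit choice of $K$ genuinely dominates their sum.
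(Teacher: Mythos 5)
Your main-case computation is sound, and it is essentially the paper's estimate rewritten in coordinates; but the paper gets the bound more directly and purely deterministically. There, one applies \eqref{eq:npp-bound} to the pair $hp$, $ht$ to get $d(p',t') \ge d(hp,ht) - d(hp,p') - d(ht,t') - K_3 \ge d(p,t) - 2L - K_3$ (using that $h$ is an isometry and that the two projection distances are at most $L$), and then uses that $p, p', t, t'$ all lie on the $(1,K_1)$-quasi-axis, with $p'$ between $p$ and $t$, to convert this into $d(t,t') \ge d(p,p') - 2L - 2K_1 - K_3$, which exceeds $K_2$ (indeed exceeds $2L + K_2 + K_3$) by the choice of $K$, since we are in the case $d(p,p') > K$. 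No orientation-agreement, no translation-by-$\sigma$ picture, and in particular no probabilistic input is needed: the claim is a deterministic statement about the fixed configuration $h, p, q, t$ under the standing hypotheses, so your appeal to Proposition \ref{P:self-match} inside the proof of the claim is structurally out of place.

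The genuine problem is your treatment of the ``degenerate possibility.'' First, the reductions made just before the claim (halving $[p,q]$, replacing $h$ by $w_n^{-1}h$ so that the projection of $h[p,q]$ lies in $[x', w_n x']$) do not excise the terminal portion of $[p',q]$ of length comparable to $d(p,p')$: the point $t$ is still allowed arbitrarily close to $q$, so the overshoot case is not ``already cut away.'' Second, the overshoot $v+\sigma \notin [0, d(p,q)]$ does not force $d(p,p')$ to be comparable to $d(p,q)$; it occurs whenever $t$ lies within roughly $d(p,p')$ of an endpoint, however small $d(p,p')$ is compared with $d(p,q)$, so the linear-size self-match dichotomy gives you nothing there. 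What is actually true is that the estimate (yours, and also the paper's, whose step ``$d(ht,t')\le L$'' tacitly assumes it) requires the nearest axis point to $ht$ to land in $[p,q]$, i.e.\ one should take $t$ at distance at least about $d(p,p')$ from $q$ (and from $p$ if $\sigma<0$). This restriction is harmless for the way the claim is used afterwards, since the argument only needs one such $t$ (for instance $t = p'$, noting that $d(p',q)$ is linear in $n$ while $d(p,p') \le \epsilon \ell n/10$); but it should be stated as a restriction on $t$, not patched by the earlier reductions or by a probabilistic self-match estimate.
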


\begin{proof}
By \eqref{eq:npp-bound},
\begin{align*}
d(p', t') & \ge d(hp, ht) - d(hp, p') - d(ht, t') - K_3. \\
\intertext{%
As $h$ is an isometry, and $d(hp, p')$ and $d(ht, t')$ are
  at most $L$, this gives%
}
d(p', t') & \ge d(p, t) - 2L - K_3. 
\intertext{%
The points $p, p', t$ and $t'$ all lie on the $(1,
           K_1)$-quasigeodesic $\alpha_{w_n}$, which implies $d(p', t) +
           d(t, t') \ge d(p', t') - K_1$, and $d(p, t) \ge d(p, p') + d(p', t) -
           K_1$.  This yields
           }           
d(t, t') & \ge d(p, p') - 2L - 2 K_1 - K_3.
\end{align*}
Our choice of $K$ therefore guarantees that $d(t, t') \ge K_2$, as
required.  In fact $d(t, t') \ge 2L + K_2 + K_3 \ge K_2$, and we will
now use this stronger bound to obtain a bound on $d(t', t'')$.
\end{proof}

\begin{claim}
We have chosen $K$ sufficiently large such that $d(t', t'') \ge K_2$.
\end{claim}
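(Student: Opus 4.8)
The plan is to push the argument of the previous claim one step further along, exploiting the fact that the estimate in \eqref{eq:npp-bound} is actually unconditional. First I would observe that we may assume $K_3 \ge K_2$ (replace $K_3$ by $\max\{K_2, K_3\}$; this affects nothing else). With this, the inequality $d(x', y') \ge d(x, y) - d(x, x') - d(y, y') - K_3$, for $x'$ and $y'$ the nearest point projections of $x$ and $y$ to a $(1, K_1)$-quasigeodesic, holds for \emph{every} pair $x, y$: if $d(x', y') \ge K_2$ this is the statement of \eqref{eq:npp-bound}, and if $d(x', y') < K_2 \le K_3$ then, since $d(x, y) \le d(x, x') + d(x', y') + d(y', y)$, the right-hand side is negative and the bound is trivial.

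Next I would apply this unconditional estimate to the points $ht$ and $ht'$, whose nearest point projections to $\alpha_{w_n}$ are $t'$ and $t''$ respectively (these projections land in $[p, q]$ — so they coincide with the closest points on $[p, q]$ used in the statement — by the same normalisation that placed the projection of $h[p, q]$ inside $[x', w_n x']$: the orientation of $h[p, q]$ agrees with $\alpha_{w_n}$, $d(p, p') < \e \ell n / 10$, and the projection of $hq$ lies at distance at least $d(p, q) - 2L - K_3$ from $p'$). Since $h$ is an isometry, $d(ht, ht') = d(t, t')$; and since $ht, ht' \in h[p, q]$, which lies in an $L$-neighbourhood of $\alpha_{w_n}$, we get $d(ht, t') \le L$ and $d(ht', t'') \le L$. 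Combining,
\[ d(t', t'') \ge d(ht, ht') - d(ht, t') - d(ht', t'') - K_3 \ge d(t, t') - 2L - K_3. \]
By the stronger bound $d(t, t') \ge 2L + K_2 + K_3$ recorded at the end of the previous claim, this yields $d(t', t'') \ge K_2$, as required.

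The computation is a single line; the only point deserving attention is the parenthetical one, namely verifying that the nearest point projections of $ht$ and $ht'$ to $\alpha_{w_n}$ genuinely fall inside $[p, q]$ (equivalently, that $t'$ and $t''$ are not clamped to an endpoint of $[p, q]$), which is what makes the bounds $d(ht, t') \le L$ and $d(ht', t'') \le L$ available. This is the one step where one must keep track of how far $h$ moves points along $\alpha_{w_n}$, and it is controlled precisely because $d(p, p')$ was assumed less than $\e \ell n / 10$; if one prefers, one can simply read $t'$ and $t''$ as the nearest point projections to $\alpha_{w_n}$ throughout, which is consistent with the labelling in Figure \ref{pic:asymmetric}.
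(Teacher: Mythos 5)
Your proof is correct and is essentially the paper's own argument: apply \eqref{eq:npp-bound} to the pair $ht$, $ht'$, use that $h$ is an isometry together with $d(ht,t')\le L$ and $d(ht',t'')\le L$ to obtain $d(t',t'')\ge d(t,t')-2L-K_3$, and conclude from the stronger bound $d(t,t')\ge 2L+K_2+K_3$ recorded at the end of the previous claim. Your two additional remarks---removing the hypothesis $d(x',y')\ge K_2$ from \eqref{eq:npp-bound} by arranging $K_3\ge K_2$, and checking that the relevant nearest point projections land in $[p,q]$---are refinements of points the paper leaves implicit, not a different route.
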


\begin{proof}
By \eqref{eq:npp-bound},
\begin{align*}
d(t', t'') & \ge d(ht, ht') - d(ht, t') - d(ht', t'') - K_3. \\
  \intertext{%
  as $h$ is an isometry, and $d(ht, t')$ and $d(ht', t'')$ are
  at most $L$, this gives
  }
d(t', t'') & \ge d(t, t') - 2L - K_3.
\end{align*}
Our choice of $K$ then implies that $d(t', t'') \ge K_2$, as required.
\end{proof}

As $d(t', t'') \ge K_2 + L$, the geodesic from $ht$ to $h^2 t$ passes
within distance $K_3$ of $[t', t'']$, the Gromov product
$\gp{ht}{t}{h^2 t}$ is at most $K_4 := L + K_2 + K_3 + 2 \delta$.  We
have chosen $K$ sufficiently large such that $d(t, ht) \ge 3 K_4$, and
so Proposition \ref{P:concatenate} implies that $h$ is loxodromic, and
any
quasiaxis of $h$ passes within distance $2 K_4$ of $\alpha_{w_n}$.

As we have assumed that $\tau(h) \le \e \ell n / 10$, this gives a
$(\e \ell n / 10, 2 K_4)$-self match of $[x', w_n x']$, and hence of
$\gamma_n = [x, w_n x]$, and so again by Proposition
\ref{P:self-match} there are constants $B_4$ and $c_4 < 1$ such that
the probability that this occurs is at most
$\cE_4(n) = B_4 c_4^{n}$.

Therefore, we have shown that the case of an orientation preserving
translate of $[p, q]$ occurs with probability at most $\cE_3(n) +
\cE_4(n)$, which is exponential in $n$, as required.
\end{proof}

\section{Small cancellation and normal closure}

We will now prove results on the normal closure (Theorems
\ref{T:inj-intro} and \ref{T:normal-intro} in the Introduction).  In
order to do so, we will use the following notions of \emph{small
  cancellation} from \cite{DGO}.  We remark that the small
cancellation results in this section were previously obtained in the
case of acylindrical actions by Maher and Sisto \cite{maher-sisto},
using work of Hull \cite{hull}, and we further extend their methods to
the case of WPD actions.  If $H \subseteq G$ is a subgroup, we define
its \emph{injectivity radius} as
$$\textup{inj}(H) := \inf \{ d(gx, x) \ : \ g \in H \setminus \{1\}, x \in X \}.$$
Let $\mathcal{R}$ be a family of loxodromic elements which is closed
under conjugation. We define its \emph{injectivity radius} as
$$\textup{inj}(\mathcal{R}) := \inf_{g \in \mathcal{R}} \inf \{ d(g^k x, x), k \in \mathbb{Z} \setminus \{0 \}, x \in X\}.$$
In particular, if $g$ is loxodromic and $\mathcal{R} := \{ h g h^{-1}, h \in G \}$ is the set of conjugates of $g$, then 
$$\textup{inj}(\mathcal{R}) \ge \tau(g).$$
Following \cite{DGO}, for a loxodromic element $g$, let
$\textup{Ax}(g)$ be the $20 \delta$-neighbourhood of set of points $x$ for
which $d(x, gx) \le \inf_{y \in X} d(y, gy) + \delta$.  If $\tau(g)$
is sufficiently large, then this set is contained in a bounded
neighbourhood of a  
quasiaxis $\alpha_g$ for $g$.

\begin{proposition}\label{P:ax}
Given $\delta \ge 0$, there are constants $A$ and $K$, such that if
$g$ is a loxodromic isometry of $\delta$-hyperbolic space $X$ with
quasiaxis $\alpha_g$ and $\tau(g) \ge A$, then
$\textup{Ax}(g) \subset N_K(\alpha_g)$.  Furthermore, $\textup{Ax}(g)$
is $10 \delta$-quasiconvex.
\end{proposition}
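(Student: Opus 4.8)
The plan is to work with $m := \inf_{y \in X} d(y, gy)$, so that $\textup{Ax}(g) = N_{20\delta}(Q)$ where $Q := \{x \in X : d(x, gx) \le m + \delta\}$, and to begin with two preliminary observations. First, $m = \tau(g)$: subadditivity of $n \mapsto d(x, g^n x)$ gives $\tau(g) \le d(x, gx)$ for every $x$, hence $\tau(g) \le m$; conversely, parametrising the $(1, K_1)$-quasi-axis $\alpha_g$ of Proposition \ref{P:quasi-axis} by arclength, $g$ acts on it by translating the parameter by a fixed amount $a$, and $d(y, g^n y) \ge na - K_1$ forces $\tau(g) \ge a \ge d(y, gy) \ge m$ for any $y \in \alpha_g$. (If one only wants $m \le \tau(g) + O(\delta)$ this is even cheaper, and that is all that is needed below.) Second, this chain of inequalities also pins $d(y, gy) = \tau(g)$ for every $y \in \alpha_g$, so $\alpha_g \subseteq Q$ and therefore $N_{20\delta}(\alpha_g) \subseteq \textup{Ax}(g)$; I will use this containment in both parts.

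For the inclusion $\textup{Ax}(g) \subseteq N_K(\alpha_g)$, I would take any $x$ with $d(x, gx) \le m + \delta = \tau(g) + \delta$ and invoke the standard translation-length estimate $\tau(g) \ge d(x, gx) - 2\gp{gx}{x}{g^2 x} - O(\delta)$ (the one used in the proof of Proposition \ref{P:concatenate}, cf. \cite{MT}*{Proposition 5.8}), which forces $\gp{gx}{x}{g^2 x} \le K'$ for a constant $K'$ depending only on $\delta$. Enlarging $K'$ if necessary so that $K' \ge K_0$ (the constant of Proposition \ref{P:concatenate}) and setting $A := 3K'$, the hypotheses $d(x, gx) \ge \tau(g) \ge A = 3K'$ and $\gp{gx}{x}{g^2 x} \le K'$ let me apply Proposition \ref{P:concatenate}, which puts $\alpha_g$ within distance $2K'$ of $gx$; since $g\alpha_g = \alpha_g$, this gives $d(x, \alpha_g) = d(gx, \alpha_g) \le 2K'$ as well. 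Hence $Q \subseteq N_{2K'}(\alpha_g)$ and $\textup{Ax}(g) = N_{20\delta}(Q) \subseteq N_{K}(\alpha_g)$ with $K := 20\delta + 2K'$, as required.

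For the $10\delta$-quasiconvexity, which is essentially \cite{DGO}, the approach is the familiar two-step one. The first step is that sublevel sets of the displacement function are uniformly quasiconvex: for $z$ on a geodesic $[x, y]$, applying $2\delta$-thinness of the geodesic quadrilateral $x, gx, gy, y$ (and, in the case where $z$ lies near the side $g[x, y]$, comparing the arclength parameters of $[x,y]$ and $g[x,y]$ to control $d(z, gz)$) yields $d(z, gz) \le \max\{d(x, gx), d(y, gy)\} + 4\delta$. The second step feeds this back into the argument of the previous paragraph: for $x, y \in \textup{Ax}(g)$ the displacement of a point $z \in [x,y]$ stays within a controlled multiple of $\delta$ of $\tau(g)$, so $z$ lies within a controlled neighbourhood of $\alpha_g$, and hence, via $N_{20\delta}(\alpha_g) \subseteq \textup{Ax}(g)$, within $\textup{Ax}(g)$ up to a controlled error. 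The only real obstacle here is quantitative: the naive bookkeeping produces quasiconvexity with some constant $C\delta$ rather than exactly $10\delta$, so pinning down the stated constant requires being careful about which neighbourhood is taken at each step (the sizes $20\delta$ and $\delta$ in the definition of $\textup{Ax}(g)$ are chosen for exactly this reason), precisely as in \cite{DGO} — which one may alternatively just cite for the sharp constant.
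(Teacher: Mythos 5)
Your argument for the inclusion $\textup{Ax}(g) \subset N_K(\alpha_g)$ is correct but follows a genuinely different route from the paper's. The paper takes $x$ in the sublevel set, projects it to a nearest point $p$ on $\alpha_g$, notes $d(p,gp) \ge \tau(g) \ge A$, and uses the thin-triangles fact that $[x,p]\cup[p,gp]\cup[gp,gx]$ is then close to a geodesic, so that $d(x,gx) \ge 2d(x,p) + d(p,gp) - K_1$; combined with $d(x,gx) \le d(p,gp) + \delta$ this bounds $d(x,p)$ directly, without invoking Proposition \ref{P:concatenate} and without needing any comparison between $\inf_y d(y,gy)$ and $\tau(g)$. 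You instead bound the Gromov product $\gp{gx}{x}{g^2x} = \gp{x}{gx}{g^{-1}x}$ using the two-sided formula $\tau(g) = d(x,gx) - 2\gp{x}{gx}{g^{-1}x} + O(\delta)$ (as quoted from \cite{MT} in the proof of Theorem \ref{T:growth}) and then apply Proposition \ref{P:concatenate}; this is equally valid, reuses statements already present in the paper, but does require the coarse comparison $\inf_y d(y,gy) \le \tau(g) + O(\delta)$, which the paper's route avoids. For the $10\delta$-quasiconvexity there is no substantive difference: you sketch the standard quasiconvexity-of-displacement argument and, like the paper (which cites Coulon \cite{coulon}), ultimately defer to the literature (\cite{DGO}) for the sharp constant, which is reasonable since the precise value $10\delta$ is exactly what the carefully chosen constants $20\delta$ and $\delta$ in the definition of $\textup{Ax}(g)$ are designed to produce.

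One caveat: the exact statements you assert along the way, namely $\inf_y d(y,gy) = \tau(g)$, $d(y,gy) = \tau(g)$ for every $y \in \alpha_g$, and hence $\alpha_g \subseteq Q$, are not justified and are false in general. A $(1,K_1)$-quasi-axis from Proposition \ref{P:quasi-axis} is only an invariant quasigeodesic; $g$ need not translate its arclength parameter by a constant, and even if it did, the quasigeodesic inequalities yield these claims only up to an additive error depending on $\delta$ (e.g. $d(y,gy) \le \tau(g) + O(\delta)$ and $\inf_y d(y,gy) \le \tau(g) + O(\delta)$). As you yourself note, the $O(\delta)$ versions are all that your argument uses: they suffice for the Gromov product bound in the inclusion, and in the quasiconvexity sketch only the coarse containment of $\alpha_g$ in a bounded neighbourhood of $\textup{Ax}(g)$ is needed. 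So nothing breaks, but the exact equalities should be replaced by their coarse counterparts, with the constants $K'$, $A$, $K$ adjusted accordingly.
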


\begin{proof}
Let $x$ be a point in $X$, and let $p$ be a nearest point on
$\alpha_g$ to $x$.  As we may assume that $\alpha_g$ is $g$-invariant,
$gp$ is a nearest point on $\alpha_g$ to $gx$, and
$d(p, gp) \ge \tau(g)$.  Given $\delta$, there are constants $A_1$ and
$K_1$ such that if $d(p, gp) \ge A_1$, then the union of the three
geodesic segments $[x, p], [p, gp]$ and $[gp, gx]$ is contained in a
bounded neighbourhood of a geodesic $[x, gx]$, and in particular,
\[ d(x, gx) \ge d(x, p) + d(p, gp) + d(gp, gx) - K_1.  \]
This is an elementary application of thin triangles, see for example
\cite[Proposition 2.3]{MT} for the geodesic case.  As the
quasigeodesics constants for the quasiaxis $\alpha_g$ only depend on
$\delta$, $A_1$ and $K_1$ may also be chosen to only depend on
$\delta$.  Therefore, if $d(x, p) \ge B_1 + \delta$ then $x$ does not
lie in $\textup{Ax}(x)$, so we may choose $A = A_1$ and
$K = K_1 + \delta$.

For the final statement, see for example Coulon \cite[Proposition 3.10]{coulon}.
\end{proof}

We also define, for $g$ and $h$ loxodromic,
$$\Delta(g, h) := \textup{diam } \left( N_{20\delta}(\textup{Ax}(g)) \cap N_{20 \delta}(\textup{Ax}(h)) \right)$$
where $N_R(Y)$ denotes the $R$ neighbourhood of the set $Y$ in $X$.

Recall that $E_G(h)$ is the maximal virtually cyclic subgroup
containing $h$, which is equal to the stabilizer of the endpoints
$\{ \lambda_h^-, \lambda_h^+ \} $ of $h$ in $\partial X$.  We now
record the following elementary property of $E_G(h)$, that the image
of this group in $X$ under the orbit map intersects any bounded set in
only finitely many points.

\begin{lemma}\label{P:bounded}
Let $G$ be a group acting on a Gromov hyperbolic space $X$ which
contains a loxodromic isometry $h$, and let $H$ be a subgroup of
$G$ which contains $\langle h \rangle$ as a finite index subgroup.
Then for any $x \in X$ and $K \ge 0$, there is an $N$ such that
$\setnorm{ Hx \cap B_K(x) } \le N$.
\end{lemma}

\begin{proof}
As $\langle h \rangle$ is a finite index subgroup of $H$, there is a
finite set of group elements $F$ such that $H$ is a finite union of
right cosets $\langle h \rangle f$, for $f \in F$.  In particular, any
element $g \in H$ may be written as $g = h^k f$, for some $k \in \N$
and $f \in F$.  By the triangle inequality,
$d(x, g x) \ge d(x, h^k x) - d(x, f x)$.  The distances
$d(x, f x)$ have an upper bound depending on $F$ and $x$, and
$d(x, h^k x) \ge k \tau(h)$, so there are only finitely many group
elements $g \in H$ with $d(x, g x) \le K$.
\end{proof}

Let $g$ be a loxodromic element in $G$.  We shall write $E_G^+(g)$ for
the orientation preserving subgroup of $E_G(g)$, i.e. the subgroup
which stabilizes $\lambda^+_g$ and $\lambda^-_g$ pointwise.  This
group is either equal to $E_G(g)$ or has index two in $E_G(g)$.  There
are elements $g$ with $E_G(g) = E^+_G(g)$, and in fact they are
generic.

\begin{corollary}\label{C:orientable}
Let $G$ be a group acting by isometries on a $\delta$-hyperbolic space
$X$, and let $\mu$ be a countable, non-elementary, bounded probability
distribution on $G$.  Then there are constants $B$ and $c < 1$ such
that the probability that $w_n$ is loxodromic with
$E_G(w_n) = E_G^+(w_n)$ is at least $1 - Bc^{n}$.
\end{corollary}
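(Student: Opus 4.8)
The plan is to argue by contrapositive: I will show that if $w_n$ is loxodromic but some element of $E_G(w_n)$ swaps the two fixed points $\lambda^+_{w_n}$ and $\lambda^-_{w_n}$, then $w_n$ carries a linear-size self-match, which is a rare event. First, by Theorem \ref{T:MT} there are constants $B$, $c<1$ and $\ell>0$ such that, outside an event of probability at most $Bc^n$, $w_n$ is loxodromic with $\tau(w_n)\ge \ell n$; I fix a $(1,K_1)$-quasi-axis $\alpha_{w_n}$ as in Proposition \ref{P:quasi-axis} and let $p$ be a nearest point projection of the basepoint $x$ onto $\alpha_{w_n}$, so that $[p,w_np]$ is a fundamental domain for $\langle w_n\rangle$ on $\alpha_{w_n}$ of length comparable to $\tau(w_n)$.

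Now suppose $h\in E_G(w_n)$ with $h\lambda^\pm_{w_n}=\lambda^\mp_{w_n}$. Since $h$ preserves the endpoint pair, $h\alpha_{w_n}$ stays within bounded Hausdorff distance of $\alpha_{w_n}$ (the Morse property for quasigeodesics with fixed endpoints on $\partial X$), and $h$ reverses its orientation; thus $h$ acts on $\alpha_{w_n}$ coarsely as a reflection $\alpha_{w_n}(t)\mapsto \alpha_{w_n}(2c-t)$ about some parameter $c$. The elements $w_n^kh$ ($k\in\Z$) also swap the endpoints, and a short computation shows that their reflection parameters form an arithmetic progression of step $\frac{1}{2}\tau(w_n)$; hence, after replacing $h$ by a suitable $w_n^kh$, I may assume that $c$ lies within $\frac{1}{4}\tau(w_n)$ of the midpoint of $[p,w_np]$. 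The reflection formula then shows that $h$ coarsely reverses a subsegment $I\subseteq[p,w_np]$ of length at least $\frac{1}{2}\tau(w_n)$ about its midpoint, so $h$ coarsely maps the first quarter of $I$ onto the \emph{disjoint} last quarter of $I$, up to an error $K$ depending only on $\delta$ (absorbing the quasigeodesic and Morse constants). Since $h\notin E_G^+(w_n)$ we have $h\neq 1$, so this is a genuine $(\frac{1}{8}\tau(w_n),K)$-self-match of $[p,w_np]$.

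On the generic event $\tau(w_n)\ge \ell n$ this is a self-match of size at least $\frac{1}{8}\ell n$, and by thin triangles (using that $[p,w_np]$ fellow-travels $\gamma_n=[x,w_nx]$, e.g.\ via Proposition \ref{P:match-axis}) it also yields an $(\frac{1}{8}\ell n,K')$-self-match of $\gamma_n$. By Corollary \ref{C:self-match}, applied with $\e=\frac{1}{8}$ and $K'\ge K_0$, the probability that $\gamma_n$ or $[p,w_np]$ has such a self-match is at most $Bc^{\sqrt n}$. Combining this with the exponentially small failure probability for loxodromicity gives that $w_n$ is loxodromic with $E_G(w_n)=E_G^+(w_n)$ with probability at least $1-B'c'^{\sqrt n}$, as required.

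I expect the main obstacle to be the geometric bookkeeping of the second paragraph: verifying that a flip, once its reflection point is pushed into $[p,w_np]$ by the correct power of $w_n$, really does force a linear-size self-match with disjoint matched pieces and with all errors absorbed into a constant depending only on $\delta$, and confirming that the resulting object satisfies the hypotheses of the self-match estimate in Corollary \ref{C:self-match}. The step where $w_n$ is loxodromic with linear translation length, and where $[p,w_np]$ fellow-travels $\gamma_n$, is routine from Theorem \ref{T:MT} and Proposition \ref{P:match-axis}.
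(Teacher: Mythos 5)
Your argument is essentially the paper's proof: an element of $E_G(w_n)\setminus E_G^+(w_n)$ reverses the orientation of $\alpha_{w_n}$, which (after translating by a power of $w_n$ to center the flip) forces a self-match of $[p,w_np]$ of size linear in $n$, and this is ruled out with probability $1-Bc^{\sqrt n}$ by Corollary \ref{C:self-match} together with the generic linear growth of $\tau(w_n)$. The only difference is that you spell out the reflection bookkeeping (arithmetic progression of flip centers, quarter-segments, the constant $1/8$ in place of the paper's $1/4$), which is harmless since Corollary \ref{C:self-match} applies for any fixed $\e>0$.
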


\begin{proof}
If $E^+_G(w_n)$ is index two in $E_G(w_n)$, then there is an element
$f$ which reverses the orientation of $\alpha_{w_n}$.  This gives an
$(\ell n / 4, K)$-self match of $[p, w_n p]$, where $\ell > 0$ is the
positive drift constant for $\mu$, and $K$ is the fellow travelling
constant from Proposition \ref{P:fellow travel2}.  However by
Corollary \ref{C:self-match}, there are constants $B$ and $c < 1$ such
that the probability that this occurs is at most $Bc^{n}$.
\end{proof}

An essential feature of asymmetric elements is the following.

\begin{proposition}\label{P:splits}
Given $\delta \ge 0$, there are constants $K$ and $L$ such that if $g$
is a WPD element of $G$ which is $(1, L, K)$-asymmetric, with
translation length $\tau(g) > 3 L + 2 K$, then there is a surjective
homomorphism $\phi \colon E^+_G(g) \to \Z$ with $\phi(g) = 1$.  In
particular, 
$$E^+_G(g) = \langle g \rangle \ltimes \ker \phi,$$ 
where
$\ker \phi$ is finite and consists precisely of the
elliptic elements of $E^+_G(g)$.
\end{proposition}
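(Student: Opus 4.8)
The plan is to build $\phi$ as the ``coarse translation number'' of an element of $E^+_G(g)$ along a quasi-axis $\alpha_g$ of $g$, and then use the asymmetry hypothesis to show this number is additive, takes the value $1$ on $g$, and has kernel exactly the elliptic elements. \textbf{Setup.} Since $g$ is WPD, Theorem \ref{T:bf} gives that $E_G(g)$ is virtually cyclic; its subgroup $E^+_G(g)$ (of index at most two) is therefore also virtually cyclic and contains $\langle g\rangle$ as a finite-index subgroup. Fix a $(1,K_1)$-quasi-axis $\alpha_g$, parameterised from $\lambda_g^-$ to $\lambda_g^+$, pick a point $p\in\alpha_g$, and set $q=gp$. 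I would take $L$ to be at least the Morse (fellow-travelling) constant for $(1,K_1)$-quasigeodesics in $X$, which depends only on $\delta$, and let $K$ be the associated asymmetry constant. For $h\in E^+_G(g)$ the quasigeodesic $h\alpha_g$ has the same endpoints $\lambda_g^\pm$ as $\alpha_g$, so $h\alpha_g\subset N_L(\alpha_g)$; in particular every $h[p,g^np]$ does. Applying $(1,L,K)$-asymmetry to the fundamental domain $[p,q]\subset\alpha_g$ produces an integer $i$ with $d(hp,g^ip)\le K$ and $d(hq,g^{i+1}p)\le K$. Since the points $g^jp$ are spaced at least $\tau(g)-O(\delta)$ apart along $\alpha_g$ and $\tau(g)>3L+2K$, such an $i$ is unique; set $\phi(h):=i$. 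Taking $h=g$ gives $\phi(g)=1$.

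\textbf{$\phi$ is a homomorphism.} The crucial step is to show that $h$ coarsely acts like $g^{\phi(h)}$ along \emph{all} of $\alpha_g$, namely $d(hg^kp,g^{\phi(h)+k}p)\le K$ for every $k\in\Z$. For this I would apply asymmetry to the long segments $[p,g^np]$ and $[g^{-n}p,p]$ (each of length $\ge \epsilon\, d(p,gp)$ with $\epsilon=1$): in each case the output index is pinned to $\phi(h)$ by the uniqueness established above (both applications control $hp$), so $d(hg^{\pm n}p, g^{\phi(h)\pm n}p)\le K$; letting $n$ vary gives the claim. Then, given $h_1,h_2\in E^+_G(g)$, from $d(h_2p,g^{\phi(h_2)}p)\le K$ together with the whole-axis estimate for $h_1$ at $k=\phi(h_2)$, applying the isometry $h_1$ yields $d(h_1h_2p,g^{\phi(h_1)+\phi(h_2)}p)\le 2K$; comparing with $d(h_1h_2p,g^{\phi(h_1h_2)}p)\le K$ and using $\tau(g)>3L+2K>3K+O(\delta)$, uniqueness forces $\phi(h_1h_2)=\phi(h_1)+\phi(h_2)$. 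Hence $\phi\colon E^+_G(g)\to\Z$ is a homomorphism, and it is onto because $1=\phi(g)$ lies in its image.

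\textbf{The kernel.} As the kernel of a homomorphism, $\ker\phi$ is normal. The map $\ker\phi\to E^+_G(g)/\langle g\rangle$, $h\mapsto h\langle g\rangle$, is injective: if $h_1\langle g\rangle=h_2\langle g\rangle$ with $h_i\in\ker\phi$ then $h_2^{-1}h_1=g^k$ and $0=\phi(g^k)=k$. Since $[E^+_G(g):\langle g\rangle]<\infty$, it follows that $\ker\phi$ is finite (equivalently, every $h\in\ker\phi$ lies in $\stab_K(p,g^Np)$ for all $N$, which is finite for $N$ large by WPD). Because $\phi(g)=1$, the section $n\mapsto g^n$ splits the sequence $1\to\ker\phi\to E^+_G(g)\to\Z\to1$ (second map $\phi$), giving $E^+_G(g)=\langle g\rangle\ltimes\ker\phi$. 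Finally, every element of the finite group $\ker\phi$ is torsion, hence has bounded orbit, hence is elliptic; conversely, if $\phi(h)=n\ne0$, iterating the whole-axis estimate gives $d(h^kp,g^{kn}p)\le kK$, so $d(p,h^kp)\ge k(|n|\tau(g)-K)-O(\delta)\to\infty$ and $h$ is not elliptic (indeed loxodromic). Thus $\ker\phi$ is exactly the set of elliptic elements of $E^+_G(g)$.

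\textbf{Main difficulty.} I expect the real content to be the whole-axis tracking estimate and the resulting additivity of $\phi$ — i.e. making precise that the coarse translation number behaves like an honest integer-valued function, which is exactly where the asymmetry hypothesis (rather than mere WPD) is used — whereas verifying that $\tau(g)>3L+2K$, with $L$ taken at least the fellow-travelling constant, clears all the uniqueness thresholds of the form ``$3K+O(\delta)$'' is a routine constant chase.
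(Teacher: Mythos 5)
Your argument is correct and takes essentially the same route as the paper: $\phi$ is the coarse translation index on the orbit $\{g^i p\}_{i\in\Z}$ supplied by $(1,L,K)$-asymmetry, the kernel is finite because the WPD property makes $E^+_G(g)$ virtually cyclic, and $\phi(g)=1$ yields the splitting, with your explicit additivity check simply filling in what the paper leaves implicit. The only cosmetic divergence is the last step: the paper shows elements with $\phi(f)\neq 0$ are loxodromic by applying Proposition \ref{P:concatenate} to the points $p$, $fp$, $f^2p$, whereas you iterate the whole-axis tracking estimate to get linear orbit growth and hence positive translation length --- both routine, up to the same sort of constant chase (e.g.\ your inequality $3L+2K>3K$ is not automatic for the $K$ produced from $L$ in Proposition \ref{P:asymmetric}, but the thresholds can be adjusted harmlessly, as the paper's own proof also tacitly does).
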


Note that the proposition is not true if one replaces $E^+_G(g)$ by
$E_G(g)$, as the latter may contain infinitely many elliptic elements
(think of the action of the infinite dihedral group on $\mathbb{Z}$).

\begin{proof}
Let $p$ be a point on a quasiaxis $\alpha_g$.  Let $L$ be the fellow
travelling constant from Proposition \ref{P:fellow travel2}.  The quasiaxis
$\alpha_g$ is $L$-coarsely preserved by $E^+_G(g)$.  As $g$ is
$(1, L, K)$-asymmetric, the set $\{ g^i p \colon i \in \Z\}$ is
$K$-coarsely preserved by $E^+_G(g)$.  As elements act by isometries,
this gives an action of $E^+_G(g)$ on $\Z$, defined as follows.  If
$f \in E^+_G(g)$, $\phi(f)$ sends $g^i p$ to the nearest $g^j p$ to
$f g^i p$.
As $g$ is WPD, the group $E^+_G(g)$ is
virtually cyclic, so $\ker \phi$ is finite.  The element
$g \in E^+_G(g)$ maps to $1 \in \Z$ and gives a splitting, so
$E_G^+(g) = \langle g \rangle \ltimes \ker \phi$.

As $\ker \phi$ is a finite subgroup of $G$, all elements of
$\ker \phi$ are elliptic.  If $\phi(f) \not = 0$, then as
$\tau(g) \ge 3 L + 2 K$, the three points $p, f p$ and $f^2 p$ satisfy
$d(p, fp) \ge 3 L$, $d(fp, f^2 p) \ge 3 L$ and
$\gp{f p}{p}{f^2 p} \le L$, and so $f$ is loxodromic by Proposition
\ref{P:concatenate}.
\end{proof}

Let $G_{\textup{WPD}}$ denote the set of WPD elements of $G$, and let
$H \le G$ be a subgroup of $G$ which contains an element of
$G_{\textup{WPD}}$.
Define
$$E^+_G( H ) := \bigcap_{g \in H \cap G_{\textup{WPD}}} E^+_G(g). $$
and an equivalent definition holds for $E_G(H)$. We will also use the notation $E(G) := E_G(G)$
when $G$ and $H$ are equal.

Recall that two elements $h_1, h_2$ of $G$ are \emph{commensurable} if
some power of $h_1$ is conjugate to some power of $h_2$, and
\emph{non-commensurable} otherwise.  The result below follows from the
arguments in \cite{DGO}*{Lemma 6.17}, but we give the details for the
convenience of the reader.

\begin{proposition}\label{P:h1-h2}
Let $G$ be a group acting by isometries on a Gromov hyperbolic space
$X$, and let $H$ be a non-elementary subgroup of $G$ which contains an
element of $G_{\textup{WPD}}$.  Then there exist two independent, WPD
elements $h_1$, $h_2$ in $H$ such that
$$E^+_G(h_1) \cap E^+_G(h_2) = E^+_G(H).$$
Moreover, for any $K \geq 0$ there exists an element $f$ in $H$ such
that for any $z \in \alpha_f$ one has
$$\textup{Stab}_K(z, fz) \subseteq E_G^+(H).$$
\end{proposition}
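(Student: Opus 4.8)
The plan is to follow \cite{DGO}*{Lemma 6.17}. Write $E^+:=E_G^+(H)$. First I would record that $H$ contains two \emph{independent} WPD elements: since $H$ is non-elementary it is not contained in the virtually cyclic group $E_G(h)$ of the given WPD element $h$, so a standard ping-pong argument (cf. \cite{bestvina-fujiwara}, \cite{DGO}) yields WPD elements $u,v\in H$ with disjoint fixed-point sets. For any WPD $g\in H$ the group $E_G^+(g)$ is virtually cyclic by Theorem \ref{T:bf}, and every element of infinite order in it is loxodromic with axis $\alpha_g$ (a power lies in $\langle g\rangle$), so its torsion elements form a finite subgroup $T_g$; moreover $E_G^+(g)\cap E_G^+(g')$ contains no loxodromic, hence is finite, whenever $g,g'$ are independent. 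In particular $E^+\subseteq E_G^+(u)\cap E_G^+(v)$ is finite. Starting from $F_0:=E_G^+(u)\cap E_G^+(v)$ and, as long as $F_k\neq E^+$, intersecting with $E_G^+(g)$ for a witness $g\in H\cap G_{\textup{WPD}}$ not containing $F_k$, the orders strictly decrease, so after finitely many steps I obtain WPD elements $b_1,\dots,b_m\in H$ with $\bigcap_{i=1}^m E_G^+(b_i)=E^+$.

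The heart of the argument is a combination/ping-pong construction. For $N$ large put $f:=b_1^Nb_2^N\cdots b_m^N$. By the standard combination theorem for high powers of loxodromic isometries --- assembled from Propositions \ref{P:concatenate}, \ref{P:fellow travel}, \ref{P:fellow travel2} (inserting a power of a fixed loxodromic between consecutive factors if needed to make them transverse) --- $f$ is loxodromic, for $z\in\alpha_f$ the fundamental domain $[z,fz]$ fellow-travels a sub-segment of $\alpha_{b_i}$ of length $\ge N\tau(b_i)-C$ for each $i$ in turn (within a constant $L_0$ depending only on $\delta$), and $f$ is WPD: for $p\in\alpha_f$ and $M$ large any element of $\stab_K(p,f^Mp)$ coarsely fixes a long sub-segment of $\alpha_{b_1}$, hence lies in the virtually cyclic group $E_G(b_1)$, of which only finitely many elements move $p$ and $f^M p$ by at most $K$ (Proposition \ref{P:axial}). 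Given $K\ge0$, let $L:=\max_i L_{b_i}(K')$, where $L_{b_i}(\cdot)$ is the function of Theorem \ref{T:bf} and $K'=K+2\delta+2L_0$, and take $N$ so large that each of the above sub-segments has length $\ge L$ and $\tau(f)>2K+O(\delta)$. If $g\in\stab_K(z,fz)$, then $g$ coarsely fixes the two endpoints of the long geodesic $[z,fz]$, so by Lemma \ref{P:fellow travel} it $(K+2\delta)$-coarsely fixes all of $[z,fz]$ and preserves its orientation; hence for each $i$ it $K'$-coarsely stabilizes, orientation-preservingly, a length-$\ge L$ sub-segment of $\alpha_{b_i}$, so $g\in E_G(b_i)$ by Theorem \ref{T:bf}, and in fact $g\in E_G^+(b_i)$. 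Therefore $g\in\bigcap_i E_G^+(b_i)=E^+$, which is the ``Moreover'' assertion.

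For the first assertion I would apply the same construction to two suitably different high-power products in $b_1,\dots,b_m$, say $h_1:=f$ and $h_2$ another such product chosen --- via the combination theorem, as in \cite{DGO} --- to be WPD and independent from $h_1$ while still having each $\alpha_{b_i}$ fellow-travel $\alpha_{h_2}$. For $j=1,2$, any torsion element $\kappa$ of $E_G^+(h_j)$ fixes $\lambda_{h_j}^+$ and $\lambda_{h_j}^-$ and has finite order, so it coarsely preserves $\alpha_{h_j}$ and acts on it by a coarse translation of amount $c$ with $|c|$ bounded in terms of $\delta$ alone (the order of $\kappa$ times $|c|$ is bounded); hence $\kappa$ coarsely stabilizes, orientation-preservingly, a long sub-segment of every $\alpha_{b_i}$, so $\kappa\in\bigcap_i E_G^+(b_i)=E^+$. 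Thus the torsion subgroup of $E_G^+(h_j)$ equals $E^+$. Since $h_1,h_2$ are independent, $E_G^+(h_1)\cap E_G^+(h_2)$ contains no loxodromic, hence is a torsion subgroup of $E_G^+(h_1)$, so is contained in $E^+$; as $E^+\subseteq E_G^+(h_1)\cap E_G^+(h_2)$ always, we get $E_G^+(h_1)\cap E_G^+(h_2)=E_G^+(H)$.

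The main obstacle is the combination theorem with \emph{uniform} constants --- that high powers of pairwise transverse loxodromics multiply to a loxodromic, WPD element whose axis fellow-travels the axes of the factors in order, and that two different high-power words can be arranged to be independent. All the ingredients are present in the excerpt (Propositions \ref{P:concatenate}, \ref{P:fellow travel}, \ref{P:fellow travel2}, Theorem \ref{T:bf}, Proposition \ref{P:axial}); the real care is needed in the order of quantifiers --- the fellow-travelling error $L_0$ must be fixed first, then fed into the constant $L$ of Theorem \ref{T:bf}, and only afterwards may $N$ be chosen large.
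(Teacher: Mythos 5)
Your overall strategy is genuinely different from the paper's, and it contains a real gap at its central step. The paper does not try to build the pair $h_1,h_2$ geometrically: it takes two non-commensurable WPD elements from \cite{DGO}*{Corollary 6.12} and then runs a purely algebraic argument in the normalizer of $H$ (using B.~H.~Neumann's theorems on FC-groups \cite{neumann} and on groups covered by finitely many cosets \cite{neumann2}) to show that \emph{some conjugate} $xh_1x^{-1}$ satisfies $E^+_G(xh_1x^{-1})\cap E^+_G(h_2)=E^+_G(H)$. Your proposal replaces this by the claim that for $f=b_1^N\cdots b_m^N$ the axis $\alpha_f$ fellow-travels a long subsegment of \emph{every} $\alpha_{b_i}$ itself, so that coarse stabilizers (and torsion of $E^+_G(h_j)$) land in $\bigcap_i E^+_G(b_i)=E^+_G(H)$. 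That claim is false for $m\ge 3$: one period of $\alpha_f$ fellow-travels long pieces of the \emph{translates} $w_i\alpha_{b_i}$ with $w_i=b_1^N\cdots b_{i-1}^N$, and using the $f$-invariance of the axis you can arrange at most two of these (the first factor, and the last factor from the preceding period, since $f^{-1}w_m\alpha_{b_m}=b_m^{-N}\alpha_{b_m}=\alpha_{b_m}$) to appear untranslated. So what your argument actually yields is $\stab_K(z,fz)\subseteq\bigcap_i w_iE^+_G(b_i)w_i^{-1}$, and likewise for the torsion of $E^+_G(h_j)$; this intersection of conjugates contains $E^+_G(H)$ but need not equal it, since $E^+_G(H)$ is only known to be the intersection over \emph{all} WPD elements of $H$. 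Controlling exactly such conjugated intersections is the hard point, and it is what the paper's Neumann-type covering argument is designed to handle; your construction silently assumes the specific conjugates produced by the ping-pong product are already ``good''. (The $m=2$ instance of your geometric argument is essentially the paper's own proof of the ``moreover'' statement, where the translate issue does not arise because $h_1$ and $h_2$ are consecutive factors; one could imagine repairing your scheme by combining the $b_i$ pairwise and inductively, but that is not what is written.)

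Two secondary problems. First, your first assertion needs $h_1,h_2$ to be WPD (both for the statement and to know $E^+_G(h_j)$ is virtually cyclic and contains $E^+_G(H)$), and your one-line WPD argument for the product has a quantifier defect: the WPD property must hold for \emph{every} $K$, but once $N$ is fixed the overlap of $[p,f^Mp]$ with a translate of $\alpha_{b_1}$ per period has fixed length about $N\tau(b_1)$, so for large $K$ the constant $L$ of Theorem \ref{T:bf} (or the power $M_W(K)$ in the WPD condition for $b_1$) exceeds what that overlap provides; the paper avoids this entirely because its $f$ only needs to be loxodromic, and its WPD pair comes ready-made from \cite{DGO}. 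Second, the parenthetical ``insert a power of a fixed loxodromic between consecutive factors'' is doing real work when the witnesses $b_i$ are not pairwise independent, and the existence of a single spacer transverse to all $b_i$, together with the uniform ping-pong constants, would need to be argued rather than asserted. Your opening reduction (finiteness of $E^+_G(u)\cap E^+_G(v)$ and the terminating greedy intersection giving $\bigcap_{i=1}^m E^+_G(b_i)=E^+_G(H)$) is fine, but the passage from the finite family $\{b_i\}$ to a \emph{pair} $h_1,h_2$ is precisely the content of the proposition, and as written your proof of that passage does not go through.
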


\begin{proof}

By \cite[Corollary 6.12]{DGO}, there exist two non-commensurable,
loxodromic, WPD elements $h_1$, $h_2$ in $H$ (pick $h_1$ as one such
element, then apply Corollary 6.12 with the subgroup called $G$ in
Corollary 6.12 chosen to be $H$, the subgroup called $H$ in the
Corollary 6.12 chosen to be $E_G(h_1)$ and
$a \in H \setminus E_G(h_1)$).  Let $N$ be the normalizer of $H$ in
$G$, i.e.
$$N := \{ x \in G \ : \ x H x^{-1} = H \}$$
which contains the group $H$. 
Denote as $T(h_i)$ the set of finite order elements in $E_G^+(h_i)$.
In $E_G^+(h_i)$ every conjugacy class is finite (since all conjugate
elements have equal translation length), so a result of Neumann
\cite{neumann} then implies that the set $T(h_i)$ of finite order
elements is a finite group.
Let us suppose that for any $x \in N$ we have
$$E^+_G(x h_1 x^{-1}) \cap E^+_G(h_2) \neq E^+_G(H).$$
Note moreover that 
$$E^+_G(x h_1 x^{-1}) \cap E^+_G(h_2) = x T(h_1) x^{-1} \cap T(h_2).$$
Given $(s, t) \in P:= T(h_1) \times (T(h_2) \setminus E^+(H))$, we pick
$y \in N$ such $y s y^{-1} = t$, if it exists, and $y(s, t) = 1$
otherwise.  Let $C_N(t)$ be the centralizer of $t$ in $N$.  Now, we
claim that
$$N = \bigcup_{(s, t)\in P} y(s, t) C_N(t).$$
Indeed, let $x \in N$. Then since
$x T(h_1) x^{-1} \cap T(h_2) \neq E_G^+(H)$, then there exists
$s \in T(h_1)$ and $t \in T(h_2) \setminus E^+(H)$ such that
$s = x^{-1} t x \in T(h_1)$. Thus if $y = y(s, t)$ then
$s = x^{-1} t x = y^{-1} t y$, so $x y^{-1} \in C_N(t)$.  This means
that there is a finite collection of cosets of the subgroups $C_N(t)$,
with $t \in T(h_2) \setminus E^+(H)$, which covers $N$, and a theorem
of Neumann \cite{neumann2} then implies that at least one of these
subgroups has finite index in $N$.  Therefore, there is a
$t \in T(h_2) \setminus E_G^+(H)$ such that $C_N(t)$ has finite index
in $N$. Hence, if $h \in N$ is a WPD element, then there exists
$k > 0$ such that $h^k t = t h^k$, hence $t \in E^+_G(h)$. Thus,
$t \in E^+_G(N) \subseteq E^+_G(H)$, which is a contradiction.
Finally, let us note that the claim implies that $h_1$ and $h_2$ are
independent. In fact, as both $h_1$ and $h_2$ are WPD, the fixed point
sets of $h_1$ and $h_2$ cannot have a common point.  This is because
in this case both $h_1$ and $h_2$ would coarsely stabilize a large
segment of the quasiaxis of $h_1$, which by Theorem \ref{T:bf}, would imply
that $E^+_G(h_1) = E^+_G(h_2)$, contradicting the non-commensurability
of $h_1$ and $h_2$.

We now prove the second claim. As $h_1$ and $h_2$ are independent
loxodromic isometries, the ping-pong lemma implies that for any
$n > 0$ sufficiently large, the orbit map gives a quasi-isometric
embedding of the free group $\langle h_1^n, h_2^n \rangle$ in $X$.  In
particular, for all $m > 0$, the element
$f := h_1^{nm} h_2^{nm}$ is loxodromic.

Fix some $K \ge 0$, and let $L_1$ be the fellow travelling constant
for $(1, K_1)$-quasigeodesics from Proposition \ref{P:fellow
  travel3}. Let $L_2$ be the constant given by Theorem \ref{T:bf}
using the constant $K + 2 \delta + L_1$.  We may choose $m$
sufficiently large so that there are two segments
$\eta_1 \subseteq \alpha_{h_1}$ and $\eta_2 \subseteq \alpha_{h_2}$ of
length $\geq L_2$, and a segment $\eta \subseteq \alpha_f$ such that
$$\eta_1 \cup \eta_2 \subseteq N_{L_1}(\eta). $$
Thus, if $h$ belongs to $\stab_K(z, fz)$, then for some
$k \in \mathbb{Z}$ the isometry $f^k h f^{-k}$ $(K+2\delta)$-coarsely
stabilizes the segment $\eta$, hence it also
$(K + 2\delta + L_1)$-coarsely stabilizes both $\eta_1$ and $\eta_2$,
and preserves the orientation of the axes.  Then by Theorem \ref{T:bf}
it is contained in
$$E^+_G(h_1) \cap E^+_G(h_2) = E^+_G(H).$$
Thus, $h$ belongs to
$f^{-k} E^+_G(H) f^{k} = E^+_G(H)$, as required.
\end{proof}

From now on we shall assume that the probability distribution $\mu$ is
reversible, so $\Gamma_\mu$ is a group. We will use the notation $E_\mu :=  E_G^+(\Gamma_\mu)$.

\begin{corollary}\label{C:Eplus}
Given $\delta \ge 0$ there are constants $K$ and $L$ with the
following properties.  Let $G$ be a group acting by isometries on a
$\delta-$hyperbolic space $X$, and let $\mu$ be a countable,
non-elementary, reversible, bounded, WPD probability distribution on
$G$.  Then there are constants $B$ and $c < 1$ such that the
probability that $w_n$ is loxodromic, $(1, L, K)$-asymmetric, WPD with
$$E_G(w_n) = E_G^+(w_n) =  \langle w_n \rangle \ltimes E_\mu$$ 
is at least $1 - Bc^{n}$.  In particular, if
$E_\mu$ is trivial, then $E_G(w_n)$ is cyclic with
probability at least $1 - Bc^{n}$.
\end{corollary}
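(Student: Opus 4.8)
The plan is to combine the generic properties already established with the structure theorem for asymmetric elements, and then to identify the torsion part of $E_G(w_n)$ with $E_\mu$. Fix $\delta$, let $L$ be the fellow-travelling constant appearing in Proposition \ref{P:splits}, and let $K$ be the asymmetry constant produced by Proposition \ref{P:asymmetric} applied with $\e=1$ and this $L$ (enlarging $K$ if necessary). By Corollary \ref{C:orientable}, Theorem \ref{T:genericWPD}, Proposition \ref{P:asymmetric} and Theorem \ref{T:MT}, outside an event of probability at most $Bc^{\sqrt n}$ the element $w_n$ is loxodromic, WPD, $(1,L,K)$-asymmetric, satisfies $E_G(w_n)=E_G^+(w_n)$, and has $\tau(w_n)>3L+2K$; on this event Proposition \ref{P:splits} applies and yields a surjection $\phi\colon E_G^+(w_n)\to\Z$ with $\phi(w_n)=1$, so that $E_G(w_n)=E_G^+(w_n)=\langle w_n\rangle\ltimes\ker\phi$ with $\ker\phi$ finite and equal to the set of elliptic elements of $E_G^+(w_n)$. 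It therefore suffices to show that, with the same probability, $\ker\phi=E_\mu$.

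The inclusion $E_\mu\subseteq\ker\phi$ is immediate on this event: since $w_n\in\Gamma_\mu$ is WPD, the definition $E_\mu=E_G^+(\Gamma_\mu)=\bigcap_{g\in\Gamma_\mu\cap G_{\textup{WPD}}}E_G^+(g)$ gives $E_\mu\subseteq E_G^+(w_n)$, and $E_\mu$ is finite — by Proposition \ref{P:h1-h2} it equals $E_G^+(h_1)\cap E_G^+(h_2)$ for independent WPD elements $h_1,h_2$, hence is finite because each $E_G^+(h_i)$ is virtually cyclic — so every element of $E_\mu$ is elliptic and thus lies in $\ker\phi$.

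For the reverse inclusion, fix $\sigma\in\ker\phi$. Since $\phi(\sigma)=0$ and $w_n$ is $(1,L,K)$-asymmetric, $\sigma$ moves each orbit point $w_n^ip$ on the axis by at most $K$, and fellow-travelling of the quasigeodesics $\alpha_{w_n}$ and $\sigma\alpha_{w_n}$ then gives a constant $K'$, depending only on $\delta$ and $K$, with $d(q,\sigma q)\le K'$ for all $q\in\alpha_{w_n}$. Let $f\in\Gamma_\mu$ be the loxodromic element furnished by Proposition \ref{P:h1-h2} for the constant $K'+O(\delta)$, so that $\stab_{K'+O(\delta)}(z,fz)\subseteq E_\mu$ for every $z\in\alpha_f$, and fix an integer $j$ with $j\,\tau(f)$ larger than all relevant Morse and fellow-travelling constants. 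Writing $f^{j}=s_1\cdots s_N$ with $s_i\in\textup{supp}(\mu)$, each disjoint block of $N$ consecutive increments spells $f^{j}$ with probability at least $p_0:=\prod_i\mu(s_i)>0$; combining this with linear progress and exponential decay of the Gromov products $\gp{w_ax}{x}{w_nx}$ (Propositions \ref{L:progress-decay} and \ref{P:gp}), carried out in the bi-infinite step space by the argument used in the proof of Proposition \ref{P:match-use} (with Proposition \ref{P:sqrtn}), one obtains that, outside an event of probability $Bc^{\sqrt n}$, there is an index $a$ with $w_{a+N}=w_af^{j}$ and with $[w_ax,w_{a+N}x]$ contained in a bounded neighbourhood of $\alpha_{w_n}$. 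Put $k:=w_a\in\Gamma_\mu$; then the quasi-axis $k\alpha_f=\alpha_{kfk^{-1}}$ fellow-travels a subsegment of $\alpha_{w_n}$ of length $\ge j\,\tau(f)$, and since $\sigma$ coarsely fixes $\alpha_{w_n}$ it coarsely fixes that subsegment, so conjugating by $k^{-1}$ we find a point $z\in\alpha_f$ with $k^{-1}\sigma k\in\stab_{K'+O(\delta)}(z,fz)\subseteq E_\mu$. As $E_\mu$ is normal in $\Gamma_\mu$ and $k\in\Gamma_\mu$, we conclude $\sigma=k(k^{-1}\sigma k)k^{-1}\in E_\mu$.

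Intersecting the finitely many exceptional events above yields $\mathbb{P}\bigl(E_G(w_n)=E_G^+(w_n)=\langle w_n\rangle\ltimes E_\mu\bigr)\ge 1-Bc^{\sqrt n}$, and the final assertion follows since $E_\mu=\{1\}$ forces $E_G(w_n)=\langle w_n\rangle$ on this event. The genuinely delicate point is the reverse inclusion $\ker\phi\subseteq E_\mu$ — and within it, arranging that the translate of $\alpha_f$ shadowing $\alpha_{w_n}$ comes from a group element $k\in\Gamma_\mu$, rather than an arbitrary element of $G$ as Proposition \ref{P:match-use} by itself would give, so that normality of $E_\mu$ in $\Gamma_\mu$ can be used to pass from $k^{-1}\sigma k\in E_\mu$ to $\sigma\in E_\mu$; everything else is bookkeeping of the exceptional probabilities.
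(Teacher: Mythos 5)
Your proof is correct in outline and follows essentially the same skeleton as the paper's: on a generic event ($w_n$ loxodromic, WPD, $(1,L,K)$-asymmetric, $E_G(w_n)=E_G^+(w_n)$, translation length large) you apply Proposition \ref{P:splits} to get $E_G^+(w_n)=\langle w_n\rangle\ltimes\ker\phi$ with $\ker\phi$ the finite set of elliptic elements, and then identify $\ker\phi$ with $E_\mu$ using the element $f$ from Proposition \ref{P:h1-h2} together with a match between $\alpha_{w_n}$ and a translate of $\alpha_f$. The one place where you genuinely diverge is exactly the point you flag: arranging that the translating element lies in $\Gamma_\mu$. The paper simply invokes Proposition \ref{P:match-use} and writes $\ker\phi\subseteq\stab_K(p,w_np)\subseteq g\,\stab_{K'}(z,fz)\,g^{-1}\subseteq E_\mu$, implicitly using that the match produced there is with $w_i\alpha_f$ for a location $w_i$ of the walk, hence a $\Gamma_\mu$-translate, so that conjugation preserves $E_\mu$ (this is made explicit in the proof of Proposition \ref{P:Delta-small}, where the translate is recorded as $g\alpha_f$ with $g\in\Gamma_\mu$). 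Your alternative --- spelling $f^{j}$ by a block of increments so that the conjugator is literally some $w_a\in\Gamma_\mu$ --- does work and is arguably more careful, since the statement of Proposition \ref{P:match-use} only promises a $G$-translate; but as written it cannot cite Proposition \ref{P:sqrtn} verbatim: there the set $A$ must be of the form $\partial^{-1}(V_+\times V_-)$ for boundary neighbourhoods, whereas your good event additionally pins down a fixed finite block of increments. The repair is routine (the block condition depends on finitely many coordinates, so the independence of the shifted sets $U_k^c$ in the proof of Proposition \ref{P:sqrtn} survives, and the relevant measure is only multiplied by the positive spelling probability $p_0$), but it should be stated as a variant rather than a citation; alternatively the spelling can be dropped entirely by noting that the proof of Proposition \ref{P:match-use} already supplies $k=w_i\in\Gamma_\mu$. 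The remaining steps --- $E_\mu\subseteq\ker\phi$ via finiteness of $E_\mu$ (note finiteness uses the independence of $h_1,h_2$, not merely virtual cyclicity of each $E_G^+(h_i)$), propagation of the $K$-coarse fixing from the orbit points $w_n^{i}p$ to all of $\alpha_{w_n}$, and normality of $E_\mu$ in $\Gamma_\mu$ --- are correct and match the intended argument.
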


\begin{proof}
We are left with proving the last claim. By Proposition
\ref{P:asymmetric}, we know that there are constants $B_1$ and
$c_1 < 1$ such that the probability that $w_n$ is
$(1, L, K)$-asymmetric is at least $1 - B_1 c_1^{n}$, hence
$$E_G^+(w_n) = \langle w_n \rangle \ltimes \ker \phi$$
where $\phi : E_G^+ \to \mathbb{Z}$ is the homomorphism given in
Proposition \ref{P:splits}.  Now, since $w_n$ is asymmetric, we have
that $\ker \phi$ is the (finite) set of elliptic elements in
$E_G^+(w_n)$, hence it is contained in $\stab_K(p, w_n p)$ where $p$
is some point on the quasiaxis of $w_n$.  Let $f \in \Gamma_\mu$ be given
by Proposition \ref{P:h1-h2}.  By Proposition \ref{P:match-use}, there
are constants $B_2$ and $c_2 < 1$ such that the probability the quasiaxis
of $w_n$ has a $(L, K)$-match with a translate of the quasiaxis of $f$ is
at least $1 - B_2 c_2^{n}$.  Therefore, for
$K' = 2 K + 2 \delta$ we get for some $z \in \alpha_f$
$$\ker \phi \subseteq \stab_K(p, w_n p) \subseteq g \stab_{K'}(z, fz)
g^{-1}\subseteq E^+_G(\Gamma_\mu) = E_\mu. $$
The result then holds for suitable choices of $B$ and $c < 1$.
\end{proof}

Given $g \in G$ a loxodromic element, let us define the \emph{fellow travelling constant} for $g$ as
$$\Delta(g) := \sup_{h \in G \setminus E(g)} \Delta(g, hgh^{-1})$$
where $E(g)$ is the maximal elementary subgroup which contains $g$. 

\begin{definition}(\cite{DGO}*{Definition 6.25}) %
Let $X$ be a $\delta$-hyperbolic space with $\delta > 0$, and let
$\mathcal{R}$ be a family of loxodromic isometries of $X$ which is
closed under conjugation. Then we say that $\mathcal{R}$ satisfies the
$(A, \epsilon)$-small cancellation condition if the following holds:
\begin{enumerate}

\item $\textup{inj}(\mathcal{R}) \geq A \delta$

\item $\Delta(g, h) \leq \epsilon \cdot \textup{inj}(\mathcal{R})$ for
all $g \neq h^{\pm 1} \in \mathcal{R}.$

\end{enumerate}

\end{definition}

We will now prove that the cyclic subgroup generated by a power of
$w_n$ satisfies the small cancellation condition.  First of all, we
show that the fellow travelling constant between translates of the
quasiaxis is sublinear in $n$.

\begin{proposition} \label{P:Delta-small} %
Let $G$ be a group of isometries of a $\delta$-hyperbolic metric space
$X$, and $\mu$ a countable, non-elementary, reversible, bounded, WPD
probability measure on $G$.  Let $\ell > 0$ be the drift of the random
walk.  Then for any $0 < \epsilon < 1$, there are constants $B$ and
$c < 1$ such that for all $n$ the fellow travelling constant of $w_n$ satisfies 
$$\mathbb{P}(\Delta(w_n) \geq \epsilon \ell n) \le B c^{n}.$$
\end{proposition}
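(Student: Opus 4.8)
The plan is to argue by contradiction on a good event of probability $1-Bc^{\sqrt n}$. Suppose $\Delta(w_n)\ge \epsilon\ell n$; unwinding the definition, there is $h\in G\setminus E(w_n)$ (so in particular $h\neq 1$) such that the $20\delta$-neighbourhoods of $\textup{Ax}(w_n)$ and of $\textup{Ax}(hw_nh^{-1})=h\,\textup{Ax}(w_n)$ overlap in a set of diameter $\ge\epsilon\ell n$. First I would restrict to the event $\mathcal G_n$ on which simultaneously: $w_n$ is loxodromic with $(\ell-\epsilon')n\le \tau(w_n)$ and $\tau(w_n)\ge A$, so that Proposition \ref{P:ax} applies and $\textup{Ax}(w_n)\subseteq N_K(\alpha_{w_n})$ (Theorems \ref{T:MT}, \ref{T:growth}); $w_n$ is $(\epsilon'',L,K)$-asymmetric for a suitably small $\epsilon''$ (Proposition \ref{P:asymmetric}); $E_G(w_n)=E_G^+(w_n)$ (Corollary \ref{C:orientable}); neither $[x,w_nx]$ nor $[p,w_np]$ has an $(\epsilon''' n,K)$-self match (Corollary \ref{C:self-match}); and every subsegment of $\alpha_{w_n}$ of length $\ge\epsilon''' n$ has a $(\,\cdot\,,K)$-match with a $\Gamma_\mu$-translate $g\alpha_f$ of the axis of the auxiliary element $f\in\Gamma_\mu$ furnished by Proposition \ref{P:h1-h2}, for which $\stab_{K'}(z,fz)\subseteq E_\mu$ whenever $z\in\alpha_f$ (this combines Proposition \ref{P:match-use}, Lemma \ref{L:match2} and Proposition \ref{P:h1-h2}, exactly as in the proof of Corollary \ref{C:Eplus}; note the matching translate may be taken of the form $w_ig'$ with $g'\in\Gamma_\mu$, hence in $\Gamma_\mu$). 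By Theorems \ref{T:genericWPD}, \ref{T:asymp-acyl} and the cited results, $\mathbb P(\mathcal G_n)\ge 1-Bc^{\sqrt n}$.

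On $\mathcal G_n$, Proposition \ref{P:ax} and the Morse property turn the overlap into a subsegment $\beta\subseteq\alpha_{w_n}$ of length $\ge\epsilon\ell n-O(\delta+K)$ together with a subsegment $\beta^*\subseteq\alpha_{w_n}$ such that $h\beta^*$ lies within bounded distance of $\beta$. Since $\alpha_{w_n}$ is $w_n$-invariant, translating by a power of $w_n$ (and replacing $h$ by a $w_n$-conjugate, which stays nontrivial and outside $E(w_n)$) I may assume $\beta$ lies in the fundamental segment $[p,w_np]$, at the cost of shrinking its length by a factor depending only on $\epsilon$. Now a somewhat intricate but elementary case analysis on the orientation of the correspondence $h\beta^*\to\beta$ and on the offset between $\beta$ and $\beta^*$ along $\alpha_{w_n}$ shows that one of the following occurs. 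Either one exhibits two disjoint subsegments of $\alpha_{w_n}$ of length linear in $n$ related by $h$ or $h^{-1}$ (this covers the case where $\beta$ and $\beta^*$ are essentially disjoint, and the orientation-reversing case with substantial overlap, using in the latter that $E_G(w_n)=E_G^+(w_n)$ to rule out genuine reflections); translating back into $[p,w_np]$ or $[x,w_nx]$ and passing to subsegments, this yields a self match of $[x,w_nx]$ or $[p,w_np]$ of size linear in $n$, contradicting the definition of $\mathcal G_n$ via Corollary \ref{C:self-match}. Or, in the orientation-preserving case with bounded offset, composing $h$ with an appropriate power $w_n^j$ and applying the asymmetry of $w_n$ (Proposition \ref{P:asymmetric}) to absorb the residual offset into a further power of $w_n$, one produces a nontrivial element $\psi\in\stab_{K''}(a,b)$, where $a,b$ are points of $\alpha_{w_n}$ with $d(a,b)$ linear in $n$; here $\psi$ is nontrivial precisely because $h\notin E(w_n)$.

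In the second alternative, the subsegment $[a,b]$ of $\alpha_{w_n}$ has length linear in $n$, so by the defining property of $\mathcal G_n$ it has a $(\,\cdot\,,K)$-match with a $\Gamma_\mu$-translate $g\alpha_f$ over a stretch longer than one period of $\alpha_f$. Since $\psi$ coarsely stabilises $[a,b]$, it coarsely stabilises a period of $g\alpha_f$; conjugating by $g^{-1}\in\Gamma_\mu$ and applying Proposition \ref{P:h1-h2} gives $g^{-1}\psi g\in\stab_{K'}(z,fz)\subseteq E_\mu$, hence $\psi\in gE_\mu g^{-1}=E_\mu$ (using that $\Gamma_\mu$ normalises $E_\mu$). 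Unwinding the construction of $\psi$, this forces $h\in\langle w_n\rangle\,E_\mu\subseteq E_G^+(w_n)=E_G(w_n)=E(w_n)$, contradicting the choice of $h$. Thus on $\mathcal G_n$ we always reach a contradiction, so $\{\Delta(w_n)\ge\epsilon\ell n\}$ is contained in the complement of $\mathcal G_n$ together with the self-match events excluded above, all of probability $\le Bc^{\sqrt n}$; adjusting constants gives $\mathbb P(\Delta(w_n)\ge\epsilon\ell n)\le Bc^{\sqrt n}$.

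The hard part will be the geometric bookkeeping of the second paragraph: passing cleanly from ``the neighbourhoods of the two quasi-axes overlap over a length $\ge\epsilon\ell n$'' to the dichotomy ``linear self match, or $h\in E_G^+(w_n)$'', with all constants independent of $n$. Two points require care: (i) the orientation/offset case analysis, especially excluding the orientation-reversing configuration in which $\beta$ and $\beta^*$ overlap substantially; and (ii) avoiding the version of the Bestvina--Fujiwara result (Theorem \ref{T:bf}) whose constant $L$ depends on the element itself --- which is precisely why the argument must be routed through the asymmetry of $w_n$ and through matching with the fixed auxiliary axis $\alpha_f$, as in the proof of Corollary \ref{C:Eplus}, rather than applied directly to $w_n$.
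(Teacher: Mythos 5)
Your proposal is, at its core, the paper's own proof: pass from the overlap of $N_{20\delta}(\textup{Ax}(w_n))$ and $h\,N_{20\delta}(\textup{Ax}(w_n))$ to a subsegment of $\alpha_{w_n}$ of length comparable to $\epsilon\ell n$ carried by $h$ into a bounded neighbourhood of $\alpha_{w_n}$ (Proposition \ref{P:ax}), use asymmetry to replace $h$ by an element coarsely fixing the endpoints of a long subsegment of the axis, match that subsegment with a $\Gamma_\mu$-translate $g\alpha_f$ of the auxiliary axis furnished by Proposition \ref{P:h1-h2} (via Lemma \ref{L:match2} and Proposition \ref{P:match-axis}), deduce that this element lies in $g\,\textup{Stab}_{K'}(z,fz)\,g^{-1}\subseteq gE_\mu g^{-1}=E_\mu\subseteq E_G^+(w_n)\subseteq E_G(w_n)$, and contradict $h\notin E(w_n)$; your attention to taking $g\in\Gamma_\mu$ so that $g E_\mu g^{-1}=E_\mu$ matches the paper exactly. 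The one real divergence is the interposed orientation/offset case analysis that you yourself flag as the hard part and leave as a sketch, and it is unnecessary: Definition \ref{D:asym} quantifies over \emph{every} group element and \emph{every} subsegment $[p,q]\subset\alpha_{w_n}$ of length at least $\epsilon\, d(p,w_np)$, with no hypothesis on orientation or on how the image sits along the axis, so once $w_n$ is $(\epsilon,L,K)$-asymmetric you obtain in one step an $i$ with $d(hp,w_n^ip)\le K$ and $d(hq,w_n^iq)\le K$ (in particular, an orientation-reversing configuration of a long segment is simply incompatible with asymmetry). The self-match and orientation bookkeeping you propose to redo is exactly what is already internal to the proofs of Proposition \ref{P:asymmetric} and Corollary \ref{C:orientable}, so Corollary \ref{C:self-match} is not needed here and the ``hard part'' evaporates; your extra branch is not wrong, but as written it is only a plan, and the paper's direct application of asymmetry makes it moot. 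Two small points worth making explicit in a final write-up (both only lightly signposted in the paper too): the containment $E_\mu\subseteq E_G^+(w_n)$ uses that $w_n$ is itself WPD, which is where Theorem \ref{T:genericWPD} enters the good event, and after translating the matched segment into a fundamental domain the element $h$ is only modified by powers of $w_n$, which lie in $E(w_n)$, so the final contradiction is unaffected.
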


\begin{proof}
By Proposition \ref{P:ax}, there is an $L$ such that
$N_{20 \delta}(\textup{Ax}(w_n)) \subset N_{L/2}(\alpha_{w_n})$.
Therefore, if $\Delta(w_n) \ge \e \ell n$, there is a translate
$h \alpha_{w_n}$, with $h \not \in E(w_n)$, such that $\alpha_{w_n}$
and $h \alpha_{w_n}$ have a $(\e \ell n, L)$-match.
This by definition means that there is a segment
$\eta = [p, q] \subseteq \alpha_{w_n}$ with $\norm{\eta}$ equal to
$\e \ell n$, such that $h \eta$ is contained in an $L$-neighbourhood
of $\alpha_{w_n}$. By replacing $\eta$ with $w_n^i \eta$ for some $i \in \mathbb{Z}$ 
and replacing $\epsilon$ by $\epsilon /2$, we can assume that $\eta \subseteq [x', w_n x']$
where $x'$ is a nearest point projection of the basepoint $x$ to $\alpha_{w_n}$.

By Proposition \ref{P:asymmetric}, there are constants $B_1$ and
$c_1 < 1$ such that the element $w_n$ is $(\epsilon, L, K)$-asymmetric
with probability at least $1 - B_1 c_1^{n}$.  Thus there is a
$K$, depending on $\e$ and $L$, such that up to replacing $h$ by
$w_n^j h$ for some $j \in \mathbb{Z}$, we may assume that
$d(p, hp) \le K$ and $d(q, hq) \le K$.

Let $f$ be given as in the second part of Proposition \ref{P:h1-h2}.
As $[p, q]$ has length $\e \ell n$ and is contained in $[x', w_n x']$,
by Lemma \ref{L:match2} there are constants $B_2$ and $c_2 < 1$ such
that the probability that it contains a match with a large subsegment
of a translate $g \alpha_f$ of a quasiaxis $\alpha_f$ (where
$g \in \Gamma_\mu$)
is at least $1 - B_2 c_2^{n}$.

As $h$ $K$-coarsely stabilizes this subsegment,  
this implies that there exists $z \in \alpha_f$ such that by Proposition \ref{P:h1-h2},
$$h \in \stab_K(gz , g f z) =  g \stab_K(z , f z) g^{-1} \subseteq g E_G^+(\Gamma_\mu) g^{-1} = E_G^+(\Gamma_\mu), $$
hence, since by construction $E_G^+(\Gamma_\mu) \subseteq E^+_G(w_n)$
and, by Corollary \ref{C:orientable}, there are constants $B_3$ and
$c_3 < 1$ such that the probability that $E^+_G(w_n)= E_G(w_n)$ is at
least $1 - B_3 c_3^{n}$.  Therefore, by suitable choices of $B$
and $c < 1$, any such $h$ must lie in $ E_G(w_n)$ with probability at
least $1 - B c^{n}$.  However, this contradicts our initial
choice of $h$, and implies that $\Delta(w_n) \ge \e \ell n$ with
probability at most $B c^{n}$, as required.
\end{proof}

\subsection{The structure of the normal closure}

The last step we need to understand the structure of the normal closure $\langle \langle w_n \rangle \rangle$ of $w_n$ in $G$
is to take care of the fact that the elementary subgroup $E_G^+(w_n)$ need not be cyclic, so we may have to pass to a power of
$w_n$. However, the power may be chosen to be a constant which only depends on $G$ and $\mu$, as we now explain.

Let $\Gamma_\mu$ be the group generated by the support of $\mu$, and let $E_\mu := E^+_G(\Gamma_\mu)$. 
By definition, $E_\mu$ is a normal subgroup of $\Gamma_\mu$, hence one has the homomorphism 
\begin{equation} \label{E:conj}
\varphi: \Gamma_\mu \to \textup{Aut } E_\mu
\end{equation}
given by conjugation: $g \mapsto (k \mapsto g k g^{-1})$. We will denote as $H_\mu := \varphi(\Gamma_\mu)$ the image of $\varphi$. 

\begin{lemma} \label{L:EZ}
The image of $\varphi$ in $\textup{Aut } E_\mu$ is trivial if and only if $E_\mu = Z(\Gamma_\mu)$. 
\end{lemma}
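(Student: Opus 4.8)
The plan is to unwind both sides of the equivalence into statements about commutation, reducing everything to the single non-formal ingredient that $Z(\Gamma_\mu)$ is always contained in $E_\mu$.

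First I would observe that $\varphi$ is trivial if and only if $gkg^{-1} = k$ for all $g \in \Gamma_\mu$ and all $k \in E_\mu$, that is, if and only if $E_\mu \subseteq Z(\Gamma_\mu)$. The backward implication of the lemma is then immediate: if $E_\mu = Z(\Gamma_\mu)$, every $k \in E_\mu$ is central in $\Gamma_\mu$, so $\varphi(g)(k) = k$ for all $g$ and $\varphi$ is trivial. For the forward implication, triviality of $\varphi$ gives $E_\mu \subseteq Z(\Gamma_\mu)$, and it remains only to establish the reverse inclusion $Z(\Gamma_\mu) \subseteq E_\mu$, which I claim holds with no extra hypothesis.

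To prove $Z(\Gamma_\mu) \subseteq E_\mu$, recall that $\mu$ is WPD, so $\Gamma_\mu$ contains at least one WPD element, and by definition $E_\mu = E_G^+(\Gamma_\mu) = \bigcap_{h} E_G^+(h)$, the intersection taken over all WPD $h \in \Gamma_\mu$. Fix $z \in Z(\Gamma_\mu)$ and such an $h$. Since $z$ commutes with $h$ it normalises $\langle h \rangle$, hence it sends each point of $\partial X$ fixed by $h$ to another point fixed by $h$; as $h$ has exactly the two fixed points $\lambda_h^+$ and $\lambda_h^-$, this shows $z \in E_G(h)$. To promote this to $z \in E_G^+(h)$, note that $z h z^{-1} = h$, while $z h z^{-1}$ has attracting fixed point $z\lambda_h^+$ and $h$ has attracting fixed point $\lambda_h^+$; since $\lambda_h^+ \neq \lambda_h^-$ we conclude $z\lambda_h^+ = \lambda_h^+$, and likewise $z\lambda_h^- = \lambda_h^-$. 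Intersecting over all WPD $h \in \Gamma_\mu$ yields $z \in E_\mu$, and combined with $E_\mu \subseteq Z(\Gamma_\mu)$ this gives $E_\mu = Z(\Gamma_\mu)$.

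I do not expect a serious obstacle here: the argument is essentially formal once the definitions are unwound. The only step needing a little care is distinguishing $E_G^+(h)$ from $E_G(h)$ --- ruling out that a central element interchanges the two endpoints of a WPD axis --- which is handled by the fact that a loxodromic isometry has a unique attracting fixed point on $\partial X$. One also uses implicitly that the intersection defining $E_\mu$ is over a non-empty index set, which is precisely the hypothesis that $\mu$ is WPD.
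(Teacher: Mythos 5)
Your proposal is correct and follows essentially the same route as the paper: show $Z(\Gamma_\mu) \subseteq E_\mu$ unconditionally by noting a central element fixes $\textup{Fix}(h)$ for every WPD $h \in \Gamma_\mu$, and observe that triviality of $\varphi$ is exactly the inclusion $E_\mu \subseteq Z(\Gamma_\mu)$. Your extra step checking that a central element fixes the two endpoints pointwise (so that one lands in $E_G^+(h)$ rather than merely $E_G(h)$) is a detail the paper glosses over, and it is handled correctly by your attracting/repelling fixed point argument.
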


\begin{proof}
First note that $Z(\Gamma_\mu) \subseteq E_\mu$. In fact, let $g \in Z(\Gamma_\mu)$ and let $h \in \Gamma_\mu$ be a loxodromic, WPD element. 
Then $g h g^{-1} = h$, hence $\textup{Fix} (g h g^{-1}) = g \textup{Fix}(h) = \textup{Fix}(h)$, hence $g \in E_G(h)$. Since this is true for any $h$ WPD, 
then $g \in E_\mu$. 

Moreover, the kernel of $\varphi$ is the set of $g$ which commute with every element of $E_\mu$, hence the image is trivial if and only if every element 
of $E_\mu$ commutes with every element of $\Gamma_\mu$, which means that $E_\mu \subseteq Z(\Gamma_\mu)$. 
\end{proof}

Now, by Corollary \ref{C:Eplus}, with probability which tends to $1$, $E_G(w_n)$ is the semidirect product
$$E_G(w_n) = \langle w_n \rangle \ltimes E_\mu$$
and the group structure of $E_G(w_n)$ is determined by the map $\langle w_n \rangle \to \textup{Aut }E_\mu$, hence
by the image $\varphi(w_n)$ in $\textup{Aut }E_\mu$.

\begin{lemma} \label{L:semidirect}
Let $K$ be a finite group, let $\psi \in \textup{Aut }K$, and consider the semidirect product 
$$H = \mathbb{Z} \ltimes_{\psi} K$$
where we denote as $t$ a generator for $\mathbb{Z}$, so that $tkt^{-1} = \psi(k)$ for any $k \in K$. Then:
\begin{enumerate}
\item 
for any $a \in \mathbb{Z} \setminus \{ 0 \}$, if $\psi(t^a) = 1$, then the normal closure of $t^a$ in $H$ is cyclic and equal to $\langle t^a \rangle$;
\item
if $\psi(t) \neq 1$, then the normal closure of $t$ in $H$ is not cyclic and not free; 
\end{enumerate}
\end{lemma}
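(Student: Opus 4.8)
The plan is to handle the two assertions separately; each follows quickly once the notation is unwound. Throughout, recall that $H = \mathbb{Z} \ltimes_\psi K$ is generated by $t$ together with $K$, that the quotient $H \to H/K \cong \mathbb{Z}$ sends $t$ to a generator (so $t$ has infinite order), and that ``$\psi(t^a) = 1$'' is shorthand for the statement that conjugation by $t^a$ is the identity automorphism of $K$, i.e. $t^a k t^{-a} = k$ for every $k \in K$; similarly ``$\psi(t) \neq 1$'' means that conjugation by $t$, namely $\psi$ itself, is a nontrivial automorphism of $K$.

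For (1), first I would observe that the hypothesis $\psi(t^a) = 1$ says exactly that $t^a$ commutes with every element of $K$. Since $t^a$ visibly commutes with $t$ as well, and $H = \langle t, K \rangle$, it follows that $t^a$ lies in the center $Z(H)$. A central element is fixed by all inner automorphisms, so its normal closure coincides with the cyclic subgroup it generates: $\langle \langle t^a \rangle \rangle = \langle t^a \rangle$. Since $t$ has infinite order, $t^a \neq 1$ for $a \neq 0$, so this is the infinite cyclic group, which proves the claim.

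For (2), set $N := \langle \langle t \rangle \rangle \le H$. The one computation that does the work is the following: for every $k \in K$ the conjugate $k t k^{-1}$ lies in $N$ (as $N$ is normal and contains $t$), hence so does
\[
k t k^{-1} t^{-1} = k \, \bigl( t k^{-1} t^{-1} \bigr) = k \, \psi(k)^{-1},
\]
and this element belongs to $K$, so $k\psi(k)^{-1} \in N \cap K$ for all $k$. Because $\psi$ is a nontrivial automorphism there is some $k$ with $\psi(k) \neq k$, and then $z := k\psi(k)^{-1}$ is a nontrivial element of the finite group $K$, hence a nontrivial torsion element of $N$. As free groups are torsion-free, $N$ is not free. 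Finally, $N$ contains both the torsion element $z \neq 1$ and the infinite-order element $t$; a cyclic group is either finite (no infinite-order element) or infinite cyclic (torsion-free), so $N$ is not cyclic either.

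I do not expect a genuine obstacle here: part (1) is essentially immediate from centrality, and in part (2) everything is concentrated in the single commutator identity $k t k^{-1} t^{-1} = k\psi(k)^{-1}$, where one must keep the conjugation convention $t k t^{-1} = \psi(k)$ straight; once a nontrivial torsion element of $N$ is exhibited, the ``not free'' and ``not cyclic'' conclusions follow formally from the standard torsion-versus-infinite-order dichotomy.
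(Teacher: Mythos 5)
Your proof is correct. Part (1) is essentially the paper's argument: $\psi(t^a)=1$ makes $t^a$ commute with $K$ and with $t$, hence central, so its normal closure is $\langle t^a\rangle$, infinite cyclic. For part (2) you take a genuinely different (and more elementary) route. The paper argues structurally: since $H$ is virtually cyclic, a free subgroup of $H$ must be infinite cyclic, and the only cyclic subgroup containing $t$ is $\langle t\rangle$ itself, so $N=\langle\langle t\rangle\rangle$ is free iff $N=\langle t\rangle$; then $\psi(t)\neq 1$ produces $k$ with $ktk^{-1}\neq t$, so $N$ properly contains $\langle t\rangle$ and is neither cyclic nor free. You instead exhibit the explicit commutator $ktk^{-1}t^{-1}=k\,\psi(k)^{-1}$, a nontrivial element of $N\cap K$, hence a nontrivial torsion element of $N$; non-freeness then follows from torsion-freeness of free groups, and non-cyclicity from the fact that a cyclic group cannot contain both nontrivial torsion and the infinite-order element $t$. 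Your version is more self-contained, avoiding the appeal to the classification of free subgroups of virtually cyclic groups, and it yields the extra information that $N\cap K\neq\{1\}$; the paper's version instead records the equivalence ``$N$ is free iff $N=\langle t\rangle$,'' which is the form in which the lemma is invoked later. Both arguments are complete, and your interpretation of the slightly abusive notation $\psi(t^a)=1$ (conjugation by $t^a$ acts trivially on $K$) matches the intended one.
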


\begin{proof}
Let $u = t^a$, and suppose that $\psi(u) = 1$. Then for any $k \in K$ we have $k u k^{-1} = u$ and since by construction $u$ commutes with $t$, then $u$ commutes with $H$, hence the normal closure $\langle \langle u \rangle \rangle = \langle u \rangle$ is infinite cyclic. 

Now, since $H$ is virtually cyclic and the subgroup of a free group is free, then the normal closure $N := \langle \langle t \rangle \rangle$ is free if and only if it is infinite cyclic.
Moreover, since $t$ generates $\mathbb{Z}$, the only cyclic group which contains $\langle t \rangle$ is $\langle t \rangle$ itself. Hence $\langle \langle t \rangle \rangle$
is free if and only if it coincides with $\langle t \rangle$.
If the image $\phi(t)$ is not trivial, then there exists $k \in K$ such that $k t k^{-1} \neq t$, hence the normal closure is larger than $\langle t \rangle$, 
hence not free.
\end{proof}

\begin{lemma} \label{L:conj-E}
Let $h \in G$ be a loxodromic, WPD element, and let $g \in G$. Then if $ghg^{-1} \in E_G(h)$, then $g \in E_G(h)$. 
\end{lemma}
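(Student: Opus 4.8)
The plan is to exploit the fact that, by definition, $E_G(h)$ is the set-stabilizer in $G$ of the pair $\Lambda(h)=\{\lambda_h^+,\lambda_h^-\}$ of fixed points of $h$ on $\partial X$, together with two elementary observations: conjugation is equivariant on boundary fixed-point sets, and a loxodromic isometry of a hyperbolic space has exactly two fixed points on $\partial X$ (as recalled in Section \ref{S:Cremona}'s background, resp. in Section 2).

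First I would record that $ghg^{-1}$ is again loxodromic, since conjugation preserves the classification into elliptic/parabolic/loxodromic, and that its set of boundary fixed points is $\textup{Fix}(ghg^{-1})=g\cdot\textup{Fix}(h)=g\,\Lambda(h)$, because $g$ acts as an isometry (hence a homeomorphism of $\partial X$). Next, the hypothesis $ghg^{-1}\in E_G(h)$ says exactly that $ghg^{-1}$ permutes the two-element set $\Lambda(h)$; since any permutation of a two-element set is either the identity or the transposition, either $ghg^{-1}$ already fixes both points of $\Lambda(h)$, or $(ghg^{-1})^2$ does. In the latter case $ghg^{-1}$ and $(ghg^{-1})^2$ have the same boundary fixed points (they share a quasi-axis, hence the same attracting and repelling endpoints). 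Thus in all cases $\Lambda(h)\subseteq\textup{Fix}(ghg^{-1})$; as $ghg^{-1}$ is loxodromic both sides have exactly two elements, so $\textup{Fix}(ghg^{-1})=\Lambda(h)$.

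Combining these, $g\,\Lambda(h)=\textup{Fix}(ghg^{-1})=\Lambda(h)$, so $g$ stabilizes $\Lambda(h)$ as a set, i.e. $g\in E_G(h)$, which is the claim. I do not expect a genuine obstacle: the argument is purely formal, uses no probabilistic input, and in fact does not even use the WPD hypothesis. An alternative route via Theorem \ref{T:bf} would observe that $g^{-1}E_G(h)g$ is a virtually cyclic subgroup of $G$ containing $h$, hence contained in $E_G(h)$; but one then still has to promote this inclusion to an equality, which is cleanest via the boundary argument above. The only point requiring mild care is invoking "a loxodromic isometry has exactly two boundary fixed points" in the non-proper setting, but this is precisely what is stated in the preliminaries.
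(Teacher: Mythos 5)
Your proposal is correct and follows essentially the same route as the paper: the paper conjugates to observe that $h$ preserves $g^{-1}\Lambda(h)$ setwise and then uses that a loxodromic isometry fixes exactly two boundary points to conclude $g^{-1}\Lambda(h)=\Lambda(h)$, which is just the conjugated form of your identification $g\,\Lambda(h)=\textup{Fix}(ghg^{-1})=\Lambda(h)$. Your handling of the possible swap of the two points via passing to the square is a slightly more careful spelling-out of the same step, and the observation that WPD is not needed is accurate.
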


\begin{proof}
Suppose that $ghg^{-1} \in E_G(h)$, and let $\Lambda := \{\lambda^+, \lambda^-\}$ be the set of fixed points of $h$ on $\partial X$. Then by the assumption $g h g^{-1}$ also fixes $\Lambda$, hence by conjugating $h$ fixes $g^{-1} \Lambda$. Since $h$ fixes exactly two points on the boundary, then $\Lambda = g^{-1} \Lambda$, 
which implies that $g \in E_G(h)$. 
\end{proof}

We are now ready to present the main Theorem (Theorems \ref{T:normal-intro} and \ref{T:inj-intro}) and its proof.

\begin{theorem}  \label{T:inj-later} 
Let $G$ be a group acting on a Gromov hyperbolic space $X$, and let $\mu$ be a countable, non-elementary, reversible, bounded, WPD
probability measure on $G$. Let $k = k(\mu)$ be the characteristic index of $\mu$. Then:
\begin{enumerate}
\item the probability that the normal closure $\langle \langle w_n \rangle \rangle$ of $w_n$ in $G$ is free satisfies
$$\mathbb{P}(\langle \langle w_n \rangle \rangle \textup{ is free}) \to \frac{1}{k}$$
as $n \to \infty$.
As a corollary, this probability tends to $1$ if and only if $E_\mu = Z(\Gamma_\mu)$. 
\item
Moreover, 
$$\mathbb{P}(\langle \langle w_n^k \rangle \rangle \textup{ is free}) \to 1$$
as $n \to \infty$, and indeed there exist constant $B > 0, c< 1$ such that
$$\mathbb{P}(\langle \langle w_n^k \rangle \rangle \textup{ is free}) \geq 1 - B c^{n}$$
for any $n$.
\item Finally, if $N_n := \langle \langle w_n^k \rangle \rangle$, then for any $R > 0$ the injectivity radius of $N_n$ satisfies for any $n$
$$\mathbb{P}(\textup{inj}(N_n) \geq R) \ge 1 - B c^{n}. $$
\end{enumerate}
\end{theorem}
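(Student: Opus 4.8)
The plan is to assemble the three parts from the structural results already established, with the small-cancellation machinery of Dahmani--Guirardel--Osin (\cite{DGO}) doing the heavy lifting once we know that $\langle w_n^k \rangle$ satisfies a small cancellation condition. First I would record the setup: by Corollary \ref{C:Eplus} there are constants $K$ and $L$ (depending only on $\delta$) such that, with probability at least $1 - Bc^{\sqrt n}$, the element $w_n$ is loxodromic, $(1, L, K)$-asymmetric, WPD, and satisfies $E_G(w_n) = E_G^+(w_n) = \langle w_n \rangle \ltimes E_\mu$. Combining this with the definition of the characteristic index and Lemmas \ref{L:EZ}, \ref{L:semidirect}, \ref{L:conj-E}, the image $\varphi(w_n) \in H_\mu$ equidistributes (as $n \to \infty$) over the finite group $H_\mu$ of order $k$, since the random walk on $\Gamma_\mu$ pushes forward to a random walk on $H_\mu$, which is irreducible on its image; hence $\varphi(w_n) = 1$ with asymptotic probability $1/k$, and in that event $\langle w_n \rangle$ is normal in $E_G(w_n)$, so (invoking small cancellation, see below) the normal closure is the free product of conjugates of $\langle w_n \rangle$, hence free; when $\varphi(w_n) \ne 1$, Lemma \ref{L:semidirect}(2) together with Lemma \ref{L:conj-E} shows the normal closure contains $E_G(w_n)$ and is not free. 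This gives (1), and the corollary on $E_\mu = Z(\Gamma_\mu)$ follows from Lemma \ref{L:EZ}.

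For (2) and (3), the key point is to check that the family $\mathcal{R}_n := \{ h w_n^k h^{-1} : h \in G \}$ of conjugates of $w_n^k$ satisfies an $(A, \epsilon)$-small cancellation condition for suitable universal $A$ and sufficiently small $\epsilon$, with probability at least $1 - Bc^{\sqrt n}$. The injectivity radius bound $\textup{inj}(\mathcal{R}_n) \ge \tau(w_n^k) = k\,\tau(w_n)$ grows linearly in $n$ with square-root-exponential decay in the failure probability, by Theorem \ref{T:MT}(3) (or Theorem \ref{T:growth}), so condition (1) of the small cancellation definition holds with $A\delta$ as small as we like relative to $n$. Condition (2) — that $\Delta(g, h) \le \epsilon\, \textup{inj}(\mathcal{R}_n)$ for $g \ne h^{\pm 1}$ in $\mathcal{R}_n$ — reduces, after conjugating, to bounding $\Delta(w_n^k, h w_n^k h^{-1})$ for $h \notin E(w_n^k) = E_G(w_n)$, i.e. to controlling the fellow-travelling constant $\Delta(w_n^k) \le \Delta(w_n)$ (a power has the same axis), which is exactly Proposition \ref{P:Delta-small}: for any $\epsilon' > 0$, $\mathbb{P}(\Delta(w_n) \ge \epsilon' \ell n) \le Bc^{\sqrt n}$. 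Choosing $\epsilon'$ small and using $\textup{inj}(\mathcal{R}_n) \ge k\ell n (1 + o(1))$, we get $\Delta \le \epsilon\, \textup{inj}(\mathcal{R}_n)$ with the required probability. Then by \cite{DGO} the small cancellation condition implies that $\langle\langle w_n^k \rangle\rangle$ is the free product $\ast_{i} \langle w_n^k \rangle$ of conjugates of the cyclic group $\langle w_n^k \rangle$ — here we use that $w_n^k$ generates its own maximal elementary subgroup's orientation-preserving part modulo $E_\mu$, and $\varphi(w_n^k) = \varphi(w_n)^k = 1$ always since $k = |H_\mu|$, so $\langle w_n^k \rangle$ is normal in $E_G(w_n)$ — hence $\langle\langle w_n^k\rangle\rangle$ is free. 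This yields (2) with the stated rate.

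For (3), the same small cancellation conclusion of \cite{DGO} gives, in addition, that $\textup{inj}(\langle\langle w_n^k \rangle\rangle) \ge \textup{inj}(\mathcal{R}_n) - O(\delta) \ge k\,\tau(w_n) - O(\delta)$; since $\tau(w_n) \to \infty$ linearly in $n$ with square-root-exponential decay in the failure probability, for any fixed $R$ we have $\textup{inj}(N_n) \ge R$ with probability at least $1 - Bc^{\sqrt n}$, as claimed. I expect the main obstacle to be the careful bookkeeping of which ``good events'' need to be intersected — loxodromic, asymmetric, WPD, the $E_G(w_n)$ identification, the $\Delta$-bound, and linear growth of $\tau$ — and verifying that the rates combine (a bounded union of square-root-exponentially small events is still square-root-exponentially small), together with quoting the precise form of the DGO small cancellation consequences (free product decomposition of the normal closure and the resulting injectivity radius lower bound) with the right constants; none of these is conceptually hard, but the constants $A$, $\epsilon$, $K$, $L$ must be threaded through without circularity, exactly as in the statement of Corollary \ref{C:Eplus} and Proposition \ref{P:Delta-small}.
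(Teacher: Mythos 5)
Your treatment of parts (1) and (2) follows the paper's argument essentially verbatim: Corollary \ref{C:Eplus} to identify $E_G(w_n) = \langle w_n\rangle \ltimes E_\mu$, the dichotomy according to whether $\varphi(w_n)=1$ (using Lemmas \ref{L:conj-E} and \ref{L:semidirect} and equidistribution of the pushed-forward walk on the finite group $H_\mu$), growth of $\tau(w_n)$ plus Proposition \ref{P:Delta-small} to verify the $(A,\epsilon)$-small cancellation condition for the conjugates of $w_n$ (resp.\ $w_n^k$), and the DGO rotating-family machinery to conclude that the normal closure is a free product of conjugates of the cyclic group, hence free. (One small slip: when $\varphi(w_n)\neq 1$ the normal closure need not contain all of $E_G(w_n)$; the correct statement, which is what Lemma \ref{L:semidirect}(2) gives and what you in fact need, is that $\langle\langle w_n\rangle\rangle \cap E_G(w_n)$ is strictly larger than $\langle w_n\rangle$ and contains torsion, so the closure cannot be free.)

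There is, however, a genuine gap in your part (3). You assert that the small cancellation conclusion of \cite{DGO} ``gives, in addition'' that $\textup{inj}(\langle\langle w_n^k\rangle\rangle) \ge \textup{inj}(\mathcal{R}_n) - O(\delta)$, but that is not what the machinery being invoked provides. The $(A,\epsilon)$-small cancellation condition yields (via \cite[Proposition 6.23]{DGO}) an $\alpha$-rotating family on a \emph{coned-off} space $X'$, and the classification of elements of the normal closure (\cite[Theorem 5.3]{DGO}) gives a translation-length lower bound of order $\alpha\delta$ \emph{measured in $X'$} — a quantity that is bounded independently of $n$ for fixed $\alpha$, not comparable to $\textup{inj}(\mathcal{R}_n) \approx k\ell n$. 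This is exactly why the paper's proof of (iii) must choose $\alpha$ depending on the target radius $R$ (namely $\alpha\delta/K = R$), with $(A,\epsilon)$ then chosen for that $\alpha$, and must verify separately that the rescaling parameter in the cone-off construction can be taken $\lambda \le 1$, so that the map $X \to X'$ is $K$-Lipschitz with $K$ \emph{independent of $\alpha$}; only then does the $\alpha\delta$ bound in $X'$ transfer to a bound $\ge R$ in $X$, while elements conjugate into $\langle w_n^k\rangle$ are handled by the linear growth of $\tau(w_n^k)$. Your shortcut either needs a Greendlinger-type statement proved directly in $X$ (which you do not cite or prove, and which is not among the DGO results used here), or you should replace it by the $\alpha$-versus-$R$ argument with the uniform Lipschitz control of the cone-off map, as in the paper.
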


\begin{proof}
Let us choose $\alpha > 0$. Then by \cite[Proposition 6.23]{DGO} there exist constants $(A, \epsilon)$ such that 
if a family $\{N_\lambda\}_{\lambda \in \Lambda}$ of subgroups, closed under conjugation,
satisfies the small cancellation condition, then $\{N_\lambda\}$ is $\alpha$-rotating on a hyperbolic graph 
$X'$. Note that $X'$ is obtained from $X$ in the following way. First, one chooses a hyperbolic graph $X''$ 
which is equivariantly quasi-isometric to $X$. This is chosen once and for all; let $K$ be the Lipschitz constant 
of the map $X \to X''$. Now, the coned off space $X'$ is obtained by coning off certain quasiconvex subsets of a rescaled copy $\lambda X''$.
However, by looking at the proof one realizes that one can make sure that $\lambda \leq 1$ in all cases
(indeed, in the language of \cite[Proposition 6.23]{DGO}, the correct choice is $\lambda = \min \left( \frac{\delta_c}{\delta}, \frac{\Delta_c}{\Delta}, 1\right)$, with $A = \max \left( \frac{\textup{inj}_c(r_0)}{\delta_c}, \frac{\textup{inj}_c(r_0)}{\delta} \right)$ and $\epsilon = \frac{\Delta_c}{\textup{inj}_c(r_0)}$.)
Thus, the map $X \to X'$ is $K$-Lipschitz, where $K$ only depends on $X$ and not on the constant $\alpha$. 

Let us fix $\alpha \geq 200$, and let $(A, \epsilon)$ chosen as above. 
Let $\ell > 0$ be the drift of the random walk. Then by Theorem
\ref{T:MT} (3), there are constants $B_1$ and $c_1 < 1$ such that
$$\mathbb{P}\left(\tau(w_n) \geq \frac{\ell n}{2} \right) \ge 1 - B_1
c_1^n.$$
Moreover, by Proposition \ref{P:Delta-small}, there are constants
$B_2$ and $c_2 < 1$ such that
$$\mathbb{P}\left(\Delta(w_n) \leq \frac{\epsilon \ell n}{2}\right)
\ge 1 - B_2 c_2^{n}.$$ 
Now by Corollary \ref{C:Eplus}, there are constants $B_3$ and
$c_3 < 1$ such that 
\[ \P \left( E_G^+(w_n) = \langle w_n \rangle \ltimes
E_\mu \right) \ge 1 - B_3 c_3^{n}. \]
Thus, for suitable choices of $B_4$ and $c_4 < 1$,
\begin{equation}\label{eq:rot}
\mathbb{P}\left(\tau(w_n) \geq A \delta, \Delta(w_n)
\leq \epsilon \tau(w_n) \textup{ and } E_G^+(w_n) = \langle w_n \rangle \ltimes
E_\mu \right) \ge 1 - B_4 c_4^{n}.
\end{equation}
In particular, with probability which tends to $1$ we have 
$$E_G(w_n) = \langle w_n \rangle \ltimes_{\varphi_n} E_\mu$$ 
where $\varphi_n = \varphi(w_n)$ is the image of $w_n$ under the homomorphism
$$\varphi: \Gamma_\mu \to \textup{Aut }E_\mu.$$
Now, we have two cases. 
\begin{enumerate}
\item if $\varphi(w_n) = 1$, then all conjugates of $w_n$ in $G$ belong to different elementary subgroups. 

In fact, suppose that there exists $g \in G$ such that $g w_n g^{-1} \in E_G(g)$. Then, by Lemma \ref{L:conj-E} one has $g \in E_G(w_n)$, and by Lemma 
\ref{L:semidirect} one has $g w_n g^{-1} = w_n$. 

Now, consider the family of subgroups $\mathcal{R}_n := \{ g w_n g^{-1} \}_{g \in G}$. Finally, let
$N_n = \langle \langle H_n \rangle \rangle$ be the normal closure of
$H_n$. By equation \eqref{eq:rot} above, with probability at least
$1 - B_4 c_4^{n}$, the family $\mathcal{R}_n$ satisfies the
$(A, \epsilon)$-small cancellation condition, hence it is an
$\alpha$-rotating family.  Then by \cite[Corollary 5.4]{DGO}, the
normal closure of $w_n$ is the free product of conjugates of
$\langle w_n \rangle$, hence it is free.

\item
if $\varphi(w_n) \neq 1$, then there exists $g \in \Gamma_\mu$ such that $g w_n g^{-1} \neq w_n$. This implies that the intersection 
$$\langle \langle w_n \rangle \rangle \cap E_G(w_n)$$
is larger than $\langle w_n \rangle$, hence the normal closure $\langle \langle w_n \rangle \rangle$ cannot be a free group. 
\end{enumerate}

By the above discussion, the probability that the normal closure of $w_n$ in $G$ is free converges to the probability that $w_n$ maps to 
the identity in $E_\mu$. In order to compute such probability, note that under the map 
$$\varphi: \Gamma_\mu \to \textup{Aut }E_\mu$$
the random walk on $\Gamma_\mu$ pushes forward to a random walk on $\textup{Aut }E_\mu$, which is a finite group. 
Hence, the random walk equidistributes on the elements of the image of $\varphi$ into $\textup{Aut }E_\mu$, hence the probability that $\varphi(w_n) = 1$
converges to $\frac{1}{\#H_\mu}$, where $\# H_\mu$ is the cardinality of the image of $\varphi$. 
That is, the normal closure of $w_n$ is free if and only if the image $\varphi(w_n) = 1$, and the probability of this happening 
tends to $\frac{1}{\# H_\mu}$, so 
$$\mathbb{P}(\langle \langle w_n \rangle \rangle \textup{ is free}) \to \frac{1}{\#H_\mu}.$$

Hence, this probability tends to $1$ if and only if the image group $H_\mu = \varphi(\Gamma_\mu)$ is the trivial group, hence by Lemma \ref{L:EZ} if and only if $E_\mu = Z(\Gamma_\mu)$. 

\smallskip
To prove (ii), if $k = \#H_\mu$, then every element in the image of $\varphi$ has order which divides $k$, hence $\varphi(w_n^k) = \varphi(w_n)^k = 1$. 
Thus, as in the previous argument, if one defines $H_n := \langle w_n^k \rangle$, the probability that the family $\mathcal{R}_n := \{ g w_n^k g^{-1} \}_{g \in G}$ satisfies the small cancellation condition tends to $1$, 
hence the probability that the normal closure $N_n := \langle \langle w_n^k \rangle \rangle$ is free satisfies
$$\mathbb{P}(\langle \langle w_n^k \rangle \rangle \textup{ is free}) \geq 1 - B c^{n}$$ 
for suitable choices of $B > 0,  c < 1$. 

\smallskip

Now, to prove (iii), given $R > 0$ let $\alpha$ be such that
$\frac{\delta \alpha}{K} = R$. Then one can choose $(A, \epsilon)$ as
before for such $\alpha$.  Then with probability at least
$1 - B_4 c_4^{n}$, the family $\mathcal{R}_n$ is
$\alpha$-rotating. Hence, by \cite[Theorem 5.3]{DGO}, for each
$g \in N_n$, either $g$ belongs to some conjugate of $H_n$ or is
loxodromic on $X'$ with translation length at least $\alpha \delta$.
Then since the map $X \to X'$ is $K$-Lipschitz, such elements have
translation length on $X$ at least $\frac{\alpha \delta}{K}$.  On the
other hand, by Theorem \ref{T:MT} (3) we know that with probability at
least $1 - B_1 c_1^n$, the isometry $w_n^k$ is loxodromic on $X$ with
translation length $\geq R$. Therefore for suitable choices of $B_5$
and $c_5 < 1$, the probability that the injectivity radius of $N_n$ is
at least $R$ is at least $1 - B_5 c_5^{n}$.  The stated result
then follows for suitable choices of $B$ and $c < 1$.
\end{proof}

\begin{corollary}
Let $G$ be a group acting on a Gromov hyperbolic space, and let $\mu$
be a countable, non-elementary, reversible, bounded, WPD
probability measure on $G$.  Let $k = k(\mu)$ be the characteristic index of $\mu$, and let $N_n(\omega) := \langle \langle w_n^k \rangle \rangle$ be
the normal closure of $w_n^k$ in $G$. Then for almost every
sample path $\omega$, the sequence
$$(N_1(\omega), N_2(\omega), \dots, N_n(\omega), \dots)$$
contains infinitely many different normal subgroups of $G$. 
\end{corollary}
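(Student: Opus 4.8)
The plan is to derive this corollary from part~(iii) of Theorem~\ref{T:inj-later} by a Borel--Cantelli argument. First I would fix an integer $R \ge 1$ and apply Theorem~\ref{T:inj-later}(iii), which gives $\mathbb{P}(\textup{inj}(N_n) < R) \le B c^{\sqrt{n}}$ with $c < 1$; since $\sum_n c^{\sqrt{n}} < \infty$, the Borel--Cantelli lemma shows that for almost every sample path $\omega$ one has $\textup{inj}(N_n(\omega)) \ge R$ for all sufficiently large $n$. Intersecting the corresponding full-measure sets over all $R \in \mathbb{N}$, I obtain that for almost every $\omega$ the injectivity radii $\textup{inj}(N_n(\omega))$ tend to $+\infty$ as $n \to \infty$.

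Next I would record that $N_n(\omega)$ is almost surely a nontrivial normal subgroup of $G$ for all large $n$: by Theorem~\ref{T:genericWPD} (or already Theorem~\ref{T:MT}) the probability that $w_n$ fails to be loxodromic decays exponentially in $n$ and is therefore summable, so by Borel--Cantelli, for almost every $\omega$ the element $w_n$ --- hence also $w_n^k$ --- is nontrivial for all large $n$, whence $N_n(\omega) = \langle\langle w_n^k \rangle\rangle \neq \{1\}$ and $\textup{inj}(N_n(\omega))$ is finite. This paragraph handles the only genuinely delicate point: under the convention that the infimum over the empty set is $+\infty$, a trivial repeated subgroup would be harmless to the injectivity-radius argument, so one must exclude it.

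Finally I would argue by contradiction on the full-measure set where both of the above hold. If the sequence $(N_n(\omega))_{n \ge 1}$ took only finitely many distinct values, some value $M$ would be attained along an infinite subsequence $n_1 < n_2 < \cdots$; taking the $n_j$ large enough we may assume $M \neq \{1\}$ by the previous paragraph, so $\textup{inj}(M) < \infty$ is a fixed finite number. But $\textup{inj}(M) = \textup{inj}(N_{n_j}(\omega)) \to \infty$ by the first paragraph, a contradiction. Hence the sequence contains infinitely many distinct normal subgroups of $G$, as claimed. The argument is essentially immediate once Theorem~\ref{T:inj-later}(iii) is in hand, so I do not anticipate any real obstacle beyond the summability check and the bookkeeping just described.
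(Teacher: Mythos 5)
Your proof is correct, and it rests on the same pillar as the paper's (the injectivity radius estimate of Theorem \ref{T:inj-later}(iii)), but the route is slightly different and in fact yields a stronger intermediate statement. The paper fixes $M>0$, considers $A_M := \{\omega : \sup_n \textup{inj}(N_n(\omega)) \le M\}$, and shows $\P(A_M)=0$ directly: if $\P(A_M)=\epsilon>0$, then for $n$ large $\P(\textup{inj}(N_n)\ge M+1) > 1-\epsilon$, forcing this event to meet $A_M$, a contradiction. This only uses convergence in probability (no rate, no Borel--Cantelli) and concludes that almost surely $\limsup_n \textup{inj}(N_n(\omega)) = +\infty$, which already rules out the sequence being eventually confined to finitely many nontrivial subgroups. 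Your argument instead exploits the summability of $c^{\sqrt{n}}$ and Borel--Cantelli, for each fixed $R$ with constants $B(R), c(R)$, and then intersects over $R\in\N$ to get the stronger conclusion that $\textup{inj}(N_n(\omega))\to\infty$ almost surely; this is more than is needed for the corollary (and touches on the authors' further question about whether $\textup{inj}(N_n)$ typically tends to infinity), at the modest cost of invoking the quantitative decay rather than mere convergence to $1$. Your second paragraph, excluding the possibility that $N_n$ is trivial along a subsequence by noting that $w_n$ is almost surely loxodromic (hence of infinite order) for all large $n$, is a point the paper's proof passes over silently under the convention $\textup{inj}(\{1\})=+\infty$, so your treatment is, if anything, a touch more careful; both proofs are sound.
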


\begin{proof}
Fix $M > 0$, and consider the set
$$A_M := \{ \omega \ : \  \sup_n \textup{inj}(N_n(\omega)) \leq M \}.$$
We claim that $\mathbb{P}(A_M) = 0$. Indeed, suppose $\mathbb{P}(A_M) = \epsilon > 0$. 
Then by Theorem \ref{T:inj-later}, there exists $n_0$ such that for $n \geq n_0$ 
$$\mathbb{P}(\textup{inj}(N_n) \geq M + 1) > 1 - \epsilon$$
which is a contradiction because such a set must be disjoint from $A_M$. 
Then for almost every $\omega$ we have 
$$\limsup_{n \to \infty} \textup{inj}(N_n(\omega)) = + \infty,$$
which implies the claim.
\end{proof}
This completes the proof of Theorem \ref{T:inj-intro} in the Introduction. 

\subsection{Application to the mapping class group} \label{S:mcg}

In the case of the mapping class group, we may answer
\cite{margalit}*{Problem 10.11} and establish Theorem \ref{T:mcg}, as we now explain.

\begin{corollary}
Let $S$ be a surface of finite type whose mapping class group
$\Mod(S)$ is infinite.  Let $\mu$ be a probability distribution on
$\Mod(S)$ such that the support of $\mu$ has bounded image in the
curve complex under the orbit map, and for which
$\Gamma_\mu = \Mod(S)$.  Then there are constants $B > 0$ and $c < 1$
such that the probability that the normal closure
$\langle \langle w_n \rangle \rangle$ is a free subgroup of $\Mod(S)$
is at least $1 - Bc^{n}$.
\end{corollary}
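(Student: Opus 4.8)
The plan is to deduce the corollary directly from Theorem~\ref{T:inj-later}. The first step is to verify that its hypotheses hold for $G = \Mod(S)$ acting on the curve complex $\mathcal{C}(S)$: by Masur--Minsky \cite{mm1} this complex is $\delta$-hyperbolic and $\Mod(S)$ acts on it by isometries; the measure $\mu$ is countable because $\Mod(S)$ is a countable group, bounded by hypothesis, reversible because $\Gamma_\mu = \Mod(S)$ is a group, non-elementary because an infinite finite-type mapping class group is not virtually cyclic and contains two independent pseudo-Anosovs (which are loxodromic on $\mathcal{C}(S)$ with disjoint fixed-point pairs), and WPD because pseudo-Anosovs satisfy the WPD condition for this action by Bestvina--Fujiwara \cite{bestvina-fujiwara}. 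So Theorem~\ref{T:inj-later} applies.

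The main step will be to show that the characteristic index is $k(\mu) = 1$; since $\Gamma_\mu = \Mod(S)$, Lemma~\ref{L:EZ} reduces this to the equality $E_\mu = Z(\Mod(S))$, where $E_\mu = E_G^+(\Gamma_\mu)$ is precisely the maximal finite normal subgroup $E(\Mod(S))$. The inclusion $Z(\Mod(S)) \subseteq E_\mu$ is routine: a central element commutes with every pseudo-Anosov, hence fixes its two boundary fixed points, hence lies in $E_G^+(f)$ for all WPD $f$, and $Z(\Mod(S))$ is finite. For the reverse inclusion I would argue: given $g \in E_\mu$, the conjugation orbit of $g$ is a finite subset of $E_\mu$, so $C_{\Mod(S)}(g)$ has finite index; therefore for each essential simple closed curve $c$ some positive power $T_c^m$ of the Dehn twist about $c$ centralizes $g$, and $g T_c^m g^{-1} = T_{g(c)}^m = T_c^m$ (no inverse sign, since mapping classes of an orientable surface preserve orientation) forces $g(c) = c$ for every $c$. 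Hence $g$ acts trivially on $\mathcal{C}(S)$, and the kernel of the $\Mod(S)$-action on $\mathcal{C}(S)$ equals $Z(\Mod(S))$ --- which for the exceptional low-complexity surfaces $S_{1,1}, S_{1,2}, S_{2,0}, S_{0,4}$ amounts to the statement that the (central) hyperelliptic involution is the only obstruction, and in general follows from the Alexander method together with the Nielsen realization theorem; see \cite{margalit} and the discussion in this section. Thus $E_\mu = Z(\Mod(S))$ and $k(\mu) = 1$.

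Finally, with $k = k(\mu) = 1$ one has $w_n^k = w_n$, so part (2) of Theorem~\ref{T:inj-later} immediately yields constants $B > 0$ and $c < 1$ with
\[ \mathbb{P}\bigl( \langle \langle w_n \rangle \rangle \text{ is free} \bigr) = \mathbb{P}\bigl( \langle \langle w_n^{k} \rangle \rangle \text{ is free} \bigr) \ge 1 - B c^{\sqrt{n}}, \]
which is the assertion of the corollary; this also establishes Theorem~\ref{T:mcg} and answers Margalit's \cite{margalit}*{Problem 10.11}. I expect the one genuinely delicate point to be the equality $E(\Mod(S)) = Z(\Mod(S))$, and in particular pinning down the kernel of the $\Mod(S)$-action on the curve complex for the sporadic surfaces; everything else is a routine check of hypotheses followed by an appeal to Theorem~\ref{T:inj-later}.
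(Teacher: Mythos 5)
Your reduction is the same as the paper's: check the hypotheses of Theorem \ref{T:inj-later}, prove that $E_\mu = E^+_G(\Mod(S))$ equals the center so that $k(\mu)=1$ via Lemma \ref{L:EZ}, and quote part (2). Your route to the key fact, however, is genuinely different from the paper's Proposition \ref{P:mcg}: the paper uses Nielsen realization and a dimension count for fixed-point sets in Teichm\"uller space, plus a case-by-case analysis of the four surfaces with center $\Z/2\Z$, whereas you argue that any $g \in E_\mu$ lies in a finite normal subgroup, hence has finite-index centralizer, hence commutes with a power $T_c^m$ of every Dehn twist, and $T_{g(c)}^m = T_c^m$ forces $g(c)=c$; so $E_\mu$ lies in the kernel of the action on $\mathcal{C}(S)$. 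Up to that point your argument is correct (and cleaner than the paper's), and the inclusion $Z(\Mod(S)) \subseteq E_\mu$ is fine.

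The gap is the final identification ``kernel of $\Mod(S) \to \textup{Aut}(\mathcal{C}(S))$ $=$ $Z(\Mod(S))$'', and specifically your inclusion of $S_{0,4}$ in the list of surfaces where a \emph{central} hyperelliptic involution is the only obstruction. For $S_{0,4}$ the kernel is the Klein four-group generated by the three hyperelliptic involutions, and it is \emph{not} central: the puncture-permutation map restricts to an isomorphism of this kernel onto the Klein subgroup of $S_4$, and conjugating by mapping classes that realize transpositions of punctures permutes the three involutions, while $Z(\Mod(S_{0,4}))$ is trivial (indeed $\Mod(S_{0,4}) \cong PGL(2,\Z) \ltimes (\Z/2\Z)^2$ with $PGL(2,\Z)$ acting through reduction mod $2$). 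Hence $E_\mu \supseteq (\Z/2\Z)^2 \neq Z(\Mod(S_{0,4}))$ and $k(\mu) > 1$, so by Theorem \ref{T:normal-intro}(1) the probability that $\langle\langle w_n \rangle\rangle$ is free tends to $1/k(\mu) < 1$; concretely, whenever the image of $w_n$ in $GL(2,\Z/2)$ is nontrivial, the commutator of $w_n$ with a hyperelliptic involution is a nontrivial torsion element of $\langle\langle w_n \rangle\rangle$, which is therefore not free. So this is not a repairable slip in your particular route: for $S_{0,4}$ the asserted conclusion itself fails, and any correct treatment must exclude $S_{0,4}$ (or pass to $w_n^{k(\mu)}$ there).

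You are, in fairness, tripping over a point where the paper itself is delicate: the ``strictly smaller dimension'' step in the proof of Proposition \ref{P:mcg} also breaks for $S_{0,4}$, since its hyperelliptic involutions act trivially on $\T(S_{0,4})$ (the quotient orbifold has Teichm\"uller space of the same dimension), so $S_{0,4}$ requires the same kind of individual attention that the paper gives to $S_{1,1}$, $S_{1,2}$ and $S_{2,0}$. For every other finite-type surface with infinite mapping class group your kernel-equals-center claim is correct (trivial kernel by the Alexander method in the generic case, and kernel $=$ center $= \Z/2\Z$ for $S_{1,0}$, $S_{1,1}$, $S_{1,2}$, $S_{2,0}$), and your argument then does prove the corollary for those surfaces.
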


This follows immediately from Theorem \ref{T:inj-later}, and the fact
that if $G = \Mod(S_{g, n})$ is the mapping class group of a surface of finite type, 
the group $E^+_G(G)$ is equal to the center of $G$, as we now explain.

We shall write $S_{g, n}$ for the surface of genus $g$ with $n$
punctures.  The mapping class groups $S_{0, n}$ with $n \le 3$ are
finite, so the results of this paper do not apply to them, and we
shall ignore them for the purposes of this section.

\begin{proposition}\label{P:mcg}
Let $S_{g, n}$ be a surface of genus $g$ with $n$ punctures, and
suppose that its mapping class group $G = Mod(S_{g,n})$ is
infinite. Then $E^+_G(G)$ is equal to the center of $G$.
\end{proposition}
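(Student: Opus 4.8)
The plan is to show the two inclusions $Z(G) \subseteq E^+_G(G)$ and $E^+_G(G) \subseteq Z(G)$. The first inclusion is already established in general in Lemma \ref{L:EZ} (the argument there shows $Z(\Gamma_\mu) \subseteq E_\mu$, and taking $\Gamma_\mu = G$ gives $Z(G) \subseteq E^+_G(G)$), so I only need to worry about the reverse inclusion. For this, recall that $E^+_G(G) = \bigcap_{h} E^+_G(h)$, the intersection being over all pseudo-Anosov (equivalently, all loxodromic, hence WPD) mapping classes $h$. An element $f \in E^+_G(G)$ must fix the pair of fixed points of \emph{every} pseudo-Anosov on the Gromov boundary of the curve complex; since pseudo-Anosov elements are WPD, $E^+_G(h)$ is virtually cyclic for each $h$, and $f$ must normalize $\langle h \rangle$ (after passing to a finite-index subgroup, commute with a power of $h$).

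The key step is to use the Nielsen--Thurston classification together with the structure of centralizers in the mapping class group. First I would fix two independent pseudo-Anosov elements $h_1, h_2$ (these exist since $G$ is infinite and hence non-elementary on the curve complex). An element $f \in E^+_G(G)$ lies in $E^+_G(h_1) \cap E^+_G(h_2)$. Since $h_1, h_2$ are independent and WPD, their axes in the curve complex diverge, so any element coarsely preserving both axes and preserving orientation lies in the (finite) intersection $E^+_G(h_1) \cap E^+_G(h_2)$; in fact such an $f$ must have a power commuting with both $h_1^{k}$ and $h_2^{k}$. At this point I would invoke the well-known fact (via McCarthy, or the canonical reduction systems / Thurston theory) that the centralizer of a pseudo-Anosov element in $Mod(S)$ is virtually cyclic, with the virtually-cyclic group generated by a "primitive" pseudo-Anosov; more precisely, if $f$ commutes with a power of a pseudo-Anosov $h$, then $f$ itself is either periodic or pseudo-Anosov with the same invariant foliations, so $f$ preserves the (projective) stable and unstable measured foliations of $h$. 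Doing this for enough independent pseudo-Anosovs forces $f$ to preserve the invariant foliations of $h_1$ and of $h_2$ simultaneously.

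The main obstacle — and the heart of the proof — is to conclude from "$f$ preserves the invariant foliations of two independent pseudo-Anosovs (up to the orientation-preserving condition)" that $f$ acts trivially, i.e. $f$ has finite order and in fact lies in the center. Here I would argue as follows: an orientation-preserving element of $E^+_G(h_1)$ that is not a power of $h_1$ (modulo the finite kernel from Proposition \ref{P:splits}) would have to be a finite-order element; a finite-order mapping class preserving the invariant foliations of two independent pseudo-Anosovs fixes (the isotopy classes of) these foliations, and since the foliations of independent pseudo-Anosovs fill the surface in "transverse" directions, Nielsen realization / the fact that a nontrivial finite-order mapping class cannot fix a filling foliation pair forces $f$ to be either trivial or the hyperelliptic involution. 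The hyperelliptic involution occurs precisely on $S_{1,0}, S_{1,1}, S_{1,2}, S_{2,0}$, where it is central and generates $Z(G) = \Z/2\Z$; in all other cases $f$ is trivial. In either case $f \in Z(G)$, which gives $E^+_G(G) \subseteq Z(G)$ and completes the proof. I expect the cleanest write-up routes this last step through the explicit description of centralizers of pseudo-Anosovs and the classification of central elements in \cite{farb-margalit}*{Section 3.4}, rather than re-deriving Nielsen realization.
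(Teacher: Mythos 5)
There is a genuine gap at what you yourself identify as the heart of the proof: the claim that a nontrivial finite-order mapping class which preserves the invariant foliations of two independent pseudo-Anosovs must be trivial or the (central) hyperelliptic involution. This is false for an arbitrary pair of independent pseudo-Anosovs. For example, on the closed genus-$3$ surface (whose mapping class group has trivial center) let $\iota$ be a hyperelliptic involution; its centralizer contains the lift of the mapping class group of the quotient orbifold and hence contains two independent pseudo-Anosovs $h_1, h_2$ commuting with $\iota$. Commutation forces $\iota$ to fix the attracting and repelling points of each $h_i$ pointwise, so $\iota \in E^+_G(h_1) \cap E^+_G(h_2)$, yet $\iota$ is neither trivial nor central. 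So knowing only that $f \in E^+_G(h_1)\cap E^+_G(h_2)$ for one (arbitrary) independent pair cannot yield $f \in Z(G)$; and invoking Proposition \ref{P:h1-h2} does not rescue this, since for the special pair produced there the equality $E^+_G(h_1)\cap E^+_G(h_2)=E^+_G(G)$ is exactly the quantity you are trying to compute, not a tool for computing it. Relatedly, the assertion that a nontrivial finite-order mapping class cannot fix a filling foliation pair is false: every symmetry of a symmetric pseudo-Anosov does so.

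To close the gap you must use the full strength of membership in $E^+_G(h)$ for \emph{every} pseudo-Anosov $h$ (equivalently, that $E^+_G(G)$ is a finite normal subgroup of $G$), and show that a nontrivial non-central finite subgroup cannot have this property. This is what the paper does: if $F \le G$ is a nontrivial finite subgroup, Nielsen realization (Kerckhoff) realizes $F$ by isometries of a hyperbolic metric, so its fixed locus in Teichm\"uller space is a proper totally geodesic copy of the Teichm\"uller space of the quotient orbifold; since this locus is not $G$-invariant (e.g.\ because axes/endpoints of pseudo-Anosovs are dense in $\mathcal{PMF}\cross\mathcal{PMF}$, there is a pseudo-Anosov not symmetric under $F$), $F$ cannot be normal, hence $E^+_G(G)$ is trivial whenever $Z(G)$ is. The four surfaces with nontrivial center ($S_{1,0}, S_{1,1}, S_{1,2}, S_{2,0}$) are then handled separately, by passing to the quotient by the hyperelliptic involution (reducing to $\Mod(S_{0,6})$, respectively a finite-index subgroup of $\Mod(S_{0,5})$) and, for $S_{1,0}, S_{1,1}$, by the isomorphism $\textup{PSL}(2,\Z) \cong (\Z/2\Z)\star(\Z/3\Z)$, which has no nontrivial finite normal subgroup. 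Your inclusion $Z(G) \subseteq E^+_G(G)$ via Lemma \ref{L:EZ} is fine; it is the reverse inclusion that needs the normality/Nielsen realization argument rather than a two-pseudo-Anosov reduction.
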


\begin{proof}
If the mapping class group $G = \Mod(S_{g,n})$ is infinite, 
then its center is trivial, unless
$S_{g, n}$ is one of the following four surfaces:
$S_{1, 0}, S_{1, 1}, S_{1, 2}$ or $S_{2, 0}$, in which case the center
$Z(G)$ is isomorphic to $\Z / 2 \Z$, generated by the hyperelliptic
involution, see for example \cite{ivanov}*{Remark 8.15} or
\cite{farb-margalit}*{Section 3.4}.

Recall that $E_G^+(G)$ is a subgroup of $E_G(G)$, which is equal to
the maximal finite normal subgroup of $G$.  By Ivanov
\cite{ivanov}*{Section 11, Exercise 5.b} any finite normal subgroup of
the mapping class group is contained in the center $Z(G)$.  In the
cases in which the center is non-trivial, it is generated by the
hyperelliptic involution, which acts trivially on the boundary, and so
fixes pointwise the endpoints of all pseudo-Anosov elements.  In
particular, the groups $Z(G)$, $E_G(G)$ and $E_G^+(G)$ are all equal.
\end{proof}


\begin{bibdiv}
\begin{biblist}

\bib{balasubramanya}{article}{
   author={Balasubramanya, Sahana H.},
   title={Acylindrical group actions on quasi-trees},
   journal={Algebr. Geom. Topol.},
   volume={17},
   date={2017},
   number={4},
   pages={2145--2176},
   issn={1472-2747},
}

\bib{bbf-arxiv}{article}{
   author={Bestvina, Mladen}, 
   author={Bromberg, Kenneth},
   author={Fujiwara, Koji},
   title={Bounded cohomology via quasi-trees},
   eprint={arXiv:1306.1542v1}, 
   date={2013},
}

\bib{bbf}{article}{
   author={Bestvina, Mladen}, 
   author={Bromberg, Kenneth},
   author={Fujiwara, Koji},
   title={Constructing group actions on quasi-trees and applications to mapping class groups},
   journal={Publ. Math. IHES}, 
   date={2015}, 
   volume={122}
}

\bib{bbf2}{article}{
   author={Bestvina, Mladen},
   author={Bromberg, Ken},
   author={Fujiwara, Koji},
   title={Bounded cohomology with coefficients in uniformly convex Banach
   spaces},
   journal={Comment. Math. Helv.},
   volume={91},
   date={2016},
   number={2},
   pages={203--218},
   issn={0010-2571},
}

\bib{bbfs}{article}{
   author={Bestvina, Mladen},
   author={Bromberg, Ken},
   author={Fujiwara, Koji},
   author={Sisto, Alessandro},
   title={Acylindrical actions on projection complexes},
   journal={Enseign. Math.},
   volume={65},
   date={2020},
   number={1-2},
   pages={1--32},
   issn={0013-8584},
}

\bib{bestvina-feighn}{article}{
	author={Bestvina, Mladen}, 
	author={Feighn, Mark},
	title={Hyperbolicity of the complex of free factors},
	journal={Adv. Math.}, 
	volume={256}, 
	date={2014}, 
	pages={104-155}
}

\bib{bestvina-fujiwara}{article}{
   author={Bestvina, Mladen},
   author={Fujiwara, Koji},
   title={Bounded cohomology of subgroups of mapping class groups},
   journal={Geom. Topol.},
   volume={6},
   date={2002},
   pages={69--89}, 
   issn={1465-3060},
}

\bib{bonk-schramm}{article}{
AUTHOR = {Bonk, Mario}, 
AUTHOR = {Schramm, Oded},
     TITLE = {Embeddings of {G}romov hyperbolic spaces},
   JOURNAL = {Geom. Funct. Anal.},
    VOLUME = {10},
      YEAR = {2000},
    NUMBER = {2},
     PAGES = {266--306},
   }

\bib{bow}{article}{
AUTHOR= {Bowditch, Brian H.},
     TITLE = {Tight geodesics in the curve complex},
   JOURNAL = {Invent. Math.},
    VOLUME = {171},
      YEAR = {2008},
    NUMBER = {2},
     PAGES = {281--300},
}


\bib{calegari-maher}{article}{
   author={Calegari, Danny},
   author={Maher, Joseph},
   title={Statistics and compression of scl},
   journal={Ergodic Theory Dynam. Systems},
   volume={35},
   date={2015},
   number={1},
   pages={64--110},
   issn={0143-3857}
}

\bib{Can}{article}{
author={Cantat, Serge}, 
title={Sur les groups de transformations birationnelles des surfaces}, 
journal={Ann. of Math. (2)}, 
volume={174}, 
date={2011}, 
pages={299-340}
}

\bib{CL}{article}{
author={Cantat, Serge}, 
author={Lamy, St\'ephane}, 
title={Normal subgroups in the Cremona group}, 
journal={Acta Math.}, 
volume={210},
number={1}, 
date={2013}, 
pages={31-94}
}

\bib{cdp}{book}{
   author={Coornaert, M.},
   author={Delzant, T.},
   author={Papadopoulos, A.},
   title={G\'{e}om\'{e}trie et th\'{e}orie des groupes},
   language={French},
   series={Lecture Notes in Mathematics},
   volume={1441},
   publisher={Springer-Verlag, Berlin},
   date={1990},
   pages={x+165},
}

\bib{coulon}{article}{
author={Coulon, R\'emi}, 
title={Partial periodic quotient of groups acting on a hyperbolic space}, 
journal = {Ann. Inst. Fourier}, 
volume ={66}, 
date={2016}, 
number = {5}, 
pages = {1773--1857}
}

\bib{DGO}{article}{
author={Dahmani, Fran\c cois}, 
author={Guirardel, Vincent}, 
author={Osin, Denis}, 
title={Hyperbolically embedded subgroups and rotating families in groups acting on hyperbolic spaces}, 
journal={Mem. Amer. Math. Soc.}, 
volume={245}, 
number={1156}, 
year={2017}, 
pages={v+152}
}

\bib{dahmani-horbez}{article}{
        author={Dahmani, Fran\c cois}, 
        author={Horbez, Camille},
        title={Spectral theorems for random walks on mapping class groups and Out($F_N$)},
        journal={Int. Math. Res. Not.}, 
        volume={2018}, 
        number = {9}, 
        date={2018},
        pages = {2693--2744}
}

\bib{DSU}{book}{
   author={Das, Tushar},
   author={Simmons, David},
   author={Urba\'{n}ski, Mariusz},
   title={Geometry and dynamics in Gromov hyperbolic metric spaces},
   series={Mathematical Surveys and Monographs},
   volume={218},
   note={With an emphasis on non-proper settings},
   publisher={American Mathematical Society, Providence, RI},
   date={2017},
   pages={xxxv+281},
}

\bib{delzant}{article}{
author = {Delzant, Thomas},
title = {Sous-groupes distingu\`es et quotients des groupes hyperboliques}, 
journal = {Duke Math. J.}, 
volume = {83}, 
number = {3}, 
year = {1996}, 
pages = {661--682}
}

\bib{DelzPy}{article}{
AUTHOR = {Delzant, Thomas}, 
AUTHOR = {Py, Pierre},
     TITLE = {K\"ahler groups, real hyperbolic spaces and the {C}remona group},
     JOURNAL = {Compos. Math.},
    VOLUME = {148},
      YEAR = {2012},
    NUMBER = {1},
     PAGES = {153--184}
     }

\bib{DF}{article}{
AUTHOR = {Diller, Jeffrey}, 
AUTHOR = {Favre, Charles}, 
TITLE = {Dynamics of Bimeromorphic Maps of Surfaces}, 
JOURNAL = {Amer. J. Math.}, 
VOLUME = {123}, 
YEAR = {2001}, 
NUMBER = {6}, 
PAGES = {1135--1169}
}

\bib{dowdall-taylor}{article}{
   author={Dowdall, Spencer},
   author={Taylor, Samuel J.},
   title={The co-surface graph and the geometry of hyperbolic free group
   extensions},
   journal={J. Topol.},
   volume={10},
   date={2017},
   number={2},
   pages={447--482},
   issn={1753-8416},
}

\bib{farb}{article}{
   author={Farb, Benson},
   title={Relatively hyperbolic groups},
   journal={Geom. Funct. Anal.},
   volume={8},
   date={1998},
   number={5},
   pages={810--840},
   issn={1016-443X},
}

\bib{farb-margalit}{book}{
   author={Farb, Benson},
   author={Margalit, Dan},
   title={A primer on mapping class groups},
   series={Princeton Mathematical Series},
   volume={49},
   publisher={Princeton University Press},
   place={Princeton, NJ},
   date={2012},
   pages={xiv+472},
   isbn={978-0-691-14794-9},
}

\bib{Fa}{article}{
	author = {Favre, Charles}, 
	title = {Le groupe the Cremona et ses sous-groupes de type fini}, 
	journal = {S\'eminaire Bourbaki}, 
	number = {998},
	year = {2008}, 
}

\bib{Fri}{inproceedings}{
	AUTHOR = {Friedland, Shmuel},
	TITLE = {Entropy of algebraic maps},
 	BOOKTITLE = {Proceedings of the {C}onference in {H}onor of {J}ean-{P}ierre
              {K}ahane ({O}rsay, 1993)},
   	JOURNAL = {J. Fourier Anal. Appl.},
	YEAR = {1995},
    	NUMBER = {Special Issue},
     	PAGES = {215--228}
}
     
\bib{Furstenberg}{article}{
    AUTHOR = {Furstenberg, Harry},
     TITLE = {Noncommuting random products},
   JOURNAL = {Trans. Amer. Math. Soc.},
    VOLUME = {108},
      YEAR = {1963},
     PAGES = {377--428},
      ISSN = {0002-9947},
}

\bib{furter-lamy}{article}{
	author={Furter, Jean-Philippe}, 
	author={Lamy, St\'ephane}, 
	title={Normal subgroup generated by a plane polynomial automorphism}, 
	journal={Transf. Groups}, 
	volume={15}, 
	year={2010}, 
	pages={577-610},
}
     
\bib{Giz}{article}{
	AUTHOR = {Gizatullin, Marat H.},
	TITLE = {Rational {$G$}-surfaces},
	JOURNAL = {Izv. Akad. Nauk SSSR Ser. Mat.},
	VOLUME = {44},
	YEAR = {1980},
	NUMBER = {1},
	PAGES = {110--144, 239}
}

\bib{gromov}{article}{
   author={Gromov, M.},
   title={Hyperbolic groups},
   conference={
      title={Essays in group theory},
   },
   book={
      series={Math. Sci. Res. Inst. Publ.},
      volume={8},
      publisher={Springer, New York},
   },
   date={1987},
   pages={75--263},
}

\bib{GST}{article}{
	author={Gruber, Dominik}, 
	author={Sisto, Alessandro}, 
	author={Tessera, Romain}, 
	title={Gromov's random monsters do not act non-elementarily on hyperbolic spaces},
	journal ={Proc. Amer. Math. Soc.}, 
	volume={148}, 
	date={2020}, 
	pages={2773--2782}
}
     
\bib{HM13}{article}{ 
	author= {Handel, Michael}, 
	author={Mosher, Lee},
	TITLE = {The free splitting complex of a free group, {I}: hyperbolicity},
   	JOURNAL = {Geom. Topol.},
    	VOLUME = {17},
     	YEAR = {2013},
    	NUMBER = {3},
    	PAGES = {1581--1672}
}


\bib{horbez}{article}{
AUTHOR = {Horbez, Camille},
     TITLE = {The {P}oisson boundary of {$Out(F_N)$}},
   JOURNAL = {Duke Math. J.},
    VOLUME = {165},
      YEAR = {2016},
    NUMBER = {2},
     PAGES = {341--369}
}

\bib{hull}{article}{
   author={Hull, Michael},
   title={Small cancellation in acylindrically hyperbolic groups},
   journal={Groups Geom. Dyn.},
   volume={10},
   date={2016},
   number={4},
   pages={1077--1119}
}

\bib{ivanov}{book}{
   author={Ivanov, Nikolai V.},
   title={Subgroups of Teichm\"{u}ller modular groups},
   series={Translations of Mathematical Monographs},
   volume={115},
   publisher={American Mathematical Society, Providence, RI},
   date={1992},
   pages={xii+127},
   isbn={0-8218-4594-2},
}

\bib{kaimanovich}{article}{
   author={Kaimanovich, Vadim A.},
   title={The Poisson formula for groups with hyperbolic properties},
   journal={Ann. of Math. (2)},
   volume={152},
   date={2000},
   number={3},
   pages={659--692},
   issn={0003-486X},
}

\bib{kapovich-benakli}{article}{
   author={Kapovich, Ilya},
   author={Benakli, Nadia},
   title={Boundaries of hyperbolic groups},
   conference={
      title={Combinatorial and geometric group theory (New York,
      2000/Hoboken, NJ, 2001)},
   },
   book={
      series={Contemp. Math.},
      volume={296},
      publisher={Amer. Math. Soc., Providence, RI},
   },
   date={2002},
   pages={39--93},
}


\bib{kim-koberda2}{article}{
   author={Kim, Sang-Hyun},
   author={Koberda, Thomas},
   title={The geometry of the curve graph of a right-angled Artin group},
   journal={Internat. J. Algebra Comput.},
   volume={24},
   date={2014},
   number={2},
   pages={121--169},
   issn={0218-1967},
}

\bib{pry-lamy}{article}{
author={Lamy, St\'ephane}, 
author={Przytycki, Piotr}, 
title={Acylindrical hyperbolicity of the three-dimensional tame automorphism group}, 
journal={Ann. Sci. Ec. Norm. Sup.}, 
volume={52}, 
number={2}, 
year={2019}, 
pages={367--392},
}

\bib{lonjou}{article}{
author = {Lonjou, Anne}, 
title = {Non simplicit\'e du groupe de Cremona sur tout corps}, 
journal = {Ann. Inst. Fourier}, 
volume={66}, 
year={2016},
number = {5}, 
pages = {2021--2046}
}

\bib{lubo-book}{book}{
author={Lubotzky, Alex}, 
title={Discrete Groups, Expanding Graphs and Invariant Measures}, 
series={Progress in Mathematics}, 
volume={125}, 
year={1994},
publisher={Birkh\"auser, Basel}
}


\bib{maher-linear}{article}{
author={Maher, Joseph}, 
title={Linear progress in the complex of curves},
journal={Trans. Amer. Math. Soc.},
Volume={362}, 
Number={6}, 
date={2010}, 
Pages={2963--2991}
}

\bib{Mah}{article}{
   author={Maher, Joseph},
   title={Random walks on the mapping class group},
   journal={Duke Math. J.},
   volume={156},
   date={2011},
   number={3},
   pages={429--468},
   issn={0012-7094},
}

\bib{maher-exp}{article}{
	author={Maher, Joseph}, 
	title={Exponential decay in the mapping class group},
	journal={J. Lond. Math. Soc. (2)}, 
	date={2012}, 
	volume={86}, 
	number={2}, 
	pages={366--386}
}

\bib{maher-sisto}{article}{
    author={Maher, Joseph},
    author={Sisto, Alessandro},
    title={Random subgroups of acylindrical groups and hyperbolic embeddings},
    volume={2019}, 
    number={13}, 
    date={2019},
    pages={3941--3980}
}

\bib{MT}{article}{
author={Maher, Joseph}, 
author={Tiozzo, Giulio}, 
title={Random walks on weakly hyperbolic groups}, 
journal={J. Reine Angew. Math.}, 
volume={2018}, 
number={742}, 
date={2018}, 
pages={187--239}, 
}

\bib{margalit}{article}{
   author={Margalit, Dan},
   title={Problems, questions, and conjectures about mapping class groups},
   conference={
      title={Breadth in contemporary topology},
   },
   book={
      series={Proc. Sympos. Pure Math.},
      volume={102},
      publisher={Amer. Math. Soc., Providence, RI},
   },
   date={2019},
   pages={157--186},
}

\bib{mm1}{article}{
   author={Masur, Howard A.},
   author={Minsky, Yair N.},
   title={Geometry of the complex of curves. I. Hyperbolicity},
   journal={Invent. Math.},
   volume={138},
   date={1999},
   number={1},
   pages={103--149},
   issn={0020-9910},
}

\bib{minasyan-osin}{article}{
author={Minasyan, Ashot}, 
author={Osin, Denis}, 
title={Acylindrical hyperbolicity of groups acting on trees}, 
journal={Math. Ann. }, 
volume={362}, 
year={2015}, 
number={3-4},
pages={1055--1105},
}

\bib{neumann}{article}{
   author={Neumann, Bernhard H.},
   title={Groups with finite classes of conjugate elements},
   journal={Proc. London Math. Soc. (3)},
   volume={1},
   date={1951},
   pages={178--187},
   issn={0024-6115},
}

\bib{neumann2}{article}{
   author={Neumann, Bernhard H.},
   title={Groups covered by permutable subsets},
   journal={J. London Math. Soc.},
   volume={29},
   date={1954},
   pages={236--248},
   issn={0024-6107},
}

\bib{osin}{article}{
   author={Osin, Denis},
   title={Acylindrically hyperbolic groups},
   journal={Trans. Amer. Math. Soc.}, 
   volume={368}, 
   date={2016}, 
   pages={851-888}
}

\bib{rivin}{article}{
   author={Rivin, Igor},
   title={Walks on groups, counting reducible matrices, polynomials, and
   surface and free group automorphisms},
   journal={Duke Math. J.},
   volume={142},
   date={2008},
   number={2},
   pages={353--379},
   issn={0012-7094},
}

\bib{rivin2}{article}{
   author={Rivin, Igor},
   title={Zariski density and genericity}, 
   journal={Int. Math. Res. Not.}, 
   volume={2010},
   number={19}, 
   date = {2010}, 
   pages ={3649--3657}
   }

\bib{Rok}{article}{
	Author = {Rohlin, Vladimir A.},
	Issn = {0065-9290},
	Journal = {Amer. Math. Soc. Translations},
	Number = {71},
	Pages = {55},
	Title = {On the fundamental ideas of measure theory},
	Volume = {1952},
	Year = {1952}
	}

\bib{RS}{article}{
AUTHOR = {Russakovskii, Alexander}, 
AUTHOR={Shiffman, Bernard},
     TITLE = {Value distribution for sequences of rational mappings and
              complex dynamics},
   JOURNAL = {Indiana Univ. Math. J.},
    VOLUME = {46},
      YEAR = {1997},
    NUMBER = {3},
     PAGES = {897--932}
     }

\bib{sela}{article}{
   author={Sela, Zlil},
   title={Acylindrical accessibility for groups},
   journal={Invent. Math.},
   volume={129},
   date={1997},
   number={3},
   pages={527--565},
   issn={0020-9910},
}

\bib{SB}{article}{
	author={Shepherd-Barron, Nicholas I.}, 
	title={Some effectivity questions for plane Cremona transformations}, 
	date={2013},
	eprint={arXiv:1311.6608 [math.AG]},
}

\bib{sisto}{article}{
  author={Sisto, Alessandro},
  title={Contracting elements and random walks},
  journal={J. Reine Angew. Math.}, 
  volume={2018}, 
  number={742},
  date={2018},
  pages={79--114}
}


\bib{Ur}{article}{
author={Urech, Christian},
title={Simple groups of birational transformations in dimension two}, 
journal={Comm. Math. Helv.}, 
volume={95}, 
number={2}, 
date={2020}, 
pages={211--246}, 
}

\end{biblist}
\end{bibdiv}


\noindent Joseph Maher \\
CUNY College of Staten Island and CUNY Graduate Center \\
\url{joseph.maher@csi.cuny.edu} \\

\noindent Giulio Tiozzo \\
University of Toronto\\
\url{tiozzo@math.toronto.edu} \\

\end{document}